\documentclass[12pt]{article}
\usepackage{amsfonts}
\usepackage{amsmath}
\usepackage{amssymb}
\usepackage{amsthm}
\usepackage[english]{babel}
\usepackage{fontspec}
\usepackage[margin=1in]{geometry}
\usepackage{graphicx}
\usepackage[colorlinks]{hyperref}
\usepackage[utf8]{inputenc}
\usepackage{mathabx}
\usepackage{mathrsfs}
\usepackage{mathtools}
\usepackage{titlesec}
\usepackage{verbatim}
\usepackage{wasysym}
\usepackage{xcolor}

\numberwithin{equation}{section}

\colorlet{linkequation}{red}

\newcommand*{\SavedEqref}{}
\let\SavedEqref\eqref
\renewcommand*{\eqref}[1]{%
  \begingroup
    \hypersetup{
      linkcolor=linkequation,
      linkbordercolor=linkequation,
    }%
    \SavedEqref{#1}%
  \endgroup
}

\titleformat{\paragraph}
{\normalfont\normalsize\bfseries}{\theparagraph}{1em}{}
\titlespacing*{\paragraph}
{0pt}{3.25ex plus 1ex minus .2ex}{1.5ex plus .2ex}

\newcommand{\abs}[1]{\left| #1\right|}
\newcommand{\norm}[1]{\left|\left|#1\right|\right|}
\providecommand{\wt}[1]{\widetilde{#1}}
\providecommand{\wh}[1]{\widehat{#1}}

\DeclareMathOperator{\Vol}{Vol}

\DeclareMathOperator{\tr}{tr}

\DeclareMathOperator*{\argmin}{arg\,min}

\DeclareMathOperator{\EL}{\textnormal{\textsterling}\!}

\newcommand{\Rm}{\mathrm{Rm}}
\newcommand{\Ric}{\mathrm{Ric}}
\newcommand{\scal}{\mathrm{scal}}
\newcommand{\p}[1]{{\left( #1\right)}}
\newcommand{\g}[2]{\left\langle #1,#2\right\rangle}
\newcommand{\R}{\mathbb{R}}
\newcommand{\mfg}{\mathfrak{g}}
\newcommand{\mfh}{\mathfrak{h}}
\newcommand{\mfk}{\mathfrak{K}}
\newcommand{\mcb}{\mathcal{B}}

\newcommand{\eps}{\varepsilon}
\newcommand{\Lap}{\Delta}

\newtheorem{definition}[equation]{Definition}
\newtheorem{example}[equation]{Example}
\newtheorem{proposition}[equation]{Proposition}
\newtheorem{theorem}[equation]{Theorem}
\newtheorem{corollary}[equation]{Corollary}
\newtheorem{lemma}[equation]{Lemma}
\newtheorem{remark}[equation]{Remark}
\newtheorem{claim}[equation]{Claim}

\title{Static Vacuum Spacetimes with $\Lambda<0$ as Attractors of the Ricci-Harmonic Flow}
\author{Rasmus Jouttij\"arvi, Klaus Kr\"oncke, and Louis Yudowitz}
\date{\today}

\begin{document}
\maketitle

\begin{abstract}
    We prove dynamical stability and instability theorems for asymptotically hyperbolic static solutions of Einstein's equation with $\Lambda<0$, viewed as self-similar solutions of the Ricci-harmonic flow. More precisely, we show that static metrics are dynamically stable if and only if a positive mass type theorem holds for nearby metrics. Our key tool is a new variant of the expander entropy for the Ricci-harmonic flow.
\end{abstract}

\tableofcontents

\section{Introduction}\label{intro}
    The Ricci flow is a weakly parabolic evolution equation for Riemannian metrics on a manifold, and has been a major tool for solving numerous open conjectures in Riemannian geometry over the past few decades. There are also various extensions and variants of the Ricci flow in the literature. For possible applications in general relativity, Bernhard List  introduced in \cite{list_paper} the extended Ricci flow system
    \begin{align}\label{eq:Listflow}
        \begin{cases}
            \partial_t g(t) &= -2\Ric_{g(t)} + 2\alpha \cdot d\varphi(t) \otimes d\varphi(t)\\
            \partial_t \varphi(t) &= -\Delta \varphi(t),
        \end{cases}
   \end{align}

    \noindent where $\alpha$ is a fixed coupling constant. Here, the evolution of the family of Riemannian metrics $g(t)$ is coupled to the evolution of a time-dependent family of real-valued functions $\varphi(t)$. This extended system is usually called the \textbf{Ricci-harmonic flow} in the literature.
    While one of the motivations for studying the Ricci flow is the possible discovery of new Ricci-flat metrics as limits when $t\to\infty$, the Ricci-harmonic flow has been introduced as a possible tool to find new static solutions of Einstein's vacuum equations. These are solutions to
    \begin{align}\label{eq:static_vac_eq}
        \begin{cases}
            \nabla^2V=V\Ric_g\\ \Delta V=0,
        \end{cases}
    \end{align}
    where $g$ is a Riemannian metric and $V$ is a positive function on a manifold $M$.
    We call $(M,g,V)$ a \textbf{static triple} and $\p{g,V}$ a \textbf{static pair}. Recall that \eqref{eq:static_vac_eq} is equivalent to the statement that the static Lorentzian metric $h=-V^2dt^2+g$ is Ricci-flat. Note that positivity of $V$ ensures that $h$ is well-defined as a Lorentzian metric. By the substitution $\varphi=-\ln{V}$, one sees that a static triple is a self-similar solution of the Ricci-harmonic flow \eqref{eq:Listflow}.
    A further generalisation, where the Ricci flow is coupled to the harmonic map heat flow for manifold-valued functions, has been introduced by M\"{u}ller in \cite{reto_paper}. Our attention, however, will be focused on the case of real-valued functions.\\
                
    \noindent In this paper,  we will study a normalised variant of the Ricci-harmonic flow which is given by
    \begin{align}\label{eq:Listflow_withLambda}
        \begin{cases}
            \partial_t g(t) &= -2\p{\Ric_{g(t)} + n g(t) - d\varphi(t) \otimes d\varphi(t)}\\
            \partial_t \varphi(t) &= -\p{\Delta \varphi(t) - n},
        \end{cases}
    \end{align}
    where $n$ is the dimension of the underlying manifold. Note that the flows \eqref{eq:Listflow} and \eqref{eq:Listflow_withLambda} are homothetically equivalent to each other. That is, solutions of these two equations can be related to each other by suitable rescaling of the components and the time variable.

    Our motivation to study \eqref{eq:Listflow_withLambda} is finding static solutions of \textbf{Einstein's equations} with cosmological constant $\Lambda<0$. By imposing the normalisation $\Lambda=-n$, it is given by 
    \begin{align}\label{eq:static_eq}
        \begin{cases}
            \nabla^2V=\p{\Ric_g+ng}V\\ 
            \Delta V=-nV,
        \end{cases}
    \end{align}
    where the function $V$ is again required to be positive. These equations are equivalent to demanding $\Ric_h=-n h$ for the static Lorentzian metric $h=-V^2dt^2+g$.
    Similarly to the previous case, one can check by substituting $\varphi=-\ln{V}$ that a static triple $(M,g,V)$ with cosmological constant $\Lambda=-n$ is a self-similar solution of the normalised Ricci-harmonic flow \eqref{eq:Listflow_withLambda}. See the discussion in Section \ref{static_metric_sec} for more details and perspectives.

    The static equations \eqref{eq:static_eq} with $\Lambda=0$ are quite rigid. A classic result of Bunting and Masood-ul-Alam (\cite{BM}) states that Euclidean space together with a constant positive function is the only solution of \eqref{eq:static_eq} such that $(M,g)$ is a complete asymptotically flat manifold without boundary. If one allows a boundary, the only other solution is the Schwarzschild family of black holes.
    On the other hand, the solution space for $\Lambda<0$ exhibits a much richer structure. In \cite{ACD}, Anderson, Chruściel, and Delay constructed an infinite-dimensional family of nontrivial asymptotically hyperbolic static triples with $\Lambda=-n$.
    
    The purpose of this article is to determine under which conditions such static triples are dynamically stable as self-similar solutions of  \eqref{eq:Listflow_withLambda}. 
    In doing so, we will reveal an interesting and deep connection to the positive mass theorem. Such a relation was conjectured by Ilmanen for the ADM-mass in the ALE setting and partly confirmed in \cite{HHS}. See also \cite{DO}. In their recent work \cite{KY_PE_stability}, the second and third authors showed that dynamical stability of Poincar\'e-Einstein manifolds is equivalent to a local positive mass theorem involving a new mass-like invariant introduced by Dahl, McCormick and the second author in \cite{DKM}. The present article is inspired by these results.

    In our setting, we use a natural definition of mass associated to a static triple, following the approach in Chruściel-Herzlich \cite{CH}, see also \cite{michel}.
    Let $(M,\wh{g},V)$ be a static triple with $\Lambda=-n$. Furthermore, assume that the Riemannian manifold $(M,\wh{g})$ is asymptotically hyperbolic, in the sense that it is conformally compact and the sectional curvature converges to $-1$ at the conformal boundary. See Section \ref{sec_AH_metrics} for a more formal statement.
    For a Riemannian manifold $(M,g)$ such that the difference $h:=g-\wh{g}$ decays suitably at infinity, we define its \textbf{mass} with respect to $(M,\wh{g},V)$ as
    \begin{align*}\label{eq:AH_mass}
        m_{\wh{g},V}\p{g}:=\lim_{R\to \infty}\int_{\partial B_R}V\g{\mathrm{div}_{\wh{g}}g-d\tr_{\wh{g}}g}{\wh{\nu}}_{\wh{g}}+\tr_{\wh{g}}h\; \wh{\nu}\p{V}-h\p{dV,\wh{\nu}}\; d\wh{A},
    \end{align*}
    where $\wh{\nu}$ is the outward-pointing unit normal. This definition of mass can be derived from the linearisation of the scalar curvature as follows: Let $\omega\in\Omega^1\p{M}$ be a one-form such that
    \begin{align*}
        V\cdot D_{\wh{g}}\scal(h)=\mathrm{div}_{\wh{g}}\omega +\g{(D_{\wh{g}}\scal)^*(V)}{h}_{\wh{g}},
    \end{align*}
    where $(D_{\wh{g}}\scal)^*$ is the formal adjoint of $D_{\wh{g}}\scal$. Then the integrand defining $m_{\wh{g},V}\p{g}$ is simply $\omega(\nu)$. Note furthermore that the static potential satisfies $(D_{\wh{g}}\scal)^*(V)=0$ so that Taylor expanding yields
    \begin{align*}
        V(\scal_g-\scal_{\wh{g}})=V\cdot D_{\wh{g}}\scal(h)+V\cdot \mathcal{O}_2(\abs{h}^2)=\mathrm{div}_{\wh{g}}\omega +V\cdot \mathcal{O}_2(\abs{h}^2),
    \end{align*}
    where $u = \mathcal{O}_2\p{\abs{h}^2}$ means $u$ and its first two derivatives have such decay. This shows that $m_{\wh{g},V}\p{g}$ is finite if both $V(\scal_g-\scal_{\wh{g}})$ and the remainder term $V\cdot \mathcal{O}_2(\abs{h}^2)$ are integrable. In the asymptotically hyperbolic setting, the static potential $V$ usually behaves like $e^r$ at infinity, so that the mass is finite if $\abs{g-\wh{g}}=\mathcal{O}_2\p{e^{-\frac{n}{2}+\varepsilon}}$ for some $\varepsilon > 0$  and $(\scal_g-\scal_{\wh{g}})e^r\in L^1\p{M}$.
    \begin{remark}
        It is straightforward to generalise this definition to metrics $g$ on different manifolds, provided they are asymptotic to $\wh{g}$ through a diffeomorphism at infinity. Here, we are always working with metrics on a fixed manifold, so we omit this case.
    \end{remark}
    \begin{remark}
        Later in this paper, we consider the mass as an expression in $\varphi=-\ln{V}$ and $h=g-\wh{g}$ for practical purposes. Furthermore, we prefer to work with the sign convention of Besse's book \textnormal{\cite{besse}} for the divergence and set $\delta_{\wh{g}}=-\mathrm{div}_{\wh{g}}$. By slight abuse of notation, we will denote the mass with these substitutions by $m_{\wh{g}}\p{h,\varphi}$ from Section \ref{sec_entropy} onwards.
    \end{remark}
    Recall that, on hyperbolic space $\p{\mathbb{H}^n,\wh{g}=g_{\mathbb{H}^n}}$, the kernel of $(D_{\wh{g}}\scal)^*$ is $(n+1)$-dimensional, so that we get an $(n+1)$-dimensional space of static potentials. The kernel can be obtained by embedding $\mathbb{H}^n$ isometrically into Minkowski space $\R^{1,n}$ and restricting the position vector field $\vec{x}=(x^0,x^1,\ldots, x^n)$ to $\mathbb{H}^n$. The restrictions of the component functions $x^i$ span the kernel of $(D_{\wh{g}}\scal)^*$. For this reason, the mass with respect to the hyperbolic metric is defined as an $\p{n+1}$-dimensional vector $\vec{m}_{\wh{g},\vec{x}}\p{g}$, whose components are given by
    \begin{align*}
        m_{\wh{g},x^i}\p{g},\qquad 0\leq i\leq n.
    \end{align*}
    
    Interpreted as a vector in $\R^{1,n}$, the positive mass theorem by Chrusciel-Herzlich in \cite{CH} asserts that if the metric $(M,g)$ is a complete spin manifold, is asymptotic to $\wh{g}=g_{\mathbb{H}^n}$, and satisfies the inequality $\scal_g\geq-n(n-1)$, then the vector $\vec{m}_{\wh{g},\vec{x}}\p{g}$ is future directed timelike or zero, and the latter case occurs if and only if $g$ is isometric to $\wh{g}$.
    To align this statement with our framework, we note that it is equivalent to the following assertion: for each positive function $V\in \ker((D_{\wh{g}}\scal)^*)$, we have ${m}_{\wh{g},V}\p{g}\geq 0$ and equality holds if and only if $g$ is isometric to $\wh{g}$.
    
    For a general static triple $(M,\wh{g},V)$, one cannot expect such a positive mass theorem. On a fixed manifold $M$, one might, however, have a local positive mass property in the following sense:
    \begin{definition}
       Let $(\wh{g},V)$ be an asymptotically hyperbolic static pair. We say that it admits the \textbf{local positive mass property} if for all nearby metrics $g$ with $\scal_g \geq -n\p{n-1}$, we have 
        \begin{align*}
            {m}_{\wh{g},V}\p{g}\geq 0.
        \end{align*}
   \end{definition}
   
    We are going to show that this property is equivalent to dynamical stability of the pair $(\wh{g},-\ln{V})$ as a self-similar solution of the Ricci-harmonic flow. This link is generated by a new entropy functional $\mu_{\mathrm{AH},\wh{g}}(g,\varphi)$ for the pair $(g,\varphi)$, which we define in Section \ref{sec_entropy_defn}. Up to diffeomorphism, the Ricci-harmonic flow can be interpreted as a gradient flow of this entropy (in a weighted $L^2$-sense).
    In the asymptotically hyperbolic setting, subtracting the boundary integral defining ${m}_{\wh{g},V}$ is the right normalisation at infinity to make the entropy well-defined. Such a normalisation has been used in other geometric situations (see \cite{DKM,DO}), in order to define suitable entropies. One of the delicate tasks to deal with here is the fact that the function $\varphi$ will not be bounded, but converge to $-\ln{V}$ at a suitable rate.
    An important cornerstone for the main statements of this paper is the following result which we state roughly here and more precisely in Section \ref{sec_local_pos_mass}:
    \begin{theorem}\label{mainthm_local_positive_mass}
        Let $\left(M,\wh{g},V\right)$ be an asymptotically hyperbolic static triple of dimension $n \geq 3$. Then the following are equivalent:
        \begin{itemize}
            \item[(i)] The pair $(\wh{g},-\ln{V})$ is a local maximiser of $\mu_{\mathrm{AH},\wh{g}}$,
            \item[(ii)] The pair $(\wh{g},V)$ satisfies the local positive mass property.
        \end{itemize}
    \end{theorem}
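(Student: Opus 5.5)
The proof compares the entropy $\mu_{\mathrm{AH},\wh{g}}$ with the mass $m_{\wh{g},V}$, using the structure inherited from Perelman-type expander entropies: $\mu_{\mathrm{AH},\wh{g}}(g,\varphi)$ is an infimum over an auxiliary weight (or over $\varphi$ itself) of a weighted energy functional $\mathcal{W}_{\mathrm{AH}}(g,\varphi,f)$. The functional is built from $\scal_g - \abs{d\varphi}^2 + n(n-1)$ together with gradient terms, and is renormalised at infinity by subtracting the mass boundary integral $m_{\wh{g}}(h,\varphi)$. At the static pair, the weight $e^{-f}$ coincides with $V$ (up to normalisation), and $\mu_{\mathrm{AH},\wh{g}}(\wh{g},-\ln V)=0$. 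I would first record the two basic facts this needs. First, in a neighbourhood of $(\wh{g},-\ln V)$ in the weighted H\"older/Sobolev topology, the infimum defining $\mu_{\mathrm{AH}}$ is attained by a unique minimiser depending smoothly on $(g,\varphi)$; this follows from the implicit function theorem, using invertibility of the relevant weighted Schr\"odinger-type operator at the static pair. Second, integrating by parts with the identity $V\,D_{\wh{g}}\scal(h)=\mathrm{div}_{\wh{g}}\omega+\langle (D_{\wh{g}}\scal)^*V,h\rangle$ shows that the boundary term cancels exactly the divergence part, so that $\mu_{\mathrm{AH}}$ is finite on the admissible class.

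For (i)$\Rightarrow$(ii), let $g$ be near $\wh{g}$ with $\scal_g\ge -n(n-1)$. I test the entropy at a suitable pair $(g,\varphi)$. The choice I would make is to take $\varphi$ solving the weighted equation $\Delta_g e^{-\varphi} = -n e^{-\varphi}$ asymptotic to $V$, or more simply $\varphi=-\ln V$ adjusted so that the minimiser is the trivial one. The aim is the inequality
\begin{align*}
\mu_{\mathrm{AH},\wh{g}}(g,\varphi)\ \ge\ -\,m_{\wh{g},V}(g) + \int_M (\scal_g+n(n-1))\,e^{-f}\,dV_g + (\text{nonnegative squares}),
\end{align*}
or the analogous statement with the inequality reversed and the sign of the mass flipped. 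The integrand rewrites as a sum of squares plus scalar curvature, as in the Ricci-flow expander entropy. If $m_{\wh{g},V}(g)<0$, then I would rescale, or take $\varphi$ to make the interior contribution small, so that the entropy exceeds $0=\mu(\wh{g},-\ln V)$, contradicting local maximality. Concretely, $\mu_{\mathrm{AH}}(g,\varphi)\le -c\, m_{\wh{g},V}(g)$ up to a nonpositive bulk term, so local maximality forces $m\ge0$. The sign bookkeeping is what I would carefully verify.

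For (ii)$\Rightarrow$(i), given $(g,\varphi)$ near the static pair, I would run the conformal-type/weighted trick used in \cite{KY_PE_stability,DKM}. Let $f$ be the minimiser, and use the Euler–Lagrange equation to produce a conformally related metric $\tilde g=u^{4/(n-2)}g$, with $u$ built from the first eigenfunction $e^{-f/2}$. This metric satisfies $\scal_{\tilde g}\ge -n(n-1)$, with equality exactly when the minimiser is static, and its mass satisfies $m_{\wh{g},V}(\tilde g) = -\mu_{\mathrm{AH}}(g,\varphi) + (\text{nonneg.})$. Positivity of the mass from (ii) then gives $\mu_{\mathrm{AH}}(g,\varphi)\le 0$. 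The function $\varphi$ enters only through the nonnegative term $\abs{d\varphi}^2$ and a completion of squares, so reducing to a pure-metric statement is legitimate. I need $n\ge3$ for the conformal exponent.

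The main obstacle is this conformal construction. One must solve the prescribed-scalar-curvature problem for $u$ with $u-1$ decaying fast enough, at rate $e^{-(n/2+\varepsilon)r}$, so that the mass of $\tilde g$ is defined and differs from the entropy only by the controlled bulk term. This requires weighted Fredholm theory for $-\Delta+\text{potential}$ on AH manifolds, with indicial roots compared to $V\sim e^r$. I would first try to get the estimate from the isomorphism property of $\Delta_g - n$-type operators on weighted H\"older spaces between the indicial roots, perturbing from $\wh{g}$.
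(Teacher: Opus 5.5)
Your direction (i)$\Rightarrow$(ii) works if you commit to your first choice of $\varphi$, and it is more direct than the paper's argument. Suppose $\Delta_g\varphi+\abs{d\varphi}^2_g=n$, equivalently $(\Delta_g+n)e^{-\varphi}=0$, which is solvable near $\wh\varphi$ as in Remark~\ref{perturbation_theory}. Then $R(g,\varphi)=\scal_g+n(n-1)$. The representation in the proof of Proposition~\ref{2sided_H1_bounds} then gives, for every test function $\omega$ and $f=\varphi-2\ln(1+\omega)$,
\[
\wt{\mathcal{W}}_{\mathrm{AH},\wh{g}}(g,\varphi,\omega)=\frac{1}{2n}\int_M\p{4\abs{d\omega}^2+H(\omega)}dV_\varphi+\frac{1}{2n}\int_M\p{\scal_g+n(n-1)}dV_f-\frac{1}{2n}m_{\wh{g}}(g-\wh{g},\varphi),
\]
with $H\geq0$. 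Hence $\mu_{\mathrm{AH}}(g,\varphi)\geq-\frac{1}{2n}m_{\wh g}(g-\wh g,\varphi)$ whenever $\scal_g\geq-n(n-1)$, and local maximality forces the mass to be nonnegative.

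The paper instead passes through the class $\mathcal{C}$: maximality on $\mathcal{C}$, then the fact that $\mu_{\mathrm{AH}}$ is a negative multiple of the mass there, then back to $\scal_g\geq-n(n-1)$ via Theorem~\ref{Yamabe} and Proposition~\ref{conformal_PMT}. Your route needs neither of these.

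Three details in your write-up need fixing. Your sentence ``$\mu_{\mathrm{AH}}\le -c\,m$ up to a nonpositive bulk term'' states the inequality in the direction that gives nothing. No rescaling is involved, and none is available under the AH normalisation. The fallback $\varphi=-\ln V$ fails, because $R(g,\wh\varphi)$ has no sign.

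The converse (ii)$\Rightarrow$(i) has a genuine gap. Take either $u=e^{-f/2}$, which is not even asymptotic to $1$, or $u=e^{-(f-\varphi)/2}=1+\omega$. In neither case does anything give $\scal_{\tilde g}\geq-n(n-1)$ for $\tilde g=u^{4/(n-2)}g$. The Euler--Lagrange equation \eqref{EL_for_omega} controls the drift Laplacian $\Delta_\varphi\omega=\Delta_g\omega+\g{d\varphi}{d\omega}$, not the conformal Laplacian $\frac{4(n-1)}{n-2}\Delta_g+\scal_g$. Its source term $R(g,\varphi)$ contains $\Delta_g\varphi+\abs{d\varphi}^2-n$, which has no sign.

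The announced identity also has the wrong sign. From $m(\tilde g)=-\mu_{\mathrm{AH}}+N$ with $N\geq0$, positivity of the mass only yields $\mu_{\mathrm{AH}}\leq N$.

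Finally, $\varphi$ does not enter only through $+\abs{d\varphi}^2$. $\mathcal{W}$ contains $-\abs{d\varphi}^2$, the term $\varphi-f$, and $\varphi$-dependent boundary terms. Statement (i) also requires $\mu_{\mathrm{AH}}\leq0$ under pure $\varphi$-variations. So there is no reduction to a metric-only statement.

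The missing idea is a second-order comparison inside the conformal class.
\begin{itemize}
\item Write $g=(1+\eta)g_{\mathcal{C}}$ with $\scal_{g_{\mathcal{C}}}=-n(n-1)$ (Theorem~\ref{Yamabe}), and take the exact potential $\varphi_{\mathcal{C}}$.
\item By Proposition~\ref{klaus_konj} the minimiser is $\varphi_{\mathcal{C}}$ itself. So $\mu_{\mathrm{AH}}(g_{\mathcal{C}},\varphi_{\mathcal{C}})$ is a negative multiple of the mass, hence $\leq0$ by (ii).
\item It remains to show $\mu_{\mathrm{AH}}(g,\varphi)\leq\mu_{\mathrm{AH}}(g_{\mathcal{C}},\varphi_{\mathcal{C}})$. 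Such pairs are critical in the directions $\mfh=(\eta g_{\mathcal{C}},\psi)$ (Remark~\ref{DCSC_crit}).
\item The second variation in those directions is $\leq-C\norm{\mfh}^2_{H^1_\mfg}$ (Proposition~\ref{stability_op_prop}). This uses the splitting into conformal auxiliary directions and tangents to auxiliary constant scalar curvature metrics, together with spectral gaps.
\item The third variation is $O(\norm{\mfh}_{C^{2,\alpha}}\norm{\mfh}^2_{H^1_\mfg})$ (Proposition~\ref{third_var_estim}), so Taylor expansion closes the argument.
\end{itemize}
The decay of the Yamabe conformal factor, which you single out as the main obstacle, is only an input to this argument, not its crux.
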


    We also mention that while static metrics each give rise to an $\left(n+1\right)$-dimensional Poincar\'e--Einstein metric, thereby reducing some of our proofs to those in previous work by the second and third authors (\cite{KY_PE_stability}), many estimates turn out to require far more work. This is due to the need to ensure the warped product structure of the $\left(n+1\right)$-dimensional Poincar\'e--Einstein metric is maintained alongside certain curvature conditions. This is a delicate matter and ended up requiring a surprising amount of work compared with the general Poincar\'e--Einstein case.    
    
    Rough statements of the main two results of this article are as follows.
    \begin{theorem}\label{mainthm_stability} (Dynamical stability)
        Let $\left(M^n,\wh{g},V\right)$ be an asymptotically hyperbolic static triple of dimension $n \geq 3$. If $(\wh{g},-\ln{V})$ is a local maximiser of $\mu_{\mathrm{AH},\wh{g}}$, then
        for every $L^2 \cap L^{\infty}$-neighbourhood $\mathcal{U}$ of $(\wh{g},-\ln{V})$, there exists an $H^\ell$-neighbourhood $\mathcal{V} \subset \mathcal{U}$ (with $\ell > \frac{n}{2} + 2$)
        such that the Ricci flow starting at any metric in  $\mathcal{V}$ exists for all times and converges polynomially
        (modulo diffeomorphisms)  to a static pair in $\mathcal{U}$.
    \end{theorem}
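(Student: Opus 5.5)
The plan is to follow the standard Łojasiewicz-type dynamical stability scheme for gradient-like flows, as in the Poincar\'e--Einstein case of \cite{KY_PE_stability}. We work with the Ricci-harmonic--DeTurck flow associated with \eqref{eq:Listflow_withLambda}, gauged with respect to the background $\wh{g}$. This flow is strictly parabolic in the pair $(g,\varphi)$. Its static points are the static pairs in Bianchi gauge, and up to diffeomorphisms it coincides with the gradient flow of $\mu_{\mathrm{AH},\wh{g}}$ in the weighted space $L^2(e^{-f}d\Vol)$. Writing $g=\wh{g}+h$ and $\varphi=-\ln V+\psi$, we express the flow as a quasilinear parabolic system for $(h,\psi)$ in weighted Sobolev spaces adapted to the asymptotically hyperbolic end. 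The weights must be chosen so that the mass term in $\mu_{\mathrm{AH},\wh{g}}$ is finite along the flow; this requires decay $\mathcal{O}_2(e^{-(\frac n2-\eps)r})$ together with $(\scal_g+n(n-1))V\in L^1$.

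\textbf{Step 1 (short-time estimates).} Using parabolic Schauder or $H^\ell$ energy estimates with $\ell>\frac n2+2$, we show that a solution starting $H^\ell$-close to $(\wh{g},-\ln V)$ exists for a uniform time. On that interval it stays $H^\ell$-close, with smoothing: higher norms are controlled by the $L^2$-norm at earlier times via interpolation.

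\textbf{Step 2 (Łojasiewicz inequality).} Near the static pair we prove a Łojasiewicz--Simon inequality
\begin{align*}
|\mu_{\mathrm{AH},\wh{g}}(g,\varphi)-\mu_{\mathrm{AH},\wh{g}}(\wh{g},-\ln V)|^{1-\theta}\le C\,\|\nabla\mu_{\mathrm{AH},\wh{g}}(g,\varphi)\|_{L^2_f}.
\end{align*}
To do this we pass to the $(n+1)$-dimensional warped Poincar\'e--Einstein metric $-V^2dt^2+g$ (Riemannian version $V^2dt^2+g$), as the introduction suggests. Fredholmness of the linearised operator, the Einstein operator restricted to warped, $t$-invariant tensors, follows from the fact that $\wh{g}$ is asymptotically hyperbolic. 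The kernel is then finite-dimensional, and an analytic Lyapunov--Schmidt reduction gives the inequality.

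\textbf{Step 3 (bootstrap).} By hypothesis $(\wh{g},-\ln V)$ is a local maximiser, so $\mu_{\mathrm{AH},\wh{g}}$ is bounded above by its value at the static pair along the flow while the flow stays in the neighbourhood. Monotonicity gives
\begin{align*}
\frac{d}{dt}\mu=\|\nabla\mu\|^2_{L^2_f}.
\end{align*}
Combining this with Step 2 yields
\begin{align*}
\int_{t_1}^{t_2}\|\partial_t(g,\varphi)\|_{L^2_f}\,dt\le C|\mu(t_2)-\mu(\mathrm{static})|^{\theta}.
\end{align*}
The right-hand side is small if the initial data is close, by continuity of $\mu$ in $H^\ell$. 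Using the smoothing of Step 1 and interpolation, this $L^2$ smallness upgrades to $H^\ell$ smallness. A standard continuity argument then shows that the solution never leaves $\mathcal{U}$, so it exists for all time. The finite length of the trajectory gives convergence to a limit at which $\nabla\mu=0$, that is, a static pair in $\mathcal{U}$, and the exponent $\theta$ gives the polynomial rate. Finally, we undo the DeTurck diffeomorphisms, whose generating vector fields are integrable in time, to obtain the statement for the original flow modulo diffeomorphisms.

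\textbf{Main obstacle.} I expect the main obstacle to be Step 2 together with the uniform control of $\varphi$. The function $\varphi$ is unbounded, behaving like $-r$. The weight $e^{-f}$ and the mass boundary term must therefore be handled so that the gradient structure and the Łojasiewicz inequality hold in spaces in which the flow is also well-posed. In addition, preserving the warped product structure and the curvature conditions when lifting to $n+1$ dimensions requires extra estimates beyond \cite{KY_PE_stability}.
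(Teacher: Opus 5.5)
Your overall architecture matches the paper's: a \L ojasiewicz--Simon inequality via the $(n+1)$-dimensional Poincar\'e--Einstein lift, monotonicity, short-time smoothing estimates, and a continuity argument. The gap is in how you propose to handle the obstacle you flag, namely choosing weights so that the mass term is finite along the flow, i.e.\ requiring $(\scal_g+n(n-1))V\in L^1$ plus extra pointwise decay. (The rate required by Proposition \ref{EHfinite} is $e^{-\beta r}$ with $\beta>\frac n2$, not $e^{-(\frac n2-\eps)r}$.) An $H^\ell_{\wh\varphi}$-neighbourhood does not give this. It only controls $\int(\scal_g+n(n-1))^2V\,dV$, and $\int V\,dV=\infty$. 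For example, $h=\eps\chi\wh g$ with $\chi\sim e^{-nr/2}/r$ is $H^\ell_{\wh\varphi}$-small. Yet $D_{\wh g}\scal(h)=(n-1)\eps(\Delta+n)\chi\sim e^{-nr/2}/r$, so $\int_{B_R}(\scal_g+n(n-1))V\,dV$ diverges. Restricting to better-decaying data would at best prove the result for a strictly smaller class than an $H^\ell$-neighbourhood. It would also need weighted parabolic estimates propagating that decay, which you do not supply; the available (Bamler) estimates are only $L^\infty$/$L^2$ estimates.

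The missing idea is that only the renormalised combination must be finite. In the corollary following Corollary \ref{first_var_crit}, the paper extends $\mu_{\mathrm{AH}}$ to all of $\mathcal R^\ell_{\wh\varphi}(M,\wh g)\times\mathcal R^\ell_{\wh\varphi}(M,\wh\varphi)$. It approximates by compactly supported perturbations and integrates the gradient $-\frac{1}{2n}(\Ric_{\mathfrak g}+n\mathfrak g+\nabla^2_{\mathfrak g}u_{\mathfrak g})e^{-u_{\mathfrak g}}$, which is analytic into $H^{\ell-2}_{\wh\varphi}$; the mass of an individual pair may diverge. Section \ref{stability_sec} then drops all pointwise decay and works with $\mathfrak g-\wh{\mathfrak g}$ in $H^\ell_{\wh{\mathfrak g}}$ on $\mathbb S^1\times M$. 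There the unboundedness of $\varphi$ disappears, since the $d\tau^2$-part of $\mathfrak g-\wh{\mathfrak g}$ has $\wh{\mathfrak g}$-norm $\abs{e^{-2(\varphi-\wh\varphi)}-1}$. Bamler's short-time and smoothing estimates apply directly to the auxiliary Ricci--DeTurck flow, and the warped structure is preserved.

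Two gauge issues are also glossed over.
\begin{itemize}
\item At a static pair the Hessian of $\mu_{\mathrm{AH}}$ annihilates every direction $(\delta^*_{\wh g}X,\frac12X(\wh\varphi))$, so it is not Fredholm on all variations. It reduces to $-\frac{1}{4n}\Delta_{E,\wh{\mathfrak g}}$ only on $\ker\delta_{\wh{\mathfrak g}}$, where $v=\frac12\tr_{\wh g}h$ by \eqref{linearised_minimiser}. The \L ojasiewicz inequality must therefore be proved on the slice of Lemma \ref{slice_result} and transferred by diffeomorphism invariance.
\item Your bound on $\int\norm{\partial_t(g,\varphi)}\,dt$ holds only in a gauge where $\partial_t(g,\varphi)$ is a multiple of $\nabla\mu_{\mathrm{AH}}$. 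In DeTurck gauge the term $\mathscr L_{X_{\wh{\mathfrak g}}(\mathfrak g)}\mathfrak g$ is not controlled by $\norm{\nabla\mu_{\mathrm{AH}}}$; it is nonzero at static metrics not in Bianchi gauge. The time-integrability of the DeTurck fields that you invoke at the end is neither proved nor needed for a statement modulo diffeomorphisms. The paper instead switches after $\tau_{\mathrm{SE}}$ to the flow pulled back by $\Psi_t$, generated by $-\nabla u$, which is exactly \eqref{entropy_grad_flow}. Transferring Bamler's bounds to this flow requires uniform $C^m$-bounds on $u=f-\varphi$, obtained from Shi's estimates and the invertibility of $\Delta_{\mathfrak g}+n$.
\end{itemize}

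A small slip: the right-hand side of your Step 3 bound should be $\abs{\mu(t_1)-\mu(\mathrm{static})}^\theta$, since $\abs{\mu-\mu(\mathrm{static})}$ decreases. Your identification of the limit as static is fine and more direct than the paper's appeal to \cite[Corollary 5.15]{DKM}. Since $\int_{\tau_{\mathrm{SE}}}^\infty\norm{\nabla\mu_{\mathrm{AH}}}^2dt<\infty$ and Proposition \ref{loj_needed_estims} gives continuity, $\nabla\mu_{\mathrm{AH}}(\mathfrak g_\infty)=0$, and Proposition \ref{static_crit_points} applies.
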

    
    \begin{theorem}\label{mainthm_instability} (Dynamical instability)
        Let $\left(M^n,\wh{g},V\right)$ be an asymptotically hyperbolic static triple of dimension $n \geq 3$. If $(\wh{g},-\ln{V}) =: \left(\wh{g}, \wh{\varphi}\right)$ is not a local maximiser of $\mu_{\mathrm{AH},\wh{g}}$, then there exists a non--trivial ancient Ricci--harmonic flow $\left\{(g(t),\varphi(t))\right\}_{t\in\left(-\infty,0\right]}$ that converges (modulo diffeomorphisms) to $(\wh{g},\wh{\varphi})$ as $t \to -\infty$.
    \end{theorem}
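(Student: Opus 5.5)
The plan is to follow the by now standard scheme for dynamical instability of Ricci solitons (as in the Poincar\'e--Einstein case of \cite{KY_PE_stability}): use the entropy $\mu_{\mathrm{AH},\wh{g}}$, its monotonicity along the flow and a \L ojasiewicz--Simon inequality to produce flows that start arbitrarily close to $(\wh{g},\wh{\varphi})$ and are forced to leave a fixed neighbourhood. A limit of these, after time shifts, should give the ancient flow. Throughout I work with the gauge-fixed (DeTurck-type) version of \eqref{eq:Listflow_withLambda} relative to $(\wh{g},\wh{\varphi})$, so that the flow is strictly parabolic and the static pair is a genuine fixed point. I also use that the flow is, modulo diffeomorphisms, a weighted $L^2$-gradient flow of $\mu_{\mathrm{AH},\wh{g}}$. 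Hence $t\mapsto \mu_{\mathrm{AH},\wh{g}}(g(t),\varphi(t))$ is nondecreasing, with
\begin{align*}
\tfrac{d}{dt}\mu_{\mathrm{AH},\wh{g}} \geq c\,\norm{\nabla\mu_{\mathrm{AH},\wh{g}}}^2 .
\end{align*}

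\emph{Step 1 (escaping data).} Since $(\wh{g},\wh{\varphi})$ is not a local maximiser, there is a sequence $(g_i,\varphi_i)\to(\wh{g},\wh{\varphi})$ in $H^\ell$ (with $\ell>\frac n2+2$ and the admissible asymptotics) such that $\mu_{\mathrm{AH},\wh{g}}(g_i,\varphi_i)>\mu_{\mathrm{AH},\wh{g}}(\wh{g},\wh{\varphi})$. Here I use a density argument to upgrade a nearby competitor to a smooth one in $H^\ell$. Fix a small $\varepsilon>0$ such that, on the $\varepsilon$-ball $B_\varepsilon$ around $(\wh{g},\wh{\varphi})$, the following three tools from the stability part of the paper are available:
\begin{itemize}
\item[(a)] the \L ojasiewicz--Simon inequality $\abs{\mu(g,\varphi)-\mu(\wh{g},\wh{\varphi})}^{1-\theta}\leq C\norm{\nabla\mu(g,\varphi)}$;
\item[(b)] local existence with smoothing estimates, uniform in the initial data, for the Ricci-harmonic flow;
\item[(c)] the fact that all static pairs in $B_\varepsilon$ have entropy equal to $\mu(\wh{g},\wh{\varphi})$. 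This follows from (a), since critical points near $(\wh g,\wh\varphi)$ have $\nabla\mu=0$.
\end{itemize}

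\emph{Step 2 (exit from $B_\varepsilon$).} Let $(g_i(t),\varphi_i(t))$ be the flow starting at $(g_i,\varphi_i)$. If it stayed in $B_\varepsilon$ for all time, the argument of Theorem \ref{mainthm_stability} (a \L ojasiewicz argument needing only smallness in the ball) would make it converge to a static pair in $B_\varepsilon$. That pair would have entropy $\geq\mu(g_i,\varphi_i)>\mu(\wh{g},\wh{\varphi})$, contradicting (c). So there is a first exit time $T_i$ with $\norm{(g_i(T_i),\varphi_i(T_i))-(\wh{g},\wh{\varphi})}_{L^2\cap L^\infty}=\varepsilon$. Continuous dependence on initial data gives $T_i\to\infty$.

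\emph{Step 3 (shift and pass to the limit).} Set $\tilde g_i(t)=g_i(t+T_i)$ and $\tilde\varphi_i(t)=\varphi_i(t+T_i)$ for $t\in[-T_i,0]$. Uniform smoothing estimates in the ball (Shi-type estimates for the coupled system, including weighted control of $\varphi-\wh\varphi$) give uniform $C^k_{\mathrm{loc}}$ bounds. A diagonal Arzel\`a--Ascoli argument then yields an ancient limit flow $(g(t),\varphi(t))$, $t\in(-\infty,0]$. It is nontrivial because its distance from $(\wh g,\wh\varphi)$ equals $\varepsilon$ at $t=0$, provided this distance passes to the limit. For that I need uniform spatial decay of $g_i-\wh g$ at infinity, which I would get from weighted estimates using a barrier built from $V$.

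\emph{Step 4 (convergence as $t\to-\infty$).} Along each shifted flow, (a) and monotonicity give the standard estimate
\begin{align*}
\int_{s}^{0}\norm{\partial_t(\tilde g_i,\tilde\varphi_i)}\,dt\leq C\bigl(\mu(\tilde g_i(0),\tilde\varphi_i(0))-\mu(\tilde g_i(s),\tilde\varphi_i(s))\bigr)^{\theta}.
\end{align*}
The right-hand side is controlled by $\mu(\tilde g_i(0),\tilde\varphi_i(0))-\mu(\wh g,\wh\varphi)$, which is uniformly small. In the limit, $\mu(g(t),\varphi(t))\downarrow \mu_{-\infty}$ as $t\to-\infty$, and the path has finite length on $(-\infty,0]$. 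So $(g(t),\varphi(t))$ converges as $t\to-\infty$ to a static pair in $B_\varepsilon$. Using the reverse \L ojasiewicz differential inequality for $\mu(\tilde g_i(t),\tilde\varphi_i(t))-\mu(\wh g,\wh\varphi)>0$, I would show that this limit is $(\wh g,\wh\varphi)$ modulo diffeomorphisms: the flows emanate from data converging to it, and the backward length from time $-T_i+1$ tends to zero.

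\emph{Main obstacle.} I expect the hardest part to be the analytic input in a neighbourhood where $\varphi$ is unbounded. This means establishing (a) for $\mu_{\mathrm{AH},\wh{g}}$ in weighted spaces (Fredholmness of the linearised operator with the mass-boundary normalisation), together with uniform decay estimates, so that the exit distance and entropy survive the limit in Step 3. My first attempt would be to transfer the estimates of \cite{KY_PE_stability} through the $(n+1)$-dimensional warped product $-V^2dt^2+g$ (Riemannian version), checking that the warped structure is preserved.
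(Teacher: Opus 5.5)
Your skeleton is the paper's. It has the same four ingredients:
\begin{itemize}
\item an escaping sequence $\mathfrak{g}_i\to\wh{\mathfrak{g}}$ with $\mu_{\mathrm{AH}}(\mathfrak{g}_i)>\mu_{\mathrm{AH}}(\wh{\mathfrak{g}})$;
\item first exit times $T_i\to\infty$ from a ball on which Theorem \ref{loj_ineq_thm} holds;
\item time shifts plus compactness;
\item convergence at $-\infty$, obtained by combining the reverse \L ojasiewicz bound \eqref{entropy_diff_bound} with a length bound starting at $\wt{\mathfrak{g}}_i(\tau_{\mathrm{SE}})\to\wh{\mathfrak{g}}$. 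Here \eqref{entropy_diff_bound} gives $\mu_{\mathrm{AH}}(\wt{\mathfrak{g}}_i(t+T_i))-\mu_{\mathrm{AH}}(\wh{\mathfrak{g}})\leq (C|t|)^{-1/(1-\theta)}$ for $t<0$, uniformly in $i$.
\end{itemize}
However, the length estimate your Steps 2 and 4 rely on is false in the gauge you fix. Monotonicity $\frac{d}{dt}\mu_{\mathrm{AH}}\geq c\norm{\nabla\mu_{\mathrm{AH}}}^2$ is gauge independent (Proposition \ref{entropy_diffeo_invar}). But bounding the length by a power of the entropy gain needs $\norm{\partial_t\mathfrak{g}}\leq C\norm{\nabla\mu_{\mathrm{AH}}}$. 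Along the DeTurck flow \eqref{rdtf} one has $\partial_t\mathfrak{g}=4n\,e^{u_{\mathfrak{g}}}\nabla\mu_{\mathrm{AH}}(\mathfrak{g})+\mathscr{L}_{Y}\mathfrak{g}$ with $Y=\nabla u_{\mathfrak{g}}-X_{\wh{\mathfrak{g}}}(\mathfrak{g})$. This Lie derivative is not controlled by $\nabla\mu_{\mathrm{AH}}$: started at $\phi^*\wh{\mathfrak{g}}$, the DeTurck flow moves although $\nabla\mu_{\mathrm{AH}}\equiv 0$. The paper avoids this as follows.
\begin{itemize}
\item It uses the DeTurck flow only on $[0,\tau_{\mathrm{SE}}]$, to get the smoothing of Lemma \ref{rdtf_hl_bounds}.
\item It then pulls back by $\Phi_t\circ\Psi_t$ to the modified flow \eqref{mod_warped_product_flow}. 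This is exactly the gradient flow \eqref{entropy_grad_flow}, so $\partial_t\wt{\mathfrak{g}}=4n\,e^{u}\nabla\mu_{\mathrm{AH}}$.
\item This flow still fixes $\wh{\mathfrak{g}}$, because $u_{\wh{\mathfrak{g}}}=0$, and the smoothing bounds are re-established for it using \eqref{stability_u_bounds}.
\end{itemize}

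Second, the length has to be measured in $H^\ell_{\wh{\mathfrak{g}}}$, not in $L^2$. This keeps the flow inside the neighbourhood where Theorem \ref{loj_ineq_thm} applies, and gives convergence at $-\infty$ in a norm that, with Lemma \ref{rdtf_hl_bounds}, yields smooth convergence. \L ojasiewicz only bounds the $L^2$-norm of the gradient. The paper interpolates, $\norm{\partial_t\wt{\mathfrak{g}}}_{H^\ell}\leq C\norm{\partial_t\wt{\mathfrak{g}}}_{L^2}^{1-\eta}$, via Corollary \ref{gagliardo_nirenberg_cor} and uniform higher-order bounds. This gives $\frac{d}{dt}(\mu_{\mathrm{AH}}-\mu_{\mathrm{AH}}(\wh{\mathfrak{g}}))^{\sigma}\geq C_\sigma\norm{\partial_t\wt{\mathfrak{g}}}_{H^\ell}$ with $\sigma=\frac{\theta}{2}-\eta+\frac{\theta\eta}{2}$. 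Correspondingly, the paper takes $T_i$ as the exit time from an $H^\ell$-ball, not an $L^2\cap L^\infty$-ball. With these two changes your Step 4 is the paper's final computation.

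Your Step 2 does not even need a length bound. While the flow is in the ball, $\mu_{\mathrm{AH}}-\mu_{\mathrm{AH}}(\wh{\mathfrak{g}})\geq\mu_{\mathrm{AH}}(\mathfrak{g}_i)-\mu_{\mathrm{AH}}(\wh{\mathfrak{g}})>0$. So \L ojasiewicz and monotonicity force $\mu_{\mathrm{AH}}$ to grow at least linearly, which is impossible since it is bounded there.

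For non-triviality, the paper does not pass the exit distance to the limit. Integrating the $H^\ell$-length bound from $\tau_{\mathrm{SE}}$ to $T_i$ gives \eqref{non_trivial_rf_estim}, hence $\mu_{\mathrm{AH}}(\wt{\mathfrak{g}}_i(T_i))\geq\varepsilon'>0$ uniformly. \L ojasiewicz at $t=0$ then shows the limit is not static. This still requires $\mu_{\mathrm{AH}}$ to pass to the limit, which the paper takes from the $H^\ell_{\wh{\mathfrak{g}}}$-convergence it asserts after Hamilton's compactness theorem. So your concern about control at infinity is real on either route.
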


    We refer the reader to Section \ref{stability_sec} for the proofs, as well as more detailed and formal statements of the results. Observe that the converse implications also hold due to the monotonicity of $\mu_{\mathrm{AH},\wh{g}}$ along the flows. In particular, each static metric is either dynamically stable or dynamically unstable, and this entirely depends on the local behaviour of $\mu_{\mathrm{AH},\wh{g}}$.
    
    Summarising and combining these results and the above discussion, we establish the following equivalences for static solutions:
    \begin{align*}
        \text{dynamical stability}&\Leftrightarrow \text{positive mass property for nearby metrics},\\
        \text{dynamical instability}&\Leftrightarrow \text{failure of the positive mass property for nearby metrics}.
    \end{align*}
    
    \begin{example}
        Hyperbolic space is well-known to be strictly linearly stable in the sense that the second variation of the entropy is strictly negative in directions orthogonal to diffeomorphisms. 
        Independently, the positive mass property follows from the positive mass theorem in \textnormal{\cite{CH}}.
        There is also an infinite-dimensional family of static triples discovered by  Anderson-Chruściel-Delay \textnormal{\cite{ACD}}. These examples arise as static fillings for conformal boundaries close to the round sphere. By continuity, the second variation will still be negative for sufficiently small perturbations, which implies the positive mass property for nearby metrics and dynamical stability  due to the results of this article.
    \end{example}
    
    An essential ingredient of the dynamical stability and instability theorems is the following \L ojasiewicz-Simon inequality for  $\mu_{\mathrm{AH},\wh{g}}$:
    \begin{theorem}\label{mainthm_loj_ineq}
        Given a static triple $\left(M^n,\wh{g},V\right)$ with $n \geq 3$, for $\ell > \frac{n}{2} + 2$ there is an $H^\ell$-neighbourhood $\mathcal{U}$ of $\p{\wh{g},\wh{\varphi}} := \p{\wh{g}, -\ln{V}}$, a positive constant $C > 0$, and an exponent $\theta \in \left(0,1\right]$ such that for all $\p{g, \varphi}\in \mathcal{U}$, we have the inequality
        \begin{equation*}
           \abs{\mu_{\mathrm{AH},\wh{g}}\p{g, \varphi} - \mu_{\mathrm{AH},\wh{g}}\p{\wh{g}, \wh{\varphi}}}^{2-\theta} \leq C\norm{\nabla\mu_{\mathrm{AH},\wh{g}}\p{g, \varphi}}^2_{L^2_{\wh{\varphi}}}.
        \end{equation*}
    \end{theorem}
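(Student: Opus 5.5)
The plan is to follow the standard route to Łojasiewicz--Simon inequalities for geometric entropies, as in the Poincaré--Einstein case of \cite{KY_PE_stability}. We reduce to an abstract Łojasiewicz--Simon inequality for analytic functionals whose gradient is a Fredholm map, after gauge-fixing away the diffeomorphism invariance.

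\textbf{Step 1: gauge slice.} Since $\mu_{\mathrm{AH},\wh{g}}$ is invariant under diffeomorphisms asymptotic to the identity, we first prove a slice lemma. Every $(g,\varphi)$ in a small $H^\ell$-neighbourhood of $(\wh{g},\wh{\varphi})$ is pulled back by such a diffeomorphism $\phi$ into the affine slice
\begin{align*}
\mathcal{S}=\{(\wh{g}+h,\wh{\varphi}+\psi): \text{the weighted divergence of } (h,\psi) \text{ vanishes}\}.
\end{align*}
Here the weighted divergence is the divergence of $h$ with respect to the measure $e^{-\wh{\varphi}}dV_{\wh{g}}$, corrected by the term $\psi\, d\wh{\varphi}$, so that the slice is $L^2_{\wh{\varphi}}$-orthogonal to the infinitesimal diffeomorphism directions. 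We construct $\phi$ by the implicit function theorem. This requires the vector Laplacian $X\mapsto \mathrm{div}_{\wh{\varphi}}(\mathcal{L}_X\wh{g},\, X(\wh{\varphi}))$ to be an isomorphism between suitable weighted Sobolev spaces. On an asymptotically hyperbolic manifold the indicial roots of this operator, combined with the Bochner-type identity $\Ric_{\wh{g}}+\nabla^2\wh{\varphi}-d\wh{\varphi}\otimes d\wh{\varphi}=-n\wh{g}$ coming from \eqref{eq:static_eq}, should give invertibility. Both sides of the inequality are invariant under $\phi$, so it suffices to prove it on $\mathcal{S}$.

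\textbf{Step 2: analyticity and Fredholm property.} On $\mathcal{S}$ we show that $(h,\psi)\mapsto \mu_{\mathrm{AH},\wh{g}}$ is analytic from $H^\ell$ to $\R$. The maximising function in the definition of the entropy depends analytically on $(g,\varphi)$ by the implicit function theorem for the associated elliptic equation. The mass boundary term is linear in $(h,\psi)$, and it exactly cancels the non-integrable linear part, so the renormalised quantity is a genuine analytic function of the perturbation. Its $L^2_{\wh{\varphi}}$-gradient $\nabla\mu$ is then an analytic map $H^\ell\to H^{\ell-2}$. Its linearisation at $(\wh{g},\wh{\varphi})$, restricted to $\mathcal{S}$, is a weighted Einstein-type operator: the weighted Lichnerowicz Laplacian on $h$ coupled to a drift Laplacian on $\psi$. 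This operator is elliptic and self-adjoint in $L^2_{\wh{\varphi}}$.

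\textbf{Step 3: the main obstacle.} The key difficulty is the Fredholm property on the noncompact asymptotically hyperbolic end, together with the behaviour of the unbounded weight $e^{-\wh{\varphi}}=V\sim e^r$. I would conjugate by $V^{1/2}$ to turn the weighted operator into an unweighted one on $L^2$, and then check that the essential spectrum is bounded away from $0$. I would compute this via the $(n+1)$-dimensional Poincaré--Einstein metric $-V^2dt^2+\wh{g}$ (Riemannian version $V^2d\theta^2+\wh{g}$), where the Einstein operator on symmetric 2-tensors of an AH Einstein manifold is known to have essential spectrum in $[\frac{n^2}{4},\infty)$ up to a shift. The operator is then Fredholm with finite-dimensional kernel $K$.

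\textbf{Step 4: Lyapunov--Schmidt reduction.} We decompose $\mathcal{S}=K\oplus K^\perp$. Solving the $K^\perp$-component of $\nabla\mu=0$ by the implicit function theorem reduces $\mu$ to an analytic function $f$ on the finite-dimensional space $K$. The classical Łojasiewicz inequality for $f$ then gives $|f(x)-f(0)|^{2-\theta}\le C|\nabla f(x)|^2$. Standard estimates, using the invertibility of the linearisation on $K^\perp$, transfer this back to the full neighbourhood, with the $H^{\ell-2}$-norm of the gradient controlled by its $L^2_{\wh{\varphi}}$-norm near $(\wh{g},\wh{\varphi})$ by elliptic regularity. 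The sign conventions in the statement, with $\theta\in(0,1]$ and exponent $2-\theta$, then match the usual form.
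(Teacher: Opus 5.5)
Your Steps 1--3 essentially follow the paper's route. The slice is $\ker\delta_{\wh{\mathfrak g}}$ for the auxiliary Poincar\'e--Einstein metric $\wh{\mathfrak g}=e^{-2\wh\varphi}d\tau^2+\wh g$, and on it the linearised gradient is $-\frac{1}{4n}\Delta_{E,\wh{\mathfrak g}}$. No conjugation by $V^{1/2}$ is needed, since $L^2_{\wh\varphi}(M)$ is, up to a factor $2\pi$, the unweighted $L^2$-space of $\tau$-independent tensors on $(\mathfrak M,\wh{\mathfrak g})$. The gap is in Step 4, exactly where the $L^2_{\wh\varphi}$-norm on the right-hand side has to be produced. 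Your claim that near $(\wh g,\wh\varphi)$ the $H^{\ell-2}$-norm of $\nabla\mu_{\mathrm{AH}}$ is controlled by its $L^2_{\wh\varphi}$-norm is false. Already for the linearisation $L$ one has $\norm{L\mfh}_{H^{\ell-2}}\geq c\norm{\mfh}_{H^\ell}$ on $K^\perp$, but only $\norm{L\mfh}_{L^2}\leq C\norm{\mfh}_{H^2}$. The ratio $\norm{\mfh}_{H^\ell}/\norm{\mfh}_{H^2}$ is unbounded on the infinite-dimensional space $T\mathcal S\cap K^\perp$ (take high-frequency elements), and elliptic regularity runs in the opposite direction. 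A reduction done only at the $H^\ell\to H^{\ell-2}$ level therefore gives the inequality with $\norm{\nabla\mu_{\mathrm{AH}}}^2_{H^{\ell-2}}$ on the right. That is strictly weaker than the statement. The $L^2$ version is the one the stability proof uses, since $\frac{d}{dt}\mu_{\mathrm{AH}}$ along the flow is a weighted $L^2$-norm of the gradient.

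The missing ingredient is the two-scale argument of \cite{CM_loj_simon}, which Conditions 1--4 in the paper's proof encode. You apply the inverse function theorem to $\mathcal N=\nabla\mu_{\mathrm{AH}}+\Pi_K$ in $H^\ell$, which gives the analytic reduced function $f$ on $K$. You then compare $x$ with $y=\mathcal N^{-1}(\Pi_K x)$ at the low scale. Since $\mathcal N(x)-\mathcal N(y)=\nabla\mu_{\mathrm{AH}}(x)$, invertibility of $L+\Pi_K$ from $H^2$ to $L^2$ together with Lipschitz bounds gives $\norm{x-y}_{H^2}\leq C\norm{\nabla\mu_{\mathrm{AH}}(x)}_{L^2}$. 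From this you get $\abs{\mu_{\mathrm{AH}}(x)-\mu_{\mathrm{AH}}(y)}\leq C\norm{\nabla\mu_{\mathrm{AH}}(x)}^2_{L^2}$ and $\abs{\nabla f(\Pi_K x)}\leq C\norm{\nabla\mu_{\mathrm{AH}}(x)}_{L^2}$. This needs two things you do not have:
(i) Fredholmness of the linearisation also as a map $T_{\wh\mfg}\mathcal S^2_{\wh\mfg}\to T_{\wh\mfg}\mathcal S^0_{\wh\mfg}$;
(ii) the uniform estimate $\norm{\nabla\mu_{\mathrm{AH}}(\mfg_2)-\nabla\mu_{\mathrm{AH}}(\mfg_1)}_{L^2}\leq C\norm{\mfg_2-\mfg_1}_{H^2}$ and its analogue for $D_{\mfg}\nabla\mu_{\mathrm{AH}}$, for $\mfg_1,\mfg_2$ in the $H^\ell$-neighbourhood (Proposition~\ref{loj_needed_estims}).

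Estimate (ii) is where the real work lies here, because the gradient $-\frac{1}{2n}\p{\Ric_\mfg+n\mfg+\nabla^2_\mfg u_\mfg}e^{-u_\mfg}$ contains the minimiser. You need $\norm{u_{\mfg_2}-u_{\mfg_1}}_{H^2}\leq C\norm{\mfg_2-\mfg_1}_{H^2}$ and a corresponding bound for the variations $u'_\mfg$. These come from the linearised Euler--Lagrange equation and the invertibility of $\Delta_\mfg+n: H^2\to L^2$ (Lemma~\ref{lemma_needed_for_third_var}); analyticity of $\mfg\mapsto u_\mfg$ into $H^\ell$ does not give them.

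Smaller points: $f_g$ minimises, not maximises, $\mathcal W_{\mathrm{AH},\wh g}$, and the mass is linear in $h$ but not in $\psi$. Also, $H^\ell_{\wh\varphi}$-perturbations only decay like $e^{-r/2}$, so the mass and the bulk integral need not converge separately. This is why the paper extends $\mu_{\mathrm{AH}}$ to the $H^\ell$-neighbourhood by density using the gradient formula.
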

    The way Theorem \ref{mainthm_stability} and Theorem \ref{mainthm_instability} are deduced from Theorem \ref{mainthm_loj_ineq} follows the approach of the second and third authors' earlier work in \cite{KY_PE_stability} for Poincar\'e-Einstein metrics. In general, the stability proofs in the asymptotically hyperbolic situation are remarkably similar to previous proofs for compact manifolds without boundary.\\
    
    The paper is structured as follows. Section \ref{prelims} contains basic definitions and results about asymptotically hyperbolic metrics and static metrics. We also introduce the weighted spaces we will use throughout the paper and present an argument which will show that, under certain decay assumptions on a non-static metric, we can associate to it a generalised 'potential'. In Section \ref{sec_entropy}, we introduce our expander entropy and show it is well-defined and finite. In Section \ref{sec_variations} we compute the first and second variations of the entropy, as well as some related results which include a derivation of the associated stability operator and monotonicity of the entropy under the Ricci-harmonic flow. Section \ref{sec_local_pos_mass} is devoted to the proof of our local positive mass result (Theorem \ref{mainthm_local_positive_mass}). Section \ref{stability_sec} contains the proofs of the \L ojasiewicz--Simon inequality (Theorem \ref{mainthm_loj_ineq}), as well as the stability (Theorem \ref{mainthm_stability}) and instability (Theorem \ref{mainthm_instability}) results. Appendix \ref{appendix} contains some variational formulas and results we rely on throughout the paper.
     
    \vspace{3mm}
    \noindent\textbf{Acknowledgements.} The authors would like to thank the G\"oran Gustafsson Foundation for financial support.

\section{Preliminaries}\label{prelims}
        This section contains some basic definitions, results, and identities we will rely on throughout the paper. In particular, we formally introduce asymptotically hyperbolic metrics and static metrics, as well as relationships between $n$-dimensional static metrics and the associated $\left(n+1\right)$-dimensional negative Einstein metrics.
    
    \subsection{Asymptotically Hyperbolic Metrics}\label{sec_AH_metrics}
        \begin{definition} 
            Let $N^n$ be a compact manifold with boundary and set $M=N\setminus \partial N$. A function $\rho:N\to [0,\infty)$ is called \textbf{boundary defining}, if 
                $$\rho|_M>0,\qquad \rho|_{\partial N}=0,\qquad d\rho|_{\partial N}\neq 0$$
        \end{definition}

        \begin{definition}
            $(M,g)$ is called \textbf{conformally compact of class $C^{\ell, \alpha}$} if there exists a $C^{\ell,\alpha}$-metric $\overline{g}$ on $N$ and a boundary defining function $\rho$, such that $g=\rho^{-2}\overline{g}$.  If $\abs{d\rho}^2_{\overline{g}} \cong 1$ on $\partial N$, $g$ is called \textbf{asymptotically hyperbolic (AH)}. We call an Einstein AH metric \textbf{Poincar\'e--Einstein (PE)}. Note this is necessarily a negative Einstein metric as the sectional curvatures converge to $-1$ at spatial infinity.
        \end{definition}

        From now on we will implicitly assume the AH manifolds we consider are of class $C^{\ell,\alpha}$ for $\ell > \frac{n}{2} + 2$.
    
        \begin{lemma}
            Let $g$ be an AH metric with boundary defining function $\rho$ and compactified metric $\overline{g} := \rho^2g$. Then
            \begin{equation}\label{rhodeltarho}
                \rho\Delta_{\overline{g}}\rho=-\frac{\scal_g+n(n-1)\abs{d\rho}_{\overline{g}}^2}{2(n-1)}+\frac{\rho^2\scal_{\overline{g}}}{2(n-1)}
            \end{equation}
        \end{lemma}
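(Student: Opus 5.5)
Since $g=\rho^{-2}\overline{g}$ is a conformal change, the plan is to apply the standard conformal transformation law for scalar curvature and then solve for $\rho\Delta_{\overline{g}}\rho$. Write $g=e^{2f}\overline{g}$ with $f=-\ln\rho$. In dimension $n$ the formula reads
\begin{align*}
    \scal_g=e^{-2f}\p{\scal_{\overline{g}}-2(n-1)\Delta_{\overline{g}}f-(n-2)(n-1)\abs{df}^2_{\overline{g}}},
\end{align*}
where $\Delta_{\overline{g}}$ is the analyst's Laplacian $\tr\nabla^2$. The sign convention is the only real point of care. With Besse's positive Laplacian the sign of the middle term flips, and the identity as stated, with the minus sign in front of $\scal_g$, should fix which convention is meant.

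Next, substitute $f=-\ln\rho$. This gives $df=-\rho^{-1}d\rho$, so $\abs{df}^2_{\overline{g}}=\rho^{-2}\abs{d\rho}^2_{\overline{g}}$, and
\begin{align*}
    \Delta_{\overline{g}}f=-\rho^{-1}\Delta_{\overline{g}}\rho+\rho^{-2}\abs{d\rho}^2_{\overline{g}}.
\end{align*}
Since $e^{-2f}=\rho^2$, we obtain
\begin{align*}
    \scal_g=\rho^2\scal_{\overline{g}}+2(n-1)\rho\Delta_{\overline{g}}\rho-2(n-1)\abs{d\rho}^2_{\overline{g}}-(n-1)(n-2)\abs{d\rho}^2_{\overline{g}}.
\end{align*}
The two gradient terms combine to $-n(n-1)\abs{d\rho}^2_{\overline{g}}$. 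Rearranging then gives
\begin{align*}
    2(n-1)\rho\Delta_{\overline{g}}\rho=\scal_g+n(n-1)\abs{d\rho}^2_{\overline{g}}-\rho^2\scal_{\overline{g}}.
\end{align*}

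With $\Delta=\tr\nabla^2$, this is exactly \eqref{rhodeltarho} with the overall sign reversed. So, consistent with the paper's stated preference for Besse's conventions ($\delta=-\mathrm{div}$, hence $\Delta_{\overline{g}}=\delta d=-\tr\nabla^2$), the Laplacian in \eqref{rhodeltarho} is to be read as the nonnegative one. Under that reading, dividing by $2(n-1)$ yields \eqref{rhodeltarho} exactly.

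As a sanity check I would test the formula on the Poincaré ball. There $\rho=(1-\abs{x}^2)/2$ and $\overline{g}$ is Euclidean, so $\scal_{\overline{g}}=0$, $\scal_g=-n(n-1)$, and $\abs{d\rho}^2=\abs{x}^2$. The right-hand side of \eqref{rhodeltarho} is then
\begin{align*}
    \tfrac{n}{2}\p{1-\abs{x}^2}=n\rho,
\end{align*}
while $\tr\nabla^2\rho=-n$. This confirms the positive-Laplacian reading $\Delta_{\overline{g}}\rho=n$.

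The main obstacle is just tracking the sign convention and the dimensional constants. The identity itself is purely algebraic once the conformal formula is in hand.
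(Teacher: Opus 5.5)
Your argument is correct and is essentially the paper's proof: the paper starts from the conformal transformation law for the Ricci curvature of $g=\rho^{-2}\overline{g}$ and traces it with respect to $g$, which produces exactly the scalar curvature law you use directly. Your reading of $\Delta_{\overline{g}}$ as Besse's nonnegative Laplacian $-\tr\nabla^2$ is also the convention behind the paper's formula. It is likewise the convention behind the later identification of $G$ with $\rho\Delta_{\overline{g}}\rho$ in Lemma \ref{fefferman_graham}.
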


        \begin{proof}   
            Using a standard formula for the Ricci curvature of conformally related metrics (see, for instance, \cite[Lemma 2.1]{GL}), we have
            \begin{align*}
                \Ric_g ={}&-(n-1)\rho^{-2}\abs{d\rho}^2_{\overline{g}}\cdot \overline{g}-\rho^{-1}\p{\Delta_{\overline{g}} \rho\cdot \overline{g}-(n-2)\nabla^2_{\overline{g}}\rho}+\Ric_{\overline{g}}.
            \end{align*}
        
            Taking the trace with respect to $g=\rho^{-2}\overline{g}$ yields \eqref{rhodeltarho}.
        \end{proof}

        \begin{remark}\label{distance_function}
            Note that, if $\rho$ is a boundary defining function on an AH manifold $\left(M,g\right)$, then $r = -\ln\p{\rho}$ is equivalent to a distance function on $\left(M,g\right)$. The volume form of an AH metric satisfies $dV_g\sim e^{(n-1)r}\; dr\;dA_{\overline{g}_r}$, where $dA_{\overline{g}_r}$ is the area form of the compactified metric on $\partial B_r$. As a consequence, a function $f\sim e^{-\beta r}$ is $L^2$-integrable if and only if $\beta>\frac{n-1}{2}$.
        \end{remark}

        \begin{remark}
            Let $g$ be an AH metric with boundary defining function $\rho$. Let $\varphi:M\to \R$ be a function satisfying 
                $$\abs{\varphi-\ln\rho}\to 0\qquad \&\qquad \abs{d\varphi}_g\to 1$$
            as one approaches $\partial N=\rho^{-1}(0)$. Then $\rho_\varphi:=e^{\varphi}$ is another boundary defining function, with the same conformal infinity:
            $$\p{\rho_\varphi^2g|_{\partial N},\partial N}=\p{\overline{g}|_{\partial N},\partial N}.$$
        \end{remark}

        \begin{definition}\label{aux_metric_defn}
            For a pair $(g,\varphi)$ as above, we define the associated \textbf{auxiliary metric} by 
                $$\mathfrak{g}:=e^{-2\varphi}\; d\tau^2+g\qquad \text{on}\qquad \mathfrak{M}=\mathbb{S}^1\times M,$$
            where $\varphi$ is extended trivially along the $\mathbb{S}^1$ factor. The auxiliary metric is itself AH, with boundary defining function $\rho_\varphi$ and conformal infinity 
                $$\p{\rho_\varphi^2\mathfrak{g}|_{\mathbb{S}^1\times \partial N},\mathbb{S}^1\times \partial N}=\p{d\tau^2+\overline{g}|_{\partial N},\mathbb{S}^1\times \partial N}.$$

            We will call a function/$1$-form/symmetric $2$-tensor \textbf{$\tau$-independent} if its restriction to the $\mathbb{S}^1$ factor is constant. 
        \end{definition}

        The importance of the final part of Definition \ref{aux_metric_defn} is that throughout the paper we will switch between viewing functions, forms, and tensors as either living on M or as $\tau$-independent objects living on $\mathfrak{M}$, depending on what is most convenient.        

    \subsection{Static Metrics and Weighted H\"older and Sobolev Spaces}\label{static_metric_sec}
        We now formally define the main objects we are interested in: static metrics.
        
        \begin{definition} 
            Let $\p{M^n,\wh{g}}$ be an asymptotically hyperbolic Riemannian metric. We call the metric $\wh{g}$ \textbf{static}, if there exists a non-trivial solution $V$ of
            \begin{equation}\label{static1}
                \p{D_{\wh{g}}\scal}^*\p{V}=\p{\Delta_{\wh{g}} V} \wh{g}+\nabla^2_{\wh{g}}V-V \Ric_{\wh{g}}=0.
            \end{equation}

            A static metric has constant scalar curvature (see Remark \ref{static_csc} below). In particular, $\scal_{\wh{g}}=-n(n-1)$ since $\wh{g}$ is asymptotically hyperbolic. Therefore, \eqref{static1} is equivalent to the system
                $$\begin{cases}
                \nabla^2_{\wh{g}}V=V\p{\Ric_{\wh{g}}+n\wh{g}}\\ \Delta_{\wh{g}} V=-nV.
                \end{cases}$$
    
            We call $V$ a \textbf{static potential} for $\wh{g}$, $\left(\wh{g}, V\right)$ a \textbf{static pair}, and $\left(M,\wh{g},V\right)$ a \textbf{static triple}.
        \end{definition}
        
            If the potential $V$ is positive near the conformal boundary, it is positive everywhere by a maximum principle argument. It then makes sense to consider the equations of staticity under the transformation $\wh{\varphi} = -\ln{V}$:
            \begin{equation}\label{static_equations}
                \begin{cases}
                    \nabla^2_{\wh{g}}\wh{\varphi}-d\wh{\varphi}\otimes d\wh{\varphi}+\Ric_{\wh{g}}+n\wh{g}=0\\ \Delta_{\wh{g}} \wh{\varphi}+\abs{d \wh{\varphi}}_{\wh{g}}^2=n.
                \end{cases}
            \end{equation}
            
            The reader should be aware, that when we refer to $\wh\varphi$ as a static potential, $\p{\wh{g},\wh\varphi}$ a static pair, or $\p{M,\wh{g},\wh\varphi}$ as static triple, it should always be interpreted as the potential $V=e^{-\wh{\varphi}}$ in the above sense.

        \begin{remark}\label{static_ricci}
          If $(\wh{g},V)$ is a static pair, both the  $(n+1)$-dimensional Lorentzian metric 
                $$h=-V^2\; d\tau^2+\wh{g}$$
                as well as the $(n+1)$-dimensional Riemannian metric 
                      $$h=V^2\; d\tau^2+\wh{g}$$
            solve the equation $\Ric_h=-nh$. In particular, the latter metric is Poincaré-Einstein.
        \end{remark}

        \begin{remark}\label{static_csc} 
            Taking the divergence of \eqref{static1}, we obtain
            $$0=-\nabla_{\wh{g}}\Delta_{\wh{g}} V+\Delta_{\wh{g}} \nabla_{\wh{g}} V+\Ric_{\wh{g}}\p{ \nabla_{\wh{g}} V,\cdot}-V \delta_{\wh{g}}\Ric_{\wh{g}}=\frac{V}{2}\nabla_{\wh{g}} \scal_{\wh{g}},$$
            where the second equality is a result of using the Ricci formula and the contracted Bianchi identity. It follows that a static metric must have constant scalar curvature, at least on each connected component of $M$.             
        \end{remark}

        \begin{example} 
                        Consider the standard hyperbolic metric on $\mathbb{H}^n$, written as
                $$g_{\mathbb{H}^n}=dr^2+\sinh^2{r}\; ds^2_{n-1},$$
            with $ds_{n-1}^2$ the round metric on $\mathbb{S}^{n-1}$. In these coordinates, a static potential for $g_{\mathbb{H}^n}$ is
                $$V_{\mathbb{H}^n}:= 2\cosh{r}.$$
                        Note that this means 
            \begin{equation*}
                \varphi_{\mathbb{H}^n} = -\ln\p{V_{\mathbb{H}^n}}=-\ln\p{e^r+e^{-r}}=-r+\ln\p{1+e^{-2r}}=-r+\mathcal{O}(e^{-2r})
            \end{equation*}
            as $r \rightarrow \infty$.             
            
            The corresponding static solution to Einstein's equations is the $(n+1)$-dimensional Anti-de Sitter metric. In Riemannian form, using the same coordinates as above, we have
                $$g_{AdS}= V_{\mathbb{H}^n}^2\; d\tau^2+g_{\mathbb{H}^n}=dr^2+\cosh^2{r}\; d\tau^2+\sinh^2{r}\; ds_{n-1}^2.$$
        \end{example}

            Motivated by the above example, from now on we will impose that \textbf{the potentials $\varphi$ we will consider are asymptotic to $-r$ and of regularity $C^{\ell,\alpha}$ (at least)}, regardless of whether they are associated to a static metric or not. 
        
            Moreover, we will often work with weighted H\"older and Sobolev spaces. In particular, proofs in future sections will involve working with $C^{\ell,\alpha}_\beta\p{M} = e^{-\beta r} C^{\ell,\alpha}\p{M}$ equipped with the norm      
            \begin{equation*}
                \norm{u}_{C^{\ell,\alpha}_\beta\p{M}} =    \norm{e^{\beta r} u}_{C^{\ell,\alpha}\p{M}},
            \end{equation*}
            where $\beta \in \mathbb{R}$ and $C^{\ell,\alpha}\p{M}$ is the usual H\"older space. As for weighted Sobolev spaces, the only weight we shall consider is $\frac{1}{2}$. However, as $\varphi$ is always asymptotic to $-r$, we shall use the weight factor $e^{\frac{\varphi}{2}}$ in place of $e^{-\frac{r}{2}}$. Thus we define $H^\ell_\varphi\p{M}$ to be the standard $L^2$-Sobolev space, defined with respect to the weighted volume measure $e^{-\varphi}dV_g$. Weighted H\"older and Sobolev spaces for sections of bundles are then defined as in, for instance, \cite{Lee}. We also have the following version of the Sobolev embedding theorem for our weighted spaces (see \cite[Lemma 3.6]{Lee}):

        \begin{lemma}\label{wgtd_sob_emb}
            If $\alpha \in \left(0,1\right)$ and $E$ is a vector bundle over $M$, then we have a continuous inclusion
            \begin{equation*}
                H^\ell_\varphi\p{M; E} \hookrightarrow C^{2,\alpha}_{\frac{1}{2}}\p{M;E}
            \end{equation*}
            provided $\ell > \frac{n}{2} + 2$.
        \end{lemma}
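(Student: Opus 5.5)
The plan is to reduce the weighted embedding to the standard unweighted one on AH manifolds, \cite[Lemma 3.6]{Lee}, by exploiting the fact that $\varphi$ is asymptotic to $-r$. The weight functions $e^{-\varphi}$ and $e^{r}$ are then uniformly comparable. More precisely, since $\varphi + r \to 0$ at the conformal boundary and $\varphi$ is of class $C^{\ell,\alpha}$, there is a constant $C>0$ with $C^{-1}e^{r}\le e^{-\varphi}\le Ce^{r}$ on $M$. Hence for any section $u$,
\begin{equation*}
\int_M \abs{\nabla^k u}^2 e^{-\varphi}\,dV_g \;\sim\; \int_M \abs{e^{\frac r2}\nabla^k u}^2\,dV_g .
\end{equation*}
It follows that $H^\ell_\varphi(M;E)$ coincides, with equivalent norms, with Lee's weighted Sobolev space $H^{\ell}_{\delta}(M;E) = \rho^{\delta}H^\ell(M;E)$, where $\rho=e^{-r}$ and the weight is $\delta=\frac12$ (up to Lee's normalisation convention). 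One should also note that in Lee's setup the derivatives of the weight $e^{r/2}$ are bounded by multiples of it, since $\abs{dr}_g$ is bounded with bounded derivatives. Therefore commuting $e^{r/2}$ past $\nabla^k$ only produces lower-order terms, and the two ways of putting the weight inside or outside the derivatives give equivalent norms.

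Next I would invoke \cite[Lemma 3.6]{Lee}. It asserts that, on a $C^{\ell,\alpha}$ AH manifold, $H^{k,p}_\delta\hookrightarrow C^{j,\alpha}_{\delta'}$ whenever $k - \frac np \ge j+\alpha$, with the same weight up to the normalisation shift between $L^p$ and $L^\infty$ weights. Take $p=2$ and $j=2$. The condition $\ell > \frac n2 + 2$ leaves room to choose some $\alpha\in(0,1)$ with $\ell-\frac n2 \ge 2+\alpha$. Because of the reduction in the first step, the conclusion then holds for every $\alpha\in(0,1)$ in the stated sense, after possibly decreasing $\alpha$ and using $C^{2,\alpha'}\subset C^{2,\alpha}$ for $\alpha'\ge\alpha$ on uniformly bounded-geometry charts. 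The underlying mechanism is a local one. Cover $M$ by Möbius (bounded-geometry) charts of unit size, in which $e^{r}$ varies only by a bounded factor. Apply the Euclidean Sobolev--Morrey embedding $H^\ell(B_1)\hookrightarrow C^{2,\alpha}(B_{1/2})$ to $e^{r/2}u$ on each chart. Finally take the supremum over charts to obtain $\norm{e^{r/2}u}_{C^{2,\alpha}}\le C\norm{u}_{H^\ell_\varphi}$.

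The main obstacle is bookkeeping the weight conventions. Lee's weighted $L^2$ spaces carry an extra volume-growth factor $\rho^{n-1}$ type normalisation relative to the Hölder weights, so I must check that the $L^2$ weight $e^{r/2}$ (measure $e^{-\varphi}dV_g$) corresponds exactly to the Hölder weight $e^{-r/2}$, i.e. $C^{2,\alpha}_{1/2}$ with this paper's convention $C^{\ell,\alpha}_\beta=e^{-\beta r}C^{\ell,\alpha}$. The chart argument settles this directly, because each unit chart has volume comparable to $1$ in $g$. The pointwise bound on $e^{r/2}u$ is therefore controlled by the local $L^2$ norm of $e^{r/2}u$, which equals the weighted norm. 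As a result, the $C^{2,\alpha}$ bound is on $e^{r/2}u$. This means $u$ lies in $e^{-r/2}C^{2,\alpha} = C^{2,\alpha}_{1/2}$ — as claimed.
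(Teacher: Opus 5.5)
Your reduction is the same as the paper's, whose proof is just the citation of \cite[Lemma 3.6]{Lee}. Since $\varphi+r$ is bounded, $e^{-\varphi}dV_g$ is uniformly comparable to $\rho^{-1}dV_g$ with $\rho=e^{-r}$, so $H^\ell_\varphi(M;E)=\rho^{1/2}H^\ell(M;E)$, and a Morrey estimate on unit M\"obius charts gives the H\"older bound. Your conclusion about the weight is also right. The Sobolev-to-H\"older embedding is $H^{k,p}_\delta\hookrightarrow C^{j,\alpha}_\delta$ with the \emph{same} $\delta$, exactly as your chart argument shows. A shift by $\frac{n-1}{p}$ only appears in the reverse inclusion $C^{k,\alpha}_{\delta'}\hookrightarrow H^{k,p}_\delta$, which needs $\delta'>\delta+\frac{n-1}{p}$. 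So the normalisation worry in your last paragraph is not an actual issue.

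The step that fails is the sentence extending the result to every $\alpha\in(0,1)$. The chartwise embedding $H^\ell(B_1)\hookrightarrow C^{2,\alpha}(B_{1/2})$ requires $\ell-\frac n2\ge 2+\alpha$, which is the hypothesis you quote from Lee yourself. The inclusion $C^{2,\alpha'}\subset C^{2,\alpha}$ for $\alpha'\ge\alpha$ only lets you \emph{lower} an admissible exponent, never raise it, and the weight comparison of your first step carries no regularity information. This still yields all $\alpha\in(0,1)$ when $n$ is even or $\ell\ge\frac{n+7}{2}$. For $n$ odd and $\ell=\frac{n+5}{2}$, however, it yields only $\alpha\le\frac12$, and nothing better is true.

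For a counterexample, take $n=3$, $\ell=4$ and $\frac12<\alpha<1$. Let $u=\chi\abs{x}^s$ in a coordinate ball around an interior point, with $\chi\equiv1$ near $0$ and $\frac52<s<2+\alpha$. Then $\abs{\nabla^4u}=\mathcal{O}(\abs{x}^{s-4})\in L^2$, so $u\in H^4_\varphi$ (weights are irrelevant for compact support). On the other hand, $\partial_1^2u(te_1)=s(s-1)t^{s-2}$ and $\partial_1^2u(0)=0$ show $u\notin C^{2,\alpha}$.

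So the lemma is only true, and you should only claim it, for $0<\alpha\le\ell-\frac n2-2$ (with $\alpha<1$). This restriction is harmless for the later applications, which only need two derivatives with some fixed H\"older exponent.
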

        
            We shall usually omit the manifold $M$, as exemplified immediately below, as long as we deem it clear from context.\\
        
            We will denote the space of metrics that are $C^{\ell,\alpha}_\beta$-close to a static background metric $\wh{g}$ by
            \begin{equation*}
                \mathcal{R}^{\ell,\alpha}_\beta\p{M,\wh{g}} := \left\{g : g - \wh{g} \in C^{\ell,\alpha}_\beta\p{S^2M}\right\},
            \end{equation*}
            and if $\wh\varphi$ is a static potential for $\wh{g}$, we shall denote the space of metrics that are $H^\ell_{\wh{\varphi}}$-close to $\wh{g}$ by
            \begin{equation*}
                \mathcal{R}^\ell_{\wh{\varphi}}\p{M,\wh{g}} := \left\{g : g - \wh{g} \in H^\ell_{\wh{\varphi}}\p{S^2M}\right\}.
            \end{equation*}

            Moreover, we will sometimes write the space of functions that are $H^\ell_{\wh{\varphi}}\p{M}$-close to $\wh{\varphi}$ as
            \begin{equation*}
                \mathcal{R}^\ell_{\wh{\varphi}}\p{M, \wh{\varphi}} := \left\{\varphi : \varphi - \wh{\varphi} \in H^\ell_{\wh{\varphi}}\p{M}\right\}.
            \end{equation*}

    \subsection[Static Metrics and the Corresponding Negative Einstein Metrics]{Static Metrics and the Corresponding $(n+1)$-Dimensional Negative Einstein Metrics}\label{sec:static_metrics_etc}

            Throughout this article, we will consider pairs $\p{g,\varphi}$, where $g$ is an AH Riemannian metric on an $n$-dimensional manifold $M$, and $\varphi$ is an \textbf{approximate potential} for $g$ in the sense of Lemma \ref{fefferman_graham} below. The lemma gives more precise asymptotic behaviour than simply being asymptotic to $-r$.\\

            Dual to this picture of a pair $\p{g,\varphi}$ is the $(n+1)$-dimensional \textbf{auxiliary manifold} 
                $$\p{\mathfrak{M}=\mathbb{S}^1\times M,\mathfrak{g}}\qquad \text{with}\qquad \mathfrak{g}=e^{-2\varphi}\; d\tau^2+g, $$
            where $\tau\in \mathbb{S}^1=\R/2\pi \mathbb{Z}$. The added dimension has to be compactified, so as to make $\mathfrak{g}$ asymptotically hyperbolic, but nowhere is it essential that the length of the circle is chosen to be $2\pi$.\\

            The non-vanishing Christoffel symbols of $\mathfrak{g}$ satisfy
                $$\overline{\Gamma}_{\tau \tau}^{i}=e^{-2\varphi}g^{ij}d\varphi_j,\qquad \overline{\Gamma}_{\tau i}^\tau = -d\varphi_i,\qquad \overline{\Gamma}_{ij}^k=\Gamma_{ij}^k\qquad\qquad \p{i,j,k\neq \tau},$$
            where $\overline{\Gamma}_{ij}^k$ and $\Gamma_{ij}^k$ denote the Christoffel symbols of $\mathfrak{g}$ and $g$, respectively. The Ricci curvature may now be calculated. For the $\tau\tau$ component, we have
            \begin{align*}
                \Ric^{\mathfrak{g}}_{\tau \tau} ={}& \partial_i \overline{\Gamma}_{\tau \tau}^{i}+\Gamma_{im}^{i}\overline{\Gamma}_{\tau \tau}^m-\overline{\Gamma}_{i\tau}^\tau \overline{\Gamma}_{\tau \tau}^{i}\\ 
                ={}&\nabla_i\p{e^{-2\varphi} g^{ij}d\varphi_j}+e^{-2\varphi}g^{ij}d\varphi_id\varphi_j\\={}&e^{-2\varphi}\p{g^{ij}\nabla^2_{ij}\varphi-g^{ij}d\varphi_id\varphi_j}\\ 
                ={}&-e^{-2\varphi}\p{\Delta_g\varphi+\abs{d\varphi}_g^2}.
            \end{align*}
            For $i,j\neq \tau$ we find that
            \begin{align*}
                \Ric^{\mathfrak{g}}_{ij}={}&\Ric_{ij}^g-\partial_i\overline{\Gamma}_{j\tau}^\tau+\Gamma_{ij}^k\overline{\Gamma}_{k\tau}^\tau-\overline{\Gamma}_{\tau i}^\tau\overline{\Gamma}_{j\tau}^\tau\\
                ={}&\Ric_{ij}^g-\nabla_i\overline{\Gamma}_{j\tau}^\tau-\overline{\Gamma}_{\tau i}^\tau\overline{\Gamma}_{j\tau}^\tau\\ ={}&\Ric_{ij}^g+\nabla^2_{ij}\varphi-d\varphi_id\varphi_j.
            \end{align*}
            Written invariantly, we have
            \begin{equation}\label{auxiliary_Ric}
                \Ric_{\mathfrak{g}}=-\p{\Delta_g\varphi+\abs{d\varphi}_g^2}e^{-2\varphi}\; d\tau^2+\Ric_g+\nabla^2_g\varphi-d\varphi\otimes d\varphi.
            \end{equation}
            Taking the trace with respect to $\mathfrak{g}$, we obtain 
            \begin{equation*}
                \scal_{\mathfrak{g}}=\scal_g-2\Delta_g\varphi-2\abs{d\varphi}_g^2.
            \end{equation*}

            One thus sees, as in Remark \ref{static_ricci} and Remark \ref{static_csc}, that if $\p{g, \varphi}$ is a static pair then $\Ric_{\mathfrak{g}} = -n\mathfrak{g}$ and $\scal_{\mathfrak{g}} = -n\p{n+1}$.   

        \subsubsection*{Identities}{\label{aux_identities}}
            We now record some important identities relating some standard differential operators on $M$, with the corresponding operators on $\mathfrak{M}$, acting on $\tau$-independent objects.\\
                
            For $\abs{s} \ll 1$, let $g_s$, $\varphi_s$ be families of, respectively, AH metrics and approximate potentials with associated auxiliary metrics $\mathfrak{g}_s$. Next, we define $h:=\left.\frac{d}{ds}\right|_{s=0}g_s$, $\psi:=\left.\frac{d}{ds}\right|_{s=0}\varphi_s$, and $\mathfrak{h}:=\left.\frac{d}{ds}\right|_{s=0}\mathfrak{g}_s$.\\
            
            We shall simultaneously view $\mfh$ as a section of $S^2M\times C^\infty(M)$ and a section of $S^2\mathfrak{M}$. Consequently, we write
                \begin{equation}\label{aux2tensor}\mathfrak{h} = \p{h,\psi} =-2\psi e^{-2\varphi}\; d\tau^2+h.\end{equation}

            Observe, in particular, the factor $-2$ on the first term, as it plays a role in how we interpret the norm and vector product of such pairs.  

            \begin{enumerate}
            \item Consider $\omega \in \Omega^1\p{M}$ and view it as a $\tau$-independent $1$-form on $\mathfrak{M}$. The divergences with respect to $g$ and $\mathfrak{g}$ are related by
                $$\delta_{\mathfrak{g}}\omega=\delta_g\omega+\g{d\varphi}{\omega}.$$
        
            The right hand side will be denoted by $\delta_{\varphi}$ when we want to emphasise the role of $\varphi$. We remark that $\p{\delta_\varphi}^*=d$, with respect to the auxiliary/weighted volume $dV_\mfg=e^{-\varphi}\; dV$.
                    
            \item Let $f:M\to \R$ and consider it as a $\tau$-independent function on $\mathfrak{M}$. The Laplacian of $\mathfrak{g}$ applied to $f$ is the drift Laplacian  
            \begin{equation}\label{aux_lap}
                \Delta_{\mathfrak{g}}f=\Delta_g f+\g{d\varphi}{df},
            \end{equation}
            which shall also be denoted by $\Delta_\varphi$ when the metric $g$ is clear from the context.
            
            \item Let $\mathfrak{h}$ be as above. Then     
            \begin{align}
                \tr_{\mathfrak{g}}\mathfrak{h}&=\tr_gh-2\psi, \label{aux_trace}\\
                \delta_{\mathfrak{g}}\mathfrak{h}&=\delta_gh+h\cdot d\varphi+2\psi\; d\varphi. \label{aux_divergence}
            \end{align}

            Here, and moving forward, $h \cdot d\varphi := h\p{d\varphi, \cdot}$. When $h$ is merely a tensor on $M$, e.g. a variation of the base metric $g$, we shall use the notation $\delta_{\varphi}h = \delta_gh + h \cdot d\varphi$ for the natural shift divergence.
                    
            \item Let $X$ be a vector field on $M$ and consider it as a $\tau$-independent vector field on $\mathfrak{M}$. We then have the following relation between codivergences:
                $$\delta^*_{\mathfrak{g}}X=-X(\varphi)e^{-2\varphi}\; d\tau^2+\delta^*_g X.$$
                    
            Specifically, if $\mathfrak{h}=\delta_{\mathfrak{g}}^*X=\frac{1}{2}\mathscr{L}_X\mathfrak{g}$, then it is  dual to the pair $\p{h,\psi}=\p{\delta^*_gX,\frac{1}{2}X(\varphi)}=\p{\frac{1}{2}\mathscr{L}_X g,\frac{1}{2}\mathscr{L}_X\varphi}$. The factor $\frac{1}{2}$ is the result of the $-2$ in the interpretation of the pair, given by \eqref{aux2tensor}.
            \end{enumerate}

        \subsubsection*{Relating Norms on $M$ and $\mathfrak{M}$}
            Later on, in the proofs of our stability and instability results, it will be useful to know H\"older and Sobolev norms on $M$ and $\mathfrak{M}$ are equivalent when dealing with $\tau$-independent objects.\\
                
            Let $f$ denote a real valued function on $M$, simultaneously viewed as a $\tau$-independent function $f:\mathfrak{M}\to \R$, where $\mathfrak{M}=\mathbb{S}^1\times M$. Let $\mathfrak{g}$ be the auxiliary metric associated to $(g,\varphi)$. Then 
            \begin{align*}
                \frac{1}{2\pi}\norm{f}_{L^2_{\mathfrak{g}}(\mathfrak{M})}^2&=\norm{f}_{L^2_\varphi\p{M}}^2\\
                \frac{1}{2\pi}\norm{f}_{H^1_{\mathfrak{g}}(\mathfrak{M})}^2&=\norm{f}_{H^1_\varphi\p{M}}^2\\
                \frac{1}{2\pi}\norm{f}_{H^2_{\mathfrak{g}}(\mathfrak{M})}^2&=\norm{f}_{H^2_\varphi\p{M}}^2+\p{df,d\varphi}^2_{L^2_\varphi\p{M}}.
            \end{align*}
            Let $\mathfrak{h}=(h,\psi)$ denote an auxiliary $2$-tensor, as in \eqref{aux2tensor}, then the pointwise norms satisfy
            \begin{align*}                    
                \abs{\mathfrak{h}}^2_{\mathfrak{g}}&=\abs{(h,\psi)}^2_{g},\\
                    \abs{\nabla_{\mathfrak{g}}\mathfrak{h}}^2_{\mathfrak{g}}&=\abs{\nabla_g \mathfrak{h}}_{g}^2 +2\abs{\mathfrak{h}\cdot d\varphi}_g^2,
            \end{align*} 
            where $\nabla_{g}\mathfrak{h}=\p{\nabla_g h, \nabla_g \psi}$,
                $$\mathfrak{h}\cdot d\varphi=h\cdot d\varphi+2\psi \;d\varphi,$$
            and $g$ should be understood as acting on a pair of tensors $(T_1,T_2)$ by
            $$\abs{\p{T_1,T_2}}^2_g=\abs{T_1}_g^2+4\abs{T_2}^2_g.$$
            
            More generally, we have 
            \begin{equation*}                \abs{\nabla^{\ell}_\mathfrak{g}\mathfrak{h}}^2_{\mathfrak{g}}=\abs{\nabla^{\ell}_g \mathfrak{h}}_{g}^2 +\sum_{i+j=0}^{\ell-1}\abs{\nabla^{i}_\mathfrak{g}\mathfrak{h}\ast \nabla^{j}_gd\varphi}_g^2,
            \end{equation*}
            where $\ast$ is allowed to express any linear combination of contractions of the tensors involved. As such contractions only influence normed inequalities by a multiplicative factor, we shall rely heavily on this notation in the latter parts of the article.
            From the pointwise equality, it follows that
            \begin{equation}\label{sobolev_norm_equiv}
                \norm{\mathfrak{h}}_{H^\ell_\varphi\p{M}}^2\leq \frac{1}{2\pi}\norm{\mathfrak{h}}_{H^\ell_{\mathfrak{g}}\p{\mathfrak{M}}}^2\leq\norm{\mathfrak{h}}_{H^\ell_\varphi\p{M}}^2+C_{\ell}\norm{\mathfrak{h}}^2_{H^{\ell-1}_{\mathfrak{g}}\p{\mathfrak{M}}} \leq D_{\ell}\norm{\mathfrak{h}}_{H^\ell_\varphi\p{M}}^2,
            \end{equation}
            where the third inequality follows from repeated application of the second, and $C_{\ell},D_{\ell}$ depend on the $C^m\p{M}$-norm of $d\varphi$ for $0\leq m\leq \ell-1$ (Recall that we implicitly assume that $\varphi\in C^{\ell,\alpha}(M)$). An analogous calculation gives
            \begin{equation*}
                \norm{\mathfrak{h}}_{C^\ell_g\p{M}}^2\leq \norm{\mathfrak{h}}_{C^\ell_\mathfrak{g}\p{\mathfrak{M}}}^2\leq E_\ell\norm{\mathfrak{h}}_{C^\ell_g\p{M}}^2,
            \end{equation*}
            where $E_\ell$ has the same dependencies as $C_\ell$ and $D_\ell$. To simplify notation moving forward, we will omit the manifolds $M$ and $\mathfrak{M}$ when writing norms, as long as it is clear from context which manifold we are working with.

        \subsubsection*{Decomposition of Symmetric $2$-Tensors}
            Let $(M,g,\varphi)$ be an AH triple, such that the auxiliary manifold $\p{\mathfrak{M},\mathfrak{g}}$ is AH as well. Then, for $0<\beta <n$, we have the common splitting
                \begin{equation}\label{standard_splitting}
                    C_\beta^{\ell,\alpha}\p{S^2\mathfrak{M}} = \ker\delta_{\mathfrak{g}}|_{C_\beta^{\ell,\alpha}\p{S^2\mathfrak{M}}}\oplus_{L^2} \mathrm{Im}\;\delta^*_{\mathfrak{g}}|_{C^{\ell+1,\alpha}_{\beta}(T\mathfrak{M})}.
                \end{equation}

            The range of allowable weights, $\beta$, is determined by the Fredholm range of $\delta_\mfg\delta_\mfg^*$, which is exactly $(0,n)$.\\
                
            Additionally, if $\mathfrak{g}$ is a negative Poincar\'e--Einstein metric (that is, if $(g,\varphi)$ is static), the linearised scalar curvature induces the splitting 
                $$C_\beta^{\ell,\alpha}\p{S^2\mathfrak{M}}=\ker D_{\mathfrak{g}}\scal|_{C_\beta^{\ell,\alpha}(S^2\mathfrak{M})}\oplus_{L^2} \mathrm{Im}\;\p{D_{\mathfrak{g}}\scal}^*|_{C^{\ell+2,\alpha}_{\beta}(\mathfrak{M})}. $$
                
            Note that the second factor of the first splitting is contained in the first factor of the second, and vice versa. Thus they combine to yield the orthogonal decomposition
                $$C_\beta^{\ell,\alpha}\p{S^2\mathfrak{M}}=TT_{\mathfrak{g}}\oplus_{L^2} \mathrm{Im}\;\p{D_{\mfg}\scal}^*|_{C^{\ell+2,\alpha}_{\beta}(\mathfrak{M})}\oplus_{L^2}\mathrm{Im}\;\delta^*_{\mathfrak{g}}|_{C^{\ell+1,\alpha}_{\beta}(T\mathfrak{M})},$$
            where 
                $$TT_{\mathfrak{g}}=\ker D_{\mfg}\scal|_{C_\beta^{\ell,\alpha}(S^2\mathfrak{M})}\cap \ker\delta_{\mathfrak{g}}|_{C_\beta^{\ell,\alpha}(S^2\mathfrak{M})}$$
            is the space of transverse--trace free tensors.
       
    \subsection{Existence of Approximate Potentials}
        Since we are interested in metric-potential pairs $(g,\varphi)$ that are not necessarily static, we shall need to weaken the concept of a static potential. The following lemma provides the definition, existence and asymptotic uniqueness of such potentials, for any AH metric whose scalar curvature decays fast enough.
        
        \begin{lemma}\label{fefferman_graham}
            Let $g$ be an AH metric with boundary defining function $\rho=e^{-r}$, satisfying $\scal_g+n(n-1)=\mathcal{O}\p{e^{-\beta r}}$, for some $\frac{n}{2}<\beta <n$. Then there exists a smooth function $\varphi$, asymptotic to $-r$, such that 
            \begin{equation}\label{phiequation}
                \Delta \varphi+\abs{d \varphi}^2=n+\mathcal{O}\p{e^{-\beta r}}.
            \end{equation}
            
            Moreover, this function is unique modulo $\mathcal{O}\p{e^{-\beta r}}$, and if $\wh{g}$ is another AH metric with the same conformal infinity and $\abs{g-\wh{g}},\abs{\scal_{\wh{g}}+n(n-1)}=\mathcal{O}\p{e^{-\beta r}}$, then $\varphi$ solves (\ref{phiequation}) with respect to $\wh{g}$.\\

            A function satisfying \eqref{phiequation} and $\varphi\to -r$, as $r\to \infty$, shall be called an \textbf{approximate potential} for $g$. We dedicate the notation $\varphi$ solely to such functions.
        \end{lemma}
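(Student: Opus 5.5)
We plan to pass to the linear problem for $V=e^{-\varphi}$. A direct computation gives
\[
\Delta V=-e^{-\varphi}\p{\Delta\varphi+\abs{d\varphi}^2}
\]
(with the sign convention of the paper, $\Delta=-\tr\nabla^2$). Hence \eqref{phiequation} is equivalent to
\[
\p{\Delta+n}V=\mathcal{O}\p{e^{(1-\beta)r}},\qquad V\sim e^{r}.
\]
The indicial roots of $\Delta+n$ on an AH manifold are read off from $e^{sr}$: one finds $\p{\Delta+n}e^{sr}\approx\p{n-s^2-(n-1)s}e^{sr}$, so the roots are $s=1$ and $s=-n$. Relative to the leading behaviour $\rho^{-1}$, the free data therefore only enters at order $\rho^{n+1}$. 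This is the mechanism behind existence and uniqueness modulo $\mathcal{O}\p{e^{-\beta r}}$ when $\beta<n$.

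\textbf{Existence.} We construct $V$ as a finite formal expansion
\[
V=\rho^{-1}\p{1+a_1\rho+\dots+a_k\rho^k},\qquad k=\lceil\beta\rceil<n+1 .
\]
Equivalently, we start from $\varphi_0=\ln\rho$ and correct it. For the first step we use \eqref{rhodeltarho}. Since $\abs{d\varphi_0}_g^2=\abs{d\rho}^2_{\overline g}$ and $\Delta_g\ln\rho$ can be expressed through $\rho\Delta_{\overline g}\rho$ and $\abs{d\rho}^2_{\overline g}$, we get
\[
\Delta\varphi_0+\abs{d\varphi_0}^2-n=\mathcal{O}(\rho)+c\p{\scal_g+n(n-1)}.
\]
Here the second term is $\mathcal{O}(\rho^\beta)$ by hypothesis, and we use $\abs{d\rho}^2_{\overline g}=1+\mathcal{O}(\rho)$ from the AH condition. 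Inductively, suppose the error is $E\rho^j$ with $j<\beta$ and $E$ bounded in $C^{\ell-2,\alpha}$ up to the boundary. Adding a correction $a_j\rho^{j}$ to $\varphi$, or $a_j\rho^{j-1}$ to $V$, changes the error by $-j(j-n-1)a_j\rho^j\abs{d\rho}^2+\mathcal{O}(\rho^{j+1})$. This is a computation using $\Delta_g=\rho^2\Delta_{\overline g}+(n-2)\rho\langle d\rho,d\cdot\rangle_{\overline g}$ together with \eqref{rhodeltarho}. Since $0<j<n+1$, the coefficient is nonzero and we can choose $a_j=E/\p{j(j-n-1)}$ to kill the error at that order. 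After finitely many steps, at most $\lceil\beta\rceil\le n$ of them, the error is $\mathcal{O}(\rho^\beta)=\mathcal{O}\p{e^{-\beta r}}$.

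The finite regularity of $\rho$ and $\overline g$ (class $C^{\ell,\alpha}$ with $\ell>\tfrac n2+2$) suffices, since we only differentiate twice per step and take few steps. Smoothness in the interior is then arranged by a cutoff and a mollification on a compact set, which does not affect the asymptotics.

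\textbf{Uniqueness.} Let $\varphi_1,\varphi_2$ both solve \eqref{phiequation} and tend to $-r$, and set $w=e^{\varphi_2-\varphi_1}=V_1/V_2\to1$. A computation gives
\[
\Delta_{\varphi_2}w-2\langle d\ln V_2,dw\rangle=\mathcal{O}\p{e^{-\beta r}}\,w .
\]
The operator on the left has indicial roots $0$ and $n+1$ in the variable $\rho$. Two routes are available. One is to repeat the inductive argument on the expansion of $w-1$: at each order $\rho^j$ with $0<j<n+1$ the coefficient is forced to vanish, so $w-1=\mathcal{O}(\rho^\beta)$. The other is to appeal to the Fredholm and isomorphism theory for geometric elliptic operators on AH manifolds in \cite{Lee} for weights between the indicial roots. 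Either way $\varphi_1-\varphi_2=\mathcal{O}\p{e^{-\beta r}}$.

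\textbf{Change of metric.} If $\wh g$ has the same conformal infinity and $\abs{g-\wh g}=\mathcal{O}\p{e^{-\beta r}}$, including the derivatives implicit in our weighted $C^{\ell,\alpha}_\beta$ convention, then $\Delta_{\wh g}\varphi-\Delta_g\varphi$ and $\abs{d\varphi}^2_{\wh g}-\abs{d\varphi}^2_g$ are bilinear in $\p{g-\wh g,\nabla\p{g-\wh g}}$ and $\p{\nabla\varphi,\nabla^2\varphi}$. The latter are bounded since $\varphi=-r+\mathcal{O}(e^{-r})$ with bounded $g$-derivatives. Hence these differences are $\mathcal{O}\p{e^{-\beta r}}$, and $\varphi$ solves \eqref{phiequation} for $\wh g$.

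\textbf{Main obstacle.} The hardest step is the bookkeeping in the inductive expansion. We must check that the leading coefficient $j(j-n-1)$ is exact, that the scalar-curvature defect enters only at order $\rho^\beta$, and that no obstruction or logarithmic term is reached. The latter is exactly where $\beta<n$ (indeed $\beta<n+1$) is needed.
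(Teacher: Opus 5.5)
Your proposal is correct and is in substance the paper's proof. Existence comes from the same order-by-order expansion in $\rho=e^{-r}$, solvable because the indicial factor $j(n+1-j)$ is nonzero for $0<j<n+1$: the paper runs it in geodesic normal form via $\psi=\varphi+r$ and the recurrence $\ell(n+1-\ell)a_\ell+b_\ell+\dots=0$, while you run it for a general defining function via \eqref{rhodeltarho} and the conformal formula for $\Delta_g$. Your uniqueness and change-of-metric steps are mild variants of the paper's; the weighted Fredholm route for uniqueness, completed by a maximum principle for the kernel, even covers solutions not known a priori to admit an expansion.

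One slip to fix: the equation for $w=V_1/V_2$ is $\Delta w-2\langle d\ln V_2,dw\rangle=\mathcal{O}\p{e^{-\beta r}}w$ with the plain Laplacian. Writing $\Delta_{\varphi_2}$ adds an extra drift term $\langle d\varphi_2,dw\rangle$, which would move the indicial roots from $0,\,n+1$ to $0,\,n+2$.
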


        \begin{proof}
            Choosing a sufficiently large compact subset $K \subset M$, we can make the identification $M\setminus K\simeq (R,\infty)\times \partial N$, and write $g$ in the form 
                $$g=dr^2+e^{2r}h_r,$$
            where $h_r$ is a one-parameter family of metrics on $\partial N$. The Laplacian of $g$ can then be expressed as     
            \begin{equation*}
                \Delta \psi=-\partial^2_{rr}\psi-\p{n-1+G}\partial_r \psi +e^{-2r}\Delta_{h_r}\psi,
            \end{equation*}
            where $G=\frac{1}{2}\tr_{h_r}\partial_r h_r$. Suppose $\varphi$ is a function solving (\ref{phiequation}). This is equivalent to $\psi=\varphi+r$ satisfying            
            \begin{equation}\label{psiequation}
                \Delta \psi +\abs{d \psi}^2=2\partial_r \psi -G(r)+\mathcal{O}\p{e^{-\beta r}}.
            \end{equation}

            Note that $G(r)=G(-\ln{\rho})$ equals the left hand side of \eqref{rhodeltarho}. So, by the assumption on $\scal_g$, we have        
                $$G(r)=\frac{1}{2(n-1)}e^{-2r}\scal_{\overline{g}}+\mathcal{O}\p{e^{-\beta r}}. $$
            
            As $\beta >\frac{n}{2}\geq 1$, and we are only interested in lower order terms, we may choose an asymptotic expansion of $\scal_{\overline{g}}$ such that             
            \begin{equation}\label{Gexpansion}
                G(r,x)=\sum_{\ell=1}^{\lceil \beta\rceil}b_\ell(x)e^{-\ell r}+\mathcal{O}\p{e^{-\beta r}}, 
            \end{equation}
            where $x$ indicates tangential coordinates on $\partial N$. 
            Suppose $\psi=\psi (r,x)$ is a solution of \eqref{psiequation}. That is,           
                $$-\psi''-(n+1)\psi'+\p{\psi'}^2-G(\psi'-1)+e^{-2r}\p{\Delta_{h_r}\psi+\abs{d \psi}^2_{h_r}}=\mathcal{O}\p{e^{-\beta r}},$$
            where $\psi'=\partial_r\psi$. By inserting $\psi=\sum_{\ell=0}^\infty a_\ell(x)e^{-\ell r}$ and using \eqref{Gexpansion}, we obtain        
            \begin{align*}
                0 = \sum_{\ell = 0}^{\lceil \beta \rceil} \p{\ell(n + 1 - \ell)a_\ell + b_\ell}e^{-\ell r}+\sum_{\ell + m=0}^{\lceil \beta \rceil} \ell a_\ell(b_m+ma_m)e^{-(\ell+m)r}\\
                +\sum_{\ell=2}^{\lceil \beta \rceil}\p{\Delta_h a_{\ell-2}}e^{-\ell r}+\sum_{\ell+m=2}^{\lceil \beta  \rceil}\g{da_{\ell-1}}{da_{m-1}}_h e^{-(\ell+m)r}.
            \end{align*}
        
            Recall that $b_0=0$ by \eqref{Gexpansion}, so the equation above gives the following recurrence relations for the coefficients $a_\ell$:
                $$a_0=0,\quad a_1=-\frac{b_1}{n},\quad a_2=-\frac{b_2}{2(n-1)}+\frac{b_1^2}{2n^2}$$
            and for $\ell \geq 3$:
                $$ \ell(n+1-\ell)a_\ell+b_\ell+\sum_{m+p=\ell}ma_m\p{b_p+pa_p}+\Delta_ha_{\ell-2}+\sum_{m+p=\ell-2}\g{da_{m}}{da_{p}}_h=0.$$
          
            These recurrence relations are uniquely solvable for all $\ell \leq\lceil \beta \rceil< n+1$. Setting $\varphi\p{r}=-r+\sum_{\ell=1}^{\lceil \beta \rceil} a_\ell e^{-\ell r}$, we obtain a solution to \eqref{phiequation}.\\

            Suppose $\wh{g}$ is another AH metric with $\abs{g-\wh{g}},\abs{\scal_{\wh{g}}+n(n-1)}=\mathcal{O}\p{e^{-\beta r}}$ and the same conformal infinity. Then, by using \eqref{rhodeltarho}, we see that \eqref{psiequation} agrees with the corresponding equation with respect to $\wh{g}$, up to relevant order.
        \end{proof}

        \begin{remark}
            It is instructive to compare the functions in the proof of Lemma \ref{fefferman_graham} to those associated with hyperbolic space:
                $$\varphi_{\mathbb{H}^n}=-\ln\p{2\cosh{r}}=-r+\sum_{\ell=1}^\infty\frac{(-1)^\ell}{\ell}e^{-2\ell r}, $$
                $$G_{\mathbb{H}^n}=(n-1)\p{\coth{r}-1}=2\p{n-1}\sum_{\ell=1}^\infty e^{-2\ell r}. $$

            In this case the potential is not only approximate, it is static.
            From the proof of \ref{fefferman_graham}, it is clear that if $G=\mathcal{O}(e^{-\ell r})$ then $\varphi+r=\mathcal{O}(e^{-\ell r})$.
        \end{remark}

            \subsubsection{Isomorphism Properties of Laplace Type Operators}
            We will extensively use the following proposition throughout the paper.

            \begin{proposition}\label{isomorph_prop}
                Let $\p{M,g}$ be an AH manifold of dimension $n$ and class $C^{\ell,\alpha}$ with approximate potential function $\varphi$. Then
                \begin{align*}
                    \Lap_g + n : H^m_\varphi\p{M} &\rightarrow H^{m-2}_\varphi\p{M},\\
                    \Lap_\varphi + n : H^m_\varphi\p{M} &\rightarrow H^{m-2}_\varphi\p{M}
                \end{align*}
                are isomorphisms for $m = 2,\dots,\ell$. Moreover,
                \begin{align*}
                    \Lap_g + n : C^{m,\alpha}_\beta\p{M} &\rightarrow C^{m-2,\alpha}_\beta\p{M},\\
                    \Lap_\varphi + n : C^{m,\alpha}_\gamma\p{M} &\rightarrow C^{m-2,\alpha}_\gamma\p{M}
                \end{align*}
                are isomorphisms for $m = 2,\dots,\ell$, $\beta \in \p{-1,n}$, and $\gamma \in \p{\frac{n}{2} - \sqrt{\frac{n^2}{4}+n},\frac{n}{2} + \sqrt{\frac{n^2}{4}+n}}$.
            \end{proposition}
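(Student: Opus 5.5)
The proof will go through Lee's Fredholm theory for geometric elliptic operators on asymptotically hyperbolic manifolds \cite{Lee}, applied once on $M$ for $\Lap_g+n$ and once on the auxiliary manifold $\mathfrak{M}$ for $\Lap_\varphi+n$. The key identity is \eqref{aux_lap}: on $\tau$-independent functions, $\Lap_\varphi=\Lap_{\mathfrak{g}}$, where $\mathfrak{g}$ is AH of dimension $n+1$. Lee's criterion for a formally self-adjoint elliptic operator $P$ on an AH manifold has two parts. First, $P$ is Fredholm on $C^{m,\alpha}_\delta$ when $\abs{\delta-\frac{N-1}{2}}<R$, where $N$ is the dimension and $R$ is the indicial radius. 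Second, $P$ is an isomorphism on $H^m$ and on all such $C^{m,\alpha}_\delta$ when it has trivial $L^2$-kernel and an $L^2$ coercivity estimate $\norm{u}_{L^2}\le C\norm{Pu}_{L^2}$ near infinity.

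The first step is to compute the indicial roots, i.e. the exponents $s$ with $P(e^{-sr})=o(e^{-sr})$.
\begin{itemize}
\item For $\Lap_g+n$ on $M^n$ with $\Lap=-\partial_r^2-(n-1)\partial_r+\dots$, we get $-s^2+(n-1)s+n=0$. The roots are $s=-1$ and $s=n$, so the Fredholm weight range is $(-1,n)$, centred at $\frac{n-1}{2}$ with radius $\frac{n+1}{2}$.
\item For $\Lap_{\mathfrak{g}}+n$ on $\mathfrak{M}^{n+1}$, the drift term gives $\Lap_\varphi=-\partial_r^2-n\partial_r+\dots$, since $d\varphi\approx -dr$. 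Then $-s^2+ns+n=0$ gives $s=\frac n2\pm\sqrt{\frac{n^2}{4}+n}$, which is exactly the stated interval for $\gamma$.
\end{itemize}

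The second step is injectivity on $L^2$. If $u\in L^2$ solves $(\Lap+n)u=0$, integrating against $u$ gives $\norm{du}^2+n\norm{u}^2=0$, so $u=0$. This is immediate because $\Lap\ge0$ and $n>0$. The same argument works in the weighted case, using $\p{\delta_\varphi}^*=d$ with respect to $e^{-\varphi}dV$, so that $\Lap_\varphi$ is self-adjoint and nonnegative on $L^2_\varphi$. Coercivity holds globally for the same reason, $\norm{(\Lap+n)u}\,\norm{u}\ge n\norm{u}^2$. The integration by parts is justified first for compactly supported $u$ and then by density. Together with elliptic regularity, this gives the isomorphism $H^m\to H^{m-2}$ for each $m\le\ell$.

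There is a weighting subtlety here. $H^m_\varphi$ uses the measure $e^{-\varphi}dV_g\approx e^{r}dV_g$. For $\Lap_\varphi$ this measure is exactly the volume of $\mathfrak{M}$ modulo the $2\pi$ factor, so \eqref{sobolev_norm_equiv} transfers Lee's $H^m(\mathfrak{M})$ isomorphism directly. For $\Lap_g+n$ on $H^m_\varphi$, I would conjugate: set $u=e^{\varphi/2}v$, which maps $H^m_\varphi$ isometrically (up to lower order) to unweighted $H^m$. Then check that $e^{-\varphi/2}(\Lap_g+n)e^{\varphi/2}$ differs from $\Lap_g+n$ by first- and zeroth-order terms involving $d\varphi$, $\abs{d\varphi}^2$ and $\Lap\varphi$. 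Alternatively, note that $H^m_\varphi\cong$ the $H^m$ space of weight $\frac12$, which lies inside the Fredholm range $(-1,n)$ in Lee's weighted-Sobolev scale. Lee's theorem then gives the isomorphism on all weighted Sobolev spaces with weight in the Fredholm range, once the $L^2$ kernel and cokernel vanish.

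I expect this to be the main obstacle: making sure weight $\frac12$, read in Lee's normalisation $e^{-\delta r}$ times the $L^2(dV)$ scale, lies in the interval where the isomorphism holds. Given the ranges computed above, it does.

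Finally, the Hölder statements follow from Lee's Theorem C. In the Fredholm range the weighted Hölder kernel coincides with the $L^2$ kernel, which is trivial, and by self-adjointness the cokernel is the kernel at the dual weight, also trivial. For $\Lap_\varphi$ the result is obtained on $\mathfrak{M}$ and restricted to $\tau$-independent functions. This is legitimate because the inverse commutes with the $\mathbb{S}^1$-action by uniqueness, so it preserves $\tau$-independence, and the norms are equivalent as noted above.
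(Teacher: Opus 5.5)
Your treatment of $\Lap_g+n$ is the paper's argument: Lee's Theorem C, with the $L^2$-kernel killed by positivity. One slip needs fixing. $H^m_\varphi$ is Lee's Sobolev space of weight $\delta=\frac12$, and for $p=2$ the Sobolev Fredholm condition is $\abs{\delta}<R=\frac{n+1}{2}$, which is centred at $0$. The interval $(-1,n)$, centred at $\frac{n-1}{2}$, is the H\"older range. So the check you flagged as the main obstacle should be $\frac12<\frac{n+1}{2}$, exactly as in the paper; your conclusion is unaffected only because $\frac12$ happens to lie in both intervals. Also drop your first suggestion of conjugating $\Lap_g+n$ by $e^{\varphi/2}$. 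It produces $\Lap_g-\g{d\varphi}{d\,\cdot\,}+(\text{potential})$, a drift operator that is not formally self-adjoint on unweighted $L^2$, so Theorem C does not apply to it as stated. The weighted-space route you give as the alternative is the right one.

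For $\Lap_\varphi+n$ you take a genuinely different route from the paper.
\begin{itemize}
\item \textbf{Your route.} You apply Lee on $\mathfrak{M}$ to the geometric operator $\Lap_{\mathfrak{g}}+n$, in dimension $n+1$ with centre $\frac n2$ and radius $\sqrt{\frac{n^2}{4}+n}$. You then descend to $\tau$-independent functions using $\mathbb{S}^1$-equivariance of the inverse and norm comparisons such as \eqref{sobolev_norm_equiv}.
\item \textbf{The paper's route.} It stays on $M$ and conjugates by $e^{\varphi/2}$ to get $\Lap_g+F(\varphi)$ with $F(\varphi)=\frac{6n-1}{4}+\mathcal{O}\p{e^{-2r}}$, which is self-adjoint on unweighted $L^2(M)$. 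It then applies Lee with the shifted weight $\gamma-\frac12$, and only remarks afterwards that the radius coincides with that of $\Lap_{\mathfrak{g}}+n$.
\end{itemize}
Your version makes self-adjointness, positivity, the $L^2$-coercivity hypothesis and kernel triviality immediate, and it matches how the proposition is later used for $\Lap_{\mathfrak{g}}+n$. It does, however, require $\mathfrak{g}$ to satisfy Lee's AH hypotheses, namely regularity of $d\tau^2+e^{2\varphi}g$ up to the boundary, which depends on the expansion of $\varphi$. The paper's version needs only the asymptotics of $\Lap\varphi$ and $\abs{d\varphi}$. Its price is working with an asymptotically constant, non-geometric potential, and triviality of the kernel must be inherited from positivity of $\Lap_\varphi+n$ on $L^2_\varphi$, a point the paper leaves implicit.
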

        
            \begin{proof}
                By \cite[Proposition F \& Corollary 7.4]{Lee}, the indicial radius of $\Lap_g + n$ is $R = \sqrt{\frac{\p{n-1}^2}{4} + n} = \frac{n+1}{2}$. Since $\abs{\frac{1}{2}} < \frac{n+1}{2}$, we can apply \cite[Theorem C]{Lee} to see that, for $m = 2,\dots,\ell$,
                \begin{equation*}
                    \Lap_g + n : H^m_\varphi\p{M} \rightarrow H^{m-2}_\varphi\p{M}
                \end{equation*}
                is a Fredholm operator of index $0$ and its kernel agrees with the $L^2$-kernel of $\Lap_g + n$. However, the $L^2$-kernel of $\Lap_g + n$ is trivial, hence $\Lap_g + n$ is an isomorphism between the Sobolev spaces detailed above.\\
        
                Similarly, since $\abs{\beta - \frac{n-1}{2}} < \frac{n+1}{2}$ if and only if $\beta \in \p{-1,n}$, we can use \cite[Theorem C]{Lee} and that $\Lap_g + n$ has trivial $L^2$-kernel to deduce that
                \begin{equation*}
                    \Lap_g + n : C^{m,\alpha}_\beta\p{M} \rightarrow C^{m-2,\alpha}_\beta\p{M}
                \end{equation*}
                is an isomorphism for $m = 2,\dots,\ell$ and $\beta $ in the given range.\\
        
                As for the statements involving $\Lap_\varphi$, we conjugate by the multiplication operator of $e^{\frac{\varphi}{2}}$
                $$e^{-\frac{\varphi}{2}}\circ \p{\Delta_\varphi+n}\circ e^{\frac{\varphi}{2}}=\Delta_g+F(\varphi)$$
                where $F\p{\varphi}=\frac{1}{2}\Delta_g\varphi+\frac{1}{4}\abs{d\varphi}^2+n=\frac{6n-1}{4}+\mathcal{O}\p{e^{-2r}}$. This means 
                $$\Delta_\varphi+n:C^{m,\alpha}_\gamma\p{M}\to C^{m-2,\alpha}_\gamma\p{M}$$
                is an isomorphism if and only if 
                $$\Delta_g+\frac{6n-1}{4}:C^{m,\alpha}_{\gamma-\frac{1}{2}}\p{M}\to C^{m-2,\alpha}_{\gamma-\frac{1}{2}}\p{M}$$
                is an isomorphism. By \cite[Proposition F, Corollary 7.4 \& Theorem C]{Lee}, this is the case whenever 
                $$\abs{\gamma-\frac{1}{2}-\frac{n-1}{2}}=\abs{\gamma-\frac{n}{2}}<\sqrt{\frac{(n-1)^2}{4}+\frac{6n-1}{4}}=\sqrt{\frac{n^2}{4}+n}.$$

                Similarly 
                $$\Delta_\varphi+n:H^m_\varphi\p{M}\to H^{m-2}_\varphi\p{M}$$
                is an isomorphism if and only if 
                $$\Delta_g+\frac{6n-1}{4}:H^m\p{M}\to H^{m-2}\p{M}$$
                is an isomorphism. The latter is the case, as the indicial radius $R=\sqrt{\frac{n^2}{4}+n}$ of $\Delta_g+\frac{6n-1}{4}$ is positive. Note that this is exactly the radius of the auxiliary operator $\Delta_\mfg+n$ (see also \eqref{aux_lap}).
            \end{proof}

\section{A Renormalised Expander Entropy}\label{sec_entropy}
        This section contains the definition of our entropy functional, as well as the proof that it is well-defined and finite. We also prove that a minimising function of the entropy exists and has asymptotic properties suitable for our future purposes.
    
    \subsection{Definition of the Entropy}\label{sec_entropy_defn}
        \begin{definition}
            Let $\wh{g}$ be a static background metric. Then define the following functional:
            $$S_{\wh{g}}\p{g,\varphi} := \lim \limits_{R \rightarrow \infty} \left[\int_{B_R} \scal_g+n(n-1)\; d\wh{V}_{\varphi} - m_{\wh{g},R}\p{g-\wh{g},\varphi}\right],$$
            where $m_{\wh{g},R}$ is the \textbf{weighted ADM mass} term
            \begin{equation}\label{def:mass}
                m_{\wh{g},R}\p{h,\varphi}:=-\int_{\partial B_R}\g{\delta_{\wh{g}}h+d\tr_{\wh{g}}h}{\wh{\nu}}_{\wh{g}}+\tr_{\wh{g}}h\; \wh{\nu}\p{\varphi}-h\p{d\varphi,\wh{\nu}}\; d\wh{A}_\varphi,
            \end{equation}
            where $d\wh{V}_\varphi = e^{-\varphi}\;dV_{\wh{g}}$, and $d\wh{A}_\varphi = e^{-\varphi}\;dA_{\wh{g}}$. We also define the following boundary term:
            \begin{equation*}
                \mathcal{B}_R\p{g,\varphi,f} := -\frac{1}{n} \int_{\partial B_R} \p{f - \varphi} \nu\p{\varphi} dA_\varphi.
            \end{equation*}
        \end{definition}

        We note that by setting $\wh{g}=g_{\mathbb{H}^n}$ in \eqref{def:mass}, we obtain the mass previously studied in \cite{CH} and \cite{HJM}.

        \begin{definition}\label{entropy_defn}
            Let $\p{M,g}$ be an AH manifold and let $f,\varphi \in C^\infty\p{M}$ be functions that are asymptotic to $-r$. Moreover, let $\wh{g}$ be a static background metric and define the functionals
            \begin{equation*}
                \mathcal{W}_R\p{g,\varphi,f} := \int_{B_R} \frac{1}{2n} \p{\abs{d f}^2 - \abs{d \varphi}^2 + \scal_g} + \varphi - f + \frac{n-1}{2}\; dV_f
            \end{equation*}
            and
            \begin{equation*}
                \mathcal{W}_{\mathrm{AH},\wh{g}}\p{g,\varphi,f} := \lim \limits_{R \rightarrow \infty}\p{\mathcal{W}_R\p{g,\varphi,f} - \frac{1}{2n} m_{\wh{g},R}\p{g-\wh{g},\varphi} + \mathcal{B}_R\p{g,\varphi, f}},
            \end{equation*}
            where $dV_f := e^{-f} dV_g$.
        \end{definition}

        \begin{proposition}\label{EHfinite} 
            Let $\p{M,g}$ be an AH manifold with approximate potential $\varphi$ (satisfying \eqref{phiequation}). Further let $\wh{g}$ be a static background metric with
                $$\scal_{\wh{g}}+n(n-1)=0.$$
            
            If $g\in \mathcal{R}^{2,\alpha}_\beta\p{M,\wh{g}}$, for some $\frac{n}{2}<\beta<n$, then $S_{\wh{g}}(g,\varphi)$ is finite. As a result, under these assumptions, $m_{\wh{g}}\p{g-\wh{g},\varphi} := \lim \limits_{R \rightarrow \infty} m_{\wh{g},R}\p{g-\wh{g},\varphi}$ is well-defined and finite.
        \end{proposition}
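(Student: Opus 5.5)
We will show that the integrand of $S_{\wh{g}}$ differs from an exact weighted divergence by an integrable error, and that this divergence produces exactly the boundary integral $m_{\wh{g},R}$. Set $h:=g-\wh{g}\in C^{2,\alpha}_\beta(S^2M)$ with $\frac n2<\beta<n$. The weighted measure is $d\wh{V}_\varphi=e^{-\varphi}dV_{\wh g}$, and $e^{-\varphi}\sim e^{r}$.

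\textbf{Step 1: expansion of the scalar curvature.} Taylor expanding around $\wh g$ and using $\scal_{\wh g}=-n(n-1)$ gives
\begin{equation*}
\scal_g+n(n-1)=D_{\wh g}\scal(h)+Q(h),\qquad D_{\wh g}\scal(h)=\Delta_{\wh g}\tr_{\wh g}h+\delta_{\wh g}\delta_{\wh g}h-\g{\Ric_{\wh g}}{h}_{\wh g}.
\end{equation*}
The remainder $Q$ is quadratic, of schematic form $h\ast\nabla^2h+\nabla h\ast\nabla h+\Rm\ast h\ast h$. Hence $\abs{Q}\le Ce^{-2\beta r}$, and $e^{-\varphi}Q=\mathcal O(e^{(1-2\beta)r})$. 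Since $dV_{\wh g}\sim e^{(n-1)r}dr\,dA$, this is integrable exactly when $1-2\beta+n-1<0$, that is $\beta>\frac n2$. So the quadratic part converges absolutely.

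\textbf{Step 2: the linear part as a divergence.} Integrate $e^{-\varphi}D_{\wh g}\scal(h)$ by parts over $B_R$ twice, moving all derivatives onto $e^{-\varphi}$. This yields the bulk term
\begin{equation*}
\int_{B_R}\g{(D_{\wh g}\scal)^*(e^{-\varphi})}{h}\,dV_{\wh g}
\end{equation*}
plus boundary terms. Writing $d(e^{-\varphi})=-e^{-\varphi}d\varphi$, the boundary terms are precisely $\int_{\partial B_R}\big(\g{\delta_{\wh g}h+d\tr_{\wh g}h}{\wh\nu}+\tr_{\wh g}h\,\wh\nu(\varphi)-h(d\varphi,\wh\nu)\big)e^{-\varphi}d\wh A$, up to the overall sign in \eqref{def:mass}. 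They cancel against $m_{\wh g,R}(h,\varphi)$ by construction, and checking these signs is a routine computation.

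\textbf{Step 3: the bulk error term (the main obstacle).} Expanding the adjoint,
\begin{equation*}
(D_{\wh g}\scal)^*(e^{-\varphi})=e^{-\varphi}\big[(-\Delta_{\wh g}\varphi-\abs{d\varphi}^2)\wh g-\nabla^2\varphi+d\varphi\otimes d\varphi-\Ric_{\wh g}\big].
\end{equation*}
This is not zero, because $\varphi$ is only an approximate potential and need not equal $\wh\varphi=-\ln V$. We handle it as follows.
\begin{itemize}
\item By the uniqueness part of Lemma \ref{fefferman_graham}, applied with $g$ and $\wh g$ (same conformal infinity, $\abs{g-\wh g}=\mathcal O(e^{-\beta r})$), we get $\varphi-\wh\varphi=\mathcal O(e^{-\beta r})$.
\item We need this estimate to hold with two derivatives, which we obtain from the explicit expansions in that proof. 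Alternatively, we would apply the isomorphism of Proposition \ref{isomorph_prop} to the linearisation of \eqref{phiequation}.
\item Since $(D_{\wh g}\scal)^*(e^{-\wh\varphi})=0$, subtracting gives $(D_{\wh g}\scal)^*(e^{-\varphi})=e^{-\varphi}\,\mathcal O(e^{-\beta r})$, using that $d\wh\varphi$ and $\nabla^2\wh\varphi$ are bounded.
\end{itemize}
Pairing with $h$ then gives an integrand $\mathcal O(e^{(1-2\beta)r})$, which is integrable for the same reason as in Step 1.

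Combining the three steps, $\int_{B_R}(\scal_g+n(n-1))d\wh V_\varphi-m_{\wh g,R}$ equals the integral over $B_R$ of an absolutely integrable function. Its limit as $R\to\infty$ therefore exists, so $S_{\wh g}(g,\varphi)$ is finite.

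For the mass, note that $\scal_g+n(n-1)=\mathcal O(e^{-\beta r})$ weighted by $e^{r}$ is not integrable on its own. The finiteness of $m_{\wh g}$ is thus understood in the sense that $\lim_R m_{\wh g,R}$ exists, which holds whenever $\lim_R\int_{B_R}(\scal_g+n(n-1))\,d\wh V_\varphi$ exists. In particular this is the case when $e^{r}(\scal_g+n(n-1))\in L^1$, the situation implicit in the statement. Without that assumption we would state the conclusion for $S_{\wh g}$, with $m_{\wh g}$ defined as the difference.
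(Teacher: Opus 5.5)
Your argument for finiteness of $S_{\wh g}(g,\varphi)$ is essentially the paper's. You expand $\scal_g$ about $\wh g$, integrate $e^{-\varphi}D_{\wh g}\scal(h)$ by parts so that the boundary terms are exactly $m_{\wh g,R}(h,\varphi)$ (the signs in \eqref{def:mass} match with no correction), and control the bulk term $\g{(D_{\wh g}\scal)^*(e^{-\varphi})}{h}$ via $\varphi-\wh\varphi=\mathcal{O}(e^{-\beta r})$ from Lemma~\ref{fefferman_graham}; you are more careful than the paper in noting that this estimate is needed with two derivatives.

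Your caveat about the mass is also correct and goes beyond the paper, whose proof only treats $S_{\wh g}$. For example, $g=(1+e^{-\beta r})\wh g$ near infinity on $\mathbb{H}^n$ satisfies all the hypotheses, yet $m_{\wh g,R}(g-\wh g,\varphi)\approx c\,e^{(n-\beta)R}$ with $c>0$. So finiteness of the mass genuinely needs an extra assumption such as $e^{r}(\scal_g+n(n-1))\in L^1$, as stated in the introduction.
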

        
        \begin{proof}
            By Taylor expanding $\scal_g$ around $\wh{g}$ in the direction of $h=g-\wh{g}$, we have
            \begin{align*}
                \int_{B_R} \scal_g+n(n-1)\; d\wh{V}_\varphi={}& \int_{B_R} \scal_{\wh{g}}+n(n-1)+D_{\wh{g}}\scal_{\wh{g}}(h)\; d\wh{V}_\varphi+C_R\\ 
                ={}&m_{\wh{g},R}\p{h,\varphi}-\int_{B_R}\g{Q\varphi}{h}\; d\wh{V}_\varphi+C_R,
            \end{align*}
            where 
             $$Q\varphi=-e^\varphi\p{D_{\wh{g}}\scal}^*\p{e^{-\varphi}}=\p{\Delta_{\wh{g}}\varphi +\abs{d\varphi}^2_{\wh{g}}}\cdot \wh{g}+\nabla_{\wh{g}}^2\varphi-d\varphi \otimes d\varphi+\Ric_{\wh{g}},$$
            and $C_R $ denotes an integral of quadratic terms in $h$ and $\nabla h$. As such terms decay faster than the weighted volume grows, we have $\lim_{R\to \infty}\abs{C_R}<\infty$.\\

            Let $\wh\varphi$ be a static potential for $\wh{g}$ asymptotic to $-r$. Then it satisfies the defining equation for static potentials: $Q\wh{\varphi}=0$. Taking the trace, we also have 
                $$0=\tr Q\wh{\varphi}=(n-1)\p{\Delta_{\wh{g}} \wh{\varphi}+\abs{d\wh{\varphi}}_{\wh{g}}^2-n}.$$
       
            It then follows from Lemma \ref{fefferman_graham} that $\varphi-\wh{\varphi}=\mathcal{O}(e^{-\beta r})$. As a consequence, we have $\abs{Q\varphi}=\abs{Q(\varphi-\wh{\varphi})}=\mathcal{O}\p{e^{-\beta r}}$. It follows from the decay assumption on $h=g-\wh{g}$, that
                $$S_{\wh{g}}(g,\varphi)=\lim_{R\to \infty}\p{C_R -  \int_{B_R}\g{Q\varphi}{h}\; d\wh{V}_\varphi}$$          
            is finite.
        \end{proof}

        \begin{corollary}\label{finitenesscor} 
            Let $g\in\mathcal{R}^{2,\alpha}_\beta\p{M,\wh{g}}$ have approximate potential $\varphi$. If $f - \varphi \in C^\infty_c\p{M}$, then $\mathcal{W}_{\mathrm{AH},\wh{g}}\p{g,\varphi,f}$ is finite.
        \end{corollary}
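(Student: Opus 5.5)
Since $f-\varphi$ has compact support, I will choose $R_0$ with $\operatorname{supp}(f-\varphi)\subset B_{R_0}$ and work only with $R>R_0$. On $\partial B_R$ we then have $f=\varphi$, so $\mathcal{B}_R(g,\varphi,f)=0$ for every such $R$, and this term plays no role. The next step is to split $\mathcal{W}_R$ as $\int_{B_{R_0}}(\dots)\,dV_f$ plus $\int_{B_R\setminus B_{R_0}}(\dots)\,dV_f$. On the compact set $B_{R_0}$ all integrands are continuous, so the first piece is a finite constant independent of $R$. On $B_R\setminus B_{R_0}$ we have $f=\varphi$, so $dV_f=dV_\varphi=e^{-\varphi}dV_g$, the terms $\abs{df}^2-\abs{d\varphi}^2$ and $\varphi-f$ cancel, and the integrand becomes
\begin{align*}
\frac{1}{2n}\scal_g+\frac{n-1}{2}=\frac{1}{2n}\p{\scal_g+n(n-1)}.
\end{align*}
Up to a finite constant, the problem therefore reduces to the existence of
\begin{align*}
\lim_{R\to\infty}\frac{1}{2n}\p{\int_{B_R\setminus B_{R_0}}\p{\scal_g+n(n-1)}\,e^{-\varphi}dV_g-m_{\wh{g},R}\p{g-\wh{g},\varphi}}.
\end{align*}

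To reach $S_{\wh{g}}$, I will add back the compact piece $\int_{B_{R_0}}(\scal_g+n(n-1))\,e^{-\varphi}dV_g$, which is finite, and then replace $dV_g$ by $dV_{\wh{g}}$. Writing $e^{-\varphi}dV_g=e^{-\varphi}dV_{\wh{g}}\,(1+\mathcal{O}(\abs{h}))$ with $h=g-\wh{g}\in C^{2,\alpha}_\beta$, the difference between the two integrals is bounded by
\begin{align*}
\int_M \abs{\scal_g+n(n-1)}\,\abs{h}\,e^{-\varphi}\,dV_{\wh{g}}.
\end{align*}
The decay inputs here are the following. Since $\scal_{\wh{g}}=-n(n-1)$, we have $\scal_g+n(n-1)=\mathcal{O}(e^{-\beta r})$, and also $\abs{h}=\mathcal{O}(e^{-\beta r})$. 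Moreover $e^{-\varphi}\sim e^{r}$ and $dV\sim e^{(n-1)r}$ by Remark \ref{distance_function}. The integrand is therefore $\mathcal{O}(e^{(n-2\beta)r})$, which is integrable because $2\beta>n$. What remains is exactly the limit defining $S_{\wh{g}}(g,\varphi)$ (with volume $d\wh{V}_\varphi$), and Proposition \ref{EHfinite} shows this limit is finite. Adding the finite constants gives finiteness of $\mathcal{W}_{\mathrm{AH},\wh{g}}(g,\varphi,f)$.

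I expect the main obstacle to be the change of volume form. Neither $\int(\scal_g+n(n-1))\,dV_\varphi$ nor the mass term converges separately, so the two must be kept paired throughout, and only the difference between the $g$- and $\wh{g}$-weighted integrals may be estimated absolutely. This works because that difference carries the product of two decaying quantities. I also need to check that the balls $B_R$ and the mass term in both definitions are taken with respect to the same background $\wh{g}$, so that the boundary terms match exactly.
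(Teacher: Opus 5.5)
Your proposal is correct and follows essentially the same route as the paper. The paper rewrites $\mathcal{W}_{\mathrm{AH},\wh{g}}$ as $\frac{1}{2n}S_{\wh{g}}(g,\varphi)$ plus compactly supported $f$-dependent terms, the boundary term $\mathcal{B}_R$, and the volume-change term $\int(\scal_g+n(n-1))e^{-\varphi}(dV-d\wh{V})$, which is integrable because both factors decay like $e^{-\beta r}$ with $2\beta>n$. Your splitting at a fixed radius $R_0$, which makes $\mathcal{B}_R$ vanish identically, is just a slightly more explicit way of organising that same decomposition.
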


        \begin{proof}
            From the definitions of $\mathcal{W}_{\mathrm{AH},\wh{g}}$ and $S_{\wh{g}}$, one can compute that
            \begin{align}\begin{split}\label{eq:decomp_W}
                \mathcal{W}_{\mathrm{AH},\wh{g}}(g,\varphi,f) ={}& \frac{1}{2n}\lim \limits_{R \rightarrow \infty} \bigg[\int_{B_R} \abs{d f}^2-\abs{d \varphi}^2-2n(f-\varphi)\; dV_f\\ 
                &+\int_{B_R} \p{\scal_g+n(n-1)}\p{e^{-f}-e^{-\varphi}}\; dV\\
                &+\int_{B_R} \p{\scal_g+n(n-1)}e^{-\varphi}\; \p{dV-d\wh{V}}\\ 
                &+ 2n\mathcal{B}_R\p{g,\varphi,f}\bigg] + \frac{1}{2n}S_{\wh{g}}\p{g,\varphi} 
            \end{split}\end{align}

            The limits of the first two lines and $\mathcal{B}_R$ are finite as $f-\varphi\in C_c^\infty\p{M}$. Note that $S_{\wh{g}}\p{g,\varphi}$ is finite by Proposition \ref{EHfinite}. As for the third line, by Taylor expanding the volume form of $g$ in the direction of $h=g-\wh{g}$, we have the estimate
            $$\p{\scal_g+n(n-1)}\p{dV-d\wh{V}}=\frac{1}{2}\g{\p{\scal_g+n(n-1)} g}{h}\; d\wh{V}+\mathcal{O}\p{\abs{h}^2}.$$
            
            This is integrable, since $g\in \mathcal{R}^{2,\alpha}_\beta\p{M,\wh{g}}$ implies $\abs{h},\scal_g+n(n-1)=\mathcal{O}\p{e^{-\beta r}}$. This completes the proof.
        \end{proof}

        Corollary \ref{finitenesscor} allows us to make the following definition:

        \begin{definition}\label{inf_expander_defn}
            The \textbf{renormalised expander entropy} of $\p{M,g,\varphi}$, relative to a static background metric $\wh{g}$, is defined as

            \begin{equation*}
                \mu_{\mathrm{AH},\wh{g}}\p{g,\varphi}: = \inf \limits_{\substack{f \in C^\infty\p{M}\: s.t. \\ f - \varphi \in C^\infty_c\p{M}}} \mathcal{W}_{\mathrm{AH},\wh{g}}\p{g,\varphi,f}.
            \end{equation*}
            
            In the following, we shall sometimes find it useful to take the infimum over compactly supported functions representing the difference $f-\varphi$. To that end, we define 
                $$\widetilde{\mathcal{W}}_{\mathrm{AH},\wh{g}}\p{g,\varphi,\omega}:=\mathcal{W}_{\mathrm{AH},\wh{g}}\p{g,\varphi,\varphi-2\ln(\omega+1)},$$
            from which it is immediate that 
                $$\mu_{\mathrm{AH},\wh{g}}\p{g,\varphi} = \inf \limits_{\omega\in C_{c}^\infty\p{M}} \widetilde{\mathcal{W}}_{\mathrm{AH},\wh{g}}\p{g,\varphi,\omega}.$$
            
            Should a minimiser $f$ of $\mathcal{W}_{\mathrm{AH},\wh{g}}$ exist, then $\omega = e^{-\frac{\p{f-\varphi}}{2}} - 1$ will be a minimiser of $\widetilde{\mathcal{W}}_{\mathrm{AH},\wh{g}}$. 
        \end{definition}

    \subsection{Existence of a Minimiser on Bounded Domains}\label{bnd_domain_min_sec}
        We now turn to showing local existence of a minimiser. In aid of that goal, we derive the Euler--Lagrange equation for the entropy. 

        \begin{proposition}\label{el_proposition}
            The \textbf{Euler--Lagrange equation} for the functional $\mathcal{W}_{\mathrm{AH},\wh{g}}\p{g,\varphi,\cdot}$ is
            \begin{align}\label{el_eqn}
                \EL f :=\frac{1}{2n}\p{2\Delta f+\abs{d f}^2+\abs{d \varphi}^2+2nf-2n\varphi-\scal_g-n(n-1)-2n}=0.
            \end{align}
        \end{proposition}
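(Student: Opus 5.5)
We compute the first variation of $\mathcal{W}_{\mathrm{AH},\wh{g}}(g,\varphi,\cdot)$ at $f$ in a direction $u\in C^\infty_c(M)$; this is the natural class, since admissible competitors satisfy $f-\varphi\in C^\infty_c(M)$. Because $u$ has compact support, for $R$ large the variation does not touch $\partial B_R$. Hence $\mathcal{B}_R(g,\varphi,f+su)$ is independent of $s$, and the mass term $m_{\wh{g},R}$ does not involve $f$ at all. The variation therefore comes entirely from $\mathcal{W}_R$, and $R$ may be fixed large enough that $\operatorname{supp}u\subset B_R$.

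Write $dV_f=e^{-f}dV_g$, so that $\frac{d}{ds}\big|_{s=0}dV_{f+su}=-u\,dV_f$. Differentiating gives
\begin{align*}
\frac{d}{ds}\Big|_{s=0}\mathcal{W}_R(g,\varphi,f+su)=\int_{B_R}\Big(\frac{1}{n}\g{df}{du}-u\Big)dV_f-\int_{B_R}u\Big(\frac{1}{2n}\big(\abs{df}^2-\abs{d\varphi}^2+\scal_g\big)+\varphi-f+\frac{n-1}{2}\Big)dV_f.
\end{align*}
Integrate the gradient term by parts against the weighted measure, with no boundary term because $u$ is compactly supported. Recall $\Delta=-\mathrm{div}\nabla$ is the positive Laplacian in Besse's convention. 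Then
\begin{align*}
\int\g{df}{du}e^{-f}dV=\int u\,\delta(e^{-f}df)\,dV=\int u\big(\Delta f+\abs{df}^2\big)dV_f.
\end{align*}

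Collecting terms, the coefficient of $u$ in the integrand against $dV_f$ is
\begin{align*}
\frac{1}{n}\big(\Delta f+\abs{df}^2\big)-1-\frac{1}{2n}\big(\abs{df}^2-\abs{d\varphi}^2+\scal_g\big)-\varphi+f-\frac{n-1}{2}.
\end{align*}
Multiplying through by $2n$, this becomes $2\Delta f+\abs{df}^2+\abs{d\varphi}^2+2nf-2n\varphi-\scal_g-n(n-1)-2n$, which is exactly $2n\,\EL f$. Since $u$ is arbitrary and $e^{-f}>0$, the fundamental lemma of the calculus of variations shows that critical points satisfy $\EL f=0$.

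No step is a genuine obstacle. The only points needing care are the sign convention for $\Delta$ in the integration by parts, and confirming that the renormalising boundary terms ($m_{\wh{g},R}$ and $\mathcal{B}_R$) drop out of the variation, which follows from the compact support of admissible variations.
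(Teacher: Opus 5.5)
Your proposal is correct and follows essentially the same route as the paper: a first variation in a compactly supported direction, weighted integration by parts with the positive (Besse) Laplacian, and the observation that $m_{\wh{g},R}$ does not depend on $f$ while $\mathcal{B}_R$ and all boundary terms are unaffected because the variation has compact support. The paper only organises the computation differently, by first rewriting $\mathcal{W}_R$ as $\frac{1}{n}\int_{B_R}\Delta f+\abs{df}^2-n-n\EL f\;dV_f$ rather than differentiating the original integrand directly. Your step of fixing $R$ beyond the support of $u$ also cleanly avoids having to interchange the limit $R\to\infty$ with the derivative.
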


        \begin{proof} 
            Whether or not \eqref{el_eqn} is the Euler--Lagrange equation for $\mathcal{W}_{\mathrm{AH},\wh{g}}$, we may write 
            \begin{equation}\label{entropy_formula_with_el}
                \mathcal{W}_R(g,\varphi,f)=\frac{1}{n}\int_{B_R}\Delta f+\abs{d f}^2-n-n\EL f\; dV_f.
            \end{equation}
        
            For a perturbation $f+sv$ with $\abs{s} \ll 1$ and $v \in C^\infty_c\p{M}$, the first variation of \eqref{el_eqn} is
            $$\left.\frac{d}{ds}\right|_{s=0}\EL \p{f+sv}=\frac{1}{n}\p{\Delta v+\g{d f}{d v}+nv}.$$
            
            Thus, the integrand of \eqref{entropy_formula_with_el} has first variation 
            \begin{align*}
                \left.\frac{d}{ds}\right|_{s=0}\p{\Delta (f+sv)+\abs{d(f+sv)}^2-n-n\EL \p{f+sv}}=\g{d f}{d v}-nv.
            \end{align*}
            
            As the weighted volume satisfies $\left.\frac{d}{ds}\right|_{s=0}dV_{f+sv}=-v\; dV_f$, we obtain        
            \begin{align*}
                \left.\frac{d}{ds}\right|_{s=0}\mathcal{W}_{\mathrm{AH},\wh{g}}\p{g,\varphi,f+sv}={}& \frac{1}{n} \lim \limits_{R \rightarrow \infty}\int_{B_R} \g{d f}{d v}-nv-v\p{\Delta f+\abs{d f}^2-n-n\EL f}\; dV_f\\
                ={}& \lim \limits_{R \rightarrow \infty}\left[\int_{B_R} v\EL f\; dV_f-\frac{1}{n}\int_{B_R} v\Delta f+v\abs{d f}^2-\g{d f}{dv}\; dV_f\right]\\
                ={}&\lim \limits_{R \rightarrow \infty}\left[\int_{B_R} v\EL f\; dV_f + \frac{1}{n}\int_{\partial B_R}v\:\nu\p{f}\; dA_f\right].
            \end{align*}
            
            The limit of the boundary integral above vanishes, as $v$ has compact support. The variation of $\mathcal{B}_R\p{g,\varphi,f}$ vanishes for the same reason, while the variation of the weighted ADM-mass vanishes entirely, as it does not depend on $f$. All together, this yields the claimed Euler--Lagrange equation.
        \end{proof}

        \begin{remark}\label{static_el_remark}
            Suppose $\p{M,\wh{g},\wh{\varphi}}$ is a static triple with scalar curvature $\scal_{\wh{g}}=-n(n-1)$. Then the Euler-Lagrange equation simplifies to
             $$\EL f=\frac{1}{n}\p{\Delta f-\Delta \wh{\varphi}}+\frac{1}{2n}\p{\abs{d f}^2-\abs{d \wh{\varphi}}^2}+f-\wh{\varphi}=0.$$
             It is plain to see that $f= \wh{\varphi}$ is a solution of this equation.
        \end{remark}

        We also have the following uniqueness result when $f$ and $\varphi$ agree at infinity:

        \begin{proposition}\label{el_uniqueness}
            Let $\p{M,g,\varphi}$ be an AH manifold with approximate potential $\varphi$. Then there exists at most one solution $f$ of \eqref{el_eqn} such that $f - \varphi \rightarrow 0$ as $r \rightarrow \infty$. Moreover, for every bounded domain $\Omega \subset M$, there is at most one function $f$ solving $\eqref{el_eqn}$ on $\Omega$, such that $\left.\p{f-\varphi}\right|_{\partial \Omega} = 0$.
        \end{proposition}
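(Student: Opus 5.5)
Suppose $f_1,f_2$ both solve \eqref{el_eqn} with the same boundary behaviour. We take the difference of the two equations and apply a maximum principle. Set $u:=f_1-f_2$. Subtracting \eqref{el_eqn} for $f_2$ from that for $f_1$, the terms $\abs{d\varphi}^2$, $\varphi$, $\scal_g$ and the constants cancel. Using $\abs{df_1}^2-\abs{df_2}^2=\g{d(f_1+f_2)}{du}$, we obtain the linear equation
\begin{equation*}
    2\Delta u+\g{d(f_1+f_2)}{du}+2nu=0,
\end{equation*}
which we rewrite as $\Delta u+\g{X}{du}+nu=0$ with $X:=\frac{1}{2}d(f_1+f_2)$. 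With the paper's sign convention $\Delta=-\mathrm{tr}\nabla^2$, the operator $\Delta$ is nonnegative. So at an interior positive maximum of $u$ we have $\Delta u\geq 0$, $du=0$ and $nu>0$, and the left hand side is strictly positive, a contradiction. Likewise, at an interior negative minimum we have $\Delta u\leq0$, $du=0$ and $nu<0$, and the left hand side is strictly negative. Hence $u$ can have neither a positive interior maximum nor a negative interior minimum. The zeroth-order coefficient has the favourable sign, so this classical argument needs no bound on $X$.

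\textbf{Bounded domain case.} Here $f_i-\varphi=0$ on $\partial\Omega$, so $u|_{\partial\Omega}=0$. Since $u$ is continuous on the compact set $\overline{\Omega}$, it attains its maximum and minimum there. If $\max u>0$, the maximum is attained in the interior, which is impossible; similarly $\min u<0$ is impossible. Thus $u\equiv0$. This requires $f_i\in C^2(\Omega)\cap C^0(\overline{\Omega})$, which is implicit in the notion of solution.

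\textbf{Whole manifold case.} Now $f_i-\varphi\to0$ as $r\to\infty$, so $u\to0$ at infinity. If $\sup u>0$, choose $R$ with $\abs{u}<\frac12\sup u$ outside $B_R$. The supremum is then attained in the compact set $\overline{B_R}$ at an interior point of $M$, contradicting the above. The same argument applies to $\inf u<0$, so $u\equiv 0$. The main point to check is that the maximum really is attained at an interior point of $M$ where the solutions are $C^2$. This is automatic once decay at infinity confines the extremum to a compact set, so no weighted estimates (for example, control of $X\sim -d r$) are needed. If one prefers an integral argument instead, multiplying by $u\,e^{-(f_1+f_2)/2}$ and integrating would need decay of $u$, which is why the pointwise maximum principle is the cleaner route.
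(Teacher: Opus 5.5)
Your proposal is correct and follows essentially the same route as the paper: subtract the two Euler--Lagrange equations, use the decay at infinity (or the Dirichlet condition on $\partial\Omega$) to force any nonzero difference to attain a positive interior maximum or a negative interior minimum, and reach a contradiction from the signs of $\Delta u$, $du$ and $nu$ at that point. Your identity $\abs{df_1}^2-\abs{df_2}^2=\g{d(f_1+f_2)}{du}$ is the correct one; the paper's displayed expression contains a slip that is harmless because the gradient terms vanish at the extremum. Your explicit argument that the extremum is attained in a compact set makes precise a step the paper leaves implicit.
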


        \begin{proof}
            Assume $f_1$ and $f_2$ are two solutions to \eqref{el_eqn} and $f_i - \varphi \rightarrow 0$ as $r \rightarrow \infty$ for $i = 1,2$. Then, setting $\wt{f} := f_1 - f_2$, we have
            \begin{align}
                0 &= \frac{1}{n} \Lap \wt{f} + \frac{1}{2n}\p{\abs{d f_1}^2 - \abs{d f_2}^2} + \wt{f} \nonumber\\
                &= \frac{1}{n} \Lap \wt{f} + \frac{1}{2n}\p{|d \wt{f}|^2 + 2\g{d \p{f_1 + f_2}}{d\wt{f}}} + \wt{f}. \label{diff_el_eqn}
            \end{align}
            Since $f_i - \varphi \rightarrow 0$ as $r \rightarrow \infty$ for $i=1,2$ we have
            \begin{equation*}
                \wt{f} = \p{f_1 - \varphi} + \p{\varphi - f_2} \rightarrow 0
            \end{equation*}
            as $r \rightarrow \infty$. Therefore, either $\wt{f} \equiv 0$ or $\wt{f}$ achieves an extremum on $M$. In the former case we are done.
            In the latter case, $\wt{f}$ has either a positive maximum or a negative minimum.
            If $\wt{f}$ has a positive maximum, let $p := \mathrm{argmax}_M \wt{f}$. Then $\wt{f}\p{p} > 0$, $d \wt{f}\p{p} = 0$, and $\Lap \wt{f}\p{p} \geq 0$, which contradicts \eqref{diff_el_eqn}. The case of a negative minimum is treated analogously, which completes the proof
 %On the other hand, if $p := \mathrm{argmin}_M \wt{f}$ then $\wt{f}\p{p} < 0$, $d\wt{f}\p{p} = 0$, and $\Lap \wt{f}\p{p} \leq 0$, which also contradicts \eqref{diff_el_eqn} and completes the proof. 
The proof for bounded domains is the same after replacing instances of $M$ by $\Omega$.
        \end{proof}

        Next, we have a result which generalises Remark \ref{static_el_remark}:
        
        \begin{proposition}\label{klaus_konj} 
            Let $\p{\mathbb{S}^1\times M,\mathfrak{g}}$ be a $(n+1)$-dimensional AH manifold, such that
                $$\mathfrak{g}=e^{-2\varphi}\; d\tau^2+g$$
            and
                $$\scal_{\mathfrak{g}}=-n(n+1).$$
            
            If $f$ solves \eqref{el_eqn} with respect to $g$ and $\varphi$, and if $f - \varphi \rightarrow 0$ as $r \rightarrow \infty$, then $f=\varphi$. 
        \end{proposition}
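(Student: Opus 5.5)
The plan is to show that $f=\varphi$ itself solves \eqref{el_eqn} under the hypothesis $\scal_{\mathfrak{g}}=-n(n+1)$, and then to invoke the uniqueness statement of Proposition \ref{el_uniqueness}.

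Substitute $f=\varphi$ into \eqref{el_eqn}. The terms $2nf-2n\varphi$ cancel, and we get
\begin{align*}
    \EL \varphi = \frac{1}{2n}\p{2\Delta\varphi+2\abs{d\varphi}^2-\scal_g-n(n-1)-2n}.
\end{align*}
The trace formula computed in Section \ref{sec:static_metrics_etc} for the auxiliary metric gives
\begin{align*}
    \scal_{\mathfrak{g}}=\scal_g-2\Delta_g\varphi-2\abs{d\varphi}_g^2.
\end{align*}
Hence
\begin{align*}
    2\Delta\varphi+2\abs{d\varphi}^2-\scal_g=-\scal_{\mathfrak{g}}=n(n+1).
\end{align*}
Therefore
\begin{align*}
    \EL\varphi=\frac{1}{2n}\p{n(n+1)-n(n-1)-2n}=0.
\end{align*}
So $\varphi$ is a solution of \eqref{el_eqn}, and trivially $\varphi-\varphi\to 0$ as $r\to\infty$.

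Next, let $f$ be any solution of \eqref{el_eqn} with $f-\varphi\to0$ as $r\to\infty$. Apply the first part of Proposition \ref{el_uniqueness} to the two solutions $f_1=f$ and $f_2=\varphi$. Both tend to $\varphi$ at infinity, so the proposition gives $f=\varphi$.

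The only point needing care is whether Proposition \ref{el_uniqueness} applies in this setting, since it is stated for $\varphi$ an approximate potential of $g$. Its maximum principle argument, however, uses only two facts. First, both $f_1$ and $f_2$ satisfy \eqref{el_eqn}, so their difference $\wt f$ solves \eqref{diff_el_eqn}. Second, $\wt f\to0$ at infinity, so $\wt f$ attains any positive maximum or negative minimum at an interior point. Neither fact depends on the approximate-potential property. I would nonetheless remark that $\scal_{\mathfrak{g}}=-n(n+1)$ combined with the trace formula gives $\Delta\varphi+\abs{d\varphi}^2=n+\tfrac12(\scal_g+n(n-1))$. So whenever $\scal_g+n(n-1)$ decays as in Lemma \ref{fefferman_graham}, $\varphi$ is in fact an approximate potential, and the proposition applies verbatim in either case.
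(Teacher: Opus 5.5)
Your proof is correct and is the paper's argument: the identity $\scal_{\mathfrak{g}}=\scal_g-2\Delta_g\varphi-2\abs{d\varphi}_g^2$ shows that $\varphi$ itself solves \eqref{el_eqn}, and the maximum-principle uniqueness of Proposition \ref{el_uniqueness} then gives $f=\varphi$. Your extra remark is accurate: that uniqueness argument uses only the equation and the decay $f-\varphi\to 0$, not the approximate-potential property, and this settles a point the paper leaves implicit.
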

        
        \begin{proof} 
            By direct computation (see Section \ref{sec:static_metrics_etc}), one finds that
                $$\scal_{\mathfrak{g}}=\scal_g-2\Delta_g \varphi-2\abs{d\varphi}_g^2.$$
                
            Inserting $\scal_g=-n(n+1)+2\Delta_g \varphi+2\abs{d\varphi}_g^2$ into the Euler--Lagrange equation \eqref{el_eqn}, we obtain
                $$\EL f=\frac{1}{2n}\p{2\Delta_gf -2\Delta_g \varphi+\abs{df}^2_g-\abs{d\varphi}_g^2}+f-\varphi=0.$$
            
            Clearly $\EL\varphi=0$, and by Proposition \ref{el_uniqueness} we have $f=\varphi$.
        \end{proof}       

        Before proving local existence of a minimiser for the entropy, we will prove two-sided $H^1_\varphi$-bounds for the entropy. The following Poincar\'e type inequality will be of aid in this.
        
        \begin{lemma}\label{alttospecgap} 
            Suppose $\varphi$ satisfies \eqref{phiequation} and let $\omega =\mathcal{O}\p{e^{-\beta r}}$, with $\frac{n}{2}<\beta<n$. For every $0 < \Lambda < \frac{2n-1}{4}$, we can find a compact subset $K \subset M$ such that 
            $$\int_M \abs{\nabla \omega}^2\; dV_\varphi \geq \int_K \p{\frac{1}{2}\Lap \varphi + \frac{1}{4}\abs{d \varphi}^2} \omega^2\; dV_\varphi +  \int_{M\setminus K}\Lambda \omega^2\; dV_\varphi$$
        \end{lemma}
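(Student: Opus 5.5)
The plan is to remove the weight $e^{-\varphi}$ by a ground-state substitution. I would write $\omega = e^{\varphi/2}u$, i.e. $u := e^{-\varphi/2}\omega$, so that $u^2\, dV = \omega^2\, dV_\varphi$. I would then show that the weighted Dirichlet energy of $\omega$ equals an unweighted Dirichlet energy of $u$ plus a potential term. In that potential the quantity $\frac12\Lap\varphi+\frac14\abs{d\varphi}^2$ appears; this is the same conjugation used in the proof of Proposition \ref{isomorph_prop}.

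\textbf{Step 1 (conjugation identity).} Since $d\omega = e^{\varphi/2}\p{du + \frac12 u\, d\varphi}$, we get
\begin{align*}
\int_{B_R}\abs{\nabla\omega}^2\, dV_\varphi = \int_{B_R}\abs{du}^2 + u\g{du}{d\varphi} + \frac14 u^2\abs{d\varphi}^2\; dV .
\end{align*}
Write $u\g{du}{d\varphi}=\frac12\g{d(u^2)}{d\varphi}$ and integrate by parts, using the Besse convention $\Lap = -\tr\nabla^2$. This turns the middle term into $\frac12\int_{B_R}u^2\Lap\varphi\, dV$ plus the boundary term $\frac12\int_{\partial B_R}u^2\,\nu(\varphi)\, dA$. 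Hence
\begin{align*}
\int_{B_R}\abs{\nabla\omega}^2 dV_\varphi = \int_{B_R}\abs{du}^2 dV + \int_{B_R}\p{\tfrac12\Lap\varphi+\tfrac14\abs{d\varphi}^2}\omega^2\, dV_\varphi + \tfrac12\int_{\partial B_R}\omega^2\,\nu(\varphi)\, dA_\varphi .
\end{align*}

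\textbf{Step 2 (boundary term vanishes).} We have $\omega^2 = \mathcal{O}(e^{-2\beta r})$, $e^{-\varphi}\sim e^{r}$, $\abs{\nu(\varphi)}\lesssim 1$, and by Remark \ref{distance_function} the area of $\partial B_R$ grows like $e^{(n-1)R}$. So the boundary integral is $\mathcal{O}(e^{(n-2\beta)R})\to 0$, because $\beta>\frac n2$. The same decay makes $\int_M \omega^2\, dV_\varphi$ finite. If $d\omega$ is only known to be $\mathcal{O}(e^{-\beta r})$ rather than merely bounded, the same count shows every integrand is integrable. I will take the decay assumption on $\omega$ in the $C^1$ sense, which is how it is used later for the entropy. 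Letting $R\to\infty$ and discarding $\int\abs{du}^2\ge 0$ gives
\begin{align*}
\int_M \abs{\nabla\omega}^2\, dV_\varphi \ge \int_M\p{\tfrac12\Lap\varphi+\tfrac14\abs{d\varphi}^2}\omega^2\, dV_\varphi .
\end{align*}

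\textbf{Step 3 (asymptotics of the potential; main obstacle).} It remains to show that $F:=\frac12\Lap\varphi+\frac14\abs{d\varphi}^2\to\frac{2n-1}{4}$ at infinity. Then, for any $\Lambda<\frac{2n-1}{4}$, one can choose a compact $K$ with $F\ge\Lambda$ on $M\setminus K$, and splitting the integral over $K$ and $M\setminus K$ gives the statement. By \eqref{phiequation}, $\Lap\varphi = n-\abs{d\varphi}^2+\mathcal{O}(e^{-\beta r})$, so $F = \frac n2 - \frac14\abs{d\varphi}^2 + \mathcal{O}(e^{-\beta r})$. The key input is therefore $\abs{d\varphi}_g\to 1$. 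This follows from the explicit form $\varphi = -r+\sum_{\ell\ge1}a_\ell e^{-\ell r}$ constructed in Lemma \ref{fefferman_graham}, together with $g = dr^2+e^{2r}h_r$ near infinity. Indeed, $\partial_r\varphi = -1+\mathcal{O}(e^{-r})$, and the tangential derivatives contribute $e^{-2r}\abs{d_x a_\ell}^2_{h_r}e^{-2\ell r}\to 0$. This step is where the precise meaning of ``asymptotic to $-r$'' (with derivatives) is needed, and it is the point I expect to require the most care.
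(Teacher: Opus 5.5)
Your proposal is correct and is essentially the paper's proof. The paper sets $V=e^{\varphi/2}$ and expands $0\le\int_M\abs{\nabla(V^{-1}\omega)}^2\,dV$, which is your $\int\abs{du}^2\ge 0$; it then integrates the cross term by parts, discards the boundary term via $\omega^2=\mathcal{O}(e^{-(n+\eps)r})$, and uses $\Delta\varphi+\abs{d\varphi}^2=n+\mathcal{O}(e^{-\beta r})$ and $\abs{d\varphi}^2=1+\mathcal{O}(e^{-r})$ from Lemma \ref{fefferman_graham} to get $\frac12\Delta\varphi+\frac14\abs{d\varphi}^2\ge\Lambda$ off a compact set. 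The extra $C^1$ decay you impose on $\omega$ is unnecessary: you can drop $\int_{B_R}\abs{du}^2$ on balls and let $R\to\infty$, and the inequality is trivial if $\int_M\abs{\nabla\omega}^2\,dV_\varphi=\infty$.
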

        
        \begin{proof}
            Set $V:=e^{\frac{\varphi}{2}}$. Then
            \begin{equation}\label{alttospecgap1}
                \begin{split}
                    0\leq{} & \int_M\abs{\nabla\p{V^{-1}\omega}}^2\; dV=\int_M\abs{V^{-1}\nabla \omega-V^{-2}\omega\nabla V}^2\; dV\\ ={}&\int_MV^{-2}\abs{\nabla \omega}^2-V^{-3}\g{\nabla \omega^2}{\nabla V}+V^{-4}\omega^2\abs{\nabla V}^2\; dV\\ ={}& \int_MV^{-2}\abs{\nabla \omega}^2-\omega^2\p{V^{-3}\Delta V + 2V^{-4}\abs{\nabla V}^2}+\delta\p{\omega^2V^{-3}\nabla V}\; dV
                \end{split}
            \end{equation}
            
            Since $\omega = \mathcal{O}\p{e^{-\beta r}}$ by assumption, we must have $\omega^2=\mathcal{O}\p{e^{-(n+\varepsilon)r}}$ for some $\eps > 0$. In particular,
                $$\int_M\delta\p{\omega^2V^{-3}\nabla V}\; dV=-\lim_{R\to \infty} \frac{1}{2}\int_{\partial B_R}\omega^2\nu(\varphi)\; dA_\varphi=0.$$
            
            Recall the asymptotics of $\varphi$, provided by the construction in Lemma \ref{fefferman_graham}:
            \begin{align*}\Delta \varphi+\abs{d \varphi}^2 &= n+\mathcal{O}\p{e^{-\beta r}},\\
                \abs{d \varphi}^2&=1+\mathcal{O}\p{e^{-r}}
            \end{align*}
            
            Thus for every $0 < \Lambda <\frac{2n-1}{4}$, we can choose a compact set $K \subset M$ such that on the complement $M\setminus K$ we have
                $$V^{-1}\Delta V+2V^{-2}\abs{\nabla V}^2=\frac{1}{2}\p{\Delta\varphi+\abs{d \varphi}^2}-\frac{1}{4}\abs{d \varphi}^2\geq \Lambda.$$
            
            Using this and rearranging \eqref{alttospecgap1}, we obtain 
            \begin{align*}
                \int_M V^{-2}\abs{\nabla \omega}^2\; dV &\geq \int_M V^{-2}\omega^2\p{V^{-1}\Delta V+2V^{-2}\abs{\nabla V}^2}\; dV\\
                &\geq \int_K V^{-2}\omega^2\p{V^{-1}\Delta V+2V^{-2}\abs{\nabla V}^2}\; dV +  \int_{M\setminus K}\Lambda V^{-2}\omega^2\; dV,    
            \end{align*}          
            as desired.
        \end{proof}

        Now for the two-sided $H^1_\varphi$-bounds.
        
        \begin{proposition}\label{2sided_H1_bounds}
            Suppose $g\in \mathcal{R}^{2,\alpha}_\beta\p{M,\wh{g}}$ for some $\frac{n}2<\beta<n$, and that $\varphi$ satisfies \eqref{phiequation}. Then there are constants $C_1, C_2, C_3 > 0$ depending only on $g, \wh{g}, \varphi$, and a constant $C_\eps > 0$ depending on the previous parameters as well as on $0 < \eps \ll 1$, such that    
                $$C_1\norm{ \omega}^2_{H^1_\varphi} -C_2 \leq \wt{\mathcal{W}}_{\mathrm{AH},\wh{g}}\p{g,\varphi,\omega} \leq C_\varepsilon\p{1+\norm{\omega}_{H^1_{\varphi}}^\varepsilon}\norm{\omega}^2_{H^1_\varphi} + C_3,$$
            for every $\omega \in H^1_\varphi\p{M}$.
        \end{proposition}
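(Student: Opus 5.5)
The plan is to rewrite $\wt{\mathcal{W}}_{\mathrm{AH},\wh{g}}\p{g,\varphi,\omega}$ explicitly in terms of $\omega$. We compare it with its value at $\omega=0$, which is finite by Corollary \ref{finitenesscor}, since $f=\varphi$ there. We then estimate the remaining $\omega$-dependent integrals. We work first with $\omega\in C^\infty_c\p{M}$ and extend to $H^1_\varphi$ by density at the end.

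Set $f=\varphi-2\ln(\omega+1)$. Then $dV_f=(\omega+1)^2dV_\varphi$ and $df=d\varphi-2(\omega+1)^{-1}d\omega$, so that
\begin{equation*}
\abs{df}^2e^{-f}=\p{(\omega+1)^2\abs{d\varphi}^2-2\g{d\varphi}{d(\omega+1)^2}+4\abs{d\omega}^2}e^{-\varphi}.
\end{equation*}
\begin{itemize}
\item[(a)] Integrate the cross term by parts against $e^{-\varphi}dV$. This produces $2(\omega+1)^2\p{\Delta\varphi+\abs{d\varphi}^2}$ plus a boundary integral on $\partial B_R$. For compactly supported $\omega$, that boundary integral coincides with the $\omega$-independent one, and the $\omega$-independent boundary pieces are absorbed by the mass term and by $\mathcal{B}_R$, whose integrand $f-\varphi$ also has compact support.
\item[(b)] Collect terms using \eqref{phiequation}. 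I expect to obtain
\begin{equation*}
\wt{\mathcal{W}}_{\mathrm{AH},\wh{g}}\p{g,\varphi,\omega}-\wt{\mathcal{W}}_{\mathrm{AH},\wh{g}}\p{g,\varphi,0}=\frac{1}{2n}\int_M 4\abs{d\omega}^2+A\,(2\omega+\omega^2)+2n\,\Phi(\omega)\; dV_\varphi ,
\end{equation*}
up to sign conventions I would fix carefully. Here $A=\scal_g+n(n-1)+2\p{\Delta\varphi+\abs{d\varphi}^2-n}=\mathcal{O}\p{e^{-\beta r}}$ by $g\in\mathcal{R}^{2,\alpha}_\beta$ and \eqref{phiequation}.
\item[(c)] The function $\Phi(\omega)=(\omega+1)^2\ln(\omega+1)^2+c_1\omega+c_2\omega^2$ collects the logarithmic term together with the constants $n-1$ and $n$. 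The constants $c_1,c_2$ come out of step (b).
\end{itemize}
The linear terms in $\omega$ must cancel exactly, reflecting that $\omega=0$ is critical when $\mathcal{L}\varphi\approx0$. The leftover linear piece $\int A\omega\,dV_\varphi$ is then bounded by $\norm{A}_{L^2_\varphi}\norm{\omega}_{L^2_\varphi}$. Here $\norm{A}_{L^2_\varphi}<\infty$ since $2\beta>n$ (Remark \ref{distance_function}, with the weight $e^{-\varphi}\sim e^{r}$). Young's inequality then gives $\int A\omega\,dV_\varphi\leq\eta\norm{\omega}^2+C_\eta$.

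For the lower bound, I would use that $\Phi$ is bounded below by $-\kappa\omega^2$ with an explicit $\kappa$. This holds because $x^2\ln x^2$ is bounded below on $x=\omega+1>0$ and behaves like $2\omega+3\omega^2$ near $\omega=0$. I would then absorb $\kappa\omega^2$ and the bounded term $A\omega^2$ into part of $4\abs{d\omega}^2$ via Lemma \ref{alttospecgap}: outside a compact $K$, $\int\abs{d\omega}^2dV_\varphi\geq\Lambda\int\omega^2dV_\varphi$ with $\Lambda$ close to $\frac{2n-1}{4}$. On $K$, the quadratic part is handled by positivity of $\Phi$ away from its minimum, together with a compactness argument (Rellich on $K$) if needed. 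This step is the main obstacle: the constants must genuinely fit, i.e.\ $\kappa<4\Lambda$ must hold (using $\Lambda<\frac{2n-1}{4}$). If they do not fit directly, I would split $\Phi$ into the region $\omega\geq-\frac12$, where $\Phi\gtrsim\omega^2$ is positive, and the region $-1<\omega<-\frac12$, where $\Phi$ is bounded and $\omega^2$ is comparable to $1$. In the second region the contribution is controlled by the measure of $\{\omega<-\frac12\}$, which is in turn at most $4\norm{\omega}_{L^2_\varphi}^2$. This gives $C_1\norm{\omega}^2_{H^1_\varphi}-C_2$.

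For the upper bound, I would use $\abs{\Phi(\omega)}\leq C_\eps\p{\omega^2+\abs{\omega}^{2+\eps}}$. It then remains to bound $\int\abs{\omega}^{2+\eps}dV_\varphi$ by $C\norm{\omega}_{H^1_\varphi}^{2+\eps}$. For this I would apply a Sobolev inequality for $e^{-\varphi/2}\omega$ on the AH manifold $(M,g)$, which has bounded geometry, with $2+\eps\leq\frac{2n}{n-2}$. The extra weight $e^{-\eps\varphi/2}$ would be handled by passing to the auxiliary manifold $\mathfrak{M}$, where $dV_\varphi$ is the Riemannian volume and norms are equivalent by \eqref{sobolev_norm_equiv}. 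Combining this with the bound on $\int A\omega$ yields $C_\eps\p{1+\norm{\omega}^\eps_{H^1_\varphi}}\norm{\omega}^2_{H^1_\varphi}+C_3$, and density finishes the argument.
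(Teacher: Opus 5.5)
Your reduction is the same as the paper's. For compactly supported $\omega$, comparing with $\omega=0$ gives
\[
\wt{\mathcal{W}}_{\mathrm{AH},\wh{g}}\p{g,\varphi,\omega}=\frac{1}{2n}\int_M 4\abs{d\omega}^2+H(\omega)+\p{\omega^2+2\omega}R(g,\varphi)\,dV_\varphi+\wt{\mathcal{W}}_{\mathrm{AH},\wh{g}}\p{g,\varphi,0},
\]
where $R(g,\varphi)=\scal_g+n(n-1)-2\p{\Delta\varphi+\abs{d\varphi}^2-n}$ and $H=2n\Phi$ with $c_1=-2$, $c_2=-1$. The last term is exactly the paper's constant $C(g,\varphi)$. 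Your treatment of the linear term by Young's inequality is the paper's, and so is your upper bound via $H^1_\varphi\subset L^{2+\eps}_\varphi$. One small slip there: after substituting $v=e^{-\varphi/2}\omega$, the leftover weight is $e^{+\eps\varphi/2}$, which is bounded, so no detour through $\mathfrak{M}$ is needed.

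The gap is in the lower bound on the compact set $K$.

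\textbf{The obstacle you name does not exist.} With the correct constants, $H(\omega)=2n\p{y\ln y-y+1}$ with $y=(1+\omega)^2$, which is nonnegative for every real $\omega$. So $\kappa=0$. Your fallback would not have helped anyway: bounding the measure of $\{\omega<-\frac12\}$ by $4\norm{\omega}^2_{L^2_\varphi}$ just produces another $-c\norm{\omega}^2_{L^2_\varphi}$ term that still has to be absorbed.

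\textbf{The real difficulty is on $K$.} There, Lemma \ref{alttospecgap} only supplies the coefficient $\frac12\Delta\varphi+\frac14\abs{d\varphi}^2$, which has no sign. Also, $R(g,\varphi)$ is merely bounded on $K$, not small, since $\scal_g$ can be very negative in the interior. Once the gradient term is spent, you are left with $\int_K\p{H(\omega)-C_K\omega^2}dV_\varphi$, where $C_K$ may be large. Neither positivity of $\Phi$ away from $0$ nor Rellich compactness controls this. Indeed, if the nonlinearity were merely quadratic, taking $\omega=t\chi$ with a bump $\chi$ supported where $R\ll0$ would send the functional to $-\infty$ like $-ct^2$, and the claimed bound would be false.

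\textbf{The missing idea is super-quadratic growth.} As $\abs{\omega}\to\infty$, $H(\omega)\sim 2n\,\omega^2\ln\omega^2$. This gives the pointwise bound $H(\omega)-C_K\omega^2\geq -C'$. Integrating over the compact set $K$ then costs only $C'\Vol_\varphi(K)$, and this is exactly where the additive constant $C_2$ comes from. Adding and subtracting $\Lambda\omega^2$ on $K$ then yields $\int_M 2\abs{d\omega}^2+\Lambda\omega^2\,dV_\varphi$ minus constants, as in the paper.

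Finally, the statement concerns all $\omega\in H^1_\varphi$, so do not restrict to $\omega>-1$. Writing the logarithm as $\ln\p{(1+\omega)^2}$ keeps $H\ge 0$ and the growth bound valid on all of $\R$.
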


        \begin{proof}
            Setting $f=\varphi-2\ln\p{1+\omega}$ in the decomposition \eqref{eq:decomp_W}, we obtain 
            \begin{align}
                \begin{split}\label{eq:2sided1}
                \wt{\mathcal{W}}_{\mathrm{AH},\wh{g}}(g,\varphi,\omega)
                    ={}&\frac{1}{2n} \lim_{R\to \infty} \left[\int_{B_R} 4\abs{d\omega}^2-4(\omega+1)\g{d\omega}{d\varphi} +4n\p{1+\omega}^2\ln\p{1+\omega}\right.\\&\qquad \qquad +\p{\omega^2+2\omega}\p{\scal_g+n(n-1)}\; dV_\varphi+2n\mathcal{B}_R\p{g,\varphi,f}\biggr]+C(g,\varphi),
                \end{split}
            \end{align}
            where 
                $$C(g,\varphi):=\frac{1}{2n}\lim_{R\to \infty}\int_{B_R}\p{\scal_g+n(n-1)}e^{-\varphi}\p{dV-d\wh{V}} + \frac{1}{2n} S_{\wh{g}}\p{g,\varphi}.$$
            
            This was shown to be finite during the proof of Corollary \ref{finitenesscor}. Since $\omega \in H^1_\varphi\p{M}$, we must have 
                $$\lim_{R\to \infty }\mathcal{B}_R(g,\varphi,f)=\frac{1}{n}\lim_{R\to \infty }\int_{\partial B_R}2\ln\p{1+\omega}\nu\p{\varphi}\; dA_\varphi=\frac{1}{n}\lim_{R\to \infty }\int_{\partial B_R}2\omega\:\nu\p{\varphi}\; dA_\varphi,$$
            as the higher order terms of $\ln(1+\omega)=\omega+\mathcal{O}\p{\omega^2}$ do not contribute to the limit. By the same logic, we may exchange $2n\mathcal{B}_R\p{g,\varphi,f}$ for 
                $$\int_{\partial B_R}2\p{\omega^2+2\omega}\nu\p{\varphi}\; dA_\varphi=\int_{B_R}4\p{\omega+1}\g{d\omega}{d\varphi}-2\p{\omega^2+2\omega}\p{\Delta\varphi+\abs{d\varphi}^2}\; dV_\varphi$$
            in \eqref{eq:2sided1}. After the immediate cancellations, we are left with 
            \begin{align*}
                \wt{\mathcal{W}}_{\mathrm{AH},\wh{g}}(g,\varphi,\omega)
                    ={}&\frac{1}{2n} \lim_{R\to \infty} \left[\int_{B_R} 4\abs{d\omega}^2+4n\p{1+\omega}^2\ln\p{1+\omega}\right.\\&\qquad \qquad +\p{\omega^2+2\omega}\p{\scal_g+n(n-1)-2\Delta\varphi-2\abs{d\varphi}^2}\; dV_\varphi\biggr]+C(g,\varphi).
            \end{align*}
            
            By defining the quantities
              \begin{align*}
                H\p{\omega} &:= 2n \p{\omega + 1}^2 \ln\p{\p{\omega + 1}^2} - 2n\p{\omega^2 + 2\omega},
            \end{align*}
            and 
                $$R\p{g,\varphi} := \scal_g + n\p{n-1} -2\p{\Lap \varphi + \abs{d \varphi}^2 - n},$$
            we may simply write 
            \begin{align*}
                \wt{\mathcal{W}}_{\mathrm{AH},\wh{g}}(g,\varphi,\omega)
                ={}&\frac{1}{2n} \lim_{R\to \infty} \left[\int_{B_R} 4\abs{d\omega}^2+H(\omega)+\p{\omega^2+2\omega}R(g,\varphi)\; dV_\varphi\right]+C(g,\varphi).
            \end{align*}
            
            Note that $R(g,\varphi)=\mathcal{O}\p{e^{-\beta r}}$ by assumption, while $H$ satisfies 
            \begin{equation*}
                0 \leq H\p{\omega} \leq C_\eps \p{\omega^2+\omega^{2+\eps}},    
            \end{equation*}
            for arbitrary $\varepsilon>0$. The upper bound for the entropy is now derived using Young's Inequality:
            \begin{align*}
                \wt{\mathcal{W}}_{\mathrm{AH},\wh{g}}\p{g,\varphi, \omega}  &= \frac{1}{2n}\lim \limits_{R \rightarrow \infty}\bigg[\int_{B_R} 4\abs{d \omega}^2 + H\p{\omega} + \p{\omega^2 + 2\omega}R\p{g,\varphi}\;  dV_\varphi\bigg]+C(g,\varphi)\\
                &\leq \frac{1}{2n}\lim \limits_{R \rightarrow \infty}\bigg[\int_{B_R} 4\abs{d \omega}^2 +C_n \omega^2 + C_\eps \omega^{2 + \eps}  + 4\abs{R\p{g,\varphi}}^2 dV_\varphi\bigg]+C(g,\varphi)\\
                &\leq \wh{C}_\eps\p{1 + \norm{\omega}^\eps_{H^1_\varphi\p{M}}}\norm{\omega}^2_{H^1_\varphi\p{M}} + \norm{R\p{g,\varphi}}^2_{L^2_\varphi\p{M}}+C(g,\varphi),
            \end{align*}
            where $\wh{C}_\eps > 0$ is a constant depending on $\eps, g, \wh{g}, \varphi$. Due to its assumed decay, the $L^2_\varphi$-norm of $R\p{g,\varphi}$ is finite. Note we have also used the Sobolev embedding $H^1_\varphi\subset L^{2+\eps}_\varphi$ for $\eps$ small enough (see \cite[Lemma 3.6(c)]{Lee}). \\

            To find the claimed lower bound, we choose a compact subset $K \subset M$ such that Lemma \ref{alttospecgap} holds for some fixed $0 < \Lambda < \frac{2n-1}{4}$. By possibly enlarging $K$, we may also assume that, on $M\setminus K$, we have 
                $$\Lambda+R(g,\varphi) - \eps>0$$

            Applying Young's inequality and Lemma \ref{alttospecgap}, we obtain
            \begin{align*}
                \lim \limits_{R \rightarrow \infty} \int_{B_R} &4\abs{d \omega}^2 + H\p{\omega} + R\p{g,\varphi}\p{\omega^2 + 2\omega} dV_\varphi\\
                \geq{} & \lim \limits_{R \rightarrow \infty} \int_{B_R} 4\abs{d \omega}^2+H(\omega)+\p{R(g,\varphi)-\eps}\omega^2-\frac{1}{\eps}\abs{R(g,\varphi)}^2\; dV_\varphi\\
                \geq{} & \int_K 2\abs{d \omega}^2 + H\p{\omega} + \p{R\p{g,\varphi} - \eps + \Lap \varphi + \frac{1}{2}\abs{d \varphi}^2} \omega^2 \;dV_\varphi\\
                &+ \lim \limits_{R \rightarrow \infty} \int_{B_R \backslash K} 2\abs{d \omega}^2 + H\p{\omega} + \p{R\p{g,\varphi} - \eps + 2\Lambda} \omega^2 \; dV_\varphi - \frac{1}{\eps}\norm{R\p{g,\varphi}}^2_{L^2_\varphi\p{M}}\\
                \geq{} &\lim \limits_{R \rightarrow \infty} \int_{B_R} 2\abs{d \omega}^2+\Lambda \omega^2\; dV_\varphi- \frac{1}{\eps}\norm{R\p{g,\varphi}}^2_{L^2_\varphi\p{M}}\\ &+\int_K H(\omega)+\p{R(g,\varphi)-\eps-\Lambda+\Delta \varphi+\frac{1}{2}\abs{d \varphi}^2}\omega^2\; dV_\varphi.
            \end{align*}

            Now, since $g$ and $\varphi$ are $C^{\ell,\alpha}$ (and $\ell\geq 2$), we can use the definition of $K$, as well as $H\p{\omega} \geq 0$, and that it has faster than quadratic growth, to deduce that           
            \begin{equation*}
                H(\omega)+\p{R(g,\varphi) - \eps -\Lambda+ \Lap \varphi + \frac{1}{2}\abs{d \varphi}^2}\omega ^2\geq -C_2    
            \end{equation*}   
            on $K$. Here $C_2 > 0$ is a constant depending only on $g,\wh{g}, \varphi$. Putting everything together yields the claimed lower bound:
            \begin{align*}
                &\lim \limits_{R \rightarrow \infty} \int_{B_R} 4\abs{d \omega}^2 + H\p{\omega} + R\p{g,\varphi}\p{\omega^2 + 2\omega} dV_\varphi
                \\ &\qquad \qquad \geq C_1\norm{\omega}_{H^1_\varphi}^2-C_2\Vol_\varphi\!\p{K}-\frac{1}{\eps}\norm{R(g,\varphi)}_{L^2_\varphi}^2.
            \end{align*}
        \end{proof}

        Note that, by a density argument, Proposition \ref{2sided_H1_bounds} will allow us to use the alternative definition of the entropy:
        \begin{equation*}
            \mu_{\mathrm{AH},\wh{g}}\p{g,\varphi}: = \inf \limits_{\substack{f \in C^\infty\p{M} \; s.t.\\ f - \varphi \in H^1_\varphi\p{M}}} \mathcal{W}_{\mathrm{AH},\wh{g}}\p{g,\varphi,f} = \inf_{\omega \in H^1_\varphi\p{M}} \wt{\mathcal{W}}_{\mathrm{AH},\wh{g}}\p{g,\varphi, \omega}.
        \end{equation*}
        
        We can now prove existence of a minimiser on bounded domains by a standard variational argument. For this, it will be convenient to consider a local version of the entropy. For a bounded domain $\Omega \subset M$, we define $\mathcal{W}_{\mathrm{AH},\wh{g},\Omega}$ by replacing integration over $M$ with integration over $\Omega$ in Definition \ref{entropy_defn}. We define $\wt{\mathcal{W}}_{\mathrm{AH},\wh{g},\Omega}$ analogously. Finally, we define the local entropy:
        \begin{equation*}
            \mu_{\mathrm{AH}, \wh{g}, \Omega}\p{g,\varphi} := \inf \limits_{\substack{f \in C^\infty\p{\Omega}\; s.t. \\ f - \varphi \in H^1_{\varphi,0}\p{\Omega}}} \mathcal{W}_{\mathrm{AH},\wh{g},\Omega}\p{g,\varphi,f} = \inf_{\omega \in H^1_{\varphi,0}\p{\Omega}} \wt{\mathcal{W}}_{\mathrm{AH},\wh{g},\Omega}\p{g,\varphi, \omega}
        \end{equation*}
        where $H^1_{\varphi,0}\p{\Omega}:=\left\{\psi\in H^1_\varphi(\Omega)\; :\; \psi|_{\partial \Omega}=0\right\}$.

        \begin{proposition}\label{minimiseronboundeddomains}
            Let $\Omega \subset M$ be a bounded domain with smooth boundary. Then there is a function $\omega\in H^1_{\varphi,0}\p{\Omega}$ which realises the infimum of $\wt{\mathcal{W}}_{\mathrm{AH},\wh{g},\Omega}\p{g, \varphi, \cdot}$.
            
            This function $\omega$ is the unique such minimiser vanishing on the boundary. Equivalently, there is a unique $f \in C^{\ell, \alpha}\p{\Omega}$ solving \eqref{el_eqn} on $\Omega$, such that $\left.\p{f-\varphi}\right|_{\partial \Omega} = 0$, and it achieves the infimum of $\mathcal{W}_{\mathrm{AH},\wh{g},\Omega}$.
        \end{proposition}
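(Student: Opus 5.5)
The plan is the direct method of the calculus of variations applied to the local functional $\wt{\mathcal{W}}_{\mathrm{AH},\wh{g},\Omega}(g,\varphi,\cdot)$ on $H^1_{\varphi,0}(\Omega)$. This is followed by regularity theory for the Euler--Lagrange equation \eqref{el_eqn}, and uniqueness comes from Proposition \ref{el_uniqueness}.

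\textbf{Existence.} On a bounded domain the boundary terms $m_{\wh{g},R}$ and $\mathcal{B}_R$ are replaced by their values on $\partial\Omega$. The latter vanishes for $\omega\in H^1_{\varphi,0}(\Omega)$, since $f=\varphi$ there. Hence, up to a constant $C(g,\varphi,\Omega)$, the same manipulation as in the proof of Proposition \ref{2sided_H1_bounds} gives
\begin{equation*}
\wt{\mathcal{W}}_{\mathrm{AH},\wh{g},\Omega}(g,\varphi,\omega)=\frac{1}{2n}\int_\Omega 4\abs{d\omega}^2+H(\omega)+(\omega^2+2\omega)R(g,\varphi)\; dV_\varphi + C.
\end{equation*}
Because $\Omega$ is bounded, the Poincar\'e inequality for $H^1_0(\Omega)$ holds, and the weights $e^{-\varphi}$, $R(g,\varphi)$ are bounded above and below there. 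Together with $H\geq 0$ and Young's inequality this yields coercivity: $\wt{\mathcal{W}}\geq C_1\norm{\omega}_{H^1}^2-C_2$. A minimising sequence is therefore bounded in $H^1_{\varphi,0}(\Omega)$, so we may pass to a subsequence converging weakly in $H^1$ and, by Rellich, strongly in $L^2$ and a.e. The Dirichlet term is weakly lower semicontinuous and the linear and quadratic $R$-terms converge. For $H(\omega)\geq 0$ Fatou's lemma gives lower semicontinuity. Hence the limit is a minimiser.

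\textbf{Regularity and positivity.} To pass to $f=\varphi-2\ln(1+\omega)$ we need $1+\omega>0$. I would first replace $\omega$ by $\abs{1+\omega}-1$; this does not increase the functional, since $H$ depends on $(1+\omega)^2$ and the other terms on $\abs{d\omega}^2$ and $\omega^2+2\omega=(1+\omega)^2-1$. The function $u=1+\omega\geq 0$ then weakly solves a semilinear equation of the form
\begin{equation*}
-4\Delta_\varphi u + a\,u + 2n\,u\ln(u^2)=0,
\end{equation*}
with $a$ built from $R(g,\varphi)$ and constants, and $u=1$ on $\partial\Omega$. The nonlinearity $u\ln u$ is subcritical, so Moser iteration or a bootstrap gives $u\in L^\infty$. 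Schauder theory then gives $u\in C^{\ell,\alpha}$, limited by the regularity of $g$ and $\varphi$. The Harnack inequality or strong maximum principle implies $u>0$. The principle applies because $u\ln u\to 0$ as $u\to 0$, so near a zero the equation behaves like $-\Delta u+c(x)u\geq 0$ with $c$ possibly $\to -\infty$ only logarithmically. This positivity step is the one I expect to be most delicate. I would handle it by a barrier or comparison argument near a hypothetical interior zero, using that $u\,\abs{\ln u}\leq C u^{1-\eps}$, together with the boundary value $u=1$.

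\textbf{Uniqueness.} With $u>0$, $f$ is well-defined and of class $C^{\ell,\alpha}$. The first-variation computation in Proposition \ref{el_proposition}, restricted to $\Omega$ with test functions $v\in C^\infty_c(\Omega)$, shows that $f$ solves \eqref{el_eqn} on $\Omega$ with $f-\varphi=0$ on $\partial\Omega$. Any minimiser gives such a solution, and any such solution is unique by the bounded-domain part of Proposition \ref{el_uniqueness}. Hence the minimiser is unique, which also gives the claimed equivalence.
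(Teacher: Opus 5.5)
Your strategy is the paper's: the direct method on $H^1_{\varphi,0}(\Omega)$ via Rellich compactness and lower semicontinuity, then elliptic regularity for the weak Euler--Lagrange equation, then uniqueness from the bounded-domain part of Proposition \ref{el_uniqueness}. You are more careful than the paper, which sets $f=\varphi-2\ln(1+\omega)$ without ever checking $1+\omega>0$; your reflection $\omega\mapsto\abs{1+\omega}-1$ is a good way to address that. Two steps still need repair, though neither changes the strategy.

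\textbf{Coercivity.} It does not follow from Poincar\'e, boundedness of $R(g,\varphi)$ on $\Omega$, $H\geq 0$ and Young. Nothing in the hypotheses makes $R$ small on a compact set. Suppose $R\leq -K_0$ on a subdomain whose first Dirichlet eigenvalue $\lambda$ for $\Delta_\varphi$ satisfies $4\lambda<K_0$, with first eigenfunction $\phi$. Then $4\norm{d\omega}^2_{L^2_\varphi}+\int_\Omega(\omega^2+2\omega)R\,dV_\varphi\to-\infty$ along $\omega=t\phi$ as $t\to\infty$. What rescues the lower bound is the superquadratic growth of $H$: for every $K$ there is $C_K$ with $H(\omega)\geq K\omega^2-C_K$. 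Taking $K$ large compared with $\sup_\Omega\abs{R}$ and using the finite weighted volume of $\Omega$ gives $\wt{\mathcal{W}}_{\mathrm{AH},\wh{g},\Omega}\geq\frac{1}{2n}\p{4\norm{d\omega}^2_{L^2_\varphi}+\norm{\omega}^2_{L^2_\varphi}}-C$. This is exactly the ingredient in Proposition \ref{2sided_H1_bounds}, which the paper invokes at this point.

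\textbf{Positivity.} This step is not delicate, and your barrier is aimed the wrong way. The bound $u\abs{\ln u}\leq Cu^{1-\eps}$ controls a term that \emph{helps} you, so it cannot drive a positivity proof. (Were the sign reversed, a sublinear bound of this type is precisely the regime in which dead cores occur.) With the paper's convention $\Delta_\varphi\geq 0$, \eqref{EL_for_omega} for $u=1+\omega$ reads $\Delta_\varphi u+cu=0$ with $c=\frac14 R(g,\varphi)+n\ln u$. Split $c=c^+-c^-$ to get $\Delta_\varphi u+c^+u=c^-u\geq 0$, where $c^+\leq\frac14\abs{R}+n\ln^+u$ is bounded once $u\in L^\infty$. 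So $u\geq 0$ is a supersolution with bounded zeroth-order coefficient. The weak Harnack inequality, or Hopf's strong maximum principle, then shows that an interior zero forces $u\equiv 0$, contradicting $u=1$ on $\partial\Omega$. Hopf applies because $u\in C^{2,\alpha}$ already follows from the H\"older continuity of $s\mapsto s\ln s^2$. The fact that $c\to-\infty$ near zeros is the harmless direction.

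\textbf{Ordering.} Establish positivity before bootstrapping beyond $C^{2,\alpha}$, since $s\mapsto s\ln s$ is not $C^1$ at $0$.

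\textbf{Uniqueness.} Your uniqueness sentence needs $1+\omega>0$ for \emph{every} minimiser, not just the reflected one. If $\omega$ is a minimiser, so is $\abs{1+\omega}-1$, which is positive by the argument above. Hence $1+\omega$ never vanishes, and since $\Omega$ is connected with $1+\omega=1$ on $\partial\Omega$, it is positive. Only then is $f$ defined and Proposition \ref{el_uniqueness} applicable.
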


        \begin{proof}
            By Lemma \ref{2sided_H1_bounds}, we know that
            \begin{equation*}
                \mu_{\mathrm{AH},\wh{g},\Omega}\p{g,\varphi} = \inf_{\omega \in H^1_{\varphi,0}\p{M}} \wt{\mathcal{W}}_{\mathrm{AH},\wh{g}, \Omega}\p{g,\varphi,\omega} > -\infty.
            \end{equation*}

            Let $\omega_i \in H^1_{\varphi,0}\p{\Omega} $ be a minimising sequence for $\mu_{\mathrm{AH},\wh{g},\Omega}\p{g,\varphi}$.\\
            
            As the weighted volume measure $e^{-\varphi} dV_g$ is equivalent to the unweighted $dV_g$ on bounded domains, we can apply the Rellich--Kondrachov theorem to deduce the existence of a subsequence, still denoted $\omega_i$, such that $\omega_i \rightharpoonup \omega$ in $H^1_\varphi\p{\Omega}$ and $\omega_i \rightarrow \omega$ in $L_\varphi^p\p{\Omega}$ for some fixed $p < \frac{2n}{n-2}$. \\

            As the functional $\omega \mapsto \wt{\mathcal{W}}_{\mathrm{AH},\wh{g}, \Omega}\p{g,\varphi,\omega}$ is coercive and convex with respect to $d\omega$, it is lower-semicontinuous on $H^1_{\varphi,0}\p{\Omega}$. Hence
            \begin{equation*}
                \mu_{\mathrm{AH},\wh{g},\Omega}\p{g,\varphi} \leq \wt{\mathcal{W}}_{\mathrm{AH},\wh{g}, \Omega}\p{g,\varphi,\omega} \leq \liminf \limits_{i \rightarrow \infty} \wt{\mathcal{W}}_{\mathrm{AH},\wh{g}, \Omega}\p{g,\varphi,\omega_i} \leq \mu_{\mathrm{AH},\wh{g},\Omega}\p{g,\varphi}.
            \end{equation*}

            Therefore, $\omega \in H^1_{\varphi,0}\p{\Omega}$ is the desired minimiser. It follows that $f:=\varphi-2\ln\p{\omega+1}$ is a weak solution to the Euler--Lagrange equation \eqref{el_eqn}. We may thus use standard arguments, involving elliptic regularity theory and the Sobolev embedding theorem, to deduce that $f \in C^{\ell,\alpha}\p{\Omega}$. Moreover, $f$ is the unique such solution due to Proposition \ref{el_uniqueness}.
         \end{proof}

    \subsection{Existence of a Global Minimiser}\label{global_min_sec}
        We now turn to proving the existence of a global minimiser for the renormalised expander entropy. This roughly follows the procedure in Section $5$ of \cite{DKM}.\\

        First of all, we find appropriate upper and lower barriers. For this, it will be convenient to rewrite the Euler--Lagrange equation \eqref{el_eqn} as follows:
        \begin{align*}
            0 = \EL f &= \frac{1}{2n}\p{2\Delta f+\abs{d f}^2+\abs{d \varphi}^2+2nf-2n\varphi-\scal_g-n(n-1)-2n}\\
            &= \frac{1}{n}\p{\p{\Lap +n}\p{f-\varphi} + \frac{1}{2}\p{\abs{d f}^2 - \abs{d \varphi}^2} + \p{\Lap \varphi + \abs{d \varphi}^2-n}  - \frac{1}{2} \p{\scal_g+n(n-1)}}.
        \end{align*}

        The second line follows from adding and subtracting $\frac{1}{n} \Lap \varphi + \frac{1}{n} \abs{d \varphi}^2$. Now, if $f$ is a minimiser we have $\EL f = 0$, hence, setting $u:= f - \varphi$,
        \begin{equation}\label{modified_el_eqn}
            \Lap u + \frac{1}{2}\abs{d u}^2 + \left<d \varphi, d u\right> + nu = -\p{\Lap \varphi + \abs{d \varphi}^2-n} + \frac{1}{2}\p{\scal_g+n(n-1)}.
        \end{equation}
        
        \begin{proposition} \label{uasymptotic}
            Let $\p{M,g,\varphi}$ be an asymptotically hyperbolic manifold with approximate potential $\varphi$, and consider $\Omega \subset M$ a bounded domain with smooth boundary. Let $u = f-\varphi$ be a solution of \eqref{modified_el_eqn} with $\left.u\right|_{\partial \Omega} = 0$. Then we have
            \begin{equation*}
               \inf_\Omega \left(\frac{1}{2} \scal_g + \frac{n\left(n-1\right)}{2} - \Lap \varphi - \abs{d \varphi} + n\right) \leq n u \leq \sup_\Omega \left(\frac{1}{2} \scal_g + \frac{n\left(n-1\right)}{2} - \Lap \varphi - \abs{d \varphi} + n\right).
            \end{equation*}
        \end{proposition}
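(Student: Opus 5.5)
This is a maximum principle argument applied directly to \eqref{modified_el_eqn}. The right-hand side is
$$F := \frac{1}{2}\scal_g + \frac{n(n-1)}{2} - \Lap\varphi - \abs{d\varphi}^2 + n .$$
The statement writes $\abs{d\varphi}$ where $\abs{d\varphi}^2$ is presumably intended. I would work with the expression $F$ exactly as it appears on the right-hand side of \eqref{modified_el_eqn}.

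Since $\Omega$ is bounded with smooth boundary, Proposition \ref{minimiseronboundeddomains} gives $u = f-\varphi \in C^{\ell,\alpha}(\Omega)$ with $\ell\ge 2$. The function $u$ extends continuously to $\overline\Omega$ and vanishes on $\partial\Omega$. It therefore attains a maximum and a minimum on $\overline\Omega$.

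\textbf{Upper bound.} Suppose first that $\max_{\overline\Omega} u > 0$. The maximum is then attained at an interior point $p\in\Omega$, because $u=0$ on $\partial\Omega$. At $p$ we have $du(p)=0$, and, with the paper's sign convention $\Lap = -\tr\nabla^2$ (cf.\ the proof of Proposition \ref{el_uniqueness}), also $\Lap u(p)\ge 0$. The gradient terms $\frac12\abs{du}^2 + \g{d\varphi}{du}$ vanish at $p$. Evaluating \eqref{modified_el_eqn} at $p$ gives
$$n u(p) = F(p) - \Lap u(p) \le F(p) \le \sup_\Omega F,$$
so $nu \le nu(p) \le \sup_\Omega F$ on $\Omega$. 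Suppose instead that $\max u \le 0$, that is, $u\le 0$ everywhere. Then the bound $nu\le \sup_\Omega F$ still holds provided $\sup_\Omega F \ge 0$. If $\sup_\Omega F<0$, the case $\max u = 0$ is attained only on the boundary. To handle it I would consider the interior maximum of $u$ when $u$ is not identically zero. If no interior maximum exists, this boundary case is the delicate point.

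\textbf{Lower bound.} The argument is symmetric. At an interior negative minimum $q$ we have $du(q)=0$ and $\Lap u(q)\le 0$, so $nu(q) = F(q) - \Lap u(q) \ge F(q)\ge \inf_\Omega F$.

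The main obstacle is the sign issue just described. When $\sup_\Omega F<0$ (respectively $\inf_\Omega F>0$), the boundary value $0$ forces the bound to fail near $\partial\Omega$ unless the statement is read as including the boundary value. I would handle this in one of two ways. The first is to interpret the bounds as $\min(0,\inf F)\le nu\le \max(0,\sup F)$, which is what the argument above actually yields and is all that is needed for barrier purposes. The second is to observe that for large $\Omega$ we have $F\to 0$ at infinity, since $\scal_g + n(n-1) = \mathcal{O}(e^{-\beta r})$ by $g\in\mathcal{R}^{2,\alpha}_\beta$ and $\Lap\varphi+\abs{d\varphi}^2-n=\mathcal{O}(e^{-\beta r})$ by \eqref{phiequation}. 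In that case $\inf_\Omega F \le 0\le \sup_\Omega F$ holds automatically once $\Omega$ reaches far enough out, and the interior-extremum argument gives exactly the stated inequality.
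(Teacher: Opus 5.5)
Your argument is the paper's own: evaluate \eqref{modified_el_eqn} at an extremum of $u$, where $du=0$ and $\Lap u$ has a known sign. You are also right on both side issues. The $\abs{d\varphi}$ in the statement should be $\abs{d\varphi}^2$; with that correction $2F=R(g,\varphi)$, which is how the bound is used later. The paper's proof evaluates at $\mathrm{argmax}_\Omega u$ and $\mathrm{argmin}_\Omega u$ without checking that these points are interior.

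The statement as written can genuinely fail. Suppose $F>0$ on $\overline{\Omega}$. A negative interior minimum $q$ would give $nu(q)\ge F(q)>0$, so $u\ge 0$. But $u\to 0$ at $\partial\Omega$ while $\inf_\Omega F>0$, so the lower bound fails near $\partial\Omega$. What you and the paper actually prove is $\min(0,\inf_\Omega F)\le nu\le \max(0,\sup_\Omega F)$. Your first remedy is therefore the correct one, and it gives a complete proof.

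Your second remedy is wrong as stated. $F\to 0$ at infinity does not force $\inf_\Omega F\le 0\le \sup_\Omega F$ for a bounded $\Omega$, however large. $F$ may have a fixed sign on all of $M$, for example being positive and decaying like $e^{-\beta r}$. What the decay does give is $\inf_M F\le 0\le\sup_M F$. Combined with your corrected bound, this yields $\inf_M F\le nu\le \sup_M F$ uniformly in $\Omega$. That is exactly the form, with $R=2F$, in which the proposition is used in the proof of Theorem~\ref{global_minimiser}.
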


        \begin{proof}
            If we evaluate \eqref{modified_el_eqn} at a maximum point $p := \mathrm{arg\, max}_\Omega u$ then, since $\nabla u\left(p\right) = 0$ and $\Lap u\left(p\right) \geq 0$, we see that
            \begin{equation*}
                n \cdot \sup_\Omega u \leq \sup_\Omega \left(\frac{1}{2} \scal_g + \frac{n\left(n-1\right)}{2} - \Lap \varphi - \abs{d \varphi} + n\right).
            \end{equation*}

            If instead $p := \argmin_\Omega u$, then $\nabla u\left(p\right) = 0$ and $\Lap u\left(p\right) \leq 0$, which yields
            \begin{equation*}
                n \cdot \inf_\Omega u \geq \inf_\Omega \left(\frac{1}{2} \scal_g + \frac{n\left(n-1\right)}{2} - \Lap \varphi - \abs{d \varphi} + n\right).
            \end{equation*}
        \end{proof}

        \begin{lemma}\label{barrier_lemma}
            Suppose $g$ is AH with approximate potential $\varphi$ and $\abs{\scal_g+n(n-1)}=\mathcal{O}(e^{-\beta r})$. Then there exists an $L^2_\varphi$-integrable function $b$, with the property that the Dirichlet solution of
            \begin{equation}\label{barrier_dirichlet_eq}
               \Delta u +nu+\frac{1}{2}\abs{d u}^2+\g{d \varphi}{d u}=\frac{1}{2}R(g,\varphi) 
            \end{equation}
            on any bounded set $\Omega \subset M$, satisfies $\abs{u}\leq b$. 
            As in the preceding section, $R\p{g,\varphi}=\scal_g+n(n-1)-2\p{\Delta\varphi+\abs{d \varphi}^2-n}$. 
        \end{lemma}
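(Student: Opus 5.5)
We will construct the barrier by comparison: find explicit super- and subsolutions of \eqref{barrier_dirichlet_eq} on all of $M$ that decay like $e^{-\gamma r}$ for some $\frac{n-1}{2}<\gamma\le\beta$. Such functions are $L^2_\varphi$-integrable by Remark \ref{distance_function}, since the weight $e^{-\varphi}\sim e^{r}$ only shifts the threshold to $\gamma>\frac{n}{2}$; we therefore aim for $\gamma\in(\frac n2,\beta)$. The operator $Pu:=\Delta u+nu+\frac12\abs{du}^2+\g{d\varphi}{du}$ satisfies a comparison principle on bounded $\Omega$. Indeed, if $Pu_1\ge Pu_2$ and $u_1\le u_2$ on $\partial\Omega$, then $w=u_1-u_2$ satisfies $\Delta w+\frac12\g{d(u_1+u_2)}{dw}+\g{d\varphi}{dw}+nw\ge 0$. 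At a positive interior maximum of $w$ we have $dw=0$ and $\Delta w\ge0$, which contradicts the inequality, exactly as in Proposition \ref{el_uniqueness}. So it suffices to produce $\bar u\ge 0$ with $P\bar u\le \frac12R(g,\varphi)$ and $\underline u\le 0$ with $P\underline u\ge\frac12 R(g,\varphi)$. Dirichlet data $0$ then lie between them, and we take $b:=\max(\bar u,-\underline u)$.

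For the supersolution, note that $\abs{R(g,\varphi)}\le Ce^{-\beta r}$: the scalar curvature term decays by hypothesis and the drift term by \eqref{phiequation}. Try $\bar u=A e^{-\gamma r}$ with $r$ smoothed near the compact core. Using $\abs{dr}=1+o(1)$, $\Delta r=-(n-1)+o(1)$ (Besse sign convention, so $\Delta=-\mathrm{tr}\nabla^2$), and $\g{d\varphi}{dr}=-1+o(1)$, one computes
\[
P\bar u = A e^{-\gamma r}\bigl(-\gamma^2+(n-1)\gamma+n+\gamma+o(1)\bigr)+\tfrac12A^2\gamma^2e^{-2\gamma r}.
\]
With this sign convention, $P$ behaves asymptotically like $-(\partial_r^2+n\partial_r)+n$ plus drift. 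The indicial polynomial is therefore $-\gamma^2+n\gamma+n$, whose roots $\frac n2\pm\sqrt{\frac{n^2}4+n}$ match the weight range of Proposition \ref{isomorph_prop} for $\Delta_\varphi+n$. For $\gamma\in(\frac n2,\beta)$ this coefficient is positive, so $P\bar u$ has a positive leading term. That is the wrong direction for a supersolution, so this sign bookkeeping must be redone carefully. The clean approach is to use the linear isomorphism: let $v$ solve $(\Delta_\varphi+n)v=-\frac12\abs{R}$ (or $+\frac12\abs{R}$, depending on the sign) in $C^{2,\alpha}_\gamma$, which is possible by Proposition \ref{isomorph_prop} for $\gamma<\beta$ in the allowed interval. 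Then handle the quadratic term $\frac12\abs{du}^2$, which has a definite sign (nonnegative), separately.

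The asymmetry of the quadratic term therefore dictates the plan. For the \emph{lower} barrier, $\frac12\abs{du}^2\ge0$ helps: take $\underline u=-v_-$ where $(\Delta_\varphi+n)(-v_-)=\frac12\abs{R}\ge\frac12R$, so that $P\underline u\ge\frac12 R$. The maximum principle for $\Delta_\varphi+n$, which is injective with a positive zeroth-order term, gives $v_-\ge0$. For the \emph{upper} barrier, the quadratic term hurts. Here we instead use the a priori bound from Proposition \ref{uasymptotic}, which gives $\abs{u}\le C$ uniformly in $\Omega$, and then use a cutoff. On $\{r>r_0\}$ we linearize: treat $\frac12\abs{du}^2$ as a drift $\frac12\g{du}{d\,\cdot}$, which does not affect the maximum-point argument. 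On $\{r>r_0\}$, $u$ is then compared to $\bar u=Be^{-\gamma r}$ solving the linear inequality $\Delta\bar u+\g{d\varphi+\frac12 du}{d\bar u}+n\bar u\le \frac12 R$. Since $d\bar u$ carries the factor $e^{-\gamma r}$ and the drift perturbation $\frac12du$ enters only multiplied by it, the gradient term is lower order provided $\abs{du}$ is bounded. We would get that bound from interior gradient estimates, using the uniform $C^0$ bound and the bounded geometry of AH manifolds. The constant $B$ is chosen so that $\bar u\ge C$ on $\{r=r_0\}$; on the core we use $b=C$.

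The main obstacle is the upper barrier: controlling the nonlinear gradient term uniformly in $\Omega$. This requires a uniform gradient bound for Dirichlet solutions independent of $\Omega$, including near $\partial\Omega$ when $\partial\Omega$ reaches far out. The first thing to try is the Cole--Hopf substitution $w=e^{u/2}$. It converts \eqref{barrier_dirichlet_eq} into a \emph{linear-in-derivatives} equation $\Delta_\varphi w+\frac n2 w\ln w^2\cdot(\ldots)$, more precisely $\Delta w+\g{d\varphi}{dw}=\frac w2\bigl(\frac12R-nu\bigr)$, where the gradient term disappears. Barriers $1\pm Be^{-\gamma r}$ for $w$ can then be built using only Proposition \ref{isomorph_prop} and the maximum principle, and pulled back to $u=2\ln w$.
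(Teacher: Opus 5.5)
Your comparison principle points the wrong way, and this one sign error derails the rest of the argument. In the paper's convention $\Delta=-\mathrm{tr}\,\nabla^2$, at a positive interior maximum of $w=u_1-u_2$ you have $dw=0$, $\Delta w\ge 0$ and $nw>0$. Hence $\Delta w+\tfrac12\langle d(u_1+u_2),dw\rangle+\langle d\varphi,dw\rangle+nw>0$, which contradicts ``$\le 0$'', not ``$\ge 0$''. The correct statement is that $Pu_1\le Pu_2$ together with $u_1\le u_2$ on $\partial\Omega$ implies $u_1\le u_2$. So an upper barrier needs $P\bar u\ge \tfrac12R(g,\varphi)$ and a lower barrier needs $P\underline u\le\tfrac12R(g,\varphi)$, the reverse of what you ask for.

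In particular, your computation $P(Ae^{-\gamma r})=Ae^{-\gamma r}\bigl(n+n\gamma-\gamma^2+o(1)\bigr)+\tfrac12A^2\gamma^2e^{-2\gamma r}$, with positive leading coefficient, is not ``the wrong direction''. It is exactly the computation the paper's proof rests on. Take $\gamma=\beta$, $|G-\psi'|\le C_1e^{-r}$ and $|R|\le C_2e^{-\beta r}$ on $\{r\ge\rho\}$. Then $b=\lambda e^{-\beta r}$ satisfies $Pb\ge\tfrac12R$ there as soon as $\lambda(n+n\beta-\beta^2-C_1\beta e^{-\rho})\ge\tfrac12C_2$, and the quadratic term only helps. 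Extend $b$ by the constant $\lambda e^{-\beta\rho}\ge\tfrac1{2n}\sup_M|R|$ on $B_\rho$, where Proposition~\ref{uasymptotic} already gives $u\le b$. A maximum-point argument on $\Omega\setminus B_\rho$, at a point where $du=db$, then yields $u\le b$.

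Everything after this inherits the flipped signs.
\begin{itemize}
\item Your lower barrier $\underline u=-v_-$, with $(\Delta_\varphi+n)(-v_-)=\tfrac12|R|$, satisfies $P\underline u\ge\tfrac12R$. That is useless for a lower bound, and the maximum principle actually gives $-v_-\ge 0$, not $v_-\ge 0$.
\item Your upper barrier rests on a uniform gradient bound for the Dirichlet solutions up to $\partial\Omega$. You never establish it, and it is not needed. As your own formula for $Pu_1-Pu_2$ shows, the quadratic terms enter only through $\tfrac12\langle d(u_1+u_2),dw\rangle$, which vanishes wherever $dw=0$.
\item The Cole--Hopf substitution has the wrong sign. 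For $w=e^{u/2}$ one gets $\Delta_\varphi w=\tfrac w2(\tfrac12R-nu)-\tfrac w2|du|^2$, so the gradient term survives. The right choice is $w=e^{-u/2}$, which gives $\Delta_\varphi w+nw\ln w=-\tfrac14 wR$; this is the paper's \eqref{EL_for_omega} with $w=1+\omega$.
\end{itemize}

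The corrected Cole--Hopf idea is worth keeping, but for the \emph{lower} barrier, where $\tfrac12|du|^2$ genuinely has the unfavourable sign. Since $w\ln w\ge w-1$, the function $-2\ln(1+Be^{-\beta r})\ge-2Be^{-\beta r}$ satisfies $P\underline u\le\tfrac12R$ on $\{r\ge\rho\}$ for \emph{every} sufficiently large $B$, with $\rho$ fixed. It can therefore be matched to the core bound of Proposition~\ref{uasymptotic} without any smallness of $\lambda e^{-\beta\rho}$. The symmetric choice $-\lambda e^{-\beta r}$ does need such smallness, in order to absorb $\tfrac12\beta^2\lambda^2e^{-2\beta r}$ near $r=\rho$.
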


        \begin{proof}
            Let $K\subset M$ be a compact set, on whose complement we can write $g$ as in the proof of Lemma \ref{fefferman_graham}. Consider the radial function $b(r)=\lambda e^{-\beta r}$, for some $\lambda>0$ still to be determined. Then 
            \begin{align*}
                \Delta b +nb+\frac{1}{2}\abs{d b}^2+\g{d \varphi}{d b}={}&-b'' -(n+G-\psi')b'+nb+\frac{1}{2}(b')^2\\={}&\p{n+n\beta-\beta^2}\lambda e^{-\beta r}+\p{G-\psi'+\frac{1}{2}\beta\lambda e^{-\beta r}}\beta \lambda e^{-\beta r}
            \end{align*}
            where $G$ and $\psi'=\partial_r \psi$ are as in the aforementioned lemma. Note that the factor $n+n\beta -\beta^2>0$ for $\beta$ in the given range. As $\abs{G-\psi'}=\mathcal{O}\p{e^{-r}}$ and $\abs{R(g,\varphi)}=\mathcal{O}\p{e^{-\beta r}}$, it is possible to find constants $C_1,C_2>0$, and a radius $\rho \gg1 $ such that 
                $$\abs{G-\psi'}\leq C_1e^{-r}\qquad \&\qquad \abs{R(g,\varphi)}\leq C_2 e^{-\beta r}$$
            for $r\geq \rho$. We can then choose $\lambda$ such that, when $\rho$ is sufficiently large, we have 
                $$\p{n+n\beta -\beta^2}\lambda\geq \frac{1}{2}\p{\beta\lambda}^2e^{-\beta \rho}+ C_1 \beta \lambda e^{-\rho}+\frac{1}{2}C_2.$$
            
            Consequently, $b(r)=\lambda e^{-\beta r}$ is a supersolution 
            \begin{equation}\label{eq:supersol}
                \Delta b +nb+\frac{1}{2}\abs{d b}^2+\g{d \varphi}{d b}\geq \frac{1}{2}R(g,\varphi)\end{equation}
            on $M\setminus B_{\rho}$. For simplicity of further arguments, we shall additionally assume that $\lambda $ and $\rho$ are chosen such that $\lambda e^{-\beta\rho}\geq \sup_{M}\abs{R(g,\varphi)}$, and extend $b$ to all of $B_\rho$ by the constant value it has on $\partial B_\rho$. Let $\Omega \subset M$ be an arbitrary bounded domain with smooth boundary, and let $u$ be a solution of \eqref{barrier_dirichlet_eq}, with $u|_{\partial \Omega}=0$. By design, $u\leq b$ everywhere, except possibly in the interior of $\Omega\setminus B_\rho$. Assume this set has an interior, as we are otherwise done, and that the function $u-b$ has a positive local maximum at $p\in \Omega\setminus B_\rho$:
                $$u(p)-b(p)>0,\qquad du(p)-db(p)=0,\qquad \Delta u(p)-\Delta b(p)\geq 0.$$
            
            Then, combining \eqref{barrier_dirichlet_eq} and \eqref{eq:supersol}, we must have 
                $$\Delta u(p)-\Delta b\p{p} +nu(p)-nb(p)\leq 0,$$
            which is a clear contradiction. The conclusion is that $u\leq b$ on all of $M$, and by finding $b$ before the introduction of either $\Omega$ or $u$, it should be clear that it is independent hereof.\\
            
            The same argument, with inverted inequalities, proves that $\lambda $ may be chosen such that $-\lambda e^{-\beta r}\leq u$ on all of $M$, thus completing the proof of the lemma.
        \end{proof}

        We can now prove the existence of a global minimiser of the entropy.

        \begin{theorem}\label{global_minimiser}
            Let $\p{\wh{g},\wh{\varphi}}$ be a static pair, and let $g\in \mathcal{R}^{2,\alpha}_\beta\p{M,\wh{g}}$ be an AH metric with approximate potential $\varphi$, with $\frac{n}{2}<\beta<n$. Then there is a unique $f$ such that $f - \varphi \in C^{2,\alpha}_\beta\p{M}\cap C^{\ell,\alpha}\p{M}$ and
            \begin{equation*}
                \mu_{\mathrm{AH},\wh{g}}\p{g,\varphi} = \mathcal{W}_{\mathrm{AH},\wh{g}}\p{g,\varphi,f}.
            \end{equation*}

            Furthermore,  if $g\in \mathcal{R}_\beta^{\ell,\alpha}\p{M,\wh{g}}$ (or $g\in \mathcal{R}^\ell_\varphi\p{M,\wh{g}}$) for some $\ell\geq2$, then $f-\varphi\in C^{\ell,\alpha}_\beta\p{M}$ (respectively $H^{\ell}_\varphi\p{M}$).
        \end{theorem}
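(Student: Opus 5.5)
Our approach is the exhaustion argument of \cite[Section 5]{DKM}. We fix an exhaustion of $M$ by bounded smooth domains $\Omega_i=B_{R_i}$ with $R_i\to\infty$. For each $i$, Proposition \ref{minimiseronboundeddomains} gives a unique $f_i\in C^{\ell,\alpha}(\Omega_i)$ that solves \eqref{el_eqn} on $\Omega_i$, satisfies $f_i-\varphi=0$ on $\partial\Omega_i$, and realises $\mu_{\mathrm{AH},\wh{g},\Omega_i}(g,\varphi)$. We extend $u_i:=f_i-\varphi$ by zero outside $\Omega_i$, so that $u_i\in H^1_\varphi(M)$. The function $u_i$ solves \eqref{modified_el_eqn}, which is exactly \eqref{barrier_dirichlet_eq}. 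Lemma \ref{barrier_lemma} therefore gives the uniform bound $\abs{u_i}\le b=\lambda e^{-\beta r}$, with $\lambda$ independent of $i$.

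Next we would obtain uniform higher-order estimates. On any fixed compact set, and on unit balls near infinity, the equation for $u_i$ is a semilinear elliptic equation with a gradient-quadratic term, and its coefficients are controlled in $C^{\ell-2,\alpha}$. The uniform $C^0$ bound together with interior Schauder estimates (gradient bounds first, e.g. via Bernstein's trick or Ladyzhenskaya--Ural'tseva, then bootstrap) gives uniform $C^{2,\alpha}$ bounds on unit balls, and hence a uniform bound on $e^{\beta r}u_i$ in $C^{2,\alpha}$ away from $\partial\Omega_i$. Arzel\`a--Ascoli and a diagonal argument then produce a subsequence with $u_i\to u$ in $C^2_{loc}$. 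The limit satisfies $\abs{u}\le b$ and solves \eqref{modified_el_eqn} on all of $M$, so $u\in C^{2,\alpha}_\beta$ and, by bootstrapping, $u\in C^{\ell,\alpha}$. Uniqueness of $f=\varphi+u$ follows from Proposition \ref{el_uniqueness}, since $u\to0$.

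To show that $f$ realises $\mu_{\mathrm{AH},\wh{g}}(g,\varphi)$, we would first note that the local entropies $\mu_{\mathrm{AH},\wh{g},\Omega_i}$ are nonincreasing in $i$ and converge to $\mu_{\mathrm{AH},\wh{g}}$. This holds because compactly supported $\omega$ are dense, and every compactly supported $\omega$ is admissible on $\Omega_i$ for large $i$. The remaining point is that $\wt{\mathcal{W}}_{\mathrm{AH},\wh{g},\Omega_i}(g,\varphi,\omega_i)\to\wt{\mathcal{W}}_{\mathrm{AH},\wh{g}}(g,\varphi,\omega)$. For this we use the representation from the proof of Proposition \ref{2sided_H1_bounds}, where the integrand is $4\abs{d\omega}^2+H(\omega)+(\omega^2+2\omega)R(g,\varphi)$ plus the finite constant $C(g,\varphi)$. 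The uniform bound $\abs{u_i}\le \lambda e^{-\beta r}$, combined with $\beta>\frac n2$, gives an integrable dominating function for $\omega_i^2$ and $\omega_i R$ with respect to $dV_\varphi$. The uniform weighted gradient bounds do the same for $\abs{d\omega_i}^2$. The boundary term $\mathcal{B}_R$ vanishes in the limit for the same decay reason. Dominated convergence then gives $\mathcal{W}_{\mathrm{AH},\wh{g}}(g,\varphi,f)=\lim_i\mu_{\mathrm{AH},\wh{g},\Omega_i}=\mu_{\mathrm{AH},\wh{g}}$.

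The main obstacle is the uniform weighted gradient estimate up to $\partial\Omega_i$, which is needed to dominate $\abs{du_i}^2$ near the moving boundary. Our first attempt would be a boundary barrier. Since $u_i$ vanishes on $\partial\Omega_i$ and is squeezed between $\pm\lambda e^{-\beta r}$, local boundary Schauder estimates on balls of unit size should give $\abs{du_i}\le Ce^{-\beta r}$ uniformly. Failing that, we would use lower semicontinuity: Fatou's lemma applied to the gradient term gives $\mathcal{W}(f)\le\liminf_i\mu_{\Omega_i}$, and this inequality suffices. For the final regularity claim, we write \eqref{modified_el_eqn} as $(\Delta_\varphi+n)u=\frac12R-\frac12\abs{du}^2$. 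If $g\in\mathcal{R}^{\ell,\alpha}_\beta$ (respectively $\mathcal{R}^\ell_\varphi$), the right-hand side lies in $C^{m-2,\alpha}_\beta$ (respectively $H^{m-2}_\varphi$) whenever $u$ lies in the corresponding space of order $m-1$. Proposition \ref{isomorph_prop} applies here because $\beta$ lies in its admissible range, and induction on $m$ gives $u\in C^{\ell,\alpha}_\beta$ (respectively $H^\ell_\varphi$).
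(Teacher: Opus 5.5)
Your proposal is correct and follows essentially the paper's own exhaustion argument: Dirichlet minimisers on $\Omega_i$, the barrier of Lemma \ref{barrier_lemma}, domain monotonicity and convergence of $\mu_{\mathrm{AH},\wh{g},\Omega_i}$, lower semicontinuity, and bootstrapping with Proposition \ref{isomorph_prop}. The paper differs only in technique. It passes to the limit by weak $H^1_\varphi$ compactness from the coercivity in Proposition \ref{2sided_H1_bounds}, and your Fatou fallback is the pointwise version of this, so the boundary gradient estimate you single out as the main obstacle is never actually needed. The paper also works with $\omega=e^{-u/2}-1$, whose equation \eqref{EL_for_omega} has no $\abs{du}^2$ term, so no Bernstein-type estimate is required and the $C^{2,\alpha}_\beta$ decay comes directly from the weighted isomorphism.

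One correction: $\mathcal{B}_R$ does not vanish in the limit when $f-\varphi$ is of exact order $e^{-\beta r}$ with $\beta<n$; it is then of order $e^{(n-\beta)R}$. In the representation from Proposition \ref{2sided_H1_bounds} it has already been used to cancel the divergent linear terms, and only the quadratic remainder vanishes by decay, so your argument is unaffected.
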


        \begin{proof}
            Let $\left\{\Omega_i\right\}^\infty_{i=1}$ be a sequence of bounded domains with smooth boundary, such that $\Omega_i \subset \Omega_{i+1}$ and $\bigcup^\infty_{i=1} \Omega_i = M$. Denote by $u_i = f_i - \varphi$ a sequence of solutions to \eqref{barrier_dirichlet_eq} such that $u_i \in C^{\ell,\alpha}\p{\Omega_i} \cap C^0\p{\overline{\Omega_i}}$ and $\left. u_i\right|_{\partial \Omega_i} = 0$. According to Proposition \ref{uasymptotic},
            \begin{equation*}
               \frac{1}{2n}\inf_M R\p{g,\varphi} \leq u_i \leq \frac{1}{2n}\sup_M R\p{g,\varphi}.
            \end{equation*}

            Therefore, there is a subsequence, also denoted $u_i$, which converges locally uniformly in $C^{\ell,\alpha}\p{M}$ to a function $u$, which is defined on all of $M$ and is still a solution to \eqref{modified_el_eqn}. Moreover, by Lemma \ref{barrier_lemma}, $\abs{u}$ is bounded by an $L^2_\varphi$-integrable barrier, which decays like $e^{-\beta r}$ as $r \rightarrow \infty$.\\

            Set $\omega_i:=e^{-\frac{u_i}{2}}-1$ and $\omega:=e^{-\frac{u}{2}}-1$. By Proposition \ref{minimiseronboundeddomains}, $\omega_i$ is the unique minimiser of the entropy over $\Omega_i$:
                $$\mu_{\mathrm{AH}, \wh{g}, \Omega_i}\p{g,\varphi} = \wt{\mathcal{W}}_{\mathrm{AH},\wh{g},\Omega_i}\p{g,\varphi,\omega_i}.$$

            By domain monotonicity of the entropy, we have
            \begin{equation*}
                \mu_{\mathrm{AH}, \wh{g}, \Omega_i}\p{g,\varphi} \geq \mu_{\mathrm{AH}, \wh{g}, \Omega_{i+1}}\p{g,\varphi}.
            \end{equation*} 

            If we extend each $\omega_i$ trivially outside $\Omega_i$, we get from the proof of Lemma \ref{2sided_H1_bounds}, that $\wt{\mathcal{W}}_{\mathrm{AH},\wh{g},\Omega_i}\p{g,\varphi,\omega_i}=\wt{\mathcal{W}}_{\mathrm{AH},\wh{g}}\p{g,\varphi,\omega_i}$ is given by
            \begin{equation*}
               \wt{\mathcal{W}}_{\mathrm{AH},\wh{g}}\p{g,\varphi, \omega_i} = \frac{1}{2n}\int_{M} 4\abs{d \omega_i}^2 + H\left(\omega_i\right) + R\p{g,\varphi}\left(\omega_i^2 + 2\omega_i\right)\;  dV_\varphi+ C(g,\varphi),
            \end{equation*}
            with $H,R,$ and $C$ as in said proof. As $H$ has subextremal growth, in the sense that $H\p{\omega}=\mathcal{O}\p{\omega^{2+\varepsilon}}$ where $\varepsilon$ may be chosen so small that $H^1_\varphi\subset L^{2+\varepsilon}_\varphi$, and $\bigcup^\infty_{i = 1} \Omega_i = M$, we see that 
            \begin{equation*}
                \lim \limits_{i \rightarrow \infty} \mu_{\mathrm{AH}, \wh{g}, \Omega_i}\p{g,\varphi}= \mu_{\mathrm{AH},\wh{g}}\p{g,\varphi}. 
            \end{equation*}

            From the coercivity estimate of Proposition \ref{2sided_H1_bounds}, we obtain a uniform bound on the $H^1_\varphi$-norms of the sequence $\omega_i$. By passing to a subsequence, if necessary, we may assume $\omega_i\rightharpoonup \omega$ in $H^1_\varphi$. Using the weak lower semi-continuity of $\wt{\mathcal{W}}_{\mathrm{AH},\wh{g}}$, ensured by the upper bound in Proposition \ref{2sided_H1_bounds}, we have 
            $$\mu_{\mathrm{AH}, \wh{g}}\p{g,\varphi}\leq \wt{\mathcal{W}}_{\mathrm{AH},\wh{g}}\p{g,\varphi, \omega}\leq \liminf_{i\to \infty}\wt{\mathcal{W}}_{\mathrm{AH},\wh{g}}\p{g,\varphi, \omega_i}=\mu_{\mathrm{AH}, \wh{g}}\p{g,\varphi}.$$

            The barrier constructed in Lemma \ref{barrier_lemma} can be used in conjunction with the dominated convergence theorem, to show that 
            $$\int_M H(\omega_i)+R(g,\varphi)\p{\omega_i^2+2\omega_i}\; dV_\varphi\longrightarrow \int_M H(\omega)+R(g,\varphi)\p{\omega^2+2\omega}\; dV_\varphi.$$

            Combined with the convergence of $\wt{\mathcal{W}}_{AH,\wh{g}}\p{g,\varphi, \omega_i}$, we actually have norm convergence $\norm{\omega_i}_{H^1_\varphi}\to \norm{\omega}_{H^1_\varphi}$, from which follows strong convergence. Concludingly, we have found a global $H^1_\varphi$-minimiser $\omega$ with
            \begin{equation*}
                \mu_{\mathrm{AH},\wh{g}}\p{g,\varphi} =\wt{\mathcal{W}}_{\mathrm{AH},\wh{g}}\p{g,\varphi,\omega}=\mathcal{W}_{\mathrm{AH},\wh{g}}\p{g,\varphi,f},
            \end{equation*}
            where $f=\varphi-2\ln\p{1+\omega}$. The Euler--Lagrange equation \eqref{modified_el_eqn} for $\omega$ reads
                \begin{equation}\label{EL_for_omega}\Delta_{\varphi} \omega+n\omega=n\wt{H}\p{\omega}-\frac{1}4 \p{\omega+1}R\p{g,\varphi},\end{equation}
            where 
            $$\wt{H}(\omega)=\omega-\p{\omega+1}\ln\p{\omega+1}=\mathcal{O}\p{\omega^2}.$$

            Since the right hand side of \eqref{EL_for_omega} lies in $H^1_\varphi \cap C^{0,\alpha}_\beta$, we can apply Proposition \ref{isomorph_prop} to see that $\omega\in H^3_\varphi\p{M}\cap C^{2,\alpha}_\beta\p{M}$. \\

            We now turn to proving the regularity statements for $f$ and $f-\varphi$. This will be achieved by proving the same regularity statements for $\omega=e^{-\frac{1}{2}\p{f-\varphi}}-1$. Assuming  $g$ is of class $C^{\ell,\alpha}$, we have $R\p{g, \varphi} \in C^{\ell-2,\alpha}\p{M}$ and $\Lap_\varphi + n: C^{m,\alpha}\p{M} \rightarrow C^{m-2,\alpha}\p{M}$ is an isomorphism by Proposition \ref{isomorph_prop}.\\

            Using \eqref{EL_for_omega}, we may compute 
            \begin{align*}
                \norm{\omega}_{C^{m,\alpha}} &\leq C\norm{\p{\Lap_\mfg + n} \omega}_{C^{m-2,\alpha}}\\
                &= C\Big|\Big| n \wt{H}\p{\omega} - \frac{1}{4}R\p{g,\varphi}\p{\omega + 1}\Big|\Big|_{C^{m-2,\alpha}}\\
                &\leq C'\p{\big|\big|\wt{H}\p{\omega}\big|\big|_{C^{m-2,\alpha}} + \norm{R\p{g,\varphi}}_{C^{m-2,\alpha}}\p{\norm{\omega}_{C^{m-2,\alpha}} + 1}}
            \end{align*}

            The right hand side is certainly finite for $m=2$, indeed it is even finite with respect to $C^{0,\alpha}_\beta$, as previously explained. Using that $C^{m,\alpha}\p{M}$ is an algebra for $1\leq m\leq \ell$ (see e.g. \cite[Lemma 3.6(a)]{Lee}), we find that $\wt{H}(\omega)\sim\omega^2$ inherits the H\"older regularity of $\omega$. A bootstrapping argument then shows that $\omega\in C^{\ell,\alpha}$.\\

            If we additionally assume that $g\in\mathcal{R}^{\ell,\alpha}_\beta\p{M,\wh{g}}$ (or $\mathcal{R}^{\ell}_\varphi\p{M,\wh{g}}$), we gain $R(g,\varphi)\in C^{\ell-2,\alpha}_\beta\p{M}$ (respectively $H^{\ell-2}_\varphi\p{M}$). Now the same bootstrapping argument goes through, using that $\Lap_\varphi+ n:C^{m,\alpha}_\beta\p{M}\to C^{m-2,\alpha}_\beta\p{M}$ (respectively $H^m_\varphi\p{M} \to H^{m-2}_\varphi\p{M}$) is an isomorphism for $m=2,\ldots, \ell$. 
            \end{proof}

        We conclude this section by proving that the minimiser depends analytically on the pair $\p{g,\varphi}$.

        \begin{proposition}\label{entropy_analytic_metric_dep}
            Let $\p{M,\wh{g},\wh{\varphi}}$ be a static triple, $\beta \in \left(\frac{n}{2}, n\right)$ and $\ell\geq 2$. Then the map
            \begin{equation*}
                \mathcal{R}^{\ell,\alpha}_\beta \p{M,\wh{g}} \times \mathcal{R}^{\ell,\alpha}_\beta \p{M, \wh{\varphi}} \ni \p{g, \varphi} \mapsto f_g - \varphi \in C^{\ell,\alpha}_\beta \p{M}
            \end{equation*}
            is analytic, hence so is the map
            \begin{equation*}
                \mathcal{R}^{\ell,\alpha}_\beta \p{M,\wh{g}} \times \mathcal{R}^{\ell,\alpha}_\beta \p{M, \wh{\varphi}} \ni \p{g, \varphi} \mapsto \mu_{\mathrm{AH},\wh{g}}\p{g,\varphi}.
            \end{equation*}
            Moreover, the map
            \begin{equation*}
                \mathcal{R}^\ell_{\wh{\varphi}}\p{M,\wh{g}} \times \mathcal{R}^\ell_{\wh{\varphi}}\p{M,\wh\varphi} \ni \p{g, \varphi} \mapsto f_g - \varphi \in H^{\ell}_{\wh{\varphi}}\p{M}
            \end{equation*}
            is analytic for $\ell > \frac{n}{2} + 2$. This map should be understood simply as sending the pair $\p{g,\varphi}$ to a solution of \eqref{modified_el_eqn}, disregarding any minimising properties.
        \end{proposition}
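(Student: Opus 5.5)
The plan is to apply the analytic implicit function theorem to the Euler--Lagrange equation in the form satisfied by $\omega=e^{-\frac{u}{2}}-1$, $u=f-\varphi$, namely \eqref{EL_for_omega}. Concretely, consider
\begin{equation*}
    F(g,\varphi,\omega):=\p{\Delta_{g,\varphi}+n}\omega-n\wt{H}(\omega)+\frac{1}{4}(\omega+1)R(g,\varphi),
\end{equation*}
regarded as a map $\mathcal{R}^{\ell,\alpha}_\beta\p{M,\wh{g}}\times \mathcal{R}^{\ell,\alpha}_\beta\p{M,\wh{\varphi}}\times C^{\ell,\alpha}_\beta\p{M}\to C^{\ell-2,\alpha}_\beta\p{M}$, and analogously on the Sobolev spaces $H^\ell_{\wh\varphi}$. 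It is equivalent, and perhaps cleaner, to work directly with $u$ and \eqref{modified_el_eqn}. In that form the nonlinearity is the polynomial $\frac12\abs{du}^2_g$, and the coefficients are $g^{-1}$, $\sqrt{\det g}$, Christoffel symbols and $d\varphi$.

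The first step is to check that $F$ is well defined and real analytic. The key points are as follows.
\begin{itemize}
    \item $g\mapsto g^{-1}$ and $g\mapsto \Ric_g,\scal_g$ are analytic, being rational in $g$ and its derivatives with $\det g$ bounded away from zero.
    \item $C^{m,\alpha}$ and $C^{m,\alpha}_\beta$ are algebras, with $C^{m,\alpha}_\beta\cdot C^{m,\alpha}\subset C^{m,\alpha}_\beta$.
    \item For $\ell>\frac n2+2$, $H^\ell_{\wh\varphi}$ is an algebra-like space via Lemma \ref{wgtd_sob_emb}.
\end{itemize}
Weighted decay of $R(g,\varphi)$ requires care. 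Writing $g=\wh g+h$ and $\varphi=\wh\varphi+\psi$, and using $R(\wh g,\wh\varphi)=0$ (since $\wh\varphi$ is static), one has $R(g,\varphi)=R(g,\varphi)-R(\wh g,\wh\varphi)$. This is an analytic expression that is at least linear in $(h,\psi)$ and their first two derivatives, with coefficients involving $d\wh\varphi$ and $\wh{g}$-curvature, which are bounded in $C^{\ell-2,\alpha}$. Hence it lies in $C^{\ell-2,\alpha}_\beta$, respectively $H^{\ell-2}_{\wh\varphi}$. Note that the drift term $\g{d\varphi}{du}$ is bounded but does not decay, which is fine. The map $\omega\mapsto\wt H(\omega)$, or $u\mapsto\frac12\abs{du}^2$, is analytic on a small ball.

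Next comes linearisation at the base point $(\wh g,\wh\varphi,0)$. By Proposition \ref{klaus_konj} (or Remark \ref{static_el_remark}), $f=\wh\varphi$, so $u=0$ solves the equation there. Since $\wt H=\mathcal O(\omega^2)$, the derivative in $\omega$ is $\Delta_{\wh\varphi}+n$. By Proposition \ref{isomorph_prop} this is an isomorphism $C^{\ell,\alpha}_\beta\to C^{\ell-2,\alpha}_\beta$, because $\beta\in(\frac n2,n)$ lies in the admissible interval $\p{\frac n2-\sqrt{\frac{n^2}4+n},\frac n2+\sqrt{\frac{n^2}4+n}}$. It is also an isomorphism $H^\ell_{\wh\varphi}\to H^{\ell-2}_{\wh\varphi}$. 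The analytic implicit function theorem then gives a neighbourhood of $(\wh g,\wh\varphi)$ and an analytic map $(g,\varphi)\mapsto u(g,\varphi)$ solving \eqref{modified_el_eqn}, unique in a small ball.

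To identify this solution with the minimiser, note that $u(g,\varphi)\to0$ at infinity. By Proposition \ref{el_uniqueness}, it therefore coincides with $f_g-\varphi$ from Theorem \ref{global_minimiser}. In the Sobolev case no minimising property is claimed, matching the statement. For the entropy, substitute $f=\varphi+u(g,\varphi)$ into the decomposition \eqref{eq:decomp_W}, or better into the $\omega$-form from Proposition \ref{2sided_H1_bounds}. This writes $\mu_{\mathrm{AH},\wh g}$ as $\frac1{2n}\int_M\bigl(4\abs{d\omega}^2+H(\omega)+R(\omega^2+2\omega)\bigr)dV_\varphi+C(g,\varphi)$. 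Each integrand is an analytic function of decaying quantities in $C^{\ell,\alpha}_\beta$ with $2\beta>n$, while $dV_\varphi\sim e^{(n)r}dr\,dA$, so the integrals are bounded multilinear, hence analytic.

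The main obstacle is analyticity of $C(g,\varphi)$, which contains $S_{\wh g}$ defined as a limit involving the mass boundary term. I would first use the representation from the proof of Proposition \ref{EHfinite}: $S_{\wh g}=\lim\bigl(C_R-\int\g{Q\varphi}{h}d\wh V_\varphi\bigr)$, where $C_R$ is the integral of the Taylor remainder. Both are convergent integrals of analytic expressions that are quadratic in decaying quantities. Here $Q\varphi=Q(\varphi-\wh\varphi)$ is linear in $\psi$, so each term is $\mathcal O(e^{-2\beta r})$ against weight $e^{nr}$. This should give analyticity as absolutely convergent integral functionals.
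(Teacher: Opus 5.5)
Your argument is correct as far as it goes, and it uses the same basic strategy as the paper: the analytic implicit function theorem applied to the Euler--Lagrange equation on weighted H\"older and Sobolev spaces. At $(\wh g,\wh\varphi,0)$ the linearisation is exactly $\Delta_{\wh\varphi}+n$, which Proposition~\ref{isomorph_prop} inverts for $\beta\in(\frac n2,n)$. You rightly use the drift Laplacian here, since the coefficient in $\g{d\varphi}{d\,\cdot}$ does not decay. Proposition~\ref{el_uniqueness} then identifies the implicit solution with $f_g-\varphi$. Your treatment of the entropy is more explicit than the paper's one-line remark: you write $S_{\wh g}$ as $\lim_{R}\bigl(C_R-\int_{B_R}\g{Q\varphi}{h}\,d\wh V_\varphi\bigr)$ with both terms absolutely convergent. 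One small correction: $Q\varphi=Q\varphi-Q\wh\varphi$ is quadratic rather than linear in $\psi$, since it contains $\abs{d\psi}^2$ and $d\psi\otimes d\psi$. This is harmless, because it still vanishes at $\psi=0$ and is $\mathcal{O}(e^{-\beta r})$.

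The gap is one of scope. Linearising only at the static pair gives analyticity on a neighbourhood of $(\wh g,\wh\varphi)$. The statement, however, asserts analyticity of $(g,\varphi)\mapsto f_g-\varphi$ on all of $\mathcal{R}^{\ell,\alpha}_\beta(M,\wh g)\times\mathcal{R}^{\ell,\alpha}_\beta(M,\wh\varphi)$. Since analyticity is a local property, you must apply the implicit function theorem at an arbitrary base point $(g_0,\varphi_0)$. There $u_0=f_{g_0}-\varphi_0\neq0$, and the linearisation is $Lv=\Delta_{g_0}v+\g{df_{g_0}}{dv}+nv=(\Delta_{\varphi_0}+n)v+\g{du_0}{dv}$, which Proposition~\ref{isomorph_prop} does not cover.

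The paper supplies exactly this missing step, in two parts:
\begin{enumerate}
\item Injectivity on $C^{\ell,\alpha}_\beta$: integrating $vLv$ against $e^{-f_{g_0}}dV_{g_0}$ gives $\norm{dv}^2_{L^2_{f_{g_0}}}+n\norm{v}^2_{L^2_{f_{g_0}}}=0$. The boundary terms are $\mathcal{O}(e^{(n-2\beta)r})$ and so vanish in the limit.
\item Fredholm index zero: $L$ differs from the invertible model operator $\Delta_{\varphi_0}+n$ only by a drift whose coefficient $du_0$ decays, since $u_0\in C^{2,\alpha}_\beta$ by Theorem~\ref{global_minimiser}. Hence $L$ has the same indicial radius. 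The paper expresses this by conjugating with $e^{u_0/2}$.
\end{enumerate}
Invertibility at every point, combined with Proposition~\ref{el_uniqueness}, then gives analyticity on the whole space. For the Sobolev part, the map is only defined as a solution branch of \eqref{modified_el_eqn}, and there your local argument is essentially what the paper proves as well.
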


        \begin{proof}
            First, consider the mapping
            \begin{equation*}
                \Phi\p{g,\varphi,f} :=2\Delta f+\abs{d f}^2+\abs{d \varphi}^2+2nf-2n\varphi-\scal-n(n-1)-2n.
            \end{equation*}
            The differential of $\Phi$ in the third argument is
            \begin{equation*}
                D_{g,\varphi,f} \Phi\p{0,0,v} = 2\Lap v + 2\g{df}{dv} + 2nv.
            \end{equation*}
            If we can show that
            \begin{equation*}
                P_{g,\varphi,f} := D_{g,\varphi,f} \Phi\p{0,0,\cdot} : C^{\ell,\alpha}_\beta\p{M} \rightarrow C^{\ell-2,\alpha}_\beta \p{M}
            \end{equation*}
            is an isomorphism, then we can deduce the claimed result by applying the implicit function theorem.\\

            To this end, one can use an integration by parts argument with respect to the weighted volume measure $e^{-f}dV_g$ to deduce that $P_{g,\varphi,f}$ has trivial kernel. Indeed, if $v\in \ker P_{g,\varphi,f}$ then
                $$0=\int_MvP_{g,\varphi,f}v\; dV_f=2n\norm{v}_{L^2_f\p{M}}^2-\lim_{R\to\infty}\int_{\partial B_R}\g{\nabla v^2}{\nu}\; dA_f,$$
            with the boundary integral vanishing, as $v\in C_\beta^{\ell,\alpha}\p{M}$.\\

            Next, setting $u:=f-\varphi$, we note that $P_{g,\varphi,f} = e^{\frac{u}{2}} \circ Q_{g,\varphi,f} \circ e^{-\frac{u}{2}}$, where
            \begin{equation*}
                Q_{g,\varphi,f}\p{v} = 2\Lap v + 2nv + \p{\Lap u + \frac{1}{2}\abs{d u}^2 }v.
            \end{equation*}

            By Proposition \ref{isomorph_prop}, we know that
            \begin{equation*}
                2\Lap + 2n: C^{\ell,\alpha}_\beta\p{M} \rightarrow C^{\ell-2,\alpha}_\beta \p{M}
            \end{equation*}
            is an isomorphism for $\beta \in \p{\frac{n}{2},n}$. Moreover, since $u \in C^{2,\alpha}_\beta(M)$ by Theorem \ref{global_minimiser}, the operator $Q_{g,\varphi,f}$ has the same indicial radius as $2\Lap + 2n$, hence it is also Fredholm with zero index. Finally, since multiplication by $e^{\pm \frac{u}{2}}$ is an isomorphism, $P_{g,\varphi,f}$ is similarly Fredholm of index zero. We already know $P_{g,\varphi,f}$ has trivial kernel, therefore it must be an isomorphism and we can apply the implicit function theorem, as desired. That this implies the entropy $\mu_{\mathrm{AH},\wh{g}}\p{g,\varphi}$ depends analytically on $\p{g,\varphi}$ is due to all the operations involved in the definition of the entropy being analytic. \\

            The proof of the analogous result for Sobolev spaces is the same after replacing occurrences of $C^{\ell,\alpha}_\beta\p{M}$ and $C^{\ell-2,\alpha}_\beta \p{M}$ with, respectively, $H^\ell_{\wh{\varphi}}\p{M}$ and $H^{\ell-2}_{\wh{\varphi}}\p{M}$ and using Lemma \ref{wgtd_sob_emb} to ensure $u$ and its first two derivatives decay appropriately.
        \end{proof}

        In light of Proposition \ref{entropy_analytic_metric_dep}, we should really write $f_{g,\varphi}$ (or $f_{\mathfrak{g}}$) to reflect the dependence of $f$ on $g$ and $\varphi$. However, to simplify notation (and since $\varphi$ itself is somewhat dependent on $g$), we shall usually write $f_g$, unless we are working with auxiliary metrics or we really want to emphasise the dependence on $\varphi$.

\section{The First and Second Variations of the Entropy}\label{sec_variations}

        Having proved the entropy is well-defined and finite in the previous section, we now compute its first and second variations. This will yield the diffeomorphism invariance of the entropy as well as our stability operator.\\

        To make notation less cumbersome moving forward, we will fix a background static pair $\left(\wh{g}, \wh{\varphi}\right)$ and omit them when we write the entropy, instead writing just $\mu_{\mathrm{AH}}$. Additionally, we will at times use $D_{g,\varphi} \mu_{\mathrm{AH}}$ to denote the first variation of the entropy, while $\nabla \mu_{\mathrm{AH}}$ will denote the corresponding (weighted) $L^2$-gradient (that is, the integrand of $D_{g,\varphi} \mu_{\mathrm{AH}}$). We will also analogously use $D_{\mathfrak{g}} \mu_{\mathrm{AH}}$ to denote the variation of the entropy with the auxiliary metric $\mathfrak{g}=\p{g,\varphi}$ as the base point, while $D^2 \mu_{\mathrm{AH}}$ and $\nabla^2 \mu_{\mathrm{AH}}$ (with appropriate subscripts) will be used to, respectively, denote the second variation of the entropy and the weighted $L^2$-Hessian of the entropy (that is, the integrand of $D^2 \mu_{\mathrm{AH}}$).
    
    \subsection{The First Variation}\label{subsec_first_var}
        \begin{proposition}[First variation]\label{first_var} 
            Let $ \frac{n}{2} < \beta < n$, $g \in \mathcal{R}^{2,\alpha}_\beta\p{M,\wh{g}}$, $\varphi$ be an approximate potential, and $\abs{h},\psi,v,f-\varphi \in C_\beta^{2,\alpha}\p{M}$. Then, for $\abs{s} \ll 1$,  
            \begin{align*}
                \left.\frac{d}{ds}\right|_{s=0}\mathcal{W}_{\mathrm{AH},\wh{g}}\p{g+sh,\varphi+s\psi,f+sv} ={}& -\frac{1}{2n}\int_M \g{\Ric_g + ng + \nabla^2 f - d\varphi\otimes d\varphi}{h}\; dV_{f}\\
                &- \frac{1}{n}\int_M \p{\Delta \varphi + \g{d \varphi}{d f}-n}\psi\; dV_{f}\\ 
                &-\int_M\p{\frac{1}{2}\tr_g f-v}\EL f\; dV_f,           
            \end{align*}
            where $\EL$ is the Euler-Lagrange operator from \eqref{el_eqn}.
        \end{proposition}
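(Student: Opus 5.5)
We compute the variation on $B_R$ and pass to the limit. Write $\mathcal{W}_{\mathrm{AH},\wh{g}}$ as $\lim_{R\to\infty}$ of $\mathcal{W}_R - \frac{1}{2n}m_{\wh{g},R} + \mathcal{B}_R$. We differentiate each piece in $s$ at fixed $R$ and keep every boundary integral over $\partial B_R$. Only at the end do we show that the boundary contributions cancel or vanish as $R\to\infty$. The decay assumptions $\abs{h},\psi,v,f-\varphi\in C^{2,\alpha}_\beta$ with $\beta>\frac n2$, together with $dA_f\sim e^{r}dA$ and $dA\sim e^{(n-1)r}$, are what make the $R\to\infty$ limits legitimate. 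They also justify interchanging $\frac{d}{ds}$ with the limit, since the $s$-derivatives of all integrands are uniformly dominated.

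\textbf{Step 1 (interior variation).} We use the standard formulas
\[
\left.\tfrac{d}{ds}\right|_{s=0}\scal_{g+sh}=\Delta\tr h+\delta\delta h-\g{\Ric}{h},\qquad \left.\tfrac{d}{ds}\right|_{s=0}dV_{f+sv}=\left(\tfrac12\tr h-v\right)dV_f ,
\]
and $\frac{d}{ds}\abs{df}^2=-h(df,df)+2\g{df}{dv}$, together with the analogous formula for $\abs{d\varphi}^2$. It is convenient to start from the rewriting \eqref{entropy_formula_with_el}, as in the proof of Proposition \ref{el_proposition}. Differentiating the integrand gives the term $-\int(\frac12\tr h-v)\EL f\,dV_f$ directly from the variation of the measure, plus the variation of the integrand itself. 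In the latter we integrate by parts with respect to $dV_f$. The identities needed are $\int \Delta\tr h\, dV_f$, $\int\delta\delta h\,dV_f$, which produce $\nabla^2 f$ via $\delta_f$, and $\int\g{d\varphi}{d\psi}\,dV_f$, which produces $\Delta\varphi+\g{d\varphi}{df}$. Collecting terms should give the three bulk integrands in the statement. The $h$-terms come out as $\Ric+ng+\nabla^2f-d\varphi\otimes d\varphi$, where the $ng$ comes from the $\frac{n-1}{2}$ and $\varphi-f$ constants combining with $\frac12\tr h$. The $\psi$-term comes from $-\frac{1}{2n}\abs{d\varphi}^2+\varphi$, whose variation is $\frac1n(-\g{d\varphi}{d\psi}+n\psi)$ and integrates to $-\frac1n(\Delta\varphi+\g{d\varphi}{df}-n)\psi$ after parts.

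\textbf{Step 2 (boundary terms).} The integrations by parts leave terms on $\partial B_R$ of the form
\[
\int_{\partial B_R}\g{\delta h+d\tr h}{\nu}+\tr h\,\nu(f)-h(df,\nu)\,dA_f,\qquad \int_{\partial B_R}\psi\,\nu(\varphi)\,dA_f,\qquad \int_{\partial B_R} v\,\nu(f)\,dA_f .
\]
The first should match, up to lower order, the $s$-derivative of $\frac{1}{2n}m_{\wh{g},R}(g+sh-\wh{g},\varphi+s\psi)$. That derivative is exactly the mass integrand \eqref{def:mass} evaluated on $h$ with respect to $\wh g$ and $\varphi$, plus the terms coming from varying $\varphi$ in $dA_\varphi$ and in $\nu(\varphi)$. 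The differences come from replacing $g,f$ by $\wh g,\varphi$, and they are bounded by $\abs{g-\wh g}\abs{h}$ and $\abs{f-\varphi}\abs h$ times $e^{r}e^{(n-1)r}$. These are $O(e^{(n-2\beta)r})\to0$. The remaining $\psi$ and $v$ boundary terms must be absorbed by the variation of $\mathcal{B}_R$, which is
\[
-\tfrac1n\int_{\partial B_R}(v-\psi)\nu(\varphi)+(f-\varphi)\left(\nu(\psi)+\dots\right)\,dA_\varphi .
\]
Since $\nu(\varphi)\approx -1$ and $v,\psi\sim e^{-\beta r}$, these terms are only $O(e^{(n-\beta)r})$ and do \emph{not} decay individually. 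They must cancel exactly against the corresponding integration-by-parts terms. The terms involving $f-\varphi$ times decaying quantities vanish in the limit.

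\textbf{Main obstacle.} The delicate step is this bookkeeping of non-decaying boundary terms. One must check that the leading-order $\psi$ and $v$ boundary contributions, together with the $\psi$-dependence of the mass term, cancel precisely. This is the reason for the specific normalisations $\frac1{2n}$ and $-\frac1n$. I would first verify this in the simplest case $h=0$, $v=\psi$, where $\mathcal{B}_R$ is unchanged to first order and only $\varphi$-dependence matters. I would then treat $h$ and $v-\psi$ separately, using $\nu(\varphi)=-1+O(e^{-r})$ and $\nu(f)=\nu(\varphi)+O(e^{-\beta r})$ to reduce each comparison to integrands that are $o(e^{-(n-1)r}e^{-r})$ times the weight.
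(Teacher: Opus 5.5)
Your proposal is correct and follows essentially the same route as the paper. In both, you vary $\mathcal{W}_R$, $m_{\wh{g},R}$ and $\mathcal{B}_R$ separately at fixed $R$ and keep the boundary terms from integrating by parts against $dV_f$. The $h$-boundary term is $\frac{1}{2n}m_{g,R}(h,f)$, which cancels $\frac{1}{2n}m_{\wh{g},R}(h,\varphi)$ in the limit. The non-decaying $v$- and $\psi$-boundary terms cancel against the leading part $-\frac{1}{n}\int_{\partial B_R}(v-\psi)\nu(\varphi)\,dA_\varphi$ of the $\mathcal{B}_R$-variation, up to errors of order $e^{(n-2\beta)R}$.

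One small correction to your bookkeeping: the $\psi$-dependent part of the mass variation is bilinear in $g-\wh{g}$ and $\psi$. It is therefore $O(e^{(n-2\beta)R})$ and vanishes on its own, so it plays no role in the cancellation.
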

        
        \begin{proof} 
            Recall from Definition \ref{entropy_defn}, that 
                $$\mathcal{W}_{\mathrm{AH},\wh{g}}(g,\varphi,f) = \lim \limits_{R \rightarrow \infty} \left[\mathcal{W}_R(g,\varphi,f)-\frac{1}{2n}m_{\wh{g},R}\p{g-\wh{g},\varphi}+\mathcal{B}_R(g,\varphi,f)\right],$$     
            where 
            \begin{align*}
                \mathcal{W}_R(g,\varphi,f)={}&\frac{1}{2n}\int_{B_R}\abs{df}^2-\abs{d\varphi}^2-2n(f-\varphi)+\scal_g+n(n-1)\; dV_f\\
                ={}&\frac{1}{n}\int_{B_R}\Delta f+\abs{d f}^2-n-n\EL f\; dV_f,\\
                m_{\wh{g},R}\p{h,\varphi}={}& - \int_{\partial B_R}\g{\delta_{\wh{g}}h+d\tr_{\wh{g}}h}{\wh{\nu}}_{\wh{g}}+\tr_{\wh{g}}h\; \wh{\nu}\p{\varphi}-h\p{d\varphi,\wh{\nu}}\; d\wh{A}_\varphi,\\
                \mathcal{B}_R(g,\varphi,f) ={}& -\frac{1}{n}\int_{\partial B_R} \p{f - \varphi} \nu\p{\varphi}\; dA_\varphi.
            \end{align*}
            
            Using \eqref{scal_var} and \eqref{f_wgtd_vol_var}, we obtain the formula for the change in $\mathcal{W}_R$, with respect to an infinitesimal perturbation of the metric:   
            \begin{align*}
                \left.\frac{d}{ds}\right|_{s=0}\mathcal{W}_R\p{g+sh,\varphi,f}={}& -\frac{1}{2n}\int_{B_R} \g{\Ric_g}{h} + h\p{d f,d f}-h\p{d \varphi,d \varphi}-\Delta \tr h-\delta \p{\delta h}\; dV_{f} \\ 
                &+\frac{1}{2n}\int_{B_R}  \tr h\p{\Delta f + \abs{d f}^2-n-n\EL f} \; dV_{f}\\
                ={}&- \frac{1}{2n}\int_{B_R} \g{\Ric_g+ ng + \nabla^2 f - d\varphi\otimes d\varphi}{h}\; dV_f\\ 
                &-\frac{1}{2}\int_{B_R}\tr h\:\EL f\; dV_{f}+\frac{1}{2n}m_{g,R}\p{h,f}.
            \end{align*}              
            Note that the second equality follows from integrating by parts. Similarly, for a perturbation of the potential $\varphi$, we have
            \begin{align*}
                \left.\frac{d}{ds}\right|_{s=0}\mathcal{W}_R\p{g,\varphi+s\psi,f}={}& -\frac{1}{n}\int_{B_R}\g{d\varphi}{d\psi} -n\psi\; dV_{f}\\ 
                ={}& -\frac{1}{n}\int_{B_R} \p{\Delta \varphi+\g{d \varphi}{d f}-n}\psi \; dV_{f}\\
                &-\frac{1}{n}\int_{\partial B_R}\psi\:\nu\p{\varphi} \; dA_{f}.
            \end{align*}
            where the second equality follows from integrating by parts. Finally, for a perturbation of $f$, Proposition \ref{el_proposition} tells us that
            \begin{align*}
                \left.\frac{d}{ds}\right|_{s=0}\mathcal{W}_R\p{g,\varphi,f+sv}
                ={}&\int_{B_R} v\EL f\; dV_{f}+\frac{1}{n}\int_{\partial B_R}v\: \nu(f)\; dA_{f}.
            \end{align*}          
             
            Now we consider the variation of the mass and boundary terms. The variation of the mass is the following:
            \begin{align*}
                \left.\frac{d}{ds}\right|_{s=0} m_{\wh{g},R}\p{g-\wh{g}+sh,\varphi+s\psi}&\\
                =m_{\wh{g},R}\p{h,\varphi}- \int_{\partial B_R} &\biggl[ \tr_{\wh{g}}\p{g - \wh{g}} \wh{\nu}\p{\psi} - \p{g - \wh{g}}\p{d\psi, \wh{\nu}}\\
                &+ \psi \left<\delta_{\wh{g}}\p{g - \wh{g}} + d\tr_{\wh{g}}\p{g - \wh{g}}, \wh{\nu}\right>_{\wh{g}} \\ &+ \psi\tr_{\wh{g}}\p{g - \wh{g}}\wh{\nu}\p{\varphi} - \psi\p{g - \wh{g}}\p{d \varphi, \wh{\nu}}\biggr]\; d\wh{A}_\varphi
            \end{align*}

            Note that only the mass term survives as $R \rightarrow \infty$, since $g - \wh{g} \in C^{2,\alpha}_\beta\p{S^2M}$ and $\psi \in C^{2,\alpha}_\beta\p{M}$ for $\beta > \frac{n}{2}$. As for the variation of the boundary term $\mathcal{B}_R$, we have
            \begin{align*}
                \left. \frac{d}{ds}\right|_{s=0}& \mathcal{B}_R\p{g+sh,\varphi+s\psi,f+sv}\\
                ={}& -\frac{1}{n} \int_{\partial B_R} \p{v - \psi}\nu\p{\varphi} + \p{f - \varphi} \nu\p{\psi} +\p{\frac{1}{2}\tr_{g^T}h-\psi} \p{f - \varphi} \nu\p{\varphi}\; dA_\varphi,
            \end{align*}
            where $g^T=g|_{\partial B_R}$ is the induced metric.
            Since $f - \varphi, \psi \in C^{2,\alpha}_\beta\p{M}$, only the first term survives as $R \rightarrow \infty$.\\

            Putting what we have so far together yields
            \begin{align*}
                \left.\frac{d}{ds}\right|_{s=0}&\mathcal{W}_{\mathrm{AH},\wh{g}}\p{g+sh,\varphi+s\psi,f+sv}\\
                ={}& -\frac{1}{2n}\int_M \g{\Ric_g +ng+\nabla^2 f-d\varphi\otimes d\varphi}{h}\; dV_{f} - \frac{1}{n}\int_M \p{\Delta \varphi+\g{d \varphi}{d f}-n}\psi\; dV_{f}\\
                &+\frac{1}{n}\lim_{R\to \infty}\p{\int_{\partial B_R}v\:\nu\p{ f} - \psi\:\nu\p{\varphi}\; dA_{f} - \int_{\partial B_R} \p{v - \psi}\nu\p{\varphi} \; dA_\varphi}\\&+\frac{1}{2n}m_{g}\p{h,f}-\frac{1}{2n}m_{\wh{g}}(h,\varphi) -\int_M\p{\frac{1}{2}\tr_g f-v}\EL f\; dV_f  .
            \end{align*}
            Since $dA_f=\p{1+\mathcal{O}(e^{-\beta r})} dA_\varphi$ and $f-\varphi, v,\psi=\mathcal{O}(e^{-\beta r})$, we may combine the boundary integrals:
            \begin{align*}
                &\lim_{R\to \infty}\p{\int_{\partial B_R}v\:\nu\p{ f} - \psi\:\nu\p{\varphi}\; dA_{f} -\int_{\partial B_R} \p{v - \psi}\nu\p{\varphi} \; dA_\varphi}\\
                &\qquad =\lim_{R\to \infty}\int_{\partial B_R}v\: \nu\p{f-\varphi}\; dA_\varphi\\
                &\qquad =0.
            \end{align*}            
            Applying the same principles, and $\abs{h},\abs{g-\wh{g}},f-\varphi=\mathcal{O}\p{e^{-\beta r}}$, it is not hard to see that 
                $$m_{g}\p{h,f}-m_{\wh{g}}(h,\varphi)=0.$$
                
            The first variation formula now follows.
        \end{proof}

        For $\abs{t} \ll 1$, let $g(t) := g + th$ (with $h \in C^{2,\alpha}_\beta\p{S^2M}$) be a family of AH metrics with approximate potentials $\varphi(t) := \varphi + t\psi+\mathcal{O}\p{t^2}$ and associated minimisers $f_{g(t)} := f_g + tv+\mathcal{O}\p{t^2}$ (with $v,\psi \in C^\infty\p{M}$). Note that, by Proposition \ref{entropy_analytic_metric_dep} and $h \in C^{2,\alpha}_\beta\p{S^2M}$, we also have $v \in C^{2,\alpha}_\beta\p{M}$. This implies $\psi \in C^{2,\alpha}_\beta\p{M}$ as $v - \psi \in C^{2,\alpha}_\beta\p{M}$. We may therefore use Proposition \ref{first_var} to conclude the following:

        \begin{corollary}\label{first_var_crit}
            Let $\mu_{\mathrm{AH}}\p{g,\varphi}$ be the renormalised expander entropy of Definition \ref{inf_expander_defn}, and let $f_g$ be the unique minimiser from Theorem \ref{global_minimiser}. The $L^2_{f_g}$-gradient of $\mu_{\mathrm{AH}}$ is given by
            \begin{align}\label{entropy-gradient}
                \nabla\mu_{\mathrm{AH}}\p{g,\varphi}=\p{-\frac{1}{4n}\p{2\Ric_g+2n g-2\: d\varphi\otimes d\varphi+\mathscr{L}_{\nabla f_g}g},-\frac{1}{n}\p{\Delta \varphi + \mathscr{L}_{\nabla f_g}\varphi - n}}.
            \end{align}

            The gradient flow is thus
            \begin{equation}\label{entropy_grad_flow}
                \begin{cases}
                    \partial_t g(t) &= -2\left(\Ric_{g(t)} + ng(t) - d\varphi(t) \otimes d\varphi(t) + \nabla^2 f_{g(t)}\right)\\
                    \partial_t \varphi(t) &= -\left(\Lap \varphi(t) + \left<d \varphi(t), d f_{g(t)}\right> - n\right).
                \end{cases}
            \end{equation}
            
            With respect to the auxiliary metric, $\mathfrak{g}=e^{-2\varphi}\; d\tau^2+g$, the first variation in the direction $\mathfrak{h}=-2\psi e^{-2\varphi}\; d\tau^2+h$ can be expressed as
            \begin{equation}\label{aux_first_var}
                D_{\mathfrak{g}}\mu_{\mathrm{AH}}\p{\mathfrak{h}}=-\frac{1}{2 n}\int_{M}\g{\Ric_{\mathfrak{g}}+n\mathfrak{g}+\nabla^2_{\mathfrak{g}}\p{f_g-\varphi}}{\mathfrak{h}}_{\mathfrak{g}}\;dV_{f_g}
            \end{equation}
        \end{corollary}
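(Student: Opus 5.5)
The plan is to differentiate $\mu_{\mathrm{AH}}(g(t),\varphi(t)) = \mathcal{W}_{\mathrm{AH},\wh{g}}(g(t),\varphi(t),f_{g(t)})$ at $t=0$ using Proposition \ref{first_var}, taking $f = f_g$, $v = \frac{d}{dt}|_{t=0} f_{g(t)}$ and $\psi = \frac{d}{dt}|_{t=0}\varphi(t)$. The hypotheses of Proposition \ref{first_var} hold by the discussion preceding the corollary. Proposition \ref{entropy_analytic_metric_dep} gives $v \in C^{2,\alpha}_\beta(M)$. Theorem \ref{global_minimiser} gives $f_g-\varphi\in C^{2,\alpha}_\beta(M)$. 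Hence $\psi \in C^{2,\alpha}_\beta(M)$ as well. Because $f_g$ solves the Euler--Lagrange equation, $\EL f_g = 0$, so the third integral in Proposition \ref{first_var} drops out, and the $v$-dependence disappears completely. The first variation is therefore
\begin{align*}
D_{g,\varphi}\mu_{\mathrm{AH}}(h,\psi) = -\frac{1}{2n}\int_M \g{\Ric_g+ng+\nabla^2 f_g - d\varphi\otimes d\varphi}{h}\,dV_{f_g} - \frac{1}{n}\int_M\p{\Delta\varphi+\g{d\varphi}{df_g}-n}\psi\,dV_{f_g}.
\end{align*}
Rewriting $\nabla^2 f_g = \frac12\mathscr{L}_{\nabla f_g} g$ and $\g{d\varphi}{df_g} = \mathscr{L}_{\nabla f_g}\varphi$ gives \eqref{entropy-gradient}, read off as the $L^2_{f_g}$-dual of this linear functional.

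For the gradient flow \eqref{entropy_grad_flow}, we take $\partial_t(g,\varphi)$ to be a positive multiple of the gradient, with the multiple fixed by normalisation. A factor $4n$ on the $g$-component and $n$ on the $\varphi$-component produces exactly \eqref{entropy_grad_flow}. These factors are natural: they correspond to the $L^2$ metric in which pairs are measured with $\abs{(h,\psi)}^2 = \abs{h}^2 + 4\psi^2$, which also accounts for the factor $-2$ in \eqref{aux2tensor}. The relative weighting between the two components is the only step that needs care. I would check it against the auxiliary formulation below, since that formulation fixes the inner product canonically.

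For \eqref{aux_first_var}, I would use \eqref{auxiliary_Ric} to compute
\begin{align*}
\Ric_\mfg + n\mfg = -\p{\Delta\varphi+\abs{d\varphi}^2 - n}e^{-2\varphi}d\tau^2 + \Ric_g + ng + \nabla^2\varphi - d\varphi\otimes d\varphi.
\end{align*}
The Christoffel symbols listed in Section \ref{sec:static_metrics_etc} give, for a $\tau$-independent function $u$,
\begin{align*}
\nabla^2_\mfg u = -\g{d\varphi}{du}e^{-2\varphi}d\tau^2 + \nabla^2_g u,
\end{align*}
since $\overline\Gamma^i_{\tau\tau} = e^{-2\varphi}\nabla^i\varphi$.

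With $u = f_g-\varphi$, the $M$-block of the sum becomes $\Ric_g+ng+\nabla^2 f_g - d\varphi\otimes d\varphi$. The $\tau\tau$-coefficient becomes $-(\Delta\varphi + \g{d\varphi}{df_g} - n)e^{-2\varphi}$, using $\abs{d\varphi}^2 - \g{d\varphi}{d\varphi} + \g{d\varphi}{df_g}$.

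Next we pair this with $\mfh = -2\psi e^{-2\varphi}d\tau^2 + h$ using $\mfg^{\tau\tau} = e^{2\varphi}$. The $\tau\tau$ contribution is $2\psi(\Delta\varphi+\g{d\varphi}{df_g}-n)$. Multiplying by $-\frac{1}{2n}$ reproduces exactly the two integrals displayed above, which proves \eqref{aux_first_var}. This bookkeeping of the $e^{\pm2\varphi}$ factors and the factor $-2$ is the main place where sign or constant errors could creep in, so it is what I would verify most carefully.
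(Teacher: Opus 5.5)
Your proposal is correct and follows the paper's route. You apply Proposition \ref{first_var} with $\EL f_g=0$, rewrite via $\mathscr{L}_{\nabla f_g}g=2\nabla^2 f_g$ and $\mathscr{L}_{\nabla f_g}\varphi=\g{d\varphi}{df_g}$, and obtain \eqref{aux_first_var} from \eqref{auxiliary_Ric} together with $\nabla^2_{\mathfrak{g}}u=-\g{d\varphi}{du}e^{-2\varphi}\,d\tau^2+\nabla^2_g u$; you also correctly make explicit what the paper simply "reads off", namely that \eqref{entropy_grad_flow} is $4n$ times the gradient for the auxiliary inner product $\abs{h}^2+4\psi^2$ rather than a single multiple of the plain product-$L^2$ gradient.
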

        
        \begin{proof}
            The gradient expression for $\mu_{\mathrm{AH}}$ follows directly from Proposition \ref{first_var}, with the only difference being $\EL f_g=0$. The gradient flow is then read off using $\mathscr{L}_{\nabla f_g}g=2\delta^*\nabla f_g=2\nabla^2f_g$ and $\mathscr{L}_{\nabla f_g}\varphi=\g{d \varphi}{d f_g}$.\\

            The formula for the first variation with respect to the auxiliary metric is a reformulation of the gradient above, using only \eqref{auxiliary_Ric} and $\nabla^2_{\mathfrak{g}}f=-\g{d\varphi}{df}_ge^{-2\varphi}\; d\tau^2+\nabla^2_{g}f$, for any $\tau$-independent function $f$ on $M$.
        \end{proof}

        \begin{remark}
            Modulo the diffeomorphisms generated by $\nabla\p{f_g-\varphi}$, the gradient flow is given by 
                $$\begin{cases}
                    \partial_t g(t) = -2\p{\Ric_{g(t)} + n g(t) - d\varphi(t) \otimes d\varphi(t) + \nabla^2 \varphi(t)}\\
                    \partial_t \varphi(t) = -\p{\Delta \varphi(t) + \abs{d \varphi(t)}^2-n}.
                \end{cases}$$
                
            Note that these are precisely the equations of staticity \eqref{static_equations}.
        \end{remark}

        Though the entropy is ostensibly only defined for pairs $\p{g,\varphi}\in \mathcal{R}^{2,\alpha}_\beta\p{M,\wh{g}}\times \mathcal{R}^{2,\alpha}_\beta\p{M,\wh\varphi}$, it is actually possible to extend the concept to an $H^\ell_{\wh\varphi}$ neighbourhood of $\p{\wh{g},\wh\varphi}$. As stated back in Proposition \ref{entropy_analytic_metric_dep}, the map sending $\p{g,\varphi}\in \mathcal{R}^\ell_{\wh\varphi}\p{M,\wh{g}}\times \mathcal{R}^\ell_{\wh\varphi}\p{M,\wh{\varphi}}$ to $u_g=f_g-\varphi\in H^\ell_{\wh{\varphi}}$ is analytic. Here $u_g$ should a priori not be thought of as a minimiser, but simply a solution to the Euler--Lagrange equation \eqref{modified_el_eqn}. It was not possible back then to conclude that the entropy itself was well-defined and analytic as a consequence, but it is now.

        \begin{corollary} The map 
                $$\mathcal{R}^{\ell}_{\wh\varphi}\p{M,\wh{g}}\times \mathcal{R}^{\ell}_{\wh\varphi}\p{M,\wh{g}}\ni \p{g,\varphi}\longmapsto \mu_{\mathrm{AH}}\p{g,\varphi}$$
            is well-defined and analytic.
        \end{corollary}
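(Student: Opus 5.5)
The plan is to define the entropy on the $H^\ell_{\wh\varphi}$-neighbourhood by an explicit formula that only involves $u_g=f_g-\varphi$, $h=g-\wh{g}$ and $\varphi-\wh\varphi$. That formula should agree with $\mu_{\mathrm{AH}}$ on the dense subset $\mathcal{R}^{\ell,\alpha}_\beta\times\mathcal{R}^{\ell,\alpha}_\beta$, and it should manifestly be a composition of analytic maps. We then check that it extends continuously and analytically.

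First, substitute $f=\varphi+u$ into the decomposition \eqref{eq:decomp_W}, together with the formula for $S_{\wh{g}}$ from the proof of Proposition \ref{EHfinite}. This writes $\mu_{\mathrm{AH}}(g,\varphi)$ as a sum of integrals over all of $M$:
\begin{itemize}
\item the $C_R$-type quadratic remainder in $h,\nabla h,\nabla^2 h$;
\item the term $-\int_M\g{Q\varphi}{h}\,d\wh V_\varphi$;
\item $(\scal_g+n(n-1))(e^{-\varphi}dV-e^{-\varphi}d\wh V)$;
\item terms of the form $\abs{du}^2$, $\g{d\varphi}{du}$, $u\,(\scal_g+n(n-1))$, and $e^{-u}-1+u$, weighted by $dV_\varphi$;
\item the limit of $\mathcal{B}_R$.
\end{itemize}
As in the proof of Proposition \ref{2sided_H1_bounds}, the boundary term $\mathcal{B}_R$ is exchanged, via integration by parts, for bulk terms involving $u\,(\Delta\varphi+\abs{d\varphi}^2-n)$.

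Next, verify that each resulting integrand is integrable and analytic in the data. For $h$, $\varphi-\wh\varphi$ and $u$ in $H^\ell_{\wh\varphi}$ with $\ell>\frac n2+2$, Lemma \ref{wgtd_sob_emb} places these quantities in $C^{2,\alpha}_{1/2}$. Every bulk integrand is then either quadratic in small quantities, with one factor in $L^2_{\wh\varphi}$ and the other bounded, or linear in a quantity pairing against an $L^2_{\wh\varphi}$ quantity, such as $Q\varphi=Q(\varphi-\wh\varphi)$ paired with $h$. The linear pieces like $\int u\,(\Delta\varphi+\abs{d\varphi}^2-n)\,dV_\varphi$ require more care. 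Here I plan to use that $\Delta\varphi+\abs{d\varphi}^2-n$ is, up to the static identity, linear in $\varphi-\wh\varphi$, so the integrand is again a product of two $L^2_{\wh\varphi}$ functions. Each such integral is a bounded multilinear form composed with analytic Nemytskii-type maps ($e^{-u}$, $\sqrt{\det}$, inverse metric), hence analytic on $H^\ell_{\wh\varphi}$. Composing with the analytic map $(g,\varphi)\mapsto u_g$ from Proposition \ref{entropy_analytic_metric_dep} then gives analyticity.

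Finally, the resulting functional agrees with $\mu_{\mathrm{AH}}$ wherever the latter was defined, since there $u_g$ is the minimiser by Theorem \ref{global_minimiser} and the rewriting is an identity. It is therefore the unique continuous extension, which gives well-definedness.

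The main obstacle is the linear terms that were only conditionally convergent as $R\to\infty$, namely the mass $m_{\wh g}$ and $\mathcal{B}_R$. With only $H^\ell_{\wh\varphi}$ decay (weight $\frac12$ rather than $\beta>\frac n2$), these boundary integrals need not converge individually. The key step is to never evaluate them separately. Instead, we use the divergence identity $V\,D\scal(h)=\mathrm{div}\,\omega$ and the Euler--Lagrange equation \eqref{modified_el_eqn} to cancel them exactly against the bulk terms, before taking limits.
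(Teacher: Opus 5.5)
Your argument is correct, but it takes a genuinely different route from the paper's.

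\textbf{The paper's route.} The paper never writes down an explicit formula on the $H^\ell_{\wh\varphi}$-neighbourhood.
\begin{enumerate}
\item It approximates $\mfg=\p{g,\varphi}$ by pairs $\mfg_i$ with $\mfg_i-\wh\mfg\in C^\infty_c$.
\item It applies the fundamental theorem of calculus along segments, using the first variation formula and the bound $\norm{\nabla\mu_{\mathrm{AH}}(\mfg)}_{L^2_{\wh\varphi}}\leq C\norm{\mfg-\wh\mfg}_{H^\ell_{\wh\varphi}}$ for the local expression $\nabla\mu_{\mathrm{AH}}(\mfg)=-\frac{1}{2n}\p{\Ric_\mfg+n\mfg+\nabla^2_\mfg u_\mfg}e^{-u_\mfg}$.
\item It concludes that $\mu_{\mathrm{AH}}(\mfg_i)$ is Cauchy and takes the limit as the definition of $\mu_{\mathrm{AH}}(\mfg)$.
\item It gets analyticity from $\mu_{\mathrm{AH}}(\mfg)=\int_0^1\p{\nabla\mu_{\mathrm{AH}}\p{\wh\mfg+t(\mfg-\wh\mfg)},\mfg-\wh\mfg}\,dt$ and the analyticity of $\mfg\mapsto\nabla\mu_{\mathrm{AH}}(\mfg)\in H^{\ell-2}_{\wh\varphi}$.
\end{enumerate}
This soft argument never has to deal with the conditionally convergent mass and $\mathcal{B}_R$ terms. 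It also shows at the same time that the gradient formula persists on the $H^\ell_{\wh\varphi}$-neighbourhood, which the later \L ojasiewicz--Simon argument needs.

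\textbf{Your route.} You redo the computations of Propositions~\ref{EHfinite} and~\ref{2sided_H1_bounds} in the variable $u$, performing every exchange as an exact identity at finite $R$. In $u$ the term $\mathcal{B}_R$ is already linear, so you never need a step like replacing $\ln(1+\omega)$ by $\omega$ on $\partial B_R$, which relies on pointwise decay. What this buys you is an explicit, absolutely convergent representation, and it makes visible that the renormalisation removes exactly the non-integrable linear terms. The cost is more bookkeeping. You must write the Taylor remainder of $\scal$ as a sum of quadratic terms in $(h,\nabla h,\nabla^2 h)$ with coefficients analytic in $h$. You must then check that each term is a product of two $L^2_{\wh\varphi}$ factors times a bounded analytic coefficient.

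\textbf{Two small corrections.}
\begin{itemize}
\item $\Delta_g\varphi+\abs{d\varphi}^2_g-n$ is linear plus quadratic in $\p{h,\varphi-\wh\varphi}$, not in $\varphi-\wh\varphi$ alone. It still lies in $L^2_{\wh\varphi}$, which is all you need.
\item The cancellation is pure integration by parts; the Euler--Lagrange equation plays no role in it. Where that equation does matter is in identifying the $H^\ell_{\wh\varphi}$-solution $u_g$ of Proposition~\ref{entropy_analytic_metric_dep} with the minimiser on the dense subset. That identification follows from Proposition~\ref{el_uniqueness}, since $u_g\to 0$ at infinity by Lemma~\ref{wgtd_sob_emb}.
\end{itemize}
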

        \begin{proof}    
             By \cite[Lemma 4.9]{Lee}, we can approximate $\mfg=\p{g,\varphi}$ by a sequence $\mfg_i=\p{g_i,\varphi_i}$ such that $\mfg_i-\wh\mfg\in C_c^\infty$. Clearly, every $\mfg_i$ satisfies the decay conditions necessary for $\mu_{\mathrm{AH}}\p{\mfg_i}$ to be well-defined and finite, and for the first variation formula \eqref{aux_first_var} to hold. Thus, through the fundamental theorem of calculus, we get 
            $$\abs{\mu_{\mathrm{AH}}\p{\mfg_i}-\mu_{\mathrm{AH}}\p{\mfg_j}}\leq \sup_{t\in [0,1]}\abs{\p{\nabla\mu_{\mathrm{AH}}\p{\mfg_{ij}(t)},\mfg_i-\mfg_j}_{L^2_{\mfg_{ij}(t)}}},$$
        where $\mfg_{ij}(t)=\mfg_i+t\p{\mfg_j-\mfg_i}$, and 
            $$\nabla\mu_{\mathrm{AH}}\p{\mfg}=-\frac{1}{2n}\p{\Ric_{\mfg}+n\mfg+\nabla^2_{\mfg}u_\mfg}e^{-u_\mfg}.$$
        
        For $i,j$ sufficiently large, we may assume that $\norm{\mfg_{ij}(t)-\wh\mfg}_{H^\ell_{\wh{\varphi}}}\leq C $ (since $\mfg_{ij}(t)\overset{H^{\ell}_{\wh{\varphi}}}{\longrightarrow}\mfg$ for $i,j\to \infty$ and all $t\in[0,1]$). Since $\ell\geq \frac{n}{2}+2\geq 2$, we also obtain a uniform bound on the gradient 
        \begin{align*}
            \norm{\nabla\mu_{\mathrm{AH}}\p{\mfg_{ij}(t)}}_{L^2_{\wh\mfg}} &\leq \norm{\nabla\mu_{\mathrm{AH}}\p{\mfg_{ij}(t)}}_{H^{\ell-2}_{\wh\mfg}}\leq C'\norm{\mfg_{ij}(t)-\wh\mfg}_{H^\ell_{\wh\mfg}} \leq C''.
        \end{align*}        
        Thus, Cauchy-Schwarz gives 
        $$\abs{\mu_{\mathrm{AH}}\p{\mfg_i}-\mu_{\mathrm{AH}}\p{\mfg_j}}\leq C''\norm{\mfg_i-\mfg_{j}}_{L^2_{\wh\mfg}}$$
        where we also used that the $L^2$-norms are comparable for all metrics involved. This proves that $\mu_{\mathrm{AH}}\p{\mfg_i}$ is a Cauchy sequence, converging to some finite value, which we denote by $\mu_{\mathrm{AH}}\p{\mfg}$. Additionally, it is clear from the explicit expression of the gradient that it carries over to the $H^{\ell}_{\wh\varphi}$ setting. Now that we know that the entropy is well-defined, we can use the fact that 
        $$\mathcal{R}^\ell_{\wh\varphi}\p{M,\wh{g}}\times \mathcal{R}^\ell_{\wh\varphi}\p{M,\wh{\varphi}}\ni \mfg \longmapsto \nabla\mu_{\mathrm{AH}}\p{\mfg}\in H^{\ell-2}_{\wh\varphi}(M)$$
        is analytic to conclude that 
        $$\mu_{\mathrm{AH}}\p{\mfg}=\int_0^1\p{\nabla \mu_{\mathrm{AH}}\p{\wh\mfg+t\p{\mfg-\wh\mfg}},\mfg-\wh\mfg}_{L^2_{\mfg(t)}}\; dt$$
        is also analytic (as it is composed of analytic maps).
        \end{proof}

        In the remainder of this section, $\p{g,\varphi}$ is assumed to be in either $\mathcal{R}^{2,\alpha}_{\beta}\p{M,\wh{g}}\times \mathcal{R}^{2,\alpha}_{\beta}\p{M,\wh{g}}$ (with $\frac{n}{2}<\beta<n$) or $\mathcal{R}^{\ell}_{\wh\varphi}\p{M,\wh{g}}\times \mathcal{R}^{\ell}_{\wh\varphi}\p{M,\wh{g}}$ (with $\ell>\frac{n}{2}+2$). The same goes for the flow $\p{g(t),\varphi(t)}$ in Lemma \ref{entropy_monot}.

        \begin{proposition}\label{static_crit_points}
            Suppose $(g,\varphi)$ is a critical point for the renormalised expander entropy, then $(g,\varphi)$ is a static pair.
        \end{proposition}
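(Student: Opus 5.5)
If $(g,\varphi)$ is critical, the gradient \eqref{entropy-gradient} vanishes. With $f=f_g$ the minimiser from Theorem \ref{global_minimiser}, this means
\begin{equation*}
\Ric_g+ng-d\varphi\otimes d\varphi+\nabla^2 f=0,\qquad \Delta\varphi+\g{d\varphi}{df}-n=0 .
\end{equation*}
In auxiliary form, by \eqref{aux_first_var}, this is the single tensor equation
\begin{equation*}
\Ric_{\mathfrak{g}}+n\mathfrak{g}+\nabla^2_{\mathfrak{g}}u=0,\qquad u:=f-\varphi ,
\end{equation*}
so $(\mathfrak{M},\mathfrak{g})$ is a gradient expanding soliton with potential $u$. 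The plan is to show $u\equiv 0$. The equations above then become exactly the static equations \eqref{static_equations}.

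The key step is to combine the soliton identity with the Euler--Lagrange equation $\EL f=0$. I would do this via Proposition \ref{klaus_konj}: it suffices to show $\scal_{\mathfrak{g}}=-n(n+1)$. Then, since $u=f-\varphi\to 0$ (it lies in $C^{2,\alpha}_\beta$ or $H^\ell_{\wh\varphi}$), that proposition gives $f=\varphi$.

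To obtain $\scal_{\mathfrak{g}}=-n(n+1)$, I take the $\mathfrak{g}$-trace of the soliton equation, which gives $\scal_{\mathfrak{g}}+n(n+1)+\Delta_{\mathfrak{g}}u=0$. Using the standard soliton identities, $\scal_{\mathfrak{g}}+\abs{du}^2_{\mathfrak{g}}+2nu$ is constant. It is convenient to use the sign convention $\Delta=-\mathrm{tr}\nabla^2$ as in the paper, so the signs must be tracked carefully. I then compare this with $\EL f=0$, rewritten using $\scal_{\mathfrak{g}}=\scal_g-2\Delta_g\varphi-2\abs{d\varphi}^2$ and $\Delta_{\mathfrak{g}}u=\Delta_g u+\g{d\varphi}{du}$. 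Since $\abs{df}^2-\abs{d\varphi}^2=\abs{du}^2+2\g{d\varphi}{du}$, the equation $2n\EL f=0$ becomes
\begin{equation*}
2\Delta_{\mathfrak{g}}u+\abs{du}^2+2nu-\p{\scal_{\mathfrak{g}}+n(n+1)}=0 .
\end{equation*}
Substituting the traced soliton equation $\Delta_{\mathfrak{g}}u=-(\scal_{\mathfrak{g}}+n(n+1))$ yields
\begin{equation*}
\Delta_{\mathfrak{g}}u+\tfrac{1}{3}\p{\abs{du}^2+2nu}=0,
\end{equation*}
or an equivalent elliptic equation in $u$ alone, with no zeroth-order inhomogeneity.

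This equation has the same structure as \eqref{diff_el_eqn}: a drift Laplacian, a gradient-squared term, and a positive multiple of $u$. Since $u\to 0$ at infinity, the maximum principle argument of Proposition \ref{el_uniqueness} applies. At a positive maximum of $u$ we have $du=0$ and $\Delta u\geq 0$, which forces $u\le 0$ there, a contradiction; a negative minimum is ruled out symmetrically. Hence $u\equiv 0$, so $f=\varphi$. Then $\nabla^2 f=\nabla^2\varphi$, and the gradient equations reduce to \eqref{static_equations}, so $(g,\varphi)$ is a static pair. As an alternative route for the $\varphi$-equation, integrating $\Delta_f$ of it by parts against $u$ with weight $dV_f$ (boundary terms vanish by the decay of $u$) should also give $u=0$.

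The main obstacle is the sign and coefficient bookkeeping in the substitution step. The resulting scalar equation must have a zeroth-order term with a favourable sign, so that the maximum principle applies. If the direct substitution produces an unfavourable combination, I would instead use the full tensor soliton identity, applying the contracted Bianchi identity to get $d(\scal_{\mathfrak{g}}+\abs{du}^2+2nu)=0$. I would then fix the constant by the decay at infinity, where $\scal_{\mathfrak{g}}\to-n(n+1)$ and $u,du\to 0$. This gives $\scal_{\mathfrak{g}}+n(n+1)=-\abs{du}^2-2nu$, which substituted into $\EL f=0$ again gives a homogeneous equation in $u$ amenable to Proposition \ref{el_uniqueness}.
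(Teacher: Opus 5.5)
Your proof is correct in substance, but the displayed equation with the factor $\tfrac13$ comes from a sign slip. With the paper's convention $\Delta=-\tr\nabla^2$, the $\mathfrak{g}$-trace of $\Ric_{\mathfrak{g}}+n\mathfrak{g}+\nabla^2_{\mathfrak{g}}u=0$ is $\scal_{\mathfrak{g}}+n(n+1)=\Delta_{\mathfrak{g}}u$, not $-\Delta_{\mathfrak{g}}u$. You can confirm this componentwise from $\scal_g+n(n-1)=\Delta f+\abs{d\varphi}^2-n$ and $\Delta\varphi+\g{d\varphi}{df}=n$. Inserting the correct identity into your rewriting of $2n\EL f=0$, which is correct, gives
\[
\Delta_{\mathfrak{g}}u+\abs{du}^2+2nu=0,\qquad\text{i.e.}\qquad \Delta u+\g{df}{du}+2nu=0 .
\]
This is exactly the equation the paper arrives at. It is also what your fallback via $\scal_{\mathfrak{g}}+\abs{du}^2+2nu\equiv-n(n+1)$ produces. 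The slip is harmless: the corrected equation has the same structure, so your maximum-principle argument goes through verbatim. The announced detour through Proposition \ref{klaus_konj} is never actually used, since the maximum principle gives $u\equiv0$ directly.

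Compared with the paper: the paper obtains the same equation by tracing only the $M$-part of the first critical equation, inserting it into $\EL f=0$, and subtracting the second critical equation. Your single auxiliary trace does the same bookkeeping.

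The genuine difference is the final step. The paper multiplies by $u$ and integrates against $dV_f$. Since $\Delta u+\g{df}{du}$ is the nonnegative drift Laplacian for this measure, this gives $\norm{du}^2_{L^2_f}+2n\norm{u}^2_{L^2_f}=0$. That requires the boundary term $\int_{\partial B_R}u\,\nu(u)\,dA_f$ to vanish against the $e^{nr}$ growth of $dA_f$, which is where $u=\mathcal{O}_1(e^{-\beta r})$ with $\beta>\frac n2$ enters.

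Your closing, modelled on Proposition \ref{el_uniqueness}, needs only $u\in C^2$ and $u\to0$ at infinity. Both hold in the $C^{2,\alpha}_\beta$ and the $H^\ell_{\wh\varphi}$ settings, so your argument is slightly more robust with respect to decay. The paper's identity, in turn, is a one-line computation that makes the coercivity explicit.
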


        \begin{proof}
            Criticality implies           
            \begin{equation}\label{critical_equations}
                \begin{cases}
                    \Ric_g+ng-d\varphi\otimes d\varphi+\nabla^2f&=0\\ \Delta\varphi+\g{d \varphi}{d f}-n&=0,
                \end{cases}
            \end{equation} 
            where $f=f_g$ is the unique minimiser and solution to the Euler--Lagrange equation:  
                $$2\Delta f+\abs{d f}^2+\abs{d \varphi}^2+2n(f-\varphi)-\p{\scal_g+n(n-1)}-2n=0.$$    Taking the trace of the first equation of \eqref{critical_equations}, we obtain
                $$\Delta f+\abs{d \varphi}^2-n=\scal_g+n(n-1).$$
                By inserting this in the Euler--Lagrange equation, it simplifies to             
                $$\Delta f+\abs{d f}^2-n+2n(f-\varphi)=0.$$
            Subtracting the second equation of \eqref{critical_equations} from this, leaves us with    $$\Delta(f-\varphi)+\g{d f}{d (f-\varphi)}+2n(f-\varphi)=0.$$
            Multiplying the above by $(f-\varphi)$ and integrating over the ball $B_R$ with respect to the weighted volume $e^{-f} dV$ yields    
                $$0=\int_{B_R}\abs{d(f-\varphi)}^2+2n\abs{f-\varphi}^2\; dV_f-\int_{\partial B_R}(f-\varphi)\nu\p{f-\varphi}\; dA_f.$$
            Taking the limit as $R\to \infty$, and using that $\p{f-\varphi}=\mathcal{O}_1\p{e^{-\beta r}}$, we obtain           
                $$\norm{d\p{f-\varphi}}_{L^2_f\p{M}}^2+2n\norm{f-\varphi}_{L^2_f\p{M}}^2=0.$$
            This proves that $f=\varphi$, which turns \eqref{critical_equations} into the defining equations for a static pair \eqref{static_equations}.
        \end{proof}
        Note that Proposition \ref{static_crit_points} tells us that no non-trivial solitons are possible.

        \begin{remark}\label{DCSC_crit} 
            While static metrics are the only true critical points for the entropy, another class of semi-critical metrics will play an important role in the proof of the local positive mass result (Theorem \ref{thm:local_PMT}). Let $\mathfrak{g}=\p{g,\varphi}$ be a pair satisfying 
                $$\scal_g=-n(n-1)\qquad \qquad \&\qquad \qquad \Delta\varphi+\abs{d\varphi}^2=n.$$
        
            This means both $g$ and $\mathfrak{g}$ are constant scalar curvature (CSC) metrics. If $\mathfrak{h}=-2\psi e^{-2\varphi}\; d\tau^2+\eta g$ preserves the conformal class of the base metric $g$, then
            \begin{align*}
                D_{\mathfrak{g}}\mu_{\mathrm{AH}}(\mathfrak{h})={}&-\frac{1}{4\pi n}\int_{\mathfrak{M}}\g{\Ric_\mathfrak{g}+n\mathfrak{g}}{\mathfrak{h}}\; dV_{\mathfrak{g}}\\ ={}&-\frac{1}{4\pi n}\int_{\mathfrak{M}} \eta\p{\scal_g-\Delta \varphi-\abs{d\varphi}^2+n^2}+2\psi\p{\Delta \varphi+\abs{d\varphi}^2-n}\; dV_{\mathfrak{g}}\\ ={}&0,\end{align*}
            where the only ingredients were the formula for the auxiliary Ricci curvature \eqref{auxiliary_Ric}, and $f_g=\varphi$ from Proposition \ref{klaus_konj}.
        \end{remark}

        \begin{proposition}\label{entropy_diffeo_invar} 
            Let $X$ be a vector field on $M$ such that $\delta^*X\in C_\beta^{2,\alpha}\p{S^2M}$ (or $H^\ell_{\wh{\varphi}}\p{S^2M}$). Then 
                $$\p{\nabla\mu_{\mathrm{AH}}(g,\varphi),\p{\mathscr{L}_Xg,\mathscr{L}_X\varphi}}_{L^2_f\p{M}}=0.$$
        
            In particular, the entropy $\mu_{\mathrm{AH}}(g,\varphi)$ is invariant under the action of diffeomorphisms that preserve the structure of the auxiliary metric $\mathfrak{g}=e^{-2\varphi}\; d\tau^2+g$.    
        \end{proposition}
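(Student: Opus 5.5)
Work on the auxiliary manifold $\p{\mathfrak{M},\mathfrak{g}}$. A $\tau$-independent vector field $X$ on $M$ gives, by identity 4 of Section \ref{sec:static_metrics_etc}, $\delta^*_{\mathfrak{g}}X$ dual to the pair $\frac12\p{\mathscr{L}_Xg,\mathscr{L}_X\varphi}$. By \eqref{aux_first_var} it therefore suffices to show
$$\int_M\g{\Ric_{\mathfrak{g}}+n\mathfrak{g}+\nabla^2_{\mathfrak{g}}u}{\delta^*_{\mathfrak{g}}X}_{\mathfrak{g}}\; e^{-u}\,dV_{\mathfrak{g}}=0,\qquad u:=f_g-\varphi,$$
where $dV_{f_g}=e^{-u}dV_\varphi$ and $dV_\varphi$ is, up to the factor $2\pi$, the $\mathfrak{g}$-volume. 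Writing $T:=\Ric_{\mathfrak{g}}+n\mathfrak{g}+\nabla^2_{\mathfrak{g}}u$, we integrate by parts on $B_R$ (times $\mathbb{S}^1$) against the weighted measure $e^{-u}dV_{\mathfrak{g}}$. The integral becomes $\int\g{\delta_{\mathfrak{g},u}T}{X}\,e^{-u}dV_{\mathfrak{g}}$ plus a boundary term over $\partial B_R$, where $\delta_{\mathfrak{g},u}T=\delta_{\mathfrak{g}}T+T(\nabla u,\cdot)$ is the weighted divergence.

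The key pointwise identity is the Bakry--\'Emery/Perelman computation $\delta_{\mathfrak{g},u}T=-\frac12 d\p{\scal_{\mathfrak{g}}+2\Delta_{\mathfrak{g}}u-\abs{du}^2_{\mathfrak{g}}-2nu}$, up to sign conventions. It follows from the contracted Bianchi identity $\delta\Ric=-\frac12 d\scal$, the commutation $\delta\nabla^2u=-d\Delta u-\Ric(\nabla u)$ (with $\Delta=-\tr\nabla^2$), and $\nabla^2u(\nabla u)=\frac12 d\abs{du}^2$. Using $\scal_{\mathfrak{g}}=\scal_g-2\Delta_g\varphi-2\abs{d\varphi}^2$ and $\Delta_{\mathfrak{g}}u=\Delta_g u+\g{d\varphi}{du}$ from \eqref{aux_lap}, the bracketed scalar should be an affine function of $2n\,\EL f_g$. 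Since $\EL f_g=0$, this scalar is constant and its differential vanishes. This step is the algebraic verification to carry out carefully; the constants in $\mathcal{W}$ are designed to make it work.

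What remains is the boundary term $\int_{\partial B_R}T(X,\nu)e^{-u}dA_\varphi$ (times $2\pi$), and I expect showing that it vanishes as $R\to\infty$ to be the main obstacle, because $X$ itself need not decay; only $\delta^*X$ is assumed to. I would first rewrite the pairing symmetrically, as $\int\g{T}{\delta^*X}$, and approximate $X$ by $\chi_R X$ with a cutoff $\chi_R$. The error is then $\int T(X,\nabla\chi_R)$, which I would control by showing $T\in C^{0,\alpha}_\beta$ (respectively in $H^{\ell-2}_{\wh\varphi}$), using $\abs{g-\wh g},\ \varphi-\wh\varphi,\ u=\mathcal{O}(e^{-\beta r})$ together with $T=0$ for the static pair. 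For $X$ I would use that $\delta^*X$ decaying forces $\abs{X}\lesssim e^{r}$ (a Killing field of $\wh{\mathfrak g}$ plus a decaying part). Then $\abs{T}\abs{X}e^{-\varphi}\,dA\sim e^{-\beta r}e^{r}e^{r}e^{(n-1)r}$, which is too large, so a crude bound fails. The fix is to apply the pointwise identity on the full region first: $\delta_{\mathfrak{g},u}T=0$ exactly, so the cutoff error equals $\int \chi_R'\,T(X,\nu)$, and $T(X,\nu)e^{-u}$ is a divergence-free flux. The flux of $T(X,\cdot)$ through $\partial B_R$ is then independent of $R$ up to $\int\g{T}{\delta^*X}$ over annuli. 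Both $T$ and $\delta^*X$ are $\mathcal{O}(e^{-\beta r})$ with $2\beta>n$, so those annulus integrals are integrable and the total is absolutely convergent. To conclude that the boundary limit is zero, I would use the $L^2$-version: a Cauchy--Schwarz estimate of $\int \abs{T}\abs{X}\abs{\nabla\chi_R}$ along a sequence $R_k$ chosen via averaging of $\int_{\partial B_R}\abs{T}^2$. If this fails, I would instead approximate $(g,\varphi)$ by compactly supported perturbations, as in the proof of the last corollary, and pass to the limit. Finally, diffeomorphism invariance follows by integrating the vanishing derivative of $\mu_{\mathrm{AH}}$ along the flow of $X$.
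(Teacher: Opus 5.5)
Your pointwise identity is correct, and it is the paper's computation in auxiliary form. The paper works on $M$ and gets $\delta\nabla\mu_1+\nabla\mu_1\cdot df=-\frac12 d(\EL f)-\frac12\nabla\mu_2\,d\varphi$. It then cancels the last term against the $X(\varphi)$-part of the pairing. On $\mathfrak{M}$ that term is carried by $T_{\tau\tau}=n\nabla\mu_2\,e^{-2\varphi}$, and you get exactly $\delta_{\mathfrak{g},u}T=n\,d(\EL f)$, because $\scal_{\mathfrak{g}}-2\Delta_{\mathfrak{g}}u-\abs{du}^2-2nu=-2n\EL f-n(n+1)$ with the paper's sign of $\Delta$.

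\textbf{The gap is the boundary term.} The paper disposes of it in one line: it uses that $\nabla\mu_1$ and $\abs{X}$ both decay like $e^{-\beta r}$, so the flux through $\partial B_R$ is $\mathcal{O}(e^{(n-2\beta)R})\to 0$ since $2\beta>n$. With that decay your argument is complete. The same holds in the $H^\ell_{\wh\varphi}$ setting if $X\in L^2_\varphi$, where your averaging idea does work because $\abs{T}\abs{X}\in L^1_\varphi$. But you explicitly allow $\abs{X}\sim e^{r}$, and then nothing you propose shows that the flux tends to zero.

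Concretely, each of your three mechanisms fails:
\begin{itemize}
\item The flux identity coming from $\delta_{\mathfrak{g},u}T=0$ only says that $\int_{\partial B_R}T(X,\nu)e^{-u}\,dA_\varphi$ equals, up to sign, $\int_{B_R}\g{T}{\delta^*_{\mathfrak{g}}X}e^{-u}\,dV_\varphi$. That is the very quantity you want to show vanishes, so it gives existence of the limit, not its value.
\item Averaging in $R$ and Cauchy--Schwarz use only sizes. By your own count, $\int_{B_{R+1}\setminus B_R}\abs{T}\abs{X}\,dV_\varphi$ and $\norm{T}_{L^2_\varphi(B_{R+1}\setminus B_R)}\norm{X\,\abs{\nabla\chi_R}}_{L^2_\varphi}$ are both of order $e^{(n+1-\beta)R}$. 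So no sequence $R_k$ makes the flux small; you would need cancellation, i.e.\ structural information on $X$.
\item Approximating $(g,\varphi)$ by compactly supported perturbations leaves $X$ unchanged. It also does not make $T$ compactly supported: the tail $\nabla^2_{\mathfrak{g}}u$ in general decays only like $e^{-\gamma r}$ with $\gamma=\frac n2+\sqrt{\frac{n^2}{4}+n}<n+1$, the upper indicial root of $\Delta_\varphi+n$. So the same bound still diverges. Passing to the limit would also need continuity of the pairing against a growing $X$, which is precisely what is missing.
\end{itemize}

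So either use $\abs{X}=\mathcal{O}(e^{-\beta r})$, as the paper's proof does, or give a genuinely different argument for growing $X$. One option is to show that such an $X$ is, up to a decaying part, a Killing field of $\wh{\mathfrak{g}}$ preserving the exhaustion $\{B_R\}$, and then use the exact invariance of $\mathcal{W}_{\mathrm{AH},\wh{g}}$ under such isometries. Note that your parenthetical claim of this decomposition is itself unproven.
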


        \begin{proof}
            To shorten notation, we set             
                \begin{align*}\nabla\mu_1&:=-\frac{1}{2n}\p{\Ric+ng+\nabla^2f-d\varphi\otimes d\varphi},\\
                \nabla\mu_2&:=-\frac{1}{n}\p{\Delta \varphi+\g{df}{d\varphi}-n},\end{align*}
            where $f=f_g$ is the unique solution to the Euler--Lagrange equation for the pair $\p{g,\varphi}$. Let $\theta_t$ be the flow of $X$. Then Proposition \ref{first_var} tells us the first variation is
            \begin{equation}\label{diffeo_invar_variation}
                \left.\frac{d}{ds}\right|_{s=0}\mathcal{W}_{\mathrm{AH}}\left(\theta_s^*g,\theta_s^*\varphi,\theta_s^*f\right) = \int_M \left<\nabla\mu_1, 2\delta^\ast X \right>\; dV_{f} + \int_M \nabla\mu_2 X\left(\varphi\right)\; dV_{f}.
            \end{equation}            
            Integrating by parts, we have 
            \begin{align*}
                \int_M \g{\nabla\mu_1}{\delta^*X}\; dV_f=\int_M\g{\delta \nabla\mu_1+\nabla\mu_1 \cdot df}{X}\; dV_f.
            \end{align*}

            There is no limiting boundary term as both $\nabla\mu_1$ and $\abs{X}$ decay as $e^{-\beta r}$. By using the contracted second Bianchi identity, $\delta\Ric=-\frac{1}{2}d\scal$, and the Ricci identity, $\Delta \nabla f=\nabla \Delta f-\Ric\cdot df$, we set about simplifying the expression above:
            \begin{align*}
                \delta \nabla\mu_1+\nabla\mu_1\cdot df={}&-\frac{1}{2n}\left(\delta\Ric+\Delta\nabla f-\Delta \varphi\; d\varphi+\nabla^2\varphi\cdot d\varphi\right.\\&\left.+\Ric\cdot df+n\;df+\nabla^2f\cdot df-\g{d\varphi}{df}\; d\varphi\right)\\
                ={}&-\frac{1}{2n}d\p{\Delta f+\frac{1}{2}\abs{\nabla f}^2+\frac{1}{2}\abs{\nabla \varphi}^2+nf-n\varphi-\frac{1}{2}\scal}\\&+\frac{1}{2n}\p{\Delta \varphi+\g{d\varphi}{df}-n}\; d\varphi\\={}&-\frac{1}{2}d\p{\EL f}-\frac{1}{2}\nabla\mu_2\; d\varphi.
            \end{align*}
            Since $\EL f=0$, we have
            $$\int_M \g{\nabla\mu_1}{\delta^*X}\; dV_f=-\frac{1}{2}\int_M \nabla\mu_2\g{\nabla\varphi}{X}\; dV_f.$$ 
            Substituting this into \eqref{diffeo_invar_variation} yields the desired result and completes the proof.
        \end{proof}

        The diffeomorphism invariance of the entropy now allows us to prove that the entropy is non-decreasing along the modified Ricci-harmonic flow \eqref{entropy_grad_flow}.

        \begin{lemma}\label{entropy_monot}
            Let $\left(g(t), \varphi(t)\right)$ be a solution of \eqref{entropy_grad_flow}. Then the function $t\mapsto \mu_{\mathrm{AH}}\p{g(t),\varphi(t)}$ is monotonically increasing. Moreover, it is strictly increasing unless $\p{g(t),\varphi(t)}$ is a family of static pairs, in which case the entropy is constant in time.
        \end{lemma}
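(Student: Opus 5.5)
The plan is to differentiate $t\mapsto\mu_{\mathrm{AH}}\p{g(t),\varphi(t)}$ with the first variation formula of Corollary \ref{first_var_crit}, and then identify the derivative with a weighted $L^2$-norm of the gradient. Write $f(t)=f_{g(t)}$ for the minimiser given by Theorem \ref{global_minimiser}. By Proposition \ref{entropy_analytic_metric_dep} and the analyticity corollary above, $t\mapsto\mu_{\mathrm{AH}}(g(t),\varphi(t))$ is differentiable. Its derivative equals the first variation in the direction $(h,\psi)=(\partial_tg,\partial_t\varphi)$; the term involving $v=\partial_tf$ drops out because $\EL f=0$. The solution stays in the admissible class by the standing assumption on the flow stated before Proposition \ref{static_crit_points}, so $h,\psi,v\in C^{2,\alpha}_\beta$ (or $H^\ell_{\wh\varphi}$) and Proposition \ref{first_var} applies.

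Using the notation $\nabla\mu_1,\nabla\mu_2$ from the proof of Proposition \ref{entropy_diffeo_invar}, the flow \eqref{entropy_grad_flow} reads
\begin{equation*}
\partial_tg=4n\,\nabla\mu_1,\qquad \partial_t\varphi=n\,\nabla\mu_2 .
\end{equation*}
Inserting this into Proposition \ref{first_var} gives
\begin{equation*}
\frac{d}{dt}\mu_{\mathrm{AH}}=\int_M\g{\nabla\mu_1}{4n\nabla\mu_1}+\nabla\mu_2\cdot n\nabla\mu_2\; dV_f=4n\norm{\nabla\mu_1}^2_{L^2_f}+n\norm{\nabla\mu_2}^2_{L^2_f}\geq0 .
\end{equation*}
To confirm the normalisation, recall that the first variation is $\int\g{\nabla\mu_1}{h}+\nabla\mu_2\psi\,dV_f$. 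This is consistent with the auxiliary pairing, where the $\tau\tau$ component carries the factor $4$; either way the result is a positive-definite quadratic form. If one prefers the flow modulo diffeomorphisms, Proposition \ref{entropy_diffeo_invar} shows that adding $\mathscr{L}_X$ terms does not change the derivative, so the same formula holds.

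For strictness, suppose the derivative vanishes at some time $t$. Then $\nabla\mu_1=\nabla\mu_2=0$, so $(g(t),\varphi(t))$ is a critical point, and Proposition \ref{static_crit_points} shows it is static with $f=\varphi$. Conversely, if the pair is static then $f_g=\varphi$ by Proposition \ref{klaus_konj}, both gradient components vanish by \eqref{static_equations}, and the flow is stationary, so the entropy is constant.

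The main obstacle I expect is justifying the differentiation in $t$: one has to ensure the flow remains in the weighted class, so that the boundary terms in Proposition \ref{first_var} vanish, and that $t\mapsto f(t)$ is differentiable. I would handle this by invoking the analytic dependence of Proposition \ref{entropy_analytic_metric_dep} together with the standing regularity assumption on the flow.
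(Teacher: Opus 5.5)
Your proposal is correct and follows essentially the paper's argument: you insert \eqref{entropy_grad_flow} into the first variation formula of Proposition \ref{first_var} to get a sum of weighted squares, then settle the equality case with Proposition \ref{static_crit_points}. The paper additionally cites Proposition \ref{entropy_diffeo_invar}, which, as you observe, is only needed if one works with the flow modulo diffeomorphisms. Your coefficients $4n\norm{\nabla\mu_1}^2_{L^2_f}+n\norm{\nabla\mu_2}^2_{L^2_f}$, i.e.\ $\frac{1}{n}$ in front of both squared terms, are what the first variation formula gives; the paper's $\frac{2}{n}$ in front of the second term appears to be a typo, and it does not affect the sign.
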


        \begin{proof}
            Proposition \ref{entropy_diffeo_invar} tells us the following for the first variation of the entropy:
                $$D_{g,\varphi} \mu_{\mathrm{AH}}\left(\mathscr{L}_{\nabla f} g, \mathscr{L}_{\nabla f} \varphi\right) = 0.$$
            
            Then, since $\left(g(t), \varphi(t)\right)$ solves \eqref{entropy_grad_flow}, the first variation at any given $t$ is 
            \begin{align*}
                \left.\frac{d}{ds}\right|_{s=t} \mu_{\mathrm{AH}}\p{g(s), \varphi(s)} ={}& \frac{1}{n} \int_M \abs{\Ric\left(g(t)\right) + ng(t) + \nabla^2 f_{g(t)} - d\varphi(t) \otimes  d\varphi(t)}^2\; dV_{f_{g(t)}}\\
                &+ \frac{2}{n}\int_M \abs{\Delta \varphi(t) +\g{df_{g(t)}}{d\varphi(t)}-n}^2 \;dV_{f_{g(t)}}\\
                \geq{} & 0.
            \end{align*}

            The equality case follows from Proposition \ref{static_crit_points}.
        \end{proof}

    \subsection{The Second Variation}\label{sec_second_var}
        Now we turn to deriving our stability operator through the computation of the second variation of the entropy. Before doing this, we make the following observation:\\        
    
        If we let $\p{g,\varphi}$ be a critical point of the renormalised expander entropy -- that is to say, a static pair -- then the associated Poincar\'e--Einstein metric $\mfg=e^{-2\varphi}\; d\tau^2 + g$ provides a natural splitting of the space of variations of $\p{g,\varphi}$, as given by \eqref{standard_splitting}. \\

        For a general variation $\mfh=\p{h,\psi}$ of $\p{g,\varphi}$ (see \eqref{aux2tensor} for the notation), it is sometimes prudent to assume it is \textbf{transverse} (divergence free). The consequence for the pair $\p{h,\psi}$ is provided by \eqref{aux_divergence}:
        \begin{equation}\label{eq:divfree}
            \delta_{\mfg}\mfh=\delta_{g} h+h\cdot d\varphi +2\psi \; d\varphi=0.
        \end{equation}

        If we instead assume $\mfh$ is trace free, we obtain another condition on the pair $(h,\psi)$ using \eqref{aux_trace}:
            $$\tr_{\mfg}\mfh=-2\p{\psi-\frac{1}{2}\tr_{g} h}=0.$$

        Now consider a family of triplets $\p{g_s,\varphi_s,f_s}$, where $f_s$ is the unique solution to the Euler--Lagrange equation with respect to $\mfg_s=\p{g_s,\varphi_s}$ and $f_0=\varphi_0$. Then we may use \eqref{el_eqn}, \eqref{scal_var}, \eqref{lap_var}, and \eqref{inner_prod_var}, to linearise $\EL$         
        \begin{equation}\label{auxiliary_EL_linearised}
            0=\left.\frac{d}{ds}\right|_{s=0}2n\EL_{\, g_s,\varphi_s}\! f_s=2\p{\Delta_{\mfg_0}+n}\p{v-\frac{1}{2}\tr_{g_0} h} -\delta_{\mfg_0}\delta_{\mfg_0}\mathfrak{h}+\g{\Ric_{\mfg_0}+n\mfg_0}{\mfh}_{\mfg_0},
        \end{equation}
        where, as usual, $v=\left.\frac{d}{ds}\right|_{s=0}f_s$. In the particular case that $\mfg_0$ is static and $\mfh$ is transverse, is simplifies to
            \begin{equation}\label{linearised_minimiser} 0=2\p{\Delta_{\mfg_0}+n}\p{v-\frac{1}{2}\tr_{g_0}h},\end{equation}
        which implies $v=\frac{1}{2}\tr_{g_0}h$.\\

        It is now time to compute the second variation of $\mu_{\mathrm{AH}}$, here in the form of the Hessian. In the following $\wh\mfg=\p{\wh{g},\wh\varphi}$ will be used to denote an arbitrary static pair, not necessarily the background pair of sections \ref{sec_entropy_defn}-\ref{subsec_first_var}.
    
        \begin{proposition}[Second Variation]\label{second_var} 
            Let $(M,\wh{g},\wh{\varphi})$ be a static triple, then the $L^2_{\wh{\varphi}}$--Hessian of the entropy is
            \begin{align}\begin{split}\label{entropy-hessian}
                \nabla^2\mu_{\mathrm{AH}}\p{h,\psi}&\\=-\frac{1}{2n}&\biggl( \frac{1}{2}\Delta_{E,\wh{\varphi}}h+\p{h\cdot d\wh{\varphi}}\otimes d\wh{\varphi}+2\psi\nabla^2\wh{\varphi}+\nabla^2\p{v-\frac{1}{2}\tr h}-\delta^*\p{\delta_{\wh{\mathfrak{g}}} \mathfrak{h}},\\
                &\quad\quad\: 2\Delta_{\wh{\varphi}} \psi+2\g{h}{\nabla^2\wh{\varphi}}-2\g{\delta_{\wh{\varphi}} h}{d\wh{\varphi}}+2\g{d\p{v-\frac{1}{2}\tr h}}{d\wh{\varphi}}\biggr),\end{split}
            \end{align}
            where $\Delta_{E,\wh{\varphi}}h=\Delta_Eh+\nabla_{\nabla \wh{\varphi}}h=\nabla^*\nabla h-2\overset{\;\circ}{R}_{\wh{g}}h+\nabla_{\nabla \wh{\varphi}}h$ and $\overset{\;\circ}{R}_{\wh{g}} h := \wh{g}^{km}\wh{g}^{\ell p} \wh{R}_{ikj\ell}h_{mp}$.
            Note that $\delta_{\wh{\mathfrak{g}}} \mathfrak{h}$ can be interpreted as a 1-form on $M$ due to \eqref{aux_divergence}.   
        \end{proposition}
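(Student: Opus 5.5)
We will obtain the Hessian by differentiating the gradient formula \eqref{entropy-gradient} of Corollary \ref{first_var_crit} along a family through the static pair. Let $g_s=\wh{g}+sh$ and $\varphi_s=\wh{\varphi}+s\psi$, and let $f_s=f_{g_s}$ be the minimisers, which depend analytically on $s$ by Proposition \ref{entropy_analytic_metric_dep}. We have $f_0=\wh{\varphi}$ by Remark \ref{static_el_remark} and Proposition \ref{el_uniqueness}, and we set $v=\left.\frac{d}{ds}\right|_{s=0}f_s$. At $s=0$ both components of $\nabla\mu_{\mathrm{AH}}$ vanish, since $(\wh g,\wh\varphi)$ is static and $f_0=\wh\varphi$. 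So when we differentiate $D_{g_s,\varphi_s}\mu_{\mathrm{AH}}(h,\psi)$, the variations of the weight $e^{-f_s}dV_{g_s}$ and of the metric used in the inner product all multiply a vanishing factor. The $L^2_{\wh\varphi}$-Hessian is therefore just the linearisation of the two components
$$-\frac{1}{2n}\p{\Ric_g+ng-d\varphi\otimes d\varphi+\nabla^2 f},\qquad -\frac{1}{n}\p{\Delta\varphi+\g{d\varphi}{df}-n},$$
with the pairing fixed by the $-2$ convention in \eqref{aux2tensor}. This convention accounts for the factor $2$ in front of the second component of \eqref{entropy-hessian}.

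For the first component, we use the standard formulas from the appendix:
$$D\Ric(h)=\tfrac12\Delta_E h-\delta^*\delta h-\tfrac12\nabla^2\tr h,\qquad D(\nabla^2 f)(h)\,[v]=\nabla^2 v-\tfrac12\p{\nabla_i h_{jk}+\nabla_j h_{ik}-\nabla_k h_{ij}}\nabla^k\wh\varphi .$$
The linearisation of $d\varphi\otimes d\varphi$ is $d\psi\otimes d\wh\varphi+d\wh\varphi\otimes d\psi$. In the Christoffel term, the piece $-\tfrac12(\nabla_ih_{jk}+\nabla_jh_{ik})\nabla^k\wh\varphi$ equals $-\delta^*(h\cdot d\wh\varphi)$ up to terms of the form $h\ast\nabla^2\wh\varphi$. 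The piece $\tfrac12\nabla_{\nabla\wh\varphi}h$ combines with $\tfrac12\Delta_E h$ to give $\tfrac12\Delta_{E,\wh\varphi}h$. Next we collect $-\delta^*\delta h-\delta^*(h\cdot d\wh\varphi)$ and write $2\delta^*(\psi\, d\wh\varphi)=2\psi\nabla^2\wh\varphi+d\psi\otimes d\wh\varphi+d\wh\varphi\otimes d\psi$. By \eqref{aux_divergence}, this sum becomes $-\delta^*(\delta_{\wh\mfg}\mfh)+2\psi\nabla^2\wh\varphi+(\text{the }d\psi\otimes d\wh\varphi\text{ terms})$, and the $d\psi\otimes d\wh\varphi$ terms cancel against the variation of $-d\varphi\otimes d\varphi$. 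Together with $-\tfrac12\nabla^2\tr h+\nabla^2 v=\nabla^2(v-\tfrac12\tr h)$, this yields the first entry.

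The leftover zeroth-order terms $h\ast\nabla^2\wh\varphi$, $h\ast\Ric$ and $nh$ must be shown to reduce to $(h\cdot d\wh\varphi)\otimes d\wh\varphi$. For this we use the static equations \eqref{static_equations}, namely $\nabla^2\wh\varphi=d\wh\varphi\otimes d\wh\varphi-\Ric-n\wh g$, together with the Ricci identity when commuting derivatives. I expect this bookkeeping to be the main obstacle; the signs are delicate. I would check it against the auxiliary picture: by \eqref{aux_first_var}, the Hessian should be the linearisation of $\Ric_\mfg+n\mfg+\nabla^2_\mfg(f-\varphi)$ at a Poincaré--Einstein $\mfg$. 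That linearisation is $\tfrac12\Delta_{E,\mfg}\mfh-\delta^*_\mfg\delta_\mfg\mfh+\nabla^2_\mfg(v-\tfrac12\tr_g h)$, using $\tr_\mfg\mfh=\tr h-2\psi$ and $v-\psi$ as the variation of $f-\varphi$. Decomposing it into the $M$-part and the $d\tau^2$-part with the Christoffel symbols of Section \ref{sec:static_metrics_etc} gives a cross-check of both entries.

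For the second component, we linearise $\Delta\varphi+\g{d\varphi}{df}$. The variation of the Laplacian is $D\Delta(h)\varphi=\g{h}{\nabla^2\varphi}-\g{\delta h+\tfrac12 d\tr h}{d\varphi}$, and the variation of the inner product contributes $-h(d\wh\varphi,d\wh\varphi)$. We also get the terms $\Delta\psi+\g{d\psi}{d\wh\varphi}=\Delta_{\wh\varphi}\psi$ and $\g{d\wh\varphi}{dv}$. Grouping $-h(d\wh\varphi,d\wh\varphi)-\g{\delta h}{d\wh\varphi}=-\g{\delta_{\wh\varphi}h}{d\wh\varphi}$ and $\g{dv-\tfrac12 d\tr h}{d\wh\varphi}$ gives the second entry, once we multiply by $2$ for the pairing convention and include the overall factor $-\tfrac1{2n}$. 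Finally, all integrations implicit in identifying the Hessian have vanishing boundary terms by the decay $h,\psi,v\in C^{2,\alpha}_\beta$ with $\beta>\frac n2$, or by the $H^\ell_{\wh\varphi}$ setting.
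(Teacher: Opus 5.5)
Your route is the paper's. You differentiate the gradient \eqref{entropy-gradient} at the critical point so the weight variations drop out, linearise with the appendix formulas, and regroup via \eqref{aux_divergence}. Your second component and all derivative terms of the first component are correct.

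The gap is in the one step that genuinely uses staticity. You leave it open, and as you have set it up it fails. You quote $D\Ric(h)=\tfrac12\Delta_E h-\delta^*\delta h-\tfrac12\nabla^2\tr h$, whereas \eqref{ric_var} has $\tfrac12\Delta_L h$. You then list $h\ast\Ric$ among the leftover zeroth-order terms, but with the formula you quote no such term arises anywhere. The only zeroth-order terms are then $nh$ and $+\tfrac12 h\times\nabla^2\wh\varphi$ from the Christoffel variation; indeed $-\tfrac12(\nabla_ih_{jk}+\nabla_jh_{ik})\nabla^k\wh\varphi=-\delta^*(h\cdot d\wh\varphi)_{ij}+\tfrac12(h\times\nabla^2\wh\varphi)_{ij}$. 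Substituting $\nabla^2\wh\varphi=d\wh\varphi\otimes d\wh\varphi-\Ric-n\wh g$ then leaves $(h\cdot d\wh\varphi)\otimes d\wh\varphi-\tfrac12 h\times\Ric$, which is off by $-\tfrac12h\times\Ric$. The Ricci identity cannot repair this, since nothing in the computation requires commuting derivatives.

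With the correct formula and $\Delta_Lh=\Delta_Eh+h\times\Ric$, the zeroth-order terms combine algebraically, exactly as in \eqref{where_we_used_static}:
\[ \tfrac12\Delta_Lh+nh+\tfrac12h\times\nabla^2\wh\varphi=\tfrac12\Delta_Eh+\tfrac12h\times\p{\Ric+n\wh g+\nabla^2\wh\varphi}=\tfrac12\Delta_Eh+\tfrac12h\times\p{d\wh\varphi\otimes d\wh\varphi}. \]
This uses $h\times\wh g=2h$ and the first static equation in \eqref{static_equations}. The last term is the symmetrised $(h\cdot d\wh\varphi)\otimes d\wh\varphi$. With this one-line step in place your argument is complete. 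The auxiliary cross-check you propose is essentially Proposition \ref{auxiliary_einstein_op} and is not needed.
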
   

        \begin{proof}
            Observe that the terms in the following computation involving the variation of the weighted volume vanish after evaluating at a critical point, so we will omit them. Specifically, the Hessian is obtained as the entry--wise variation of the gradient \eqref{entropy-gradient}. \\
        
            Combining the variational formulas \eqref{ric_var}, \eqref{hess_var}, \eqref{lap_var}, \eqref{inner_prod_var} and evaluating them at the critical point $\left(\wh{g}, \wh{\varphi}\right)$ (so $\left.f_{g_s}\right|_{s=0} = \wh{\varphi}$ by Proposition \ref{static_crit_points}), we obtain    
            \begin{align}\begin{split} \label{second_var_first_argument}
                \left. \frac{d}{ds}\right|_{s = 0}&\p{\Ric_{g_s} + \nabla^2_{g_s} f_{g_s} + ng_s - d\varphi_s \otimes d\varphi_s}\\
                ={}&\frac{1}{2}\Delta_Lh+\frac{1}{2}\nabla_{\nabla \wh{\varphi}}h+nh+\frac{1}{2}h\times \nabla^2\wh{\varphi}+\nabla^2\p{v-\frac{1}{2}\tr h}\\
                & -\delta^*\p{\delta_{\wh{\varphi}} h}-d\psi\otimes d\wh{\varphi}-d\wh{\varphi}\otimes d\psi,
            \end{split}\end{align}
            where $\left(h \times k\right)_{ij} = \wh{g}^{\ell m}\left(h_{i\ell}k_{mj} + h_{j\ell}k_{mi}\right)$. Recalling the relationship between the auxiliary and shift divergence, $\delta_{\wh{\mathfrak{g}}}\mathfrak{h}=\delta_{\wh{\varphi}} h+2\psi \; d\wh{\varphi}$, the final line may be reduced to        
                $$-\delta^*\p{\delta_{\wh{\varphi}} h}-d\psi\otimes d\wh{\varphi}-d\wh{\varphi}\otimes d\psi=-\delta^*\p{\delta_{\wh{\mathfrak{g}}}\mathfrak{h}}+2\psi\nabla^2\wh{\varphi}.$$

            A further reduction is possible by using the relationship between the Lichnerowicz Laplacian and the Einstein operator: $\Delta_Lh=\Delta_E h+h\times \Ric$. Since the static equations hold, we have 
            \begin{align}
            \begin{split}\label{where_we_used_static}
                \frac{1}{2}\Delta_Lh+nh+\frac{1}{2}h\times \nabla^2\wh{\varphi} ={}& \frac{1}{2}\Delta_E h+\frac{1}{2}h\times \p{\Ric+n\wh{g}+\nabla^2\wh{\varphi}}\\ ={}&\frac{1}{2}\Delta_E h+\p{h\cdot d\wh{\varphi}}\otimes d\wh{\varphi}.
                \end{split}
            \end{align}     
        
            The variation of the second term of the gradient of $\mu_{\mathrm{AH}}$, scaled by the appropriate factor of two, is  
            \begin{align}\begin{split} \label{second_var_second_argument}
                \left. \frac{d}{ds}\right|_{s = 0}&\p{2\Delta_{g_s}\varphi_s+2\g{df_{g_s}}{d\varphi_s}_{g_s}-2n}\\
                ={}&2\Delta \psi+2\g{h}{\nabla^2\wh{\varphi}-d\wh{\varphi}\otimes d\wh{\varphi}}\\ &-2\g{\delta h-d\psi}{d\wh{\varphi}}+2\g{d\p{v-\frac{1}{2}\tr h}}{d\wh{\varphi}}.
            \end{split}\end{align}
        
            The Hessian \eqref{entropy-hessian} now follows from the use of \eqref{second_var_first_argument} and \eqref{second_var_second_argument}, when differentiating \eqref{entropy-gradient}.
        \end{proof}

        \begin{corollary}\label{stability_op_cor} 
            Let $\p{M, \wh{g},\wh{\varphi}}$ be a static triple. If the variation $\p{h,\psi}$ is transverse (in the sense of \eqref{eq:divfree}), then 
            \begin{align*}
                \nabla^2\mu_{\mathrm{AH}}&\p{h,\psi}\\={}&-\frac{1}{2n}\left( \frac{1}{2}\Delta_{E,\wh{\varphi}}h+\p{h\cdot d\wh{\varphi}}\otimes d\wh{\varphi}+2\psi\nabla^2\wh{\varphi},\;2\Delta_{\wh{\varphi}} \psi+2\g{h}{\nabla^2\wh{\varphi}}+4\psi \abs{d\wh{\varphi}}^2\right).
            \end{align*}
        \end{corollary}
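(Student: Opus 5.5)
The plan is to specialise the Hessian formula \eqref{entropy-hessian} of Proposition \ref{second_var} to transverse variations. Three terms should disappear, and the two remaining first-order terms in the second component should combine with $2\Delta_{\wh\varphi}\psi$ into the $4\psi\abs{d\wh\varphi}^2$ term. The first step is to use the transversality condition \eqref{eq:divfree}, which reads $\delta_{\wh{\mfg}}\mfh=\delta_{\wh\varphi}h+2\psi\,d\wh\varphi=0$. This immediately kills the term $-\delta^*\p{\delta_{\wh{\mfg}}\mfh}$ in the first component.

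The second step is to handle the terms containing $v-\frac{1}{2}\tr h$. Here I would invoke the linearised Euler--Lagrange equation \eqref{auxiliary_EL_linearised}. At a static pair we have $\Ric_{\wh\mfg}+n\wh\mfg=0$, and transversality gives $\delta_{\wh\mfg}\delta_{\wh\mfg}\mfh=0$, so \eqref{auxiliary_EL_linearised} reduces to \eqref{linearised_minimiser}:
\[
\p{\Delta_{\wh\mfg}+n}\p{v-\tfrac12\tr h}=0 .
\]
Proposition \ref{isomorph_prop} says $\Delta_{\wh\varphi}+n=\Delta_{\wh\mfg}+n$ is injective on $H^\ell_{\wh\varphi}$ and on $C^{2,\alpha}_\beta$ for the relevant weights. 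Hence $v=\frac12\tr h$. Consequently $\nabla^2\p{v-\frac12\tr h}$ in the first component and $2\g{d\p{v-\frac12\tr h}}{d\wh\varphi}$ in the second component both vanish. The only point to check is that $v-\frac12\tr h$ lies in a space where the isomorphism applies. I expect this to follow from Proposition \ref{entropy_analytic_metric_dep}, which puts $v\in C^{2,\alpha}_\beta$ or $H^\ell_{\wh\varphi}$, together with the assumed decay of $h$.

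The third step is to rewrite the remaining term $-2\g{\delta_{\wh\varphi}h}{d\wh\varphi}$ in the second component. Transversality gives $\delta_{\wh\varphi}h=-2\psi\,d\wh\varphi$, so
\[
-2\g{\delta_{\wh\varphi}h}{d\wh\varphi}=4\psi\abs{d\wh\varphi}^2 .
\]
After these three substitutions, \eqref{entropy-hessian} becomes exactly the stated formula.

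The step I expect to be the main obstacle is the injectivity argument for $v=\frac12\tr h$, specifically checking the function-space membership in the $H^\ell$ setting. Even there it is a direct appeal to Proposition \ref{isomorph_prop}. Everything else is bookkeeping with \eqref{aux_divergence}.
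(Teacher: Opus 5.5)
Your proposal is correct and is the paper's proof. The paper likewise just substitutes $\delta_{\wh\mfg}\mfh=0$ and $v=\frac12\tr h$ (from \eqref{linearised_minimiser}) into \eqref{entropy-hessian}, and rewrites $-2\g{\delta_{\wh\varphi}h}{d\wh\varphi}$ as $4\psi\abs{d\wh\varphi}^2$. Your check that $v-\frac12\tr h$ lies in a space where $\Delta_{\wh\varphi}+n$ is injective is a detail the paper leaves implicit. (Your overview sentence about terms "combining with $2\Delta_{\wh\varphi}\psi$" is loosely worded, but your actual Step 3 is exactly right.)
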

        
        \begin{proof} 
            Follows immediately from Proposition \ref{second_var} and the discussion at the start of this section.
        \end{proof}

        \begin{proposition}\label{auxiliary_einstein_op} 
            Let $\p{\wh{g},\wh{\varphi}}$ be a static pair with auxiliary (PE) metric $\wh{\mathfrak{g}}=e^{-2\wh{\varphi}}\; d\tau^2+\wh{g}$. If $\mathfrak{h}=-2\psi e^{-2\wh{\varphi}}\; d\tau^2+h$, then        
            \begin{align*}
                \frac{1}{2}\Delta_{E,\wh{\mathfrak{g}}}\mathfrak{h}={}&\p{\frac{1}{2}\Delta_{E,\wh{\varphi}}h+(h\cdot d\wh{\varphi})\otimes d\wh{\varphi}+2\psi \nabla ^2\wh{\varphi},\; 2\Delta_{\wh{\varphi}}\psi+2\g{h}{\nabla^2\wh{\varphi}}+4\psi \abs{d\wh{\varphi}}^2}.
            \end{align*}
            
            In particular:         
            \begin{equation}\label{aux_mu_Hessian}
                D^2_{\wh{\mathfrak{g}}}\mu_{\mathrm{AH}}\p{\mathfrak{h}}=-\frac{1}{2n}\p{\frac{1}{2}\Delta_{E,\wh{\mathfrak{g}}}\mathfrak{h}+\nabla^2_{\wh{\mathfrak{g}}}\p{v-\frac{1}{2}\tr_{\wh{g}} h}-\delta^*_{\wh{\mathfrak{g}}}\delta_{\wh{\mathfrak{g}}}\mathfrak{h},\mathfrak{h}}_{L^2_{\wh\mfg}},
            \end{equation}
            where $v$ is the variation of the minimiser $f_{\wh{g}}$ and solves \eqref{auxiliary_EL_linearised}. In \eqref{aux_mu_Hessian} integration is over $M$, but with respect to the volume element of $\wh\mfg$ -- that is to say, $L^2_{\wh\mfg}(M):=L^2_{\wh\varphi}(M)$.
        \end{proposition}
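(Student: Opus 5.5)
The proof is a direct computation of the Einstein operator of the warped product $\wh{\mathfrak{g}}=e^{-2\wh\varphi}\,d\tau^2+\wh g$ acting on a $\tau$-independent tensor $\mathfrak{h}=-2\psi e^{-2\wh\varphi}d\tau^2+h$, using the Christoffel symbols listed in Section \ref{sec:static_metrics_etc}. I would write
\[
\tfrac12\Delta_{E,\wh{\mathfrak{g}}}\mathfrak{h}=\tfrac12\nabla^*_{\wh{\mathfrak{g}}}\nabla_{\wh{\mathfrak{g}}}\mathfrak{h}-\overset{\;\circ}{R}_{\wh{\mathfrak{g}}}\mathfrak{h}
\]
and treat the two pieces separately. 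For each piece I compute the mixed components $(\tau,i)$, the $(i,j)$ components and the $(\tau,\tau)$ component.

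The mixed components vanish by $\tau$-independence and the block structure, since $\overline{\Gamma}^\tau_{ij}=\overline{\Gamma}^i_{\tau j}=0$ apart from the listed symbols. For the curvature term I need the curvature of the warped product:
\[
\overline R_{i\tau j\tau}=e^{-2\wh\varphi}\big(\nabla^2_{ij}\wh\varphi-d\wh\varphi_i d\wh\varphi_j\big),\qquad \overline R_{ikj\ell}=\wh R_{ikj\ell},
\]
with no other nonzero components up to symmetries. Contracting against $\mathfrak{h}$ then gives the following.
\begin{itemize}
\item The $(i,j)$ component is $\overset{\;\circ}{R}_{\wh g}h+\overline R_{i\tau j\tau}\,\mathfrak{h}^{\tau\tau}$, and $\mathfrak{h}^{\tau\tau}=-2\psi e^{2\wh\varphi}$.
\item The $(\tau,\tau)$ component is $e^{-2\wh\varphi}\langle h,\nabla^2\wh\varphi-d\wh\varphi\otimes d\wh\varphi\rangle$.
\end{itemize}
Keep in mind that the second slot of the pair $(h,\psi)$ is read off after dividing by $-2e^{-2\wh\varphi}$, which is where the factor $2$ in the second slot comes from.

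For the rough Laplacian, I expand
\[
\nabla^*\nabla\mathfrak{h}=-\mathfrak{g}^{ab}\big(\partial_a(\nabla_b\mathfrak{h})-\Gamma\cdot\nabla\mathfrak{h}\big).
\]
The $\mathfrak{g}^{\tau\tau}$ part produces the drift term $\nabla_{\nabla\wh\varphi}h$, because $\overline\Gamma^i_{\tau\tau}$ contracted with $e^{2\wh\varphi}$ gives $\nabla\wh\varphi$. It also produces zeroth-order terms quadratic in $d\wh\varphi$, which come from $\nabla_\tau\mathfrak{h}$ mixing $h$ and $\psi$ through $\overline\Gamma^\tau_{\tau i}=-d\wh\varphi_i$. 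I would organise this by first computing
\[
\nabla_\tau\mathfrak{h}=-\overline\Gamma^c_{\tau a}\mathfrak{h}_{cb}-\overline\Gamma^c_{\tau b}\mathfrak{h}_{ac},
\]
which is purely mixed with entries $d\wh\varphi\ast(h\cdot d\wh\varphi+2\psi\,d\wh\varphi)$, matching the $\mathfrak{h}\cdot d\varphi$ quantity from the norm identities. Then I compute $\nabla_\tau\nabla_\tau\mathfrak{h}$ in the same way. The $\nabla_i\mathfrak{h}$ terms for the $\tau\tau$-entry give
\[
\nabla_i\big(-2\psi e^{-2\wh\varphi}\big)-2\overline\Gamma^\tau_{i\tau}\mathfrak{h}_{\tau\tau}=-2e^{-2\wh\varphi}\partial_i\psi,
\]
so the $\tau\tau$-slot reduces to $2\Delta_{\wh\varphi}\psi$ plus zeroth-order terms.

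Collecting terms, the $(i,j)$-slot consists of the following.
\begin{itemize}
\item $\tfrac12\nabla^*\nabla h-\overset{\;\circ}{R}h+\tfrac12\nabla_{\nabla\wh\varphi}h=\tfrac12\Delta_{E,\wh\varphi}h$.
\item The terms $(h\cdot d\wh\varphi)\otimes d\wh\varphi$ symmetrised, together with $\psi\,d\wh\varphi\otimes d\wh\varphi$, coming from $\nabla_\tau\nabla_\tau$.
\item The curvature term $2\psi(\nabla^2\wh\varphi-d\wh\varphi\otimes d\wh\varphi)$.
\end{itemize}
The $\psi\,d\wh\varphi\otimes d\wh\varphi$ contributions must cancel, leaving $2\psi\nabla^2\wh\varphi$. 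Similarly, in the $\tau\tau$-slot the $\langle h,d\wh\varphi\otimes d\wh\varphi\rangle$ pieces must cancel, leaving $2\langle h,\nabla^2\wh\varphi\rangle+4\psi|d\wh\varphi|^2$. The staticity equations \eqref{static_equations} may be needed to rewrite a term such as $h\times\nabla^2\wh\varphi$, in the same way as in \eqref{where_we_used_static}. Getting these constant bookkeeping factors right, including the $\tfrac12$ and the $-2$ convention, is the main obstacle; I would check the result against the case $h=0$, $\psi=$ const on AdS.

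Formula \eqref{aux_mu_Hessian} then follows by comparing with Proposition \ref{second_var}. The Hessian there contains the bracket $\tfrac12\Delta_{E,\wh\varphi}h+\dots+2\psi\nabla^2\wh\varphi$, which equals the $(i,j)$-slot above. The remaining terms are $\nabla^2(v-\tfrac12\tr h)-\delta^*(\delta_{\wh{\mathfrak g}}\mathfrak h)$ in the first slot and $2\langle d(v-\tfrac12\tr h),d\wh\varphi\rangle-2\langle\delta_{\wh\varphi}h,d\wh\varphi\rangle$ in the second. By the identities $\nabla^2_{\mathfrak g}f=-\langle d\varphi,df\rangle e^{-2\varphi}d\tau^2+\nabla^2f$ and $\delta^*_{\mathfrak g}X=-X(\varphi)e^{-2\varphi}d\tau^2+\delta^*X$, with the pairing convention \eqref{aux2tensor}, these are exactly the pair representatives of $\nabla^2_{\wh{\mathfrak g}}(v-\tfrac12\tr h)-\delta^*_{\wh{\mathfrak g}}\delta_{\wh{\mathfrak g}}\mathfrak h$. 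For this I use \eqref{aux_divergence}, which shows that the $\tau\tau$ part of $\delta^*_{\wh{\mathfrak g}}$ of that 1-form produces $-\langle\delta_{\wh\varphi}h+2\psi\,d\wh\varphi,d\wh\varphi\rangle$. Its $2\psi|d\wh\varphi|^2$ piece accounts for the switch from $4\psi|d\wh\varphi|^2$ to the form appearing in \eqref{entropy-hessian}. Finally, pairing with $\mathfrak h$ in $L^2_{\wh\varphi}$ and using $|\mathfrak h|^2_{\mathfrak g}=|(h,\psi)|^2_g$ gives \eqref{aux_mu_Hessian}.
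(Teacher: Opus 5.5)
Your route is the paper's: a componentwise Christoffel-symbol computation of $\nabla^*\nabla$ and $\overset{\;\circ}{R}$ for the warped product, then a comparison with \eqref{entropy-hessian} via the formulas for $\nabla^2_{\wh{\mathfrak g}}$ and $\delta^*_{\wh{\mathfrak g}}$. The cancellations you predict are correct, and the static equations are not needed for the first identity, which holds for any pair.

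The gap is in the step you yourself flag as the crux: your read-off rule is wrong. The computation gives
\[
\tfrac12\p{\Delta_{E,\wh{\mathfrak g}}\mathfrak h}_{\tau\tau}=-e^{-2\wh\varphi}\p{\Delta_{\wh\varphi}\psi+2\psi\abs{d\wh\varphi}^2+\g{h}{\nabla^2\wh\varphi}}.
\]
Dividing this by $-2e^{-2\wh\varphi}$, i.e.\ representing it in the convention \eqref{aux2tensor}, yields $\tfrac12\Delta_{\wh\varphi}\psi+\psi\abs{d\wh\varphi}^2+\tfrac12\g{h}{\nabla^2\wh\varphi}$. That is one quarter of the claimed second slot, so your rule does not explain the factor $2$; followed literally, it contradicts the displayed formula.

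The statement instead uses the dual representative $T\mapsto\p{T_{ij},\,-2e^{2\wh\varphi}T_{\tau\tau}}$. Its second slot is the coefficient of $\psi$ in $\g{T}{\mathfrak h}_{\wh{\mathfrak g}}=\g{T_{ij}}{h}+(\,\cdot\,)\,\psi$. This is the convention in which \eqref{entropy-gradient} and \eqref{entropy-hessian} are written, since Proposition \ref{first_var} pairs the second entry with $\psi$ with no factor $4$. The paper's proof reads off the slot exactly this way when it passes from $\p{\Delta_{E,\wh{\mathfrak g}}\mathfrak h}_{\tau\tau}=-2e^{-2\wh\varphi}(\cdots)$ to $2(\cdots)$.

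The same mix-up breaks your comparison with Proposition \ref{second_var}.
\begin{itemize}
\item Under \eqref{aux2tensor}, $\nabla^2_{\wh{\mathfrak g}}f$ corresponds to $\p{\nabla^2 f,\tfrac12\g{df}{d\wh\varphi}}$. The second slot of $-\delta^*_{\wh{\mathfrak g}}\delta_{\wh{\mathfrak g}}\mathfrak h$ is $-\tfrac12\g{\delta_{\wh\varphi}h}{d\wh\varphi}-\psi\abs{d\wh\varphi}^2$. Neither is ``exactly'' the corresponding entry of \eqref{entropy-hessian}.
\item Suppose you identify the pair in \eqref{entropy-hessian} as an \eqref{aux2tensor}-representative and then pair with $\mathfrak h$ using $\abs{(T_1,T_2)}^2_g=\abs{T_1}^2+4\abs{T_2}^2$. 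Then the $\psi$-part of the pairing is four times the $\psi$-part of $D^2_{\wh{\mathfrak g}}\mu_{\mathrm{AH}}$, so \eqref{aux_mu_Hessian} does not follow.
\end{itemize}

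The fix is to read every second slot as $-2e^{2\wh\varphi}$ times the $\tau\tau$-component. Then $\nabla^2_{\wh{\mathfrak g}}\p{v-\tfrac12\tr h}$ contributes $2\g{d(v-\tfrac12\tr h)}{d\wh\varphi}$. The term $-\delta^*_{\wh{\mathfrak g}}\delta_{\wh{\mathfrak g}}\mathfrak h$ contributes $-2\g{\delta_{\wh\varphi}h}{d\wh\varphi}-4\psi\abs{d\wh\varphi}^2$, and this cancels the $4\psi\abs{d\wh\varphi}^2$ entirely, not partially as your ``$2\psi\abs{d\wh\varphi}^2$ piece'' suggests. The final pairing is then the plain one, $\g{A}{h}+B\psi$.

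A minor slip: the mixed entries of $\nabla_\tau\mathfrak h$ are $-e^{-2\wh\varphi}\p{h\cdot d\wh\varphi+2\psi\,d\wh\varphi}$. These are linear, not quadratic, in $d\wh\varphi$.
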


        \begin{proof}
            To simplify notation, we will use $g$ for both $\wh{\mathfrak{g}}$ and $\wh{g}$, distinguishing them simply by their index. A similar approach is applied to $\mathfrak{h}$ and $h$. Integers $i,j,m,l$ will run from $1$ to $n$, thus indexing only the coordinates on $M$. We commence by calculating the $ij$-components of $\Delta_{E,\wh{\mathfrak{g}}}\mathfrak{h}$:
            \begin{align*}
                \p{\nabla^*\nabla }_{\wh{\mathfrak{g}}}\mathfrak{h}_{ij}={}&-g^{lm}\nabla_l\p{\nabla h}_{mij}-g^{\tau \tau}\nabla_\tau \p{\nabla h}_{\tau ij}\\
                ={}&\nabla^*\nabla h_{ij}+g^{\tau \tau}\p{\Gamma_{\tau \tau}^m\nabla_mh_{ij}+\Gamma_{\tau i}^\tau \nabla_\tau h_{\tau j}+\Gamma_{\tau j}^\tau \nabla_\tau h_{i\tau}}\\
                ={}& \nabla^*\nabla h_{ij}+\nabla^m\wh{\varphi}\nabla_mh_{ij}-e^{2\wh{\varphi}}\p{\Gamma_{\tau i}^\tau\Gamma_{\tau \tau}^mh_{mj}+2\Gamma_{\tau i}^\tau \Gamma_{\tau j}^\tau h_{\tau \tau}+\Gamma_{\tau j}^\tau\Gamma_{\tau \tau}^mh_{im}}\\
                ={}& \nabla^*\nabla h_{ij}+\nabla_{\nabla \wh{\varphi}}h_{ij}+d\wh{\varphi}_ig^{ml}d\wh{\varphi}_l h_{mj}-2e^{2\wh{\varphi}}d\wh{\varphi}_id\wh{\varphi}_jh_{\tau \tau}+g^{ml}d\wh{\varphi}_lh_{mi}d\wh{\varphi}_j\\
                ={}&  \nabla^*\nabla h_{ij}+\nabla_{\nabla \wh{\varphi}}h_{ij}+d\wh{\varphi}_i\p{h\cdot d\wh{\varphi}}_j+4\psi d\wh{\varphi}_id\wh{\varphi}_j+\p{h\cdot d\wh{\varphi}}_id\wh{\varphi}_j,\\
                (\overset{\;\circ}{R}_{\wh{\mathfrak{g}}}\mathfrak{h})_{ij}={}&R_{i\;\; \;\;\;j}^{\;\; lm}h_{lm}+R_{i\;\; \;\;j}^{\;\ \tau \tau}h_{\tau \tau}\\
                ={}&(\overset{\;\circ}{R}h)_{ij}-e^{2\wh{\varphi}}\p{\partial_i\Gamma_{\tau j}^\tau +\Gamma_{\tau i}^\tau\Gamma_{\tau j}^\tau-\Gamma_{ij}^m\Gamma_{\tau m}^\tau}h_{\tau \tau}\\ 
                ={}&(\overset{\;\circ}{R}h)_{ij}+e^{2\wh{\varphi}}\p{\nabla^2_{ij}\wh{\varphi}-d\wh{\varphi}_id\wh{\varphi}_j}h_{\tau \tau}\\ 
                ={}& (\overset{\;\circ}{R}h)_{ij}-2\psi\nabla^2_{ij}\wh{\varphi}+2\psi d\wh{\varphi}_id\wh{\varphi}_j.
            \end{align*}

            Which means we can express the $ij$-components of the Einstein operator as
                $$\Delta_{E,\wh{\mathfrak{g}}}\mathfrak{h}_{ij}=\Delta_E h_{ij}+\nabla_{\nabla \wh{\varphi}}h_{ij}+d\wh{\varphi}_i\p{h\cdot d\wh{\varphi}}_j+\p{h\cdot d\wh{\varphi}}_id\wh{\varphi}_j+4\psi \nabla^2_{ij}\wh{\varphi}.$$
        
            Next we calculate the $\tau \tau$-component of the Einstein operator
            \begin{align*}
                \p{\nabla^*\nabla }_{\wh{\mathfrak{g}}}\mathfrak{h}_{\tau \tau}={}&-g^{\tau \tau}\nabla_\tau\p{\nabla h}_{\tau \tau \tau}-g^{ij}\nabla_i\p{\nabla h}_{j\tau \tau}\\ 
                ={}&g^{\tau \tau}\Gamma_{\tau \tau}^{i}\p{\partial_i h_{\tau \tau}-2\Gamma_{\tau \tau}^kh_{ik}-4\Gamma_{i\tau}^\tau h_{\tau \tau}}\\&-g^{ij}\p{\nabla^2_{ij}(h_{\tau \tau})+2h_{\tau \tau}\nabla^2_{ij}\wh{\varphi}+2\Gamma_{i\tau}^\tau\Gamma_{j\tau}^\tau h_{\tau \tau}-4\Gamma_{i\tau}^\tau\partial_jh_{\tau \tau}}\\
                ={}&-e^{-2\wh{\varphi}}g^{ij}\partial_j\wh{\varphi}\p{2\partial_j\psi+2(h\cdot d\wh{\varphi})_i+8\psi\partial_i\wh{\varphi}}\\
                &-g^{ij}\p{\nabla^2_{ij}\p{h_{\tau \tau}}+2h_{\tau \tau}\nabla_{ij}^2\wh{\varphi}+4\partial_i\wh{\varphi}\partial_j(h_{\tau \tau})+4h_{\tau \tau}\partial_i\wh{\varphi}\partial_j\wh{\varphi}}\\
                ={}&-e^{-2\wh{\varphi}}g^{ij}\p{2d\wh{\varphi}_id\psi_j+4\psi d\wh{\varphi}_id\wh{\varphi}_j+2d\wh{\varphi}_i h_{jm}g^{ml}d\wh{\varphi}_l-2\nabla^2_{ij}\psi}\\
                ={}&-e^{-2\wh{\varphi}}\p{2\g{d\wh{\varphi}}{d\psi}+4\psi \abs{d\wh{\varphi}}^2+2h\p{d\wh{\varphi},d\wh{\varphi}}+2\Delta \psi},\\
                (\overset{\;\circ}{R}_{\wh{\mathfrak{g}}}\mathfrak{h})_{\tau \tau}={}& g^{ij}g^{lm}R_{\tau il\tau}h_{jm}\\={}&g^{ij}g^{lm}g_{\tau \tau}g_{\tau \tau}R_{i\;\;\;\;l}^{\;\; \tau \tau}h_{jm}\\ 
                ={}&e^{-2\wh{\varphi}}g^{ij}g^{lm}\p{\nabla^2_{il}\wh{\varphi}-d\wh{\varphi}_i d\wh{\varphi}_l}h_{jm}\\
                ={}&e^{-2\wh{\varphi}}\g{\nabla^2\wh{\varphi}-d\wh{\varphi}\otimes d\wh{\varphi}}{h}.
            \end{align*}
        
            These combine to produce
                $$\Delta_{E,\wh{\mathfrak{g}}} \mathfrak{h}_{\tau \tau}=-2e^{-2\wh{\varphi}}\p{\Delta_{\wh{\varphi}} \psi+2\psi\abs{d\wh{\varphi}}^2+\psi \g{\nabla^2\wh{\varphi}}{h}},$$
            which, together with the $ij$-components, gives the claimed formula for $\Delta_{E,\wh{\mathfrak{g}}} \mathfrak{h}$. The expression \eqref{aux_mu_Hessian} is the resulting simplification of \eqref{entropy-hessian}, with the added use of 
                $$\nabla^2_{\wh{\mathfrak{g}}}f=-\g{df}{d\wh{\varphi}}_{\wh{g}}e^{-2\wh{\varphi}}\; d\tau^2+\nabla^2_{\wh{g}}f$$
            applied to $f=v-\frac{1}{2}\tr_{\wh{g}}h$. 
        \end{proof}

        \begin{remark}\label{DCSC_second_var} 
            As was remarked upon in the previous subsection (see \ref{DCSC_crit}), metric pairs $\mathfrak{g}=(g,\varphi)$ satisfying 
                $$\scal_g=-n(n-1)\qquad \qquad \& \qquad \qquad \Delta\varphi+\abs{d\varphi}^2=n,$$
            are critical points for the entropy restricted to directions preserving the conformal class of $g$. \\
    
            An additional property of such a pair is the fact that the second variation in the direction of $\mathfrak{h}=-2\psi e^{-2\varphi}\; d\tau^2+\eta g$, is the same as that of static metrics: \eqref{aux_mu_Hessian}. The argument is simple, as the only time during the proof of the second variation we used the static equations was in \eqref{where_we_used_static}. While this simplification does not hold outright for the non-static pair $(g,\varphi)$, it does hold ``weakly'' when $h=\eta g$:
            \begin{align*}
                \frac{1}{2}\g{\eta g\times \p{\Ric_g+ng+\nabla^2\varphi}}{\eta g}={}& \eta^2 \tr\p{\Ric_g+ng+\nabla^2\varphi}\\={}& \eta^2\p{\scal_g+n^2-\Delta \varphi}\\={}&\eta^2\abs{d\varphi}^2\\
                ={}&\g{\p{\eta g\cdot d\varphi}\otimes d\varphi}{\eta g}.\end{align*}
        \end{remark}

        We end this section by estimating the second variation of the entropy along specific curves of auxiliary metrics and pairs $\p{g,\varphi}$. The following proposition even adds a third type of semi-critical point for the entropy.
    
        \begin{proposition}\label{stability_op_prop} 
            \begin{enumerate}
            \item Let $\mathfrak{g}=\p{g,\varphi}$ be a constant scalar curvature metric, $\scal_{\mathfrak{g}}=-n(n+1)$, then it is a critical point for the entropy restricted to its conformal class, and 
            \begin{align*}
                \left.\frac{d^2}{ds^2}\right|_{s=0}\mu_{\mathrm{AH}}\p{\p{1+s\chi}\mathfrak{g}}\leq -\frac{1}{4 n}\norm{\chi \mathfrak{g}}^2_{H^1_{\mathfrak{g}}},
            \end{align*}
            for every $\chi\in H^\ell_\varphi\p{M}= H^\ell_{\mathfrak{g}}\p{M}$.
            
            \item Let $(g,\varphi)$ be a pair, such that $\scal_g=-n(n-1)$ and $\Delta \varphi+\abs{d\varphi}^2=n$. Then $\p{g,\varphi}$ is a critical point for the entropy restricted to variations preserving the conformal class of $g$. Furthermore, for every $\eta,\psi\in H^\ell_\varphi\p{M}$,
            \begin{align}\label{sec_var_nonpos}
                \left.\frac{d^2}{ds^2}\right|_{s=0}\mu_{\mathrm{AH}}\p{\p{1+s\eta}g,\varphi+s\psi} \leq -C_{g,\varphi}\norm{\p{\eta g,\psi}}^2_{H^1_{\mathfrak{g }}},
            \end{align}
            for some $C_{g,\varphi}>0$.
            \end{enumerate}
        \end{proposition}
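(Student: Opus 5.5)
The plan is to reduce both parts to the auxiliary Hessian formula \eqref{aux_mu_Hessian}. That formula is valid at a static pair. Here, as in Remark \ref{DCSC_second_var}, we must check that it still holds at the semi-critical pairs in question when restricted to the relevant directions.

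\emph{Criticality in part 1.} Since $\scal_{\mathfrak{g}}=\scal_g-2\Delta_g\varphi-2\abs{d\varphi}^2_g=-n(n+1)$, Proposition \ref{klaus_konj} gives $f_g=\varphi$. The first variation \eqref{aux_first_var} in the direction $\chi\mathfrak{g}$ is therefore $-\frac{1}{2n}\int_M\chi\,\tr_{\mathfrak{g}}\p{\Ric_{\mathfrak{g}}+n\mathfrak{g}}\,dV_{\varphi}$. This vanishes because $\tr_{\mathfrak{g}}\p{\Ric_{\mathfrak{g}}+n\mathfrak{g}}=\scal_{\mathfrak{g}}+n(n+1)=0$.

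\emph{Justifying the Hessian formula along $s\mapsto(1+s\chi)\mathfrak{g}$.} The curve is linear, so there is no term coming from its second derivative. Differentiating $\p{\nabla\mu_{\mathrm{AH}},\mathfrak{h}}$ along the curve produces, besides the derivative of the gradient, two further terms. One is the variation of the weighted volume, which is a scalar multiple of $\g{\Ric_{\mathfrak{g}}+n\mathfrak{g}}{\chi\mathfrak{g}}$. The other is the variation of the pointwise inner product, which is $-2\chi\g{\Ric_{\mathfrak{g}}+n\mathfrak{g}}{\chi\mathfrak{g}}$. Both vanish because $\Ric_{\mathfrak{g}}+n\mathfrak{g}$ is trace free. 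The only place the static equations entered the derivative of the gradient was \eqref{where_we_used_static}. There the relevant term pairs to $\g{\chi\mathfrak{g}\times\p{\Ric_{\mathfrak{g}}+n\mathfrak{g}}}{\chi\mathfrak{g}}=2\chi^2\tr\p{\Ric_{\mathfrak{g}}+n\mathfrak{g}}=0$. So \eqref{aux_mu_Hessian} holds in this direction.

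\emph{Computing the second variation in part 1.} Set $w:=v-\frac12\tr_g h$. In \eqref{auxiliary_EL_linearised} the Ricci term drops out for the same reason. Since $\delta_{\mathfrak{g}}(\chi\mathfrak{g})=-d\chi$, we get $\delta_{\mathfrak{g}}\delta_{\mathfrak{g}}(\chi\mathfrak{g})=-\Delta_{\mathfrak{g}}\chi$. Hence
\begin{equation*}
2\p{\Delta_{\mathfrak{g}}+n}w=-\Delta_{\mathfrak{g}}\chi .
\end{equation*}
By Proposition \ref{isomorph_prop}, $w\in H^\ell_\varphi$, so all integrations by parts below have vanishing boundary terms. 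We evaluate the three pieces of \eqref{aux_mu_Hessian} separately.
\begin{itemize}
\item Using $\overset{\;\circ}{R}_{\mathfrak{g}}\mathfrak{g}=\Ric_{\mathfrak{g}}$ and $\scal_{\mathfrak{g}}=-n(n+1)$, the term $\p{\frac12\Delta_{E}(\chi\mathfrak{g}),\chi\mathfrak{g}}$ equals $\frac{n+1}{2}\norm{d\chi}^2+n(n+1)\norm{\chi}^2$.
\item The term $-\p{\delta^*_{\mathfrak{g}}\delta_{\mathfrak{g}}(\chi\mathfrak{g}),\chi\mathfrak{g}}=\p{\nabla^2\chi,\chi\mathfrak{g}}$ contributes $-\norm{d\chi}^2$.
\item The term $\p{\nabla^2 w,\chi\mathfrak{g}}=-\p{\Delta w,\chi}=-\p{w,\Delta\chi}$ equals $2\p{w,(\Delta+n)w}=2\norm{dw}^2+2n\norm{w}^2\ge 0$.
\end{itemize}
Collecting these,
\begin{equation*}
\left.\frac{d^2}{ds^2}\right|_{s=0}\mu_{\mathrm{AH}}\le-\frac{1}{2n}\p{\frac{n-1}{2}\norm{d\chi}^2+n(n+1)\norm{\chi}^2}.
\end{equation*}
The stated constant $\frac{1}{4n}$ then follows by comparing with $\norm{\chi\mathfrak{g}}^2_{H^1_{\mathfrak{g}}}$ using the norm relations on $\mathfrak{M}$ from Section \ref{sec:static_metrics_etc}, where $n\ge3$ is used. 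If the exact factor is off, we will redo the bookkeeping of that comparison.

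\emph{Part 2.} Criticality is Remark \ref{DCSC_crit}, and validity of \eqref{aux_mu_Hessian} in conformal-in-$g$ directions is Remark \ref{DCSC_second_var}. The weighted-volume and inner-product correction terms again vanish weakly, by the same trace computation. Take $\mathfrak{h}=(\eta g,\psi)$. We solve \eqref{auxiliary_EL_linearised} for $w$. The term $\g{\Ric_{\mathfrak{g}}+n\mathfrak{g}}{\mathfrak{h}}$ equals $\eta(\ldots)+2\psi(\ldots)$, and both brackets vanish by the two hypotheses on $(g,\varphi)$. This gives $2(\Delta_\varphi+n)w=\delta_{\mathfrak{g}}\delta_{\mathfrak{g}}\mathfrak{h}$, and as before the $w$-term enters with the favourable sign. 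For the remaining terms we use the explicit components of $\frac12\Delta_{E,\mathfrak{g}}\mathfrak{h}$ from Proposition \ref{auxiliary_einstein_op} together with \eqref{aux_divergence}. This yields a quadratic form in $(\eta,\psi,d\eta,d\psi)$ with cross terms such as $\psi\,\eta\,\Delta\varphi$ and $\g{d\eta}{d\varphi}\psi$.

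\emph{Main obstacle.} The hard step is proving that this quadratic form is negative definite, uniformly in the $H^1$-norm. First, we would split $\mathfrak{h}=\chi\mathfrak{g}+\mathfrak{h}'$, where $\chi$ carries the $\mathfrak{g}$-trace part. For the $\chi\mathfrak{g}$ piece we reuse the part 1 computation, which is valid since here $\scal_{\mathfrak{g}}=-n(n+1)$ as well. The remaining part $\mathfrak{h}'$ is a one-function family (in the $\tau\tau$ versus $M$ direction), handled via $\Delta_\varphi+\frac{n}{?}$-type spectral bounds. Second, we would absorb the cross terms using Young's inequality and the asymptotics $\abs{d\varphi}\to1$, $\Delta\varphi\to n-1$. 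Third, we would invoke the Poincaré-type estimate of Lemma \ref{alttospecgap} to control the zeroth-order terms near infinity. On the compact set $K$, compactness and the fact that the form is coercive modulo a compact perturbation give $C_{g,\varphi}>0$, provided there is no kernel. That there is no kernel follows because a null direction would give a nontrivial $L^2$ solution of $(\Delta_\varphi+n)u=0$, contradicting Proposition \ref{isomorph_prop}.
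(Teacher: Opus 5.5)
Your Part~2 has a genuine gap: nothing in the outline shows that the quadratic form is negative. The trace/trace-free split $\mathfrak{h}=\theta\mathfrak{g}+\mathfrak{K}$ does not decouple the form. The mixed term is a multiple of $\p{(\Delta_{\mathfrak{g}}+n)^{-1}(\Delta_{\mathfrak{g}}+2n)\theta,\delta_{\mathfrak{g}}\delta_{\mathfrak{g}}\mathfrak{K}}$, which does not vanish in general, so Part~1 cannot simply be reused for the trace piece. Young's inequality and Lemma~\ref{alttospecgap} only help near infinity. On $K$, cross terms such as $\psi\eta\Delta\varphi$ have order-one coefficients and no sign. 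The step ``coercive modulo a compact perturbation and no kernel, hence coercive'' works only for a form already known to be semidefinite, which is exactly what is at stake. Your reduction of a null direction to an $L^2$ solution of $(\Delta_\varphi+n)u=0$ is also unsupported.

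The missing idea is the paper's splitting $\mathfrak{h}=\chi\mathfrak{g}+\mathfrak{y}$ with $D_{\mathfrak{g}}\scal(\mathfrak{y})=0$, obtained by solving $n(\Delta_{\mathfrak{g}}+n+1)\chi=D_{\mathfrak{g}}\scal(\mathfrak{h})$. With this splitting the mixed term vanishes. The set $\mathcal{A}$ of pairs with $\scal_{\mathfrak{g}}=-n(n+1)$ is a manifold with tangent space $\ker D_{\mathfrak{g}}\scal$, and each of its points is conformally critical by Part~1. So differentiating $\left.\frac{d}{ds}\right|_{s=0}\mu_{\mathrm{AH}}\p{(1+s\chi)\mathfrak{g}_r}=0$ in $r$ gives zero. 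On $\ker D_{\mathfrak{g}}\scal$, comparison with \eqref{auxiliary_EL_linearised} gives $v=\psi$, so your nonlocal $w$ becomes the local $-\frac12\tr_{\mathfrak{g}}\mathfrak{h}$. Three facts then yield the exact, manifestly negative expression \eqref{second_var_trace_tracefree}:
\begin{itemize}
\item the identity $\g{\overset{\;\circ}{R}_{\mathfrak{g}}\mathfrak{h}}{\mathfrak{h}}=\abs{\mathfrak{h}}^2-\abs{\tr_{\mathfrak{g}}\mathfrak{h}}^2$;
\item the constraint $D_{\mathfrak{g}}\scal(\mathfrak{h})=0$, which evaluates $(d\theta,\delta_{\mathfrak{g}}\mathfrak{K})$;
\item $\int_M\omega\g{d\omega}{d\varphi}\,dV_\varphi=\frac n2\norm{\omega}^2$, which follows from $\Delta_\varphi\varphi=n$.
\end{itemize}
Spectral gaps then give $H^1$-coercivity.

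In Part~1 your computation is correct: it is exactly \eqref{aux_mu_Hessian} evaluated on $\chi\mathfrak{g}$. The final comparison, however, is not. Since $\norm{\chi\mathfrak{g}}^2_{H^1_{\mathfrak{g}}}=(n+1)\p{\norm{\chi}^2+\norm{d\chi}^2}$, your bound only gives the constant $\frac{n-1}{4n(n+1)}$, and no bookkeeping can produce $\frac{1}{4n}$. The paper's proof writes the Hessian term as $\nabla^2_{\mathfrak{g}}\p{f'-\frac{\chi}{2}}$. But $\varphi_s=\varphi-\frac12\ln(1+s\chi)$, so $\frac{d}{ds}(f_s-\varphi_s)=f'+\frac{\chi}{2}$. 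Correcting this turns the paper's $\frac{n+1}{2}\norm{d\chi}^2$ into your $\frac{n-1}{2}\norm{d\chi}^2$. Moreover, $\p{(\Delta_{\mathfrak{g}}+n)^{-1}\Delta_{\mathfrak{g}}\chi,\Delta_{\mathfrak{g}}\chi}\le\norm{d\chi}^2$. So your exact formula shows the second variation is at least $-\frac14\norm{d\chi}^2-\frac{n+1}{2}\norm{\chi}^2$. This violates the stated bound whenever $\norm{d\chi}^2>(n+1)(2n-1)\norm{\chi}^2$. Only positivity of the constant is used afterwards, in Part~2 and in Theorem~\ref{thm:local_PMT}, so you should state Part~1 with the constant $\frac{n-1}{4n(n+1)}$.
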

            
        \begin{proof}
            \textit{(1.)} Consider the curve of auxiliary metrics $\mathfrak{g}_s=(1+s\chi)\mathfrak{g}$. Recall that Proposition \ref{klaus_konj} says that the minimiser $f_s$ agrees with $\varphi_s$ at $s=0$. Thus, from Corollary \ref{first_var_crit}, we see that $\mathfrak{g}$ is conformally critical:   
            \begin{align*}
                \left.\frac{d}{ds}\right|_{s=0}\mu_{\mathrm{AH}}\p{(1+s\chi)\mathfrak{g}}={}&-\frac{1}{2 n}\int_M\g{\Ric_{\mathfrak{g}}+n\mathfrak{g}+\nabla^2_{\mathfrak{g}}\p{f-\varphi}}{\chi\mathfrak{g}}_{\mathfrak{g}}\; dV_{\mathfrak{g}}\\ ={}& -\frac{1}{2 n}\int_M\p{\scal_{\mathfrak{g}}+n(n+1)}\chi\; dV_{\mathfrak{g}}\\ ={}&0,
            \end{align*}
            where, as we usually do, view $dV_\mfg=dV_\varphi$ as a volume form on $M$.\\
                
            Using that $\mathfrak{g}$ is CSC and that $f_{0}=f_{\mfg}=\varphi$, it is a simple task to check that the second variation in a conformal direction springs from the first variation of the $L^2$-gradient:   
            \begin{align*}
                \left.\frac{d^2}{ds^2}\right|_{s=0}&\mu_{\mathrm{AH}}\p{(1+s\chi)\mathfrak{g}}\\={}&-\frac{1}{2 n}\int_M\g{\left.\frac{d}{ds}\right|_{s=0}\p{\Ric_{\mathfrak{g}_s}+n\mathfrak{g}_s+\nabla^2_{\mathfrak{g}_s}\p{f_s-\varphi_s}}}{\chi\mathfrak{g}}_{\mathfrak{g}}\; dV_{\mathfrak{g}}\\
                ={}&-\frac{1}{2 n}\int_M\g{\frac{1}{2}\p{\Delta_{\mathfrak{g}}\chi} \mathfrak{g} - \frac{n-1}{2}\nabla^2_{\mathfrak{g}}\chi+n\chi\mathfrak{g}+\nabla^2_{\mathfrak{g}}\p{f'-\frac{\chi}{2}}}{\chi\mathfrak{g}}_{\mathfrak{g}}\; dV_{\mathfrak{g}}\\
                ={}&-\frac{1}{2 n}\int_M\frac{n+1}{2}\chi\Delta_{\mathfrak{g}}\chi+n(n+1)\chi^2-\chi\Delta_{\mathfrak{g}}\p{f'-\frac{n\chi}{2}}\; dV_{\mathfrak{g}}.
            \end{align*}
                    
            The form of the final term of the integrand is designed to allow for the use of the linearised Euler--Lagrange equation \eqref{auxiliary_EL_linearised}:
                $$f'-\frac{n\chi}{2}=\frac{1}{2}\p{\Delta_{\mathfrak{g}}+n}^{-1}\delta_{\mathfrak{g}}\delta_{\mathfrak{g}}\p{\chi\mathfrak{g}}=-\frac{1}{2}\p{\Delta_{\mathfrak{g}}+n}^{-1}\Delta_{\mathfrak{g}}\chi.$$
                
            Since $\p{\Delta_{\mathfrak{g}}+n}^{-1}$ is a positive operator, we have:
            \begin{align*}
                \left.\frac{d^2}{ds^2}\right|_{s=0}&\mu_{\mathrm{AH}}\p{(1+s\chi)\mathfrak{g}}\\={}&-\frac{1}{2n}\int_M\frac{n+1}{2}\abs{\nabla\chi}^2_{\mathfrak{g}}+n(n+1)\chi^2+\frac{1}{2}\p{\Delta_{\mathfrak{g}}\chi}\p{\Delta_{\mathfrak{g}}+n}^{-1}\p{\Delta_{\mathfrak{g}}\chi}\; dV_{\mathfrak{g}}\\\leq{} &-\frac{1}{2 n}\int_M\frac{n+1}{2}\abs{\nabla\chi}^2_{\mathfrak{g}}+n(n+1)\chi^2\; dV_{\mathfrak{g}}\\={}&-\frac{1}{2 n}\int_M\frac{1}{2}\abs{\nabla_\mathfrak{g}\p{\chi\mathfrak{g}}}^2_{\mathfrak{g}}+n\abs{\chi\mathfrak{g}}^2_\mathfrak{g}\; dV_{\mathfrak{g}}.
            \end{align*}
                
            The inequality in first part of proposition is then obtained by using $n \geq 3 > \frac{1}{2}$.\\
                
            \textit{(2.)} Let $(g,\varphi)$ be a pair as in the statement of the proposition and consider a perturbation of the auxiliary metric  $\mathfrak{g}=e^{-2\varphi}\; d\tau^2+g$, of the form $\mathfrak{h}=-2\psi e^{-2\varphi}\; d\tau^2+\eta g$. It has already been proven that $(g,\varphi)$ is critical in this direction (see Remark \ref{DCSC_crit}). Before we can show \eqref{sec_var_nonpos}, we prove two claims.
                
            \begin{claim}
                The variation $\mathfrak{h}=\p{h,\psi}$ splits uniquely as $\mathfrak{h}=\mathfrak{y}+\chi \mathfrak{g}$, with $D_{\mathfrak{g}}\scal\p{\mathfrak{y}}=0$ and $\chi\in H^\ell_{\mfg}(M)$. Furthermore, $\mathfrak{y}$ integrates to a curve of CSC metrics. That is, we find a curve of metrics $\mathfrak{g}_s=e^{-2\varphi_s}\; d\tau^2+g_s$ with $\scal_{\mathfrak{g}_s}=-n(n+1)$ and $\left.\frac{d}{ds}\right|_{s=0}\mathfrak{g}_s=\mathfrak{y}$.
            \end{claim}

            \begin{proof}
                To prove the claim, note first that 
                \begin{equation*}
                    D_{\mathfrak{g}}\scal\p{\mathfrak{h}}=\p{\Delta_{\mathfrak{g}}+n}\tr_{\mathfrak{g}}\mathfrak{h}+\delta_{\mathfrak{g}}\delta_{\mathfrak{g}}\mathfrak{h},
                \end{equation*}
                where we used \eqref{auxiliary_Ric} to compute that 
                    $$\g{\Ric_\mathfrak{g}}{\mathfrak{h}}_{\mathfrak{g}}=2n\psi+\p{\scal_g-n}\eta=-n\p{n\eta-2\psi}=-n\tr_{\mathfrak{g}}\mathfrak{h}.$$
                    
                Now the claimed decomposition of $\mfh$ is equivalent to the equation             
                \begin{align*}
                    \p{\Delta_{\mathfrak{g}}+n}\tr_{\mathfrak{g}}\mathfrak{h}+\delta_{\mathfrak{g}}\delta_{\mathfrak{g}}\mathfrak{h}=n(\Delta_{\mathfrak{g}}+n+1)\chi,
                \end{align*}
                which is uniquely solvable for a function $\chi\in H^{\ell}_\varphi\p{M}\subset H^\ell_{\mathfrak{g}}\p{\mathfrak{M}}$, provided that $\mathfrak{h}\in H^\ell_{\mathfrak{g}}\p{S^2\mathfrak{M}}$. This yields the claimed decomposition.\\
                    
                Next, we observe that $D_{\mathfrak{g}}\scal$ is surjective on this class of perturbations. Therefore, the set
                    $$\mathcal{A}:=\left\{\mathfrak{g}=e^{-2\varphi}\; d\tau^2+g\, :\; \scal_{\mathfrak{g}}=-n(n+1)\right\}$$
                is a smooth manifold satisfying $T_{\mathfrak{g}}\mathcal{A}=\mathrm{ker}(D_{\mathfrak{g}}\scal)$. This establishes the second part of the claim.
                \end{proof}  
        
                Next, we claim the following:
            \begin{claim}
                \begin{align*}
                   \left.\frac{d^2}{drds}\right|_{r,s=0}\mu_{\mathrm{AH}}\p{\mathfrak{g}+s\chi\mathfrak{g}+r\mathfrak{y}}=0
                \end{align*}                            
                for every  $\mathfrak{y}\in T_{\mathfrak{g}}\mathcal{A}$ and $\chi\in H^{\ell}_\varphi\p{M}\subset H^\ell_{\mathfrak{g}}(\mathfrak{M})$.
            \end{claim}

            \begin{proof}
                By the previous claim, and because $\mathfrak{g}$ is a semi-critical metric, we can replace $\mathfrak{g}+s\chi\mathfrak{g}+r\mathfrak{y}$ by the $2$-parameter family $(1+s\chi)\mathfrak{g}_r$, where $\mathfrak{g}_r$ is a curve in $\mathcal{A}$ with $\left.\frac{d}{dr}\right|_{r=0}\mathfrak{g}_r=\mathfrak{y}$. By the first part of the proposition,               
                \begin{align*}
                    \left.\frac{d}{ds}\right|_{s=0}\mu_{\mathrm{AH}}\p{\mathfrak{g}_r+s\chi\mathfrak{g}_r}=0
                \end{align*}                    
                for all $r$, and differentiating this identity with respect to $r$ yields
                \begin{align*}
                    0=\left.\frac{d^2}{drds}\right|_{r,s=0}\mu_{\mathrm{AH}}\p{\mathfrak{g}_r+s\chi\mathfrak{g}_r}=    \left.\frac{d^2}{drds}\right|_{r,s=0}\mu_{\mathrm{AH}}\p{\mathfrak{g}+s\chi\mathfrak{g}+r\mathfrak{y}},
                \end{align*}                    
                as desired. 
            \end{proof}
                 
            Thus, the second variation is orthogonal with respect to the splitting $ H^{\ell}_{\varphi}\p{M}\cdot\mathfrak{g}\oplus T_{\mathfrak{g}}\mathcal{A}$. For $\mathfrak{h}=\chi\mathfrak{g}\in H^{\ell}_{\varphi}\p{M}\cdot\mathfrak{g}$, the desired estimate follows already from the first part of the proposition. It therefore suffices to establish the claimed estimate for tensors $\mathfrak{h}\in T_{\mathfrak{g}}\mathcal{A}$. That is, for those tensors $\mathfrak{h}$ which satisfy 
            \begin{equation}\label{aux_linearised_scalar=0}
                0=D_{\mathfrak{g}}\scal\p{\mathfrak{h}}=\p{\Delta_{\mathfrak{g}}+n}\tr_{\mathfrak{g}}\mathfrak{h}+\delta_{\mathfrak{g}}\delta_{\mathfrak{g}}\mathfrak{h}.
            \end{equation}
                
            When combined with the linearised Euler--Lagrange equation \eqref{auxiliary_EL_linearised}, scalar curvature preservation is revealed to be equivalent to $\psi=v$, as                
            \begin{align*}
                0 ={}& 2\p{\Delta_{\mathfrak{g}}+n}\p{v-\frac{1}{2}\tr_gh} - \delta_{\mathfrak{g}}\delta_{\mathfrak{g}}\mathfrak{h}+\g{\Ric_{\mfg}+n\mfg}{\mfh}\\
                ={}&2\p{\Delta_{\mathfrak{g}}+n}\p{v-\psi-\frac{1}{2}\tr_\mathfrak{g}\mathfrak{h}} - \delta_{\mathfrak{g}}\delta_{\mathfrak{g}}\mathfrak{h}\\
                ={}& 2\p{\Delta_{\mathfrak{g}}+n}\p{v-\psi},
            \end{align*}                
            and $\Lap_{\mathfrak{g}} + n$ has trivial kernel. Using this and \eqref{aux_trace}, we find that \eqref{aux_mu_Hessian} becomes
            \begin{align*}
                D^2_\mathfrak{g}\mu_{\mathrm{AH}}\p{\mathfrak{h}}=-\frac{1}{4 n}\p{\Delta_{E,\mathfrak{g}}\mathfrak{h}-\nabla^2_{\mathfrak{g}}\tr_{\mathfrak{g}}\mathfrak{h}-2\delta_{\mathfrak{g}}^*\delta_{\mathfrak{g}}\mathfrak{h},\mathfrak{h}}_{L^2_\mathfrak{g}}.
            \end{align*}
                    
            Recall that \eqref{aux_mu_Hessian} holds for the non-static pair $(g,\varphi)$ by Remark \ref{DCSC_second_var}. The explicit form of $\mathfrak{h}$ may be used to compute the curvature term in the definition of the Einstein operator:
            \begin{align*}
                \overset{\;\circ}{R}_{\mathfrak{g}}\mathfrak{h} &= \overset{\;\circ}{R}_g \left(\eta g\right) - 2\psi \left(\nabla^2 \varphi - d\varphi \otimes d\varphi\right) + e^{-2\varphi} \left<\nabla^2 \varphi - d\varphi \otimes d\varphi, \eta g\right> d\tau^2\\
                &= \eta \Ric_g - 2\psi \left(\nabla^2 \varphi - d\varphi \otimes d\varphi\right) - \eta e^{-2\varphi} \left(\Lap \varphi - \abs{d \varphi}^2\right) d\tau^2 \\
                &= \eta\Ric_{g}-2\psi\p{\nabla^2_g\varphi+d\varphi\otimes d\varphi} -n\eta e^{-2\varphi}d\tau^2,
            \end{align*}
            which means 
            \begin{align*}
                \g{\overset{\;\circ}{R}_{\mathfrak{g}}\mathfrak{h}}{\mathfrak{h}}_\mathfrak{g}=\eta^2\scal_g+2\psi \eta \p{\Delta_g\varphi+\abs{d\varphi}^2}+2n\psi\eta= \abs{\mathfrak{h}}^2_{\mathfrak{g}}-\abs{\tr_{\mathfrak{g}}\mathfrak{h}}^2_{\mathfrak{g}},
            \end{align*}
            where we used       
            \begin{align*}
                \abs{\mathfrak{h}}^2_{\mathfrak{g}} &= 4\psi^2 + n \eta^2,\\
                \abs{\tr_{\mathfrak{g}}\mathfrak{h}}^2_{\mathfrak{g}} &= 4\psi^2 - 4n\psi \eta + n^2 \eta^2.
            \end{align*}                
            Using Integration by parts and \eqref{aux_linearised_scalar=0} produces 
            \begin{align*}
                \p{\nabla^2_{\mathfrak{g}}\tr_{\mathfrak{g}}\mathfrak{h},\mathfrak{h}}_{L^2_{\mathfrak{g}}}=\p{\tr_\mathfrak{g}\mathfrak{h},\delta_{\mathfrak{g}}\delta_{\mathfrak{g}}\mathfrak{h}}_{L^2_\mathfrak{g}}=-\norm{\nabla_{\mathfrak{g}}\tr_\mathfrak{g}\mathfrak{h}}^2_{L^2_\mathfrak{g}}-n\norm{\tr_\mathfrak{g}\mathfrak{h}}^2_{L^2_\mathfrak{g}}.
            \end{align*}                
            In the end, the second variation can be written as
                $$D^2_{\mathfrak{g}}\mu_{\mathrm{AH}}\p{\mathfrak{h}}=-\frac{1}{4 n}\p{\norm{\nabla_{\mathfrak{g}}\mathfrak{h}}^2_{L^2_{\mathfrak{g}}}+\norm{\nabla_{\mathfrak{g}}\tr_\mathfrak{g}\mathfrak{h}}^2_{L^2_\mathfrak{g}}-2\norm{\delta_\mathfrak{g}\mathfrak{h}}^2_{L^2_\mathfrak{g}}+(n+2)\norm{\tr_\mathfrak{g}\mathfrak{h}}^2_{L^2_\mathfrak{g}}-2\norm{\mathfrak{h}}^2_{L^2_\mathfrak{g}}}.$$
                
            The final part of the proof is most easily achieved by again decomposing $\mfh$, this time by trace, $\mathfrak{h}=\theta\mathfrak{g}+\mathfrak{K}$, where
                $$\theta=\frac{1}{n+1}\tr_{\mathfrak{g}}\mathfrak{h}=\frac{n\eta-2\psi}{n+1}$$
            and
                $$\mathfrak{K}=\mathfrak{h}-\theta\mathfrak{g}=-n\omega e^{-2\psi}\; d\tau^2+\omega g, \qquad \qquad \omega=\frac{\eta-2\psi}{n+1}.$$
            It will be useful to keep the following identities in mind:   
            \begin{align*}
                \abs{\mathfrak{K}}^2 &= n\left(n+1\right) \omega^2\\
                \left(\nabla_{\mathfrak{g}} \mathfrak{K}\right)_{\tau i \tau} &= \left(\nabla_{\mathfrak{g}} \mathfrak{K}\right)_{\tau i \tau} = -\left(n+1\right)\omega e^{-2\varphi} d\varphi_i\\
                \left(\nabla_{\mathfrak{g}} \mathfrak{K}\right)_{i \tau \tau} &= -n e^{2\varphi} d\omega_i\\
                \left(\nabla_{\mathfrak{g}} \mathfrak{K}\right)_{ijk} &= -d\omega_i g_{jk}\\
                \abs{\nabla_{\mathfrak{g}} \mathfrak{K}}^2 &= 2\left(n+1\right)^2 \omega^2 \abs{d\varphi}^2 + n \left(n+1\right) \abs{d \omega}^2\\
                \left(\delta_{\mathfrak{g}} \mathfrak{K}\right)_i &= \left(n+1\right)\omega d\varphi_i - d\omega_i\\
                \abs{\nabla_{\mathfrak{g}} \mathfrak{K}}^2 - 2\abs{\delta_{\mathfrak{g}} \mathfrak{K}}^2 &= \left(n^2 + 2n - 2\right)\abs{d \omega}^2 + 4\left(n+1\right) \omega\left<d\omega,d\varphi\right>\\
                2\left(d\theta, \delta_{\mathfrak{g}} \mathfrak{K}\right)_{L^2_{\mathfrak{g}}} &= -2n\norm{d\theta}_{L^2_\mfg}^2 - 2n\left(n+1\right)\norm{\theta}_{L^2_\mfg}^2. 
            \end{align*}    
            We also note that, due to \eqref{aux_linearised_scalar=0}, we have
            \begin{align*}
                0 &= \Lap_{\mathfrak{g}} \tr_{\mathfrak{g}} \mathfrak{h} + n\tr_{\mathfrak{g}} \mathfrak{h} + \delta_{\mathfrak{g}} \delta_{\mathfrak{g}} \mathfrak{h}\\
                &= \left(n+1\right) \Lap_{\mathfrak{g}} \theta + n \left(n+1\right) \theta - \Lap_{\mathfrak{g}} \theta + \delta_{\mathfrak{g}} \delta_{\mathfrak{g}} \mathfrak{K}\\
                &= n\Lap_{\mathfrak{g}} \theta + n\left(n+1\right) \theta + \delta_{\mathfrak{g}} \delta_{\mathfrak{g}} \mathfrak{K}.
            \end{align*}
                
            Using the above identities and the decomposition of $\mathfrak{h}$ into full trace and trace--free parts, one may show that            
            \begin{align*}
                &\norm{\nabla_{\mathfrak{g}}\mathfrak{h}}^2_{L^2_{\mathfrak{g}}}-2\norm{\delta_{\mathfrak{g}}\mathfrak{h}}^2_{L^2_{\mathfrak{g}}}\\
                &= \norm{\nabla_{\mathfrak{g}} \left(\theta \mathfrak{g}\right) + \nabla_{\mathfrak{g}} \mathfrak{K}}^2_{L^2_{\mathfrak{g}}} - 2\norm{\delta_{\mathfrak{g}} \left(\theta \mathfrak{g}\right) + \delta_{\mathfrak{g}} \mathfrak{K}}^2_{L^2_{\mathfrak{g}}}\\
                &= \norm{\nabla_\mathfrak{g}\p{\theta\mathfrak{g}}}_{L^2_\mathfrak{g}}^2+\norm{\nabla_\mathfrak{g}\mathfrak{K}}_{L^2_\mathfrak{g}}^2 -2\norm{\delta_{\mathfrak{g}}\p{\theta\mathfrak{g}}}^2_{L^2_{\mathfrak{g}}}-2\norm{\delta_{\mathfrak{g}}\mathfrak{K}}^2_{L^2_{\mathfrak{g}}} + 2\left(\nabla_{\mathfrak{g}} \left(\theta \mathfrak{g}\right), \nabla_{\mathfrak{g}} \mathfrak{K}\right)_{L^2_{\mathfrak{g}}}\\
                &~~~~~~ - 4\left(\delta_{\mathfrak{g}}\left(\theta \mathfrak{g}\right), \delta_{\mathfrak{g}} \mathfrak{K}\right)_{L^2_{\mathfrak{g}}}\\
                &= \left(n+1\right)\norm{d\theta}_{L^2_\mathfrak{g}}^2+\norm{\nabla_\mathfrak{g}\mathfrak{K}}_{L^2_\mathfrak{g}}^2-2\norm{d\theta}^2_{L^2_{\mathfrak{g}}}-2\norm{\delta_{\mathfrak{g}}\mathfrak{K}}^2_{L^2_{\mathfrak{g}}}+2\p{d\theta,\delta_{\mathfrak{g}}\mathfrak{K}}_{L^2_{\mathfrak{g}}}\\
                &= \left(n-1\right)\norm{d\theta}^2_{L^2_{\mathfrak{g}}} + 2\left(d\theta, \delta_{\mathfrak{g}} \mathfrak{K}\right)_{L^2_{\mathfrak{g}}} + \left(n^2 + n -2\right)\norm{d \omega}^2_{L^2_{\mathfrak{g}}} + 4\left(n+1\right)\int_M \omega\left<d\omega, d\varphi\right> dV_{\mathfrak{g}}\\
                &= -(n+1)\norm{d\theta}^2_{L^2_{\mathfrak{g}}}-2n(n+1)\norm{\theta}_{L^2_{\mathfrak{g}}}^2+(n-1)(n+2)\norm{d\omega}^2_{L^2_{\mathfrak{g}}}+2n(n+1)\norm{\omega}^2_{L^2_{\mathfrak{g}}}.
            \end{align*}                
            Note that the final line used the identity for $\left(d\theta, \delta_{\mathfrak{g}} \mathfrak{K}\right)_{L^2_{\mathfrak{g}}}$ which we stated earlier, as well as   
            \begin{equation*}
                \int_M \omega \left<d\omega, d\varphi\right> dV_{\mathfrak{g}} = \frac{1}{2}\int_M \left<d\omega^2, d\varphi\right> dV_{\mathfrak{g}} = \frac{1}{2}\int_M \omega^2 \Lap_\varphi \varphi \; dV_{\mathfrak{g}} = \frac{n}{2} \norm{\omega}^2_{L^2_{\mathfrak{g}}}.
            \end{equation*}                
            The final term above will be cancelled by a contribution from                 $$-2\norm{\mathfrak{h}}^2_{L^2_{\mathfrak{g}}}=-2(n+1)\norm{\theta}^2_{L^2_{\mathfrak{g}}}-2n(n+1)\norm{\omega}^2_{L^2_{\mathfrak{g}}}.$$    
           Finally, using 
            \begin{align*}
                \norm{\nabla_{\mathfrak{g}} \tr_{\mathfrak{g}}\mathfrak{h}}^2_{L^2_{\mathfrak{g}}} &= \left(n+1\right)^2 \norm{d\theta}^2_{L^2_{\mathfrak{g}}},\\
                \norm{\tr_{\mathfrak{g}} \mathfrak{h}}^2_{L^2_{\mathfrak{g}}} &= \left(n+1\right)^2 \norm{\theta}^2_{L^2_{\mathfrak{g}}},
            \end{align*}
            the full second variation can be written as
            \begin{equation}\label{second_var_trace_tracefree}
                D^2_{\mathfrak{g}}\mu_{\mathrm{AH}}\p{\mathfrak{h}}=-\frac{1}{4 n}\p{n(n+1)\norm{d\theta}^2_{L^2_{\mathfrak{g}}}+n(n+1)^2\norm{\theta}^2_{L^2_{\mathfrak{g}}}+(n-1)(n+2)\norm{d\omega}^2_{L^2_{\mathfrak{g}}}}.%\\ \leq{} & -\frac{1}{4(n+1)}\norm{\tr_\mathfrak{g}\mathfrak{h}}^2_{H^1_{\mathfrak{g}}}
            \end{equation}             
            Using the expression for $\abs{\nabla_{\mathfrak{g}} \mathfrak{K}}$ from above, we have   
            \begin{align*}
                \norm{\nabla_{\mathfrak{g}} \mathfrak{K}}^2_{L^2_{\mathfrak{g}}} \leq 2\left(n+1\right)^2 \sup_M \abs{d \varphi}\norm{\omega}^2_{L^2_\mathfrak{g}} + n\left(n+1\right)\norm{d\omega}^2_{L^2_{\mathfrak{g}}}.          
            \end{align*}    
             \cite[Proposition F]{Lee} provides a spectral gap $\Lap_{\mathfrak{g}} \geq \Lambda > 0$, where $\Lambda=\frac{\p{n-1}^2}{4}$. This, in turn, gives the Poincar\'e inequality $\norm{\omega}^2_{L^2_\mfg} \leq \frac{1}{\Lambda}\norm{d\omega}^2_{L^2_\mfg}$. Combining this with the bound on the $L^2$-norm of $\nabla\mathfrak{K}$ from above, we have 
            \begin{align*}
                \norm{\nabla_{\mathfrak{g}} \mathfrak{K}}^2_{L^2_{\mathfrak{g}}} \leq \left(\frac{2\left(n+1\right)^2 \sup_M \abs{d \varphi} + \Lambda n\left(n+1\right)}{\Lambda}\right) \norm{d\omega}^2_{L^2_{\mathfrak{g}}}.
            \end{align*}    
            This, when combined with \eqref{second_var_trace_tracefree}, yields   
            \begin{align*}
                &D^2_\mathfrak{g}\mu_{\mathrm{AH}}\p{\mathfrak{h}}\\
                &\leq -\frac{1}{4 n}\left(n\left(n+1\right)\norm{\theta}^2_{H^1_{\mathfrak{g}}} + \frac{\left(n-1\right)\left(n+2\right)\Lambda}{2\left(n+1\right)^2 \sup_M \abs{d \varphi} + \Lambda n\left(n+1\right)}\norm{\nabla_{\mathfrak{g}}\mathfrak{K}}^2_{L^2_{\mathfrak{g}}}\right)\\
                &\leq -\frac{1}{4 n}\left(n\left(n+1\right)\norm{\theta}^2_{H^1_{\mathfrak{g}}} + \frac{\left(n-1\right)\left(n+2\right)\Lambda}{4\left(n+1\right)^2 \sup_M \abs{d \varphi} + 2\Lambda n\left(n+1\right)}\left(\norm{\nabla_{\mathfrak{g}}\mathfrak{K}}^2_{L^2_{\mathfrak{g}}} + \wt{\Lambda} \norm{\mathfrak{K}}^2_{L^2_{\mathfrak{g}}}\right)\right)\\
                &\leq -C_{g,\varphi} \norm{\mathfrak{h}}^2_{H^1_{\mathfrak{g}}},
            \end{align*}
            where the penultimate line follows from applying a spectral gap argument to the trace--free $\mathfrak{K}$. Specifically, using \cite[Proposition E]{Lee} to guarantee that $\wt{\Lambda} \norm{\mathfrak{K}}^2_{L^2_{\mathfrak{g}}} \leq \norm{\nabla_{\mathfrak{g}} \mathfrak{K}}^2_{L^2_{\mathfrak{g}}}$, for $\wt\Lambda=\frac{\p{n-1}^2}{4}+2$.
        \end{proof}

\section{A Local Positive Mass Theorem}\label{sec_local_pos_mass}
    In this section, we prove a local positive mass theorem which, in essence, shows that positivity of the weighted mass
    \begin{equation*}
        m_{\wh{g}}\p{h,\varphi}=-\lim_{R\to \infty}\int_{\partial B_R}\g{\delta_{\wh{g}}h+d\tr_{\wh{g}}h}{\wh{\nu}}_{\wh{g}}+\tr_{\wh{g}}h\; \wh{\nu}\p{\varphi}-h\p{d\varphi,\wh{\nu}}\; d\wh{A}_\varphi
    \end{equation*}
    for certain $h\in C^{2,\alpha}_\beta(M)\cap H^\ell_{\wh\varphi}(M)$ and $\varphi\in \mathcal{R}^\ell_{\wh{\varphi}}\p{M,\wh\varphi}$, is tantamount to the static pair $\p{\wh{g},\wh{\varphi}}$ being a local maximum of the entropy $\mu_{\mathrm{AH},\wh{g}}$. This, in turn, will be proven to be equivalent to dynamical stability under the Ricci-harmonic flow in Section \ref{stability_sec}.\\
    
    A subtle difference between the current section and the next, is that we require all metrics in this section to have a certain pointwise, in addition to being $H^\ell_{\wh{\varphi}}$-close to a static metric $\wh{g}$. To be specific, for the rest of Section \ref{sec_local_pos_mass} and unless stated otherwise, we assume $g\in \mathcal{R}^{\ell}_{\wh\varphi}\p{M,\wh{g}}\cap \mathcal{R}^{2,\alpha}_\beta\p{M,\wh{g}}$, where $\alpha\in [0,1)$ and $\beta\in \p{\frac{n}{2},n}$. In Section \ref{stability_sec}, we drop the requirement that $g\in \mathcal{R}^{2,\alpha}_{\beta}\p{M,\wh{g}}$. 
    
        \begin{lemma}\label{mass_independence}
            Consider a static background pair $\p{\wh{g},\wh{\varphi}}$ and fix a symmetric $2$-tensor $h \in H^1_{\wh{\varphi}}\p{S^2M}$. For any other pair $\p{g,\varphi}$ we have 
            \begin{equation*}
                m_{\wh{g}}\p{h,\wh{\varphi}} = m_{g}\p{h,\varphi},
            \end{equation*}     
            provided $\wh{\varphi} - \varphi \in H^1_{\wh{\varphi}}\p{M}$, $\abs{\wh{g}-g} + \abs{\scal_g + n\p{n-1}} = \mathcal{O}\p{e^{-\beta r}}$ for some $\beta > \frac{n}{2}$. In particular, it is meaningful to discuss the mass relative to non--static metric pairs $\p{g,\varphi}\in \mathcal{R}^{2,\alpha}_\beta\p{M,\wh{g}}\times \mathcal{R}^1_{\wh\varphi}\p{M,\wh\varphi}$.
        \end{lemma}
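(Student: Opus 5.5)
We compare the two boundary integrals directly and show that their difference tends to zero as $R\to\infty$. Write $m_{g,R}(h,\varphi)$ for the truncated mass computed with $g$ (its divergence, trace, unit normal $\nu$ and area form $dA_\varphi=e^{-\varphi}dA_g$) and $m_{\wh{g},R}(h,\wh\varphi)$ for the analogous expression with hatted quantities. The plan is to split
\begin{equation*}
m_{g,R}(h,\varphi)-m_{\wh{g},R}(h,\wh\varphi)=\bigl(m_{g,R}(h,\varphi)-m_{\wh{g},R}(h,\varphi)\bigr)+\bigl(m_{\wh{g},R}(h,\varphi)-m_{\wh{g},R}(h,\wh\varphi)\bigr),
\end{equation*}
so that we first change the metric with the potential fixed, then change the potential with the metric fixed.

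\textbf{Potential term.} We have
\begin{equation*}
m_{\wh{g},R}(h,\varphi)-m_{\wh{g},R}(h,\wh\varphi)=-\int_{\partial B_R}\Bigl[\langle\delta h+d\tr h,\wh\nu\rangle\bigl(e^{-\varphi}-e^{-\wh\varphi}\bigr)+\tr h\,\bigl(\wh\nu(\varphi)e^{-\varphi}-\wh\nu(\wh\varphi)e^{-\wh\varphi}\bigr)-\bigl(h(d\varphi,\wh\nu)e^{-\varphi}-h(d\wh\varphi,\wh\nu)e^{-\wh\varphi}\bigr)\Bigr]d\wh A.
\end{equation*}
Set $u=\varphi-\wh\varphi$. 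Each bracket is bilinear in $(h,u)$ up to higher order, with one derivative distributed among them. We turn each surface integral into a bulk integral via the divergence theorem on $M\setminus B_R$: $\int_{\partial B_R}X(\nu)\,dA=-\int_{M\setminus B_R}\mathrm{div}X$. The integrands then become products of $h,\nabla h$ and $u,\nabla u$, weighted by $e^{-\wh\varphi}$ (using $|d\wh\varphi|\to1$ and $|\nabla^2\wh\varphi|$ bounded). By Cauchy--Schwarz they are bounded by $\|h\|_{H^1_{\wh\varphi}(M\setminus B_R)}\|u\|_{H^1_{\wh\varphi}(M\setminus B_R)}$.

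The main obstacle sits in this step. The term $\langle \delta h+d\tr h,\wh\nu\rangle u$ produces a second derivative of $h$ after taking the divergence. To avoid it, we integrate by parts on the sphere and in the radial direction, or equivalently choose the vector field $X$ so that $\mathrm{div}X$ involves only $\nabla h\cdot\nabla u$ and $h\cdot\nabla^2 u$. We then move the second derivative onto $u$, where the $H^\ell$ regularity implicit in the setting can absorb it. If this is awkward, the fallback is a limiting argument: along a sequence $R_i\to\infty$, the surface integrals of $|h||\nabla u|$, $|\nabla h||u|$ and $|h||u|$ against $dA_{\wh\varphi}$ tend to zero, because their integrals over $M$ (integrated in $R$ via the coarea formula) are finite by Cauchy--Schwarz in $H^1_{\wh\varphi}$. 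Either way, the tail vanishes as $R\to\infty$.

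\textbf{Metric term.} With $k=g-\wh g=\mathcal O(e^{-\beta r})$, the differences $\delta_g-\delta_{\wh g}$, $\tr_g-\tr_{\wh g}$, $\nu-\wh\nu$ and $dA_g-d\wh A$ are all $\mathcal O(|k|+|\nabla k|)$ multiplied by $h$ or $\nabla h$. So the integrand is bounded by $C(|h|+|\nabla h|)e^{-\beta r}e^{-\varphi}$. Since $e^{-\varphi}\sim e^{r}$ and the area grows like $e^{(n-1)r}$, we apply the same trick: pass to the bulk $\int_{M\setminus B_R}(|h|+|\nabla h|+|\nabla^2 h|)e^{-\beta r}\,dV_{\wh\varphi}$, or use the sequence argument. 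Cauchy--Schwarz gives a bound by $\|h\|_{H^1_{\wh\varphi}}\|e^{-\beta r}\|_{L^2_{\wh\varphi}(M\setminus B_R)}$, and $e^{-\beta r}\in L^2_{\wh\varphi}$ exactly because $2\beta>n$ (by Remark \ref{distance_function}, with weight $e^{r}$). The assumption on $\scal_g+n(n-1)$ enters only through Lemma \ref{fefferman_graham}: it guarantees that $\varphi$ is an approximate potential for $g$, so $u$ has the required decay.

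Combining the two estimates, the difference tends to $0$, so the limits agree, and the final assertion follows.
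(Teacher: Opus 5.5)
Your proposal is correct once you commit to the coarea/subsequence argument. Its skeleton is the paper's own: the paper rewrites every $g$- and $\varphi$-quantity as its $\widehat g$-, $\widehat\varphi$-counterpart plus error terms, such as $(g^{-1}-\widehat g^{-1})\ast h$ and the Christoffel difference acting on $h$. It then simply asserts that the boundary integrals of these errors vanish, by the decay of $g-\widehat g$ and $h\in H^1_{\widehat\varphi}$.

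The difference is that you make this last step precise. You bound each error integrand by a product of one factor from $\{h,\nabla h\}$ and one from $\{u,\nabla u,e^{-\beta r}\}$, weighted by $e^{-\widehat\varphi}$. You then integrate in $R$ via the coarea formula and use Cauchy--Schwarz together with $e^{-\beta r}\in L^2_{\widehat\varphi}$, which holds exactly when $2\beta>n$. This really uses only the $H^1$ hypotheses.

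The paper's version, vanishing as $R\to\infty$ for every $R$, needs some pointwise or trace control of $\nabla h$ on the spheres. That control is available in its applications, where $h\in C^{2,\alpha}_\beta\cap H^\ell_{\widehat\varphi}$. The price of your version is that vanishing holds only along a sequence $R_i\to\infty$. Take that sequence common to the metric and potential pieces, by summing the two integrable functions of $R$. You then prove equality of the masses when both limits exist, which is how the lemma is used, but not that existence of one limit forces existence of the other.

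Discard the divergence-theorem route entirely. It produces $\nabla^2 h$ or $\nabla^2 u$, and neither is controlled by the stated hypotheses. Your metric-term bulk integrand in fact lists $\lvert\nabla^2 h\rvert$ and then claims a bound by $\lVert h\rVert_{H^1_{\widehat\varphi}}$, which is inconsistent. Invoking ``$H^\ell$ regularity implicit in the setting'' goes beyond the lemma. Converting to a bulk integral on $M\setminus B_R$ also presupposes that the flux at infinity vanishes, which is the kind of statement you are trying to prove.

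Two smaller points. The estimate $\lvert e^{-u}-1\rvert\lesssim\lvert u\rvert$ needs $u$ bounded, which you correctly obtain from Lemma~\ref{fefferman_graham}. The metric term also uses $\nabla(g-\widehat g)=O(e^{-\beta r})$, which holds for $g\in\mathcal{R}^{2,\alpha}_\beta$ but is stronger than the $C^0$ hypothesis as literally stated. The paper's sketch relies on the same thing.
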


        \begin{proof}
            The result follows from analysing the error terms that result from changing from quantities defined with respect to $\wh{g}$ and/or $\wh{\varphi}$ to those defined with respect to $g$ and/or $\varphi$. For instance,
            \begin{equation*}
                \tr_gh = \tr_{\wh{g}}h + \tr_g h - \tr_{\wh{g}}h = \tr_{\wh{g}}h + \left(g^{-1} - \wh{g}^{-1}\right) \ast h. 
            \end{equation*}

            Using our decay assumption on $\wh{g} - g$, and that $h \in H^1_{\wh\varphi}\p{S^2M}$, implies the boundary integrals involving the error term $\left(g^{-1} - \wh{g}^{-1}\right) \ast h$ vanish as $R \rightarrow \infty$. The rest of the proof follows along similar lines, after using the formula
            \begin{equation}\label{grad_swap}
                    \nabla_g T = \nabla_{\wh{g}} T + \wh{g}^{-1} \ast \nabla_g h \ast T
            \end{equation}
            to deal with the divergence term.
        \end{proof}

    \subsection{A Conformal Positive Mass Result}
        An integral part of the local positive mass theorem, Theorem \ref{thm:local_PMT}, is the following \textit{conformal} positive mass proposition.
        
        \begin{proposition}\label{conformal_PMT}
            Let $g\in \mathcal{R}^{2,\alpha}_\beta\p{M,\wh{g}}$ be a metric of constant scalar curvature, $\scal_{g}=-n(n-1)$, and let $\varphi$ be an \textbf{exact potential} -- that is, an approximate potential satisfying the defining equation exactly;
            \begin{equation}\label{exact_equation}
                \Delta_{g}\varphi+\abs{d\varphi}_g^2=n.
            \end{equation}
    
            Furthermore, let $\wt{g}\in \mathcal{R}^{2,\alpha}_\beta\p{M,\wh{g}}$ be a metric conformal to $g$ such that $\scal_{\wt{g}}\geq -n(n-1)$. Then,
            \begin{equation*}
                m_{g}\p{\wt{g}-g,\varphi}\geq 0,    
            \end{equation*}
            and equality holds if and only if $\wt{g}=g$.
        \end{proposition}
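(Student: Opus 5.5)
Write $\wt{g}=u^{\frac{4}{n-2}}g$ with $u>0$ and $u-1\in C^{2,\alpha}_\beta(M)$; the decay follows from $\wt{g}-g\in C^{2,\alpha}_\beta$. The plan is to express the mass $m_g(\wt g-g,\varphi)$ as a weighted integral of $u$ against the operator whose adjoint kills the exact potential $V=e^{-\varphi}$, and then use the scalar curvature inequality as a sign condition. Two ingredients drive this. First, since $\scal_g=-n(n-1)$ and $\varphi$ satisfies \eqref{exact_equation}, the function $V$ solves the trace of the static equation, $\Delta_g V=-nV$, although not the full static system. Second, a conformal perturbation $h=\eta g$ only sees the trace part of $(D_g\scal)^*V$. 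Concretely, $\langle (D_g\scal)^*V, \eta g\rangle=\eta\big((n-1)\Delta V-\scal_g V\big)=\eta\big(-n(n-1)V+n(n-1)V\big)=0$. So $V$ behaves as a static potential for conformal directions, and the proof of Proposition \ref{EHfinite} runs without the $\langle Q\varphi,h\rangle$ error term.

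The steps in order are as follows. (i) Linearise exactly rather than only to first order. The conformal transformation law
\[
\scal_{\wt g}\,u^{\frac{n+2}{n-2}}=\tfrac{4(n-1)}{n-2}\Delta_g u+\scal_g u
\]
(with the paper's sign convention $\Delta=\nabla^*\nabla$) gives
\[
\tfrac{4(n-1)}{n-2}\Delta_g u-n(n-1)u=\scal_{\wt g}u^{\frac{n+2}{n-2}}\ge -n(n-1)u^{\frac{n+2}{n-2}}.
\]
(ii) Multiply by $V$ and integrate over $B_R$. Green's formula together with $\Delta V=-nV$ yields
\[
\tfrac{4(n-1)}{n-2}\int_{\partial B_R}\big(u\,\nu(V)-V\nu(u)\big)\,dA
\ge n(n-1)\int_{B_R}V\big(u-u^{\frac{n+2}{n-2}}\big)+\tfrac{4(n-1)n}{n-2}\int_{B_R}Vu,
\]
after rearranging the $-nVu$ terms. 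Better is to first set $u=1+w$ and write everything in the form $\int V\,(\text{quadratic in }w)$ plus a boundary term. (iii) Identify the boundary term with a positive multiple of $m_g(\wt g-g,\varphi)$. For $h=(u^{4/(n-2)}-1)g\approx\frac{4}{n-2}w\,g$, the integrand of \eqref{def:mass} reduces to a combination of $e^{-\varphi}\nu(w)$ and $w\,e^{-\varphi}\nu(\varphi)$, up to terms quadratic in $w$ that vanish in the limit because $\beta>\frac n2$ and $e^{-\varphi}\sim e^{r}$. This should match $c_n\big(V\nu(w)-w\nu(V)\big)$ with $c_n<0$, so that the mass equals $-c_n\lim\int_{\partial B_R}(\dots)$.

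The main obstacle is step (iv): after the linear terms cancel (they must, because $V$ is in the kernel for conformal directions), showing that the remaining bulk integral is a nonnegative quantity vanishing only when $w=0$. I expect the cleanest route is to avoid the crude nonlinear expansion and apply the divergence identity to the quantity $V\,u^{-1}$-weighted, that is, test against $Vu^{-1}$ or use $\log u$, to produce a positive gradient term of the form $\int V|du|^2u^{-2}$. Failing that, I would use the coercivity from Proposition \ref{stability_op_prop}(1) and Lemma \ref{alttospecgap}. The idea there is that Remark \ref{DCSC_crit} and Proposition \ref{klaus_konj} show that $\mu_{\mathrm{AH}}$ restricted to conformal directions at the semi-critical pair $(g,\varphi)$ has the strictly negative second variation \eqref{sec_var_nonpos}. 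Then $\mu_{\mathrm{AH}}(\wt g,\varphi)\le\mu_{\mathrm{AH}}(g,\varphi)$, with equality only when $\wt g=g$. Expressing the entropy difference via \eqref{eq:decomp_W} with the test function $f=\varphi$ gives $-\frac1{2n}m$ plus $\frac1{2n}\int(\scal_{\wt g}+n(n-1))$, where the latter is $\ge 0$. This chains to $m\ge0$, but it only works locally in the conformal factor, so for the global statement the direct identity of step (ii) is the intended path. For the equality case, equality forces $dw=0$ and the inequality $\scal_{\wt g}\ge -n(n-1)$ to be an equality. Then $w$ is constant and decays, so $w=0$ and $\wt g=g$.
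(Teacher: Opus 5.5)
There is a genuine gap at step (iv). It is not a technicality: the bulk integral you get by testing against $V=e^{-\varphi}$ has no sign. Carry out your steps (ii)--(iii) exactly, with $\wt g=u^{\frac{4}{n-2}}g$, $\Delta_gV=-nV$ and $\scal_g=-n(n-1)$. This gives
\[
m_g(\wt g-g,\varphi)=\int_M V\Big(u^{\frac{n+2}{n-2}}\big(\scal_{\wt g}+n(n-1)\big)+n(n-1)F_0(u)\Big)\,dV_g,\qquad F_0(x)=x-x^{\frac{n+2}{n-2}}+\tfrac{4}{n-2}(x-1).
\]
The first term is nonnegative. However, $F_0(1)=F_0'(1)=0$ and $F_0''<0$, so $F_0\le 0$: the quadratic remainder left after the linear terms cancel has the wrong sign.

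Your suggested repairs do not fix this. Passing to $\log u$ (equivalently, testing against $Vu^{-1}$) does produce a positive term $\int V\abs{d\log u}^2$. But at leading order it competes with $-\frac{2n}{n-2}\int V(\log u)^2$, so you would need a weighted Poincar\'e inequality with constant $\frac{2n}{n-2}$. Nothing in the paper gives this for a general static background. Even on hyperbolic space with $n=3$, the bottom of the spectrum of $\Delta_\varphi$ is $\frac{n^2}{4}=\frac94<6$.

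The entropy fallback fails for a reason beyond locality. The inequalities $\mu_{\mathrm{AH}}(\wt g,\varphi)\le\mathcal{W}_{\mathrm{AH},\wh g}(\wt g,\varphi,\varphi)$ and $\mu_{\mathrm{AH}}(\wt g,\varphi)\le\mu_{\mathrm{AH}}(g,\varphi)$ are both upper bounds for $\mu_{\mathrm{AH}}(\wt g,\varphi)$. Since $\varphi$ is not the minimiser for $\wt g$, they cannot be combined into a lower bound for $m$. Your equality argument (``$dw=0$'') also rests on a gradient term that the correct identity does not contain.

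The paper closes the gap with two ideas you are missing.
\begin{itemize}
\item First, the maximum principle applied to the conformal law, using $\scal_{\wt g}\ge\scal_g$ and $u\to1$ at infinity, gives $u\ge1$.
\item Second, $V$ is replaced by a function $W\ge0$ solving $\Delta_gW=-nu^pW$ for a large exponent $p$. It is obtained by inverting $\Delta_g+nu^p$ in $(\Delta_g+nu^p)(W-V)=n(1-u^p)V$. This gives $W-V\in C^{2,\alpha}_{\beta-1}(M)$, which leaves the boundary integral defining the mass unchanged because $\beta>\frac n2$.
\end{itemize}
Repeating your Green's formula computation with $W$ in place of $V$ replaces $F_0$ by
\[
F(x)=x-x^{\frac{n+2}{n-2}}+\tfrac{4}{n-2}(x-1)x^p .
\]
This still has $F(1)=F'(1)=0$, but it is strictly convex on $[1,\infty)$ once $p>\max\{\frac{4}{n-2},\frac{n+2}{2(n-2)}\}$. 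With $u\ge1$ and $W\ge0$, every term in the bulk integral is then nonnegative. Equality forces $\scal_{\wt g}=-n(n-1)$ and $F(u)=0$, i.e.\ $u\equiv1$.
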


        \begin{remark} \label{perturbation_theory}
            Note that the assumptions on $g$ and $\varphi$ imply that the auxiliary metric $e^{-2\varphi}\;d\tau^2+g$ has constant scalar curvature $-n(n+1)$. Note also that for a metric $g\in \mathcal{R}^{2,\alpha}_\beta\p{M, \wh{g}}$, we can always find an exact potential due to perturbation theory. That is, one linearises \eqref{exact_equation} at $\p{g,\varphi}$ and applies the implicit function theorem.
        \end{remark}

        \begin{proof}[Proof of Proposition \ref{conformal_PMT}]
            We parametrise the conformal factor such that $\wt{g}=\phi^{\frac{4}{n-2}}g$, where $\phi=1+\mathcal{O}(e^{-\beta r})$. The scalar curvatures of the two metrics are related by
            \begin{equation}\label{eq_conformal_law}
                \frac{4(n-1)}{n-2}\Delta_{g}\phi=\scal_{\wt{g}}\phi^{\frac{n+2}{n-2}}-\scal_{g}\phi.
            \end{equation}
    
            Since $\scal_{\wt{g}}\geq \scal_{g}=-n(n-1)$, a maximum principle argument shows that $\phi\geq 1$ (see the proof of \cite[Proposition 4.5]{DKM}). This is crucial later in the proof.\\
    
            Inserting $h=\wt{g}-g=(\phi^{\frac{4}{n-2}}-1)g$ in the definition of the weighted ADM mass, and substituting $V:=e^{-\varphi}$, yields
            \begin{equation*}
                m_{g}\p{h,\varphi}=(n-1)\lim_{R\to \infty}\int_{\partial B_R}\left((\phi^{\frac{4}{n-2}}-1)\wh{\nu}(V)-V\wh{\nu}(\phi^{\frac{4}{n-2}})\right)dA_{g}.
            \end{equation*}

            Now consider the equation            
            \begin{equation}\label{eq_potential_eq_2}
                \Delta_{g}W=-n\phi^pW
            \end{equation}
            where $p$ is a large exponent which will be chosen later. We can find a solution by considering the equivalent equation
            \begin{equation}\label{p_static_eqn}
                (\Delta_{g}+n\phi^p)(W-V)=n(1-\phi^p)V.
            \end{equation}

            The operator on the left hand side is strictly positive and therefore has trivial $L^2$-kernel. Because $\phi\to 1$ at infinity, $\Delta_{g}+n\phi^p: C^{2,\alpha}_\beta\p{M} \to C^{0,\alpha}_\beta \p{M}$ has the same indicial radius as $\Delta_{g}+n$, and is therefore an isomorphism for all $\beta \in(-1,n)$ by Proposition \ref{isomorph_prop}. Since the right hand side of \eqref{p_static_eqn} is contained in $C^{0,\alpha}_{\beta-1}\p{M}$, we can solve this equation and get  $W-V\in C^{2,\alpha}_{\beta-1}\p{M}$. In particular, $W$ is positive in a neighbourhood of infinity because $V$ is. Moreover, a maximum principle argument applied to the equation shows that $W\geq 0$ everywhere.\\
                
            The decay is strong enough, that the mass remains unchanged under the substitution of $V$ by $W$;
            \begin{equation*}
                m_{g}\p{h,\varphi}=(n-1)\lim_{R\to \infty}\int_{\partial B_R}\left((\phi^{\frac{4}{n-2}}-1)\wh{\nu}(W)-W\wh{\nu}(\phi^{\frac{4}{n-2}})\right)dA_{g},
            \end{equation*}
            
            By the decay assumptions of $\phi$ at infinity, we have
            \begin{align*}
                -(n-1)\lim_{R\to \infty}\int_{\partial B_R}W\wh{\nu}(\phi^{\frac{4}{n-2}})\; dA_{g}
                &=-4\frac{n-1}{n-2}\lim_{R\to \infty}\int_{\partial B_R}W\phi^{\frac{6-n}{n-2}}\wh{\nu}(\phi)\; dA_{g}\\
                &=-4\frac{n-1}{n-2}\lim_{R\to \infty}\int_{\partial B_R}W\wh{\nu}(\phi)\; dA_{g}.
            \end{align*}
            By Green's formula, this can be further modified to       
            \begin{align*}
                -4\frac{n-1}{n-2}\lim_{R\to \infty}\int_{\partial B_R}W\wh{\nu}(\phi)\; dA_{g}&=
                -4\frac{n-1}{n-2}\lim_{R\to \infty}\int_{\partial B_R}(\phi-1)\wh{\nu}(W)\; dA_{g}\\
                &+4\frac{n-1}{n-2}\lim_{R\to \infty}\int_{ B_R}(W\Delta_{g}\phi -(\phi-1)\Delta_{g}W)\; dV_{g}.
            \end{align*}
            Now we want to substitute this into the above formula for $m_{g}\p{h,\varphi}$. For this purpose, we note that Taylor expansion implies
            \begin{equation*}
                (n-1)(\phi^{\frac{4}{n-2}}-1)-4\frac{n-1}{n-2}(\phi-1)=\mathcal{O}\p{(\phi-1)^2}=\mathcal{O}\p{e^{-2\beta r}}.
            \end{equation*}
            Therefore        
            \begin{equation*}
                (n-1)\lim_{R\to \infty}\int_{\partial B_R}(\phi^{\frac{4}{n-2}}-1)\wh{\nu}(W)\; dA_{g}=4\frac{n-1}{n-2}\lim_{R\to \infty}\int_{\partial B_R}(\phi-1)\wh{\nu}(W)\; dA_{g}.
            \end{equation*}
            Putting everything together, and using \eqref{eq_conformal_law} and \eqref{eq_potential_eq_2}, we get
            \begin{align*}
                m_{g}\p{h,\varphi} &=  4\frac{n-1}{n-2}\int_{M}(W\Delta_{g}\phi -(\phi-1)\Delta_{g}W)dV_{g} \\
                &=\int_M W\left(\phi^{\frac{n+2}{n-2}}\scal_{\wt{g}}-\phi\scal_{g}+4n\frac{n-1}{n-2}(\phi-1)\phi^p\right)dV_{g}\\
                &=\int_M W\left(\phi^{\frac{n+2}{n-2}}(\scal_{\wt{g}}+n(n-1))+n(n-1)F(\phi)\right)dV_{g},
            \end{align*}
            where we used that $\scal_{g}=-n(n-1)$, and define $F$ by 
                $$F(x)=x-x^{\frac{n+2}{n-2}}+\frac{4}{n-2}(x-1)x^p.$$                
            One computes $F(1)=0$ and $F'(0)=1$ for any choice of $p > 1$. Furthermore, we have
            \begin{align*}
                F''(x)&= \frac{4}{n-2}\left[(p+1)px^{p-1}-p(p-1)x^{p-2}-\frac{n+2}{n-2}x^{\frac{6-n}{n-2}}\right]\\
                &\geq \frac{4}{n-2}\left[2px^{p-1}-\frac{n+2}{n-2}x^{\frac{6-n}{n-2}}\right]
            \end{align*}          
            for $x\geq 1$. Therefore, if $p$ is chosen so large that 
            \begin{equation*}
                p> \max\left\{\frac{4}{n-2},\frac{1}{2}\frac{n+2}{n-2}\right\},
            \end{equation*}
            then $F''(x)>0$ for all $x\geq 1$, so that $F(x)> 0$ for all $x\geq1$ by the above. Then, since  $\scal_{\wt{g}}\geq-n(n-1)$, $\phi\geq 1$ and $W\geq0$, we get that
            \begin{equation*}
                m_{g}\p{h,\varphi} = \int_M W\left(\phi^{\frac{n+2}{n-2}}(\scal_{\wt{g}}+n(n-1))+F(\phi)\right)dV_{g}\geq 0,
            \end{equation*}
            and equality can only hold if  $\scal_{\wt{g}}=-n(n-1)$ and $\phi=1$. This finishes the proof of the proposition.
        \end{proof}
    \subsection{Proof of Theorem \ref{mainthm_local_positive_mass}}
        We now prove some technical estimates we will need when proving our local positive mass theorem (Theorem \ref{mainthm_local_positive_mass}/\ref{thm:local_PMT}). 

        \begin{lemma} \label{lemma_needed_for_third_var}
            Let $\p{M,\wh{g},\wh{\varphi}}$ be a static triple. Then, for $\ell > \frac{n}{2} + 2$, there exists a neighbourhood $\mathcal{U}\subset \mathcal{R}^{\ell}_{\wh{\varphi}}\p{M,\wh{g}}\times \mathcal{R}^{\ell}_{\wh{\varphi}}\p{M,\wh{\varphi}}$ and a constant $C>0$, such that for any $\mathfrak{g}=(g,\varphi)\in \mathcal{U}$, the following estimates hold:
            \begin{align*}
                \norm{\left.\frac{d}{ds}\right|_{s=0} u_{\mathfrak{g}+s\mathfrak{h}}}_{C^{2,\alpha}}&\leq C\norm{\mathfrak{h}}_{C^{2,\alpha}},\\
                \norm{\left.\frac{d}{ds}\right|_{s=0} u_{\mathfrak{g}+s\mathfrak{h}}}_{H^1_{\mathfrak{g}}}\:\; &\leq C\norm{\mathfrak{h}}_{H^1_{\mathfrak{g}}},\\ \norm{\left.\frac{d}{ds}\right|_{s=0}f_{\mathfrak{g}+s\mathfrak{h}}}_{H^2_\mathfrak{g}}\;\;&\leq C\norm{\mathfrak{h}}_{H^2_\mathfrak{g}}\\
                \norm{\left.\frac{d^2}{ds^2}\right|_{s=0} u_{\mathfrak{g}+s\mathfrak{h}}}_{H^1_{\mathfrak{g}}}\, &\leq C\norm{\mathfrak{h}}_{C^{2,\alpha}}\norm{\mathfrak{h}}_{H^1_{\mathfrak{g}}},
            \end{align*}
            where $f_{\mathfrak{g}}=f_g$ is the unique minimiser of the entropy given in Theorem \ref{global_minimiser}, $u_{\mathfrak{g}}:=f_{\mathfrak{g}}-\varphi$, and $\mathfrak{h} = \p{h,\psi}\in H^\ell_{\mfg}$.
        \end{lemma}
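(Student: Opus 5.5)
The plan is to differentiate the Euler--Lagrange equation \eqref{modified_el_eqn} along the line $\mathfrak{g}+s\mathfrak{h}$ and invert the resulting linear operator, using the isomorphism properties already established in the proof of Proposition \ref{entropy_analytic_metric_dep}.

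\textbf{First derivative.} Write the equation as $\Phi(g,\varphi,u)=0$ with
\begin{equation*}
\Phi(g,\varphi,u)=\Delta_g u+\tfrac{1}{2}\abs{du}^2_g+\g{d\varphi}{du}_g+nu+\p{\Delta_g\varphi+\abs{d\varphi}^2_g-n}-\tfrac12\p{\scal_g+n(n-1)}.
\end{equation*}
Set $u_s:=u_{\mathfrak{g}+s\mathfrak{h}}$, which is analytic in $s$ by Proposition \ref{entropy_analytic_metric_dep}. Differentiating once gives
\begin{equation*}
P_{\mathfrak{g}}\dot u:=\p{\Delta_{f}+n}\dot u=-D_{g,\varphi}\Phi(\mathfrak{h}),\qquad \Delta_f:=\Delta_g+\g{df_{\mathfrak{g}}}{d\,\cdot},
\end{equation*}
where the right-hand side is a linear expression in $\mathfrak{h}$ and its derivatives up to order two. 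Its coefficients are $u$, $\varphi$, $g$ and their derivatives up to order two; for example it contains $D\scal(h)$, $-h\ast\nabla^2u$, and $d\psi\ast du$. For $\mathfrak{g}\in\mathcal{U}$ these coefficients are bounded in $C^{0,\alpha}$ by Lemma \ref{wgtd_sob_emb}, since $\ell>\frac n2+2$.

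\textbf{The $C^{2,\alpha}$ bound.} The right-hand side is bounded by $C\norm{\mathfrak{h}}_{C^{2,\alpha}}$ in $C^{0,\alpha}$. It remains to know that $\Delta_f+n:C^{2,\alpha}\to C^{0,\alpha}$ is an isomorphism with an inverse bounded uniformly on $\mathcal{U}$. At $\wh{\mathfrak{g}}$ this is Proposition \ref{isomorph_prop}, because $f=\wh\varphi$ and $\Delta_f=\Delta_{\wh\varphi}$ there. For nearby $\mathfrak{g}$ I would argue by openness of invertibility: the operator depends continuously on $\mathfrak{g}$ in operator norm, because the coefficient difference $d(f-\wh\varphi)$ is small in $C^{0,\alpha}$. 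Shrinking $\mathcal{U}$ then gives a uniform constant.

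\textbf{The $H^1$ and $H^2$ bounds.} These need more care, because the right-hand side contains second derivatives of $\mathfrak{h}$, namely $D\scal(h)\ni\delta\delta h+\Delta\tr h$ and $\Delta_g\psi$. I would test the equation against $\dot u\,e^{-f}$ and integrate by parts. The principal terms then become $\int \g{\delta h+d\tr h}{d\dot u}+\g{d\psi}{d\dot u}$ plus lower-order terms, and $e^{-f}$ is comparable to $e^{-\varphi}$. The left-hand side gives $\norm{d\dot u}^2+n\norm{\dot u}^2$ with weight $e^{-f}$. Cauchy--Schwarz then yields $\norm{\dot u}_{H^1_{\mathfrak{g}}}\le C\norm{\mathfrak{h}}_{H^1_{\mathfrak{g}}}$; the boundary terms vanish by a density argument with compactly supported $\mathfrak{h}$. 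For $\dot f=\dot u+\psi$, the right-hand side lies in $L^2$ with norm at most $C\norm{\mathfrak{h}}_{H^2}$. Uniform $H^2\to L^2$ invertibility, from Proposition \ref{isomorph_prop} and the same perturbation argument, then gives the $H^2$ bound.

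\textbf{Second derivative.} Differentiating twice gives
\begin{equation*}
P_{\mathfrak{g}}\ddot u=-\abs{d\dot u}^2-2\,\dot{P}\dot u-D^2\Phi(\mathfrak{h},\mathfrak{h}).
\end{equation*}
Here $\dot P$ is the first-order variation of the operator in the direction $\mathfrak{h}$, and $D^2\Phi(\mathfrak{h},\mathfrak{h})$ is quadratic in $\mathfrak{h}$ and at most second order. Every term is bilinear, so I would place one factor in $C^{2,\alpha}$ (using the $C^{2,\alpha}$ bound on $\dot u$) and the other in the weak space. The same weak $H^1$ pairing argument then controls $\ddot u$ by $C\norm{\mathfrak{h}}_{C^{2,\alpha}}\norm{\mathfrak{h}}_{H^1}$.

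\textbf{Main obstacle.} The hard part is the bookkeeping in this last step. Terms of the form $\nabla^2\mathfrak{h}\ast\mathfrak{h}$ must be arranged, by integration by parts, so that at most one derivative falls on the $H^1$ factor. Terms like $\nabla^2 h\ast\dot u$ must be paired so that the $C^{2,\alpha}$ norm lands on the factor carrying two derivatives. I would organise this by writing everything in divergence form on the auxiliary manifold, using the identities for $\delta_{\mathfrak{g}}$ and $\Delta_{\mathfrak{g}}$.
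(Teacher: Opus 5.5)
Your proposal is correct and follows essentially the same route as the paper: differentiate the Euler--Lagrange equation, invert via Proposition \ref{isomorph_prop}, test the linearised equation against $u'$ for the $H^1$ bound (integrating the second-order terms $\delta_{\mfg}\delta_{\mfg}\mfh$ and $\Delta_{\mfg}\tr_{\mfg}\mfh$ by parts), and estimate the twice-differentiated equation with one factor in $C^{2,\alpha}$. The differences are cosmetic: the paper keeps $\Delta_{\mfg}+n$ on the left and absorbs $\nabla u'\ast\nabla u$ using smallness of $\norm{u}_{C^{2,\alpha}}$, which is equivalent to your openness-of-invertibility argument for $\Delta_f+n$; and in the last step no integration by parts is needed, since in terms like $\nabla^2\mfh\ast\mfh$ or $\nabla^2u'\ast\mfh$ the two derivatives can simply be placed on the $C^{2,\alpha}$ factor.
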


        \begin{proof} 
            The static metric will stay mostly in the background for this proof. However, we use the fact that any pair $\p{g,\varphi}\in \mathcal{R}^{\ell}_{\wh{\varphi}}\p{M,\wh{g}}\times \mathcal{R}^{\ell}_{\wh{\varphi}}\p{M,\wh{\varphi}}$ has $u_{g,\varphi}\in H^{\ell}_\varphi\subset C^{2,\alpha}$ (see Propositions \ref{wgtd_sob_emb} and \ref{entropy_analytic_metric_dep}), and that the auxiliary Ricci curvature of $\p{g,\varphi}$ has finite $C^{0,\alpha}$-norm. With that out of the way, we may suppress the $g,\varphi$ subscripts to simplify notation. Recall that $u$ satisfies \eqref{modified_el_eqn}:
            \begin{equation*}
                \Lap u + \frac{1}{2}\abs{d u}^2 + \g{d \varphi}{ d u} + nu = -\p{\Lap \varphi + \abs{d \varphi}^2 - n} + \frac{1}{2}\p{\scal_g + n\left(n-1\right)},
            \end{equation*}
            or, in terms of the auxiliary metric $\mathfrak{g}=\p{g,\varphi}$, as 
            \begin{equation}\label{aux_u_EL}
                \Lap_{\mathfrak{g}} u + nu + \frac{1}{2}\abs{d u}^2 = \frac{1}{2}\p{\scal_{\mathfrak{g}} + n(n+1)}.
            \end{equation}    
            Taking the variation of this equation, and denoting the variation of $u$ by $u'$, yields
            \begin{equation}\label{EL_variation}\begin{split}
                \p{\Lap_{\mathfrak{g}}+n}u' ={}& \frac{1}{2}\left.\frac{d}{ds}\right\vert_{s=0} \scal_{\mathfrak{g} + s\mathfrak{h}} - \left(\left.\frac{d}{ds}\right\vert_{s=0} \Lap_{\mathfrak{g} + s\mathfrak{h}}\right) u  + \frac{1}{2}\mathfrak{h}\left(\nabla u,\nabla u\right)-\g{du'}{du}\\
                ={}& \frac{1}{2}\left(\Lap_{\mathfrak{g}}\tr_{\mathfrak{g}}\mathfrak{h} + \delta_{\mathfrak{g}} \delta_{\mathfrak{g}} \mathfrak{h} - \left<\Ric_{\mathfrak{g}}, \mathfrak{h}\right>\right) - \left<\mathfrak{h}, \nabla^2 u\right>\\
                &+ \left<\delta_{\mathfrak{g}} \mathfrak{h} + \frac{1}{2}d\tr_{\mathfrak{g}}\mathfrak{h}, d u\right> + \frac{1}{2}\mathfrak{h}\left(\nabla u,\nabla u\right)-\g{du'}{du}\\ 
                ={}&\nabla^2_{\mathfrak{g}}\ast \mathfrak{h}+\nabla_{\mathfrak{g}}\mathfrak{h}\ast \nabla u+\p{\Ric_{\mathfrak{g}}+\nabla^2u+\nabla u\ast \nabla u}\ast \mathfrak{h} +\nabla u'\ast \nabla u.\end{split}
            \end{equation}
            Since $\Lap_{\mathfrak{g}} + n: C^{2,\alpha} \rightarrow C^{0,\alpha}$ is an isomorphism by Proposition \ref{isomorph_prop}, we have
            \begin{align*}
                \norm{u'}_{C^{2,\alpha}} \leq{} &C\norm{\nabla^2_{\mathfrak{g}}\ast \mathfrak{h}+\nabla_{\mathfrak{g}}\mathfrak{h}\ast \nabla u+\p{\Ric_{\mathfrak{g}}+\nabla^2u+\nabla u\ast \nabla u}\ast \mathfrak{h} +\nabla u'\ast \nabla u}_{C^{0,\alpha}}\\ \leq{} &C'\p{\norm{u}_{C^{2,\alpha}}^2+1}\norm{\mathfrak{h}}_{C^{2,\alpha}}+C\norm{u}_{C^{1,\alpha}}\norm{u'}_{C^{1,\alpha}}.
            \end{align*}        
            Since the $C^{2,\alpha}$-norm of $u$ is controlled by the proximity to the background (which has $u\equiv0$), we may assume that we are working in a neighbourhood so small that $\norm{u}_{C^{2,\alpha}}\leq \frac{1}{2C}$. Moving the final term in the above equation to the left hand side, we obtain the desired:
            \begin{equation*}
                \frac{1}{2}\norm{u'}_{C^{2,\alpha}} \leq C'\p{\p{\frac{1}{2C''}}^2+1}\norm{\mathfrak{h}}_{C^{2,\alpha}}.
            \end{equation*}
            As for the corresponding estimate involving $H^1_{\mathfrak{g}}$-norms, we can integrate \eqref{EL_variation} against $u'$ to arrive at 
            \begin{equation}\label{eq:u'_H^1_bound}
                \norm{u'}_{H^1_\mfg}^2\leq \Big(u',\nabla^2_{\mathfrak{g}}\ast \mathfrak{h}+\nabla_{\mathfrak{g}}\mathfrak{h}\ast \nabla u+\p{\Ric_{\mathfrak{g}}+\nabla^2u+\nabla u\ast \nabla u}\ast \mathfrak{h} +\nabla u'\ast \nabla u\Big)_{L^2_{\mfg}}.     
            \end{equation}

            The next step is a little delicate, as we will have to get rid of the second order derivative of $\mfh$. In this case, we may recall the precise form of those terms, which will allow us to integrate by parts:
            \begin{align*}
                \Big(u',\nabla^2_\mfg\ast \mfh\Big)_{L^2_\mfg}&=\frac{1}{2}\Big(u',\delta_\mfg\p{\delta_\mfg\mfh+d\tr_\mfg\mfh}\Big)_{L^2_\mfg}\\&=\frac{1}{2}\Big(du',\delta_\mfg\mfh+d\tr_\mfg\mfh\Big)_{L^2_\mfg}-\frac{1}{2}\lim_{R\to\infty}\int_{\partial B_R}u'\g{\delta_{\mfg}\mfh+d\tr_\mfg\mfh}{\nu}\; dA_\mfg.
            \end{align*}

            By the first proven inequality of this lemma, and the bounded $H^1_\mfg$-norm of $\mfh$, the boundary integral vanishes in the limit. Returning to \eqref{eq:u'_H^1_bound}, we proceed by applying Cauchy--Schwarz and factoring out bounded H\"older norms of $u$ and $\Ric_\mfg$:
            \begin{align*}
                \norm{u'}_{H^1_\mfg}^2&\leq \norm{u'}_{H^1_\mfg}\Big(\norm{\nabla_{\mathfrak{g}}\ast \mathfrak{h}+\nabla_{\mathfrak{g}}\mathfrak{h}\ast \nabla u+\p{\Ric_{\mathfrak{g}}+\nabla^2u+\nabla u\ast \nabla u}\ast \mathfrak{h}+\nabla u'\ast \nabla u}_{L^2_\mfg} \Big)\\ &\leq \wt{C}\norm{u'}_{H^1_\mfg}\Big(1+\norm{u}_{C^{2,\alpha}}\Big)\norm{\mfh}_{H^1_\mfg}+\norm{u}_{C^{2,\alpha}}\norm{u'}_{H^1_\mfg}^2
            \end{align*}

            After possibly shrinking the size of the neighbourhood we are working in, we can ensure $\norm{u}_{C^{2,\alpha}} \leq \frac{1}{2}$, so that we may absorb the final term on the right into the left hand side. Dividing by $\norm{u'}_{H^1_\mfg}$, all that remains is
            $$\norm{u'}_{H^1_\mfg}\leq 3\wt{C}\norm{\mfh}_{H^1_\mfg},$$
            which proves the second inequality.\\

            For the third inequality, we rearrange \eqref{EL_variation} slightly, by absorbing $\p{\Delta_\mfg+n}\psi$ into the right hand side, to yield:
            \begin{align*}
                \p{\Lap_{\mfg}+n} v  & = \nabla^2_{\mfg}\ast \mfh+\nabla_{\mfg}\mfh\ast \nabla u+\p{\Ric_{\mfg}+\nabla^2u+\nabla u\ast \nabla u+n}\ast \mfh +\nabla u'\ast \nabla u.
            \end{align*}
            
            Then, using that $\Lap_{\mfg} + n : H^2_{\mfg} \rightarrow L^2_{\mfg}$ is an isomorphism by Proposition \ref{isomorph_prop}, we have
            \begin{align*}
                \norm{v}_{H^2_{\mfg}} &\leq \wh{C}\norm{\nabla^2_{\mfg}\ast \mfh+\nabla_{\mfg}\mfh\ast \nabla u+\p{\Ric_{\mfg}+\nabla^2u+\nabla u\ast \nabla u+n}\ast \mfh +\nabla u'\ast \nabla u}_{L^2_{\mfg}}\\ &\leq \wh{C}'\p{1+\norm{u}_{C^{2,\alpha}}}\norm{\mfh}_{H^2_{\mfg}}+\wh{C}\norm{u}_{C^{2,\alpha}}\norm{u'}_{H^1_{\mfg}}\\
                &\leq \frac{3\wh{C}'+\wh{C}}{2}\norm{\mfh}_{H^2_\mfg},
            \end{align*}
            where the final inequality follows from the previous estimate, and the assumptions made during its proof.\\
    
            For the final estimate of the lemma we take the variation of \eqref{EL_variation} to obtain
            \begin{align*}
                \left(\Lap_{\mfg} + n\right) u'' ={}&\frac{1}{2}\left.\frac{d^2}{ds^2}\right|_{s=0}\scal_{\mfg+s\mfh}-\p{\left.\frac{d^2}{ds^2}\right|_{s=0}\Delta_{\mfg+s\mfh}}u-\p{\left.\frac{d}{ds}\right|_{s=0}\Delta_{\mfg+s\mfh}}u'\\ &+2\mfh\p{\nabla u',\nabla u}-\frac{1}{2}\p{\mfh\times \mfh}\p{\nabla u,\nabla u}-\abs{du'}^2-\g{du''}{du}\\={}&
                \nabla^2_{\mfg} \mfh \ast \mfh +  \p{\nabla_{\mfg} \mfh+\mfh\ast \nabla u+\nabla u'}\ast \nabla_{\mfg} \mfh+\nabla u'\ast \nabla u'\\& +\p{R_{\mfg}\ast \mfh+\nabla^2u'+\nabla u'\ast \nabla u+\nabla u\ast \nabla u\ast \mfh}\ast \mfh+\nabla u''\ast \nabla u.
            \end{align*}
            where the schematic expression in the last two lines can be derived from \cite[Lemma A.3]{KK_stability}. It is written in a deliberate way, according to the method of estimation in the following. \\
        
            By integrating against $u''$ and using H\"older's inequality, one sees that
            \begin{align*}
                \norm{u''}^2_{H^1_{\mfg}}\leq{} & \overline{C} \big|\big|\nabla^2_{\mfg} \mfh \ast \mfh +  \p{\nabla_{\mfg} \mfh+\mfh\ast \nabla u+\nabla u'}\ast \nabla_{\mfg} \mfh+\nabla u'\ast \nabla u'\\& \qquad\quad +\p{R_{\mfg}\ast \mfh+\nabla^2u'+\nabla u'\ast \nabla u+\nabla u\ast \nabla u\ast \mfh}\ast \mfh\big|\big|_{L^2_{\mfg}}\norm{u''}_{L^2_\mfg}\\ \leq{} &\overline{C}'\bigg(\norm{\nabla^2_{\mfg}\mfh+\nabla_{\mfg} \mfh+\mfh\ast \nabla u+\nabla u'}_{C^{0,\alpha}}\norm{\mfh}_{H^1_{\mfg}}+\norm{\nabla u'}_{C^{0,\alpha}}\norm{u'}_{H^1_{\mfg}}\\ &\qquad \qquad +\norm{R_{\mfg}\ast \mfh+\nabla^2u'+\nabla u'\ast \nabla u+\nabla u\ast \nabla u\ast \mfh}_{C^{0,\alpha}}\norm{\mfh}_{H^1_{\mfg}}\bigg)\norm{u''}_{H^1_\mfg}.
            \end{align*}
            
            Combining this with the previous estimates and the uniform $C^{2,\alpha}$-bound on $u$, and then dividing through by $\norm{u''}_{H^1_\mfg}$, completes the proof.
        \end{proof}

        \begin{proposition}\label{third_var_estim}
            Let $\p{M,\wh{g},\wh{\varphi}}$ be a static triple and $\ell > \frac{n}{2} + 2$. Then there exists a neighbourhood $\mathcal{U}\subset \mathcal{R}^{\ell}_{\wh\varphi}\p{M,\wh{g}}\times \mathcal{R}^{\ell}_{\wh\varphi}\p{M,\wh\varphi}$ and a constant $C>0$, such that, for any $\p{g,\varphi}\in \mathcal{U}$, we have 
            \begin{align*}
                \abs{\left.\frac{d^3}{ds^3}\right|_{s=0}\mu_{\mathrm{AH}}\p{\mfg+s\mathfrak{h}}}\leq C\norm{\mathfrak{h}}_{C^{2,\alpha}}\norm{\mathfrak{h}}_{H^1_\mfg}^2,
            \end{align*}
            where $\mfg=\p{g,\varphi}$ and $\mathfrak{h}=(h,\psi)$ in the usual sense (of Section \ref{sec_second_var}).
        \end{proposition}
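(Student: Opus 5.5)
We will differentiate the first variation formula \eqref{aux_first_var} twice along the line $\mfg_s=\mfg+s\mfh$. By Corollary \ref{first_var_crit},
\begin{equation*}
    \frac{d}{ds}\mu_{\mathrm{AH}}\p{\mfg_s}=-\frac{1}{2n}\int_M\g{\Ric_{\mfg_s}+n\mfg_s+\nabla^2_{\mfg_s}u_s}{\mfh}_{\mfg_s}e^{-u_s}\;dV_{\mfg_s},
\end{equation*}
where $u_s:=u_{\mfg_s}=f_{\mfg_s}-\varphi_s$. The third derivative at $s=0$ is then the second $s$-derivative of this integral. Here $\mfh$ enters linearly as the test tensor, and all $s$-dependence sits in $\mfg_s$ and $u_s$. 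Expanding by the product rule, each term is an integral of $\mfh$ paired (through $\mfg^{-1}$, $\mfg$, $e^{-u}$ and the volume form) with one of the following:
\begin{itemize}
    \item[(a)] second variations of $\Ric$ or $\nabla^2_{\mfg}u$, which schematically are $\nabla^2\mfh\ast\mfh+\nabla\mfh\ast\nabla\mfh+R\ast\mfh\ast\mfh+\nabla^2u\ast\mfh\ast\mfh+\nabla u\ast\nabla\mfh\ast\mfh$ (using e.g.\ \cite[Lemma A.3]{KK_stability} as in Lemma \ref{lemma_needed_for_third_var});
    \item[(b)] mixed terms such as $\nabla^2u'\ast\mfh$, $\nabla u'\ast\nabla\mfh$, $u'\ast(\text{first variation of Ric})$, or $\nabla u'\ast\nabla u'$;
    \item[(c)] $\nabla^2u''$ and $u''\ast(\Ric+n\mfg+\nabla^2u)$;
    \item[(d)] products of the gradient $\Ric_\mfg+n\mfg+\nabla^2u$ with $\mfh\ast\mfh$, $u'\ast\mfh$, or $u'\ast u'$, which arise from differentiating the metric contractions, the weight $e^{-u}$ and the volume form.
\end{itemize}
Here $u'$ and $u''$ denote the $s$-derivatives of $u_s$.

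The bounds follow a single principle. In every term, one factor carrying the fewest derivatives is placed in $C^{2,\alpha}$, and the remaining two factors are placed in $L^2_\mfg$ via Cauchy--Schwarz. In doing so we use that on $\mathcal U$ the quantities $\norm{u}_{C^{2,\alpha}}$ and $\norm{\Ric_\mfg}_{C^{0,\alpha}}$ are uniformly bounded, by Lemma \ref{wgtd_sob_emb} and Proposition \ref{entropy_analytic_metric_dep}. We also use the bounds of Lemma \ref{lemma_needed_for_third_var}, namely $\norm{u'}_{C^{2,\alpha}}\le C\norm{\mfh}_{C^{2,\alpha}}$, $\norm{u'}_{H^1}\le C\norm{\mfh}_{H^1}$, and $\norm{u''}_{H^1}\le C\norm{\mfh}_{C^{2,\alpha}}\norm{\mfh}_{H^1}$. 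For example, $\int\nabla\mfh\ast\nabla\mfh\ast\mfh$ is bounded by $\norm{\mfh}_{C^0}\norm{\mfh}_{H^1}^2$, and $\int\nabla u'\ast\nabla u'\ast\mfh$ by $\norm{\mfh}_{C^0}\norm{u'}_{H^1}^2$. Terms of type (d) are cubic with one factor $\Ric+n\mfg+\nabla^2u$ bounded in $C^0$; the estimate goes through once a factor such as $\mfh$ or $u'$ is put in $C^{2,\alpha}$ and the other two in $L^2$.

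The main obstacle is the terms containing two derivatives on an $L^2$-controlled quantity, because only $H^1$ control is available. These are $\int\nabla^2\mfh\ast\mfh\ast\mfh$, $\int\nabla^2u'\ast\mfh\ast\mfh$, and especially $\int\g{\nabla^2u''}{\mfh}e^{-u}dV$. For each of them we integrate by parts once on balls $B_R$ and move one derivative onto the other factors, producing terms of the form $\nabla u''\ast\nabla\mfh$, $\nabla u''\ast\mfh\ast\nabla u$, and so on. These are then bounded by $\norm{u''}_{H^1}\norm{\mfh}_{H^1}(1+\norm{u}_{C^{1}})\le C\norm{\mfh}_{C^{2,\alpha}}\norm{\mfh}_{H^1}^2$. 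The same treatment applied to $\nabla^2\mfh\ast\mfh\ast\mfh$ leaves $\nabla\mfh\ast\nabla\mfh\ast\mfh$, which is handled as above.

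The boundary integrals vanish as $R\to\infty$. The approach is to first prove the estimate for $\mfh$ with compact support, or with $C^{2,\alpha}_\beta$-decay for $\beta>\frac n2$, where the products decay faster than the weighted area grows. The general $\mfh\in H^\ell_\mfg$ then follows by density, as in the proof of the corollary establishing analyticity of $\mu_{\mathrm{AH}}$, since both sides of the estimate are continuous in $H^\ell$. Collecting all terms, and shrinking $\mathcal U$ if necessary so that the constants of Lemma \ref{lemma_needed_for_third_var} apply, gives the stated bound $C\norm{\mfh}_{C^{2,\alpha}}\norm{\mfh}^2_{H^1_\mfg}$.
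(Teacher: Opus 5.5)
Your proposal is correct and matches the paper's proof: it differentiates the auxiliary first-variation formula twice along $\mfg+s\mfh$ and expands the variations of $\Ric_{\mfg}+n\mfg+\nabla^2_{\mfg}u_{\mfg}$ schematically via \cite[Lemma A.3]{KK_stability}, bounding every term with Lemma \ref{lemma_needed_for_third_var} and H\"older's inequality, and integrates by parts only in the critical term $\int\langle\nabla^2_{\mfg}u''_{\mfg},\mfh\rangle e^{-u_{\mfg}}\,dV_{\mfg}$. The differences are minor: the paper needs no integration by parts for $\nabla^2\mfh\ast\mfh\ast\mfh$ or $\nabla^2u'\ast\mfh\ast\mfh$, since the second-derivative factor can be put in $C^0$ using the $C^{2,\alpha}$ bounds on $\mfh$ and $u'$; and it disposes of the boundary term directly through the $H^1_{\mfg}$ bound on $u''$, whereas your compact-support-plus-density argument makes that step more transparent.
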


        \begin{proof}
            As we are interested in the auxiliary norms, we extend the first variation \eqref{aux_first_var}, to an integral over $\mathfrak{M}=\mathbb{S}^1\times M$:
            \begin{align*}
                -4n\pi D_{\mfg}\mu_{\mathrm{AH}}\p{\mathfrak{h}}=\int_\mathfrak{M}\g{\Ric_{\mfg}+n\mfg+\nabla^2_{\mfg}u_{\mfg}}{\mathfrak{h}}_{\mfg}e^{-u_{\mfg}}\; dV_{\mfg}.
            \end{align*}
            where $u_\mfg=f_{\mfg}-\varphi$ as in Lemma \ref{lemma_needed_for_third_var}. Writing $k_{\mfg}:=\Ric_{\mfg}+n\mfg+\nabla^2_{\mfg}u_{\mfg}$, we find that
            \begin{align*}
                -4n\pi\left.\frac{d^3}{ds^3}\right|_{s=0}&\mu_{\mathrm{AH}}\p{\mfg+s\mathfrak{h}}\\={}&\int_\mathfrak{M}\g{k_{\mfg}''}{\mathfrak{h}}_{\mfg}e^{-u_{\mfg}}\; dV_{\mfg}\\
                &+\int_\mathfrak{M}\g{k_{\mfg}'}{\tr_{\mfg}\mathfrak{h}-2\p{\mathfrak{h}\times \mathfrak{h}}-2u_{\mfg}'\mathfrak{h}}_{\mfg}e^{-u_{\mfg}}\; dV_{\mfg}\\
                &+\int_\mathfrak{M}\g{k_{\mfg}}{\p{\mathfrak{h}\times \mathfrak{h}\times \mathfrak{h}}-\p{\tr_{\mfg}\mathfrak{h}-2u_{\mfg}'}\p{\mathfrak{h}\times \mathfrak{h}}}_{\mfg}e^{-u_{\mfg}}\; dV_{\mfg}\\
                &+\int_\mathfrak{M}\g{k_\mfg}{\p{\frac{1}{2}\tr_{\mfg}\mathfrak{h}-u_{\mfg}'}^2\mathfrak{h}-\frac{1}{2}\abs{\mathfrak{h}}^2_{\mfg}\mathfrak{h}-u_{\mfg}''\mathfrak{h}}_{\mfg}e^{-u_{\mfg}}\; dV_{\mfg},
            \end{align*}
            where we use the convention:
                $$h\times k\times l=h\times \p{k\times l}+\p{h\times k}\times l+\p{h\times l}\times k.$$
            Using Hamilton star notation and referencing \cite[Appendix A]{KK_stability}, we may write 
            \begin{align*}
                k_{\mfg}''={}&\nabla^2_{\mfg}\mathfrak{h}\ast \mathfrak{h}+\nabla_{\mfg}\mathfrak{h}\ast \nabla_{\mfg}\mathfrak{h}+\nabla_{\mfg}\mathfrak{h}\ast \mathfrak{h}+R_{\mfg} \ast \mathfrak{h} \ast \mathfrak{h}\\
                &+\nabla_{\mfg}^2u_{\mfg}''+\nabla_{\mfg}u_{\mfg}'\ast \nabla_{\mfg}\mathfrak{h}\\
                k_{\mfg}'={}& \nabla^2_{\mfg}\ast\mathfrak{h}+R_{\mfg}\ast \mathfrak{h}+\nabla^2_{\mfg}u_\mfg'+\nabla_{\mfg}u_{\mfg}\ast \nabla_{\mfg} \mathfrak{h},
            \end{align*}          
            where $R_{\mfg}$ is a catch-all for curvature terms and constants. The terms $k_\mfg$, $R_\mfg$, $u_\mfg$ and $\nabla_\mfg u_\mfg$ are uniformly bounded due to proximity to a static background. By counting the number and derivatives of $\mathfrak{h}$ and $u_\mfg'$ in each term, it is a simple task to construct the claimed inequality via Lemma \ref{lemma_needed_for_third_var}, H\"older's inequality, and the definition of the neighbourhood $\mathcal{U}$. At least for every term save one. The only term that requires special attention is 
                $$\int_{\mathfrak{M}}\g{\nabla^2_{\mfg}u_{\mfg}''}{\mathfrak{h}}_{\mfg}e^{-u_{\mfg}}\; dV_{\mfg}.$$        
            Using Lemma \ref{lemma_needed_for_third_var}, we deduce that $u''_\mfg$ has bounded $H^1_{\mfg}$-norm, as long as $\mathfrak{h}$ has the same. As such, no boundary term arises from integration by parts:
                $$\int_{\mathfrak{M}}\g{\nabla^2_{\mfg}u_{\mfg}''}{\mathfrak{h}}_{\mfg}e^{-u_{\mfg}}\; dV_{\mfg}=\int_\mathfrak{M}\g{du_{\mfg}''}{\delta_\mfg\mathfrak{h}}_{\mfg}e^{-u_{\mfg}}+\mathfrak{h}\p{\nabla u_{\mfg}'',\nabla u_\mfg}e^{-u_{\mfg}}\; dV_{\mfg}.$$  
            The proposition now follows by a further use of H\"older's inequality and Lemma \ref{lemma_needed_for_third_var}.
        \end{proof}

    We can now prove Theorem \ref{mainthm_local_positive_mass}, which we restate here in an expanded version for the reader's convenience.

        \begin{theorem}\label{thm:local_PMT}
            Let $\p{M,\wh{g},\wh\varphi}$ be a static triple, $\ell>\frac{n}{2}+2$ and $\frac{n}{2}<\beta<n$. Then the following are equivalent:
            \begin{enumerate}
            \item $\p{\wh{g},\wh{\varphi}}$ is a local minimiser of the functional $(g,\varphi)\mapsto m_{\wh{g}}(g-\wh{g},\varphi)$ on
            \begin{equation*}
                \mathcal{D} := \big\{\p{g,\varphi} \; :\; \scal_{g} \geq -n\p{n-1}\big\}.
            \end{equation*}                  
            \item $\p{\wh{g},\wh{\varphi}}$ is a local minimiser of the functional $(g,\varphi)\mapsto m_{\wh{g}}(g-\wh{g},\varphi)$ on
            \begin{align*}
                \mathcal{C}:=\big\{(g,\varphi) \; : \; \scal_{g}=-n\p{n-1}, \;\Delta_g\varphi+\abs{d\varphi}^2_g=n\big\}.
            \end{align*}        
            \item $\p{\wh{g},\wh{\varphi}}$ is a local maximiser of $\mu_{\mathrm{AH}}$ on $\mathcal{C}$.           
            \item $\p{\wh{g},\wh{\varphi}}$ is a local maximiser of $\mu_{\mathrm{AH}}$.
            \end{enumerate}
            The statements relate to metric pairs $\p{g,\varphi}\in \p{\mathcal{R}^\ell_{\wh{\varphi}}\p{M,\wh{g}}\cap \mathcal{R}^{2,\alpha}_\beta\p{M,\wh{g}}}\times \mathcal{R}^\ell_{\wh{\varphi}}\p{M,\wh{\varphi}}$.
        \end{theorem}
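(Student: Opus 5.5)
I will prove the cycle $(1)\Rightarrow(2)\Rightarrow(3)\Rightarrow(4)\Rightarrow(1)$, using throughout the key identity on $\mathcal{C}$. For $(g,\varphi)\in\mathcal{C}$, Proposition \ref{klaus_konj} gives $f_g=\varphi$. The first two lines of \eqref{eq:decomp_W} vanish, and the third involves $\scal_g+n(n-1)=0$. Hence $\mu_{\mathrm{AH}}(g,\varphi)=\frac{1}{2n}S_{\wh{g}}(g,\varphi)$. Also $\int(\scal_g+n(n-1))\,d\wh V_\varphi=0$, so
\begin{equation*}
\mu_{\mathrm{AH}}(g,\varphi)=-\frac{1}{2n}\,m_{\wh g}(g-\wh g,\varphi)\qquad\text{on }\mathcal{C}.
\end{equation*}
Since $\mu_{\mathrm{AH}}(\wh g,\wh\varphi)=0=m_{\wh g}(0,\wh\varphi)$, this immediately gives $(2)\Leftrightarrow(3)$. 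The implication $(1)\Rightarrow(2)$ is trivial, because $\mathcal{C}\subset\mathcal{D}$.

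For $(3)\Rightarrow(4)$, take $(g,\varphi)$ near $(\wh g,\wh\varphi)$. I first project it onto $\mathcal{C}$ within the class of variations of Proposition \ref{stability_op_prop}(2). That is, I solve the conformal (Yamabe-type) problem $\scal_{\bar g}=-n(n-1)$ with $\bar g=\phi^{4/(n-2)}g$, which is solvable near $\wh g$ by the implicit function theorem and Proposition \ref{isomorph_prop}. Then I pick the exact potential $\bar\varphi$ for $\bar g$ as in Remark \ref{perturbation_theory}. Now $(\bar g,\bar\varphi)\in\mathcal{C}$ and $(g,\varphi)=((1+\eta)\bar g,\bar\varphi+\psi)$ with $(\eta,\psi)$ small.

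I then Taylor-expand $s\mapsto\mu_{\mathrm{AH}}((1+s\eta)\bar g,\bar\varphi+s\psi)$ up to third order. The first derivative vanishes by Remark \ref{DCSC_crit}. The second derivative is $\le -C\|(\eta\bar g,\psi)\|_{H^1}^2$ by Proposition \ref{stability_op_prop}(2), and the constant is uniform near $(\wh g,\wh\varphi)$. The third derivative is bounded by $C\|\mathfrak h\|_{C^{2,\alpha}}\|\mathfrak h\|^2_{H^1}$ by Proposition \ref{third_var_estim}. For $\mathfrak h$ small in $C^{2,\alpha}$, which follows from $H^\ell$-smallness via Lemma \ref{wgtd_sob_emb}, this gives $\mu_{\mathrm{AH}}(g,\varphi)\le\mu_{\mathrm{AH}}(\bar g,\bar\varphi)\le\mu_{\mathrm{AH}}(\wh g,\wh\varphi)$.

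For $(4)\Rightarrow(1)$, take $(g,\varphi)\in\mathcal{D}$ near the static pair and make the same conformal projection $\bar g=\phi^{4/(n-2)}g$, this time in reverse. Since $g$ is conformal to $\bar g$ and $\scal_g\ge-n(n-1)$, Proposition \ref{conformal_PMT} gives $m_{\bar g}(g-\bar g,\bar\varphi)\ge0$. The mass is linear in $h$, and by Lemma \ref{mass_independence} the reference pair may be changed. Hence
\begin{equation*}
m_{\wh g}(g-\wh g,\varphi)=m_{\wh g}(\bar g-\wh g,\bar\varphi)+m_{\bar g}(g-\bar g,\bar\varphi)\ge m_{\wh g}(\bar g-\wh g,\bar\varphi)=-2n\,\mu_{\mathrm{AH}}(\bar g,\bar\varphi)\ge0,
\end{equation*}
where the last inequality is (4) applied to $(\bar g,\bar\varphi)$. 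I also need that replacing $\varphi$ by $\bar\varphi$ does not alter the mass, which again follows from Lemma \ref{mass_independence}.

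The main obstacles are the functional-analytic bookkeeping of the projection onto $\mathcal{C}$. First, the conformal factor $\phi$ and the exact potential $\bar\varphi$ must depend continuously on $(g,\varphi)$ in $H^\ell\cap C^{2,\alpha}_\beta$, so that $(\bar g,\bar\varphi)$ lies in the given neighbourhood. Second, the third-order remainder must genuinely be dominated by the $H^1$-coercive second variation uniformly along the segment. The latter needs the constant $C_{g,\varphi}$ of Proposition \ref{stability_op_prop} to be bounded below near $(\wh g,\wh\varphi)$, which I would check from its explicit dependence on $\sup|d\varphi|$.
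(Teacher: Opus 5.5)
Your proposal is correct and follows the paper's proof. It uses the same three ingredients: the identity $\mu_{\mathrm{AH}}=-\frac{1}{2n}m_{\wh g}(g-\wh g,\varphi)$ on $\mathcal{C}$ (your factor $\frac{1}{2n}$ is the accurate one), the conformal projection onto $\mathcal{C}$ combined with Proposition \ref{conformal_PMT} and Lemma \ref{mass_independence}, and a third-order Taylor expansion controlled by Propositions \ref{stability_op_prop}(2) and \ref{third_var_estim}. The remaining differences are cosmetic: your single cycle makes $(4)\Rightarrow(1)$ the paper's $(4)\Rightarrow(3)\Leftrightarrow(2)\Rightarrow(1)$ in compressed form, and you use the implicit function theorem in place of Theorem \ref{Yamabe}.

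One bookkeeping point: Proposition \ref{third_var_estim} concerns curves that are affine in $\mathfrak{g}$, whereas your curve is affine in $(g,\varphi)$. You can either expand along $\mathfrak{g}_{\mathcal{C}}+s\mathfrak{h}$ as the paper does, or absorb the extra cubic chain-rule terms along your own curve. If you switch to the paper's curve, Proposition \ref{stability_op_prop}(2) still bounds the second derivative there: a $(g,\varphi)$-affine curve with the same initial velocity differs at second order only by a pure $d\tau^2$-term, and $D_{\mathfrak{g}_{\mathcal{C}}}\mu_{\mathrm{AH}}$ vanishes on such terms by Remark \ref{DCSC_crit}.
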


        \begin{proof}
            Note that \textit{(1.)} implies \textit{(2.)}, simply as $\mathcal{C}$ is contained in $\mathcal{D}$.
            To show that \textit{(2.)} implies \textit{(1.)}, consider the pair $\p{g_{\mathcal{D}},\varphi_{\mathcal{D}}} \in \mathcal{D}$. By the resolution of the Yamabe problem (see Theorem \ref{Yamabe}), we may find a CSC metric $g_{\mathcal{C}}\in \left[g_{\mathcal{D}}\right]$, the conformal class of $g_{\mathcal{D}}$ in $\mathcal{R}^\ell_{\wh{\varphi}}\p{M,\wh{g}}\cap \mathcal{R}^{2,\alpha}_\beta\p{M,\wh{g}}$. Relative to this metric, we may even find an exact potential $\varphi_{\mathcal{C}}$ (see Remark \ref{perturbation_theory}) such that $\p{g_{\mathcal{C}},\varphi_{\mathcal{C}}}\in \mathcal{C}$. Then, since the mass is linear in its first argument, we can compute as follows:
            \begin{align*}
                m_{\wh{g}}\p{g_{\mathcal{D}} - \wh{g}, \varphi_{\mathcal{D}}} &= m_{\wh{g}}\p{g_{\mathcal{D}} - \wh{g}, \varphi_{\mathcal{C}}}\\
                &=m_{\wh{g}}\p{g_{\mathcal{D}} - g_{\mathcal{C}}, \varphi_{\mathcal{C}}} +  m_{\wh{g}}\p{g_{\mathcal{C}} - \wh{g}, \varphi_{\mathcal{C}}}\\
                &\geq m_{g_{\mathcal{C}}}\p{g_{\mathcal{D}} - g_{\mathcal{C}}, \varphi_{\mathcal{C}}}\\
                &\geq 0,
            \end{align*}
            where the first equality follows from Lemma \ref{mass_independence}, the first inequality from the assumption that \textit{(2.)} holds, while the final inequality is due to Proposition \ref{conformal_PMT}. In other words, \textit{(1.)} holds.\\
    
            For an arbitrary $\p{g,\varphi} \in \mathcal{C}$, Proposition \ref{klaus_konj} tells us $f_g = \varphi$. Plugging this and $\scal_{g} = -n\p{n-1}$ into the definition of the entropy tells us $\mathcal{B}_R\p{g, f_g,\varphi} = 0$ and $\mathcal{W}_R\p{g,f_g,\varphi} = 0$. Thus   
            \begin{equation*}
                \mu_{\mathrm{AH}}\p{g,\varphi} = -m_{\wh{g}}\p{g-\wh{g},\varphi}.
            \end{equation*}

            Hence, if \textit{(2.)} holds, we have $\mu_{\mathrm{AH}}\p{g,\varphi} \leq 0$, while if \textit{(3.)} holds we have $m_{\wh{g}}\p{g-\wh{g},\varphi} \geq 0$. That is, \textit{(2.)} and \textit{(3.)} are equivalent.\\
    
            We next note that \textit{(4.)} implies \textit{(3.)}, by again using that $\mathcal{C}$ is a subset of $\mathcal{D}$.\\

            Now assume \textit{(3.)} holds and consider an arbitrary pair $\mathfrak{g}=\p{g,\varphi}$ with $g\in \mathcal{R}^\ell_{\wh{\varphi}}\p{M,\wh{g}}\cap \mathcal{R}^{2,\alpha}_\beta\p{M,\wh{g}}$ and $\varphi\in   \mathcal{R}^\ell_{\wh{\varphi}}\p{M,\wh{\varphi}}$. As before, we conclude from Theorem \ref{Yamabe} that $g$ is conformally related to a metric ${g}_{\mathcal{C}}$ with exact potential $\varphi_{\mathcal{C}}$, such that $\p{g_{\mathcal{C}},\varphi_{\mathcal{C}}}\in \mathcal{C}$. We write $g=\p{1+\eta}{g}_{\mathcal{C}}$ with $\eta\in H^{\ell}_{\wh\varphi}\p{M}\cap C^{2,\alpha}_\beta\p{M}$. By setting $\mathfrak{h}:=\p{e^{-2\varphi}-e^{-2\varphi_{\mathcal{C}}}}d\tau^2+\eta g_{\mathcal{C}}$, we may write $\mathfrak{g}={\mathfrak{g}}_{\mathcal{C}}+\mathfrak{h}$.\\
            
            To understand the relationship between the entropy of $\mathfrak{g}$ and $\mathfrak{g}_\mathcal{C}$, we Taylor expand around the latter metric with respect to the affine curve connecting them: 
            \begin{align*}
                \mu_{\mathrm{AH}}\p{\mathfrak{g}} -  \mu_{\mathrm{AH}}\p{\mathfrak{g}_{\mathcal{C}}} &= \left.\frac{d}{ds}\right\vert_{s=0} \mu_{\mathrm{AH}}\p{{\mathfrak{g}}_{\mathcal{C}}+s\mathfrak{h}} + \frac{1}{2}\left.\frac{d^2}{ds^2}\right\vert_{s=0} \mu_{\mathrm{AH}}\p{{\mathfrak{g}}_{\mathcal{C}}+s\mathfrak{h}}\\
                &+ \frac{1}{2}\int^1_0 \left(1-s\right)^2 \frac{d^3}{ds^3} \mu_{\mathrm{AH}}\p{{\mathfrak{g}}_{\mathcal{C}}+s\mathfrak{h}}\; ds.
            \end{align*}
        
            Using Propositions \ref{stability_op_prop} and \ref{third_var_estim}, we find that the first derivative vanishes, the second has a negative upper bound, and the third has a uniform bound for all $s\in [0,1]$. In particular, 
            \begin{align*}
                \mu_{\mathrm{AH}}\p{\mathfrak{g} }-  \mu_{\mathrm{AH}}\p{\mathfrak{g}_{\mathcal{C}}} 
                &\leq \frac{1}{2}\p{C_1\norm{\mathfrak{h}}_{C^{2,\alpha}}-C_2}
                \norm{\mathfrak{h}}^2_{H^1_{\mathfrak{g}_{\mathcal{C}}}},
            \end{align*}
            where $C_1,C_2$ are the constants from the propositions referenced above. By restricting our attention to a sufficiently small $H^\ell_{\wh{\mfg}}$-neighbourhood of $\wh{\mathfrak{g}}$, we may assume
                $$\norm{\mathfrak{h}}_{C^{2,\alpha}}<\frac{C_2}{C_1},$$
            which means $\mu_{\mathrm{AH}}\p{\mathfrak{g}} \leq \mu_{\mathrm{AH}}\p{\mathfrak{g}_{\mathcal{C}}}$, with equality if and only if $\mathfrak{h}=0$. This concludes the proof.
        \end{proof}

\section{The Stability and Instability Results}\label{stability_sec}
    This section is devoted to proving Theorem \ref{mainthm_stability} and Theorem \ref{mainthm_instability}. First, though, we need to prove a weighted \L ojasiewicz--Simon inequality (Theorem \ref{mainthm_loj_ineq}) for pairs $\p{g,\varphi}$ which are sufficiently close to a static pair $\left(\wh{g}, \wh{\varphi}\right)$. We remind the reader that we shall now drop the pointwise decay condition, $g\in \mathcal{R}^{2,\alpha}_\beta\p{M,\wh{g}}$, of the preceding section.

    \subsection{The \L ojasiewicz--Simon Inequality}
        As in, say, \cite{KY_PE_stability}, the proof of the \L ojasiewicz--Simon inequality will follow the framework introduced in \cite{CM_loj_simon}. This in turn relies on proving various continuity and Fredholm properties of the entropy and its derivatives, when they act as maps between suitable Sobolev spaces. We emphasise here that the norms involved in such estimates will implicitly be taken with respect to a fixed static pair $\p{\wh{g},\wh{\varphi}}$.
        
        \begin{proposition}\label{loj_needed_estims}
            For $\ell > \frac{n}{2} + 2$ and any $\left(g_1, \varphi_1\right)$, $\left(g_2, \varphi_2\right)$, which are $\eps$-close to $\left(\wh{g}, \wh{\varphi}\right)$ in the $H^\ell_{\wh{\varphi}}$-sense for $0 < \eps \ll 1$, we have
            \begin{equation}\label{mu_grad_estims}
                \norm{\nabla \mu_{\mathrm{AH}}\p{g_2,\varphi_2} - \nabla \mu_{\mathrm{AH}}\p{g_1,\varphi_1}}_{L^2_{\wh{\varphi}}} \leq C\p{\eps}\norm{\p{g_2, \varphi_2} - \p{g_1,\varphi_1}}_{H^2_{\wh{\varphi}}}.
            \end{equation}

            Furthermore, for any $h \in H^\ell_{\wh\varphi}\p{S^2 M}$ and $\psi \in H^\ell_{\wh\varphi}\p{M}$, we have the following estimate for the difference of linearisations of the gradient of the entropy:
            \begin{equation}\label{mu_hess_estims}
                \norm{D_{\p{g_2,\varphi_2}} \nabla \mu_{\mathrm{AH}} \p{h, \psi} - D_{\p{g_1,\varphi_1}} \nabla \mu_{\mathrm{AH}} \p{h, \psi}}_{L^2_{\wh{\varphi}}} \leq C\p{\eps}\norm{\p{g_2, \varphi_2} - \p{g_1,\varphi_1}}_{H^2_{\wh{\varphi}}}.
            \end{equation}
        \end{proposition}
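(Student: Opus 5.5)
We will express the gradient in the auxiliary form of Corollary \ref{first_var_crit},
$$\nabla\mu_{\mathrm{AH}}(\mfg)=-\frac{1}{2n}\p{\Ric_{\mfg}+n\mfg+\nabla^2_{\mfg}u_{\mfg}}e^{-u_{\mfg}}, \qquad u_{\mfg}=f_{\mfg}-\varphi,$$
and measure it in $L^2_{\wh\varphi}$. Since the weighted $L^2$ norms with respect to $\mfg_1,\mfg_2,\wh\mfg$ are uniformly comparable on an $\eps$-neighbourhood, and by \eqref{sobolev_norm_equiv} norms on $M$ and $\mathfrak{M}$ agree for $\tau$-independent objects, we may work on $\mathfrak{M}$.

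For \eqref{mu_grad_estims}, connect the two pairs by $\mfg_t=\mfg_1+t(\mfg_2-\mfg_1)$, $t\in[0,1]$. This path stays in the $\eps$-neighbourhood by convexity. Then
$$\nabla\mu_{\mathrm{AH}}(\mfg_2)-\nabla\mu_{\mathrm{AH}}(\mfg_1)=\int_0^1 D_{\mfg_t}\nabla\mu_{\mathrm{AH}}(\mfk)\,dt, \qquad \mfk=\mfg_2-\mfg_1,$$
so it suffices to bound $\norm{D_{\mfg}\nabla\mu_{\mathrm{AH}}(\mfk)}_{L^2}\le C\norm{\mfk}_{H^2}$ uniformly for $\mfg$ in the neighbourhood. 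Schematically, differentiating the expression above gives
$$\nabla^2_{\mfg}\ast\mfk+R_{\mfg}\ast\mfk+\nabla^2 u'+\nabla u\ast\nabla\mfk+\nabla^2u\ast\mfk+(\Ric_\mfg+n\mfg+\nabla^2u)\,u'.$$
The coefficients $R_\mfg$, $u$, $\nabla u$, $\nabla^2u$ are bounded in $C^{0,\alpha}$ because $\ell>\frac n2+2$ (Lemma \ref{wgtd_sob_emb}, Proposition \ref{entropy_analytic_metric_dep}). The derivative $u'$ is controlled in $H^2$ by the third estimate of Lemma \ref{lemma_needed_for_third_var}, applied to $v-\psi$. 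This gives the linear bound. It also shows the constant is uniform, and in fact of the form $C_0+C(\eps)$.

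For \eqref{mu_hess_estims}, apply the same interpolation once more:
$$D_{\mfg_2}\nabla\mu_{\mathrm{AH}}(\mfh)-D_{\mfg_1}\nabla\mu_{\mathrm{AH}}(\mfh)=\int_0^1 D^2_{\mfg_t}\nabla\mu_{\mathrm{AH}}(\mfh,\mfk)\,dt.$$
The second derivative is a bilinear expression in $(\mfh,\mfk)$ and their first and second derivatives, in mixed derivatives $u''(\mfh,\mfk)$, and in products $u'(\mfh)\ast u'(\mfk)$, with bounded coefficients. Each term will be estimated by placing the fixed direction $\mfh$ in $C^{2,\alpha}$, which is finite since $\mfh\in H^\ell_{\wh\varphi}$, and $\mfk$ in $H^2$. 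This gives a bound of the form $C\norm{\mfh}_{C^{2,\alpha}}\norm{\mfk}_{H^2}$. The dependence on $\mfh$ is absorbed into $C$, as the statement allows.

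The main obstacle is controlling the mixed second variation $u''$ in $H^2$ rather than only in $H^1$, as Lemma \ref{lemma_needed_for_third_var} provides. I would polarise the variational equation used there. The function $u''$ solves
$$(\Lap_\mfg+n)u''=\text{RHS},$$
where the right-hand side is a bilinear combination of $\nabla^{\le2}\mfh$, $\nabla^{\le2}\mfk$, $\nabla u'$ and $\nabla^2u'$, plus a term $\nabla u''\ast\nabla u$. Using the $H^2\to L^2$ isomorphism of Proposition \ref{isomorph_prop}, I would put $\mfh$ and $u'(\mfh)$ in $C^{2,\alpha}$ (first estimate of the lemma) and $\mfk$ and $u'(\mfk)$ in $H^2$. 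The term $\nabla u''\ast\nabla u$ is absorbed on the left by the smallness of $\norm{u}_{C^{2,\alpha}}$, exactly as in the proof of Lemma \ref{lemma_needed_for_third_var}.
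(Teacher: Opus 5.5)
Your proposal is correct. For \eqref{mu_grad_estims} it is close to the paper's argument. The paper treats the Ricci part with the nonlinear difference formula $\Ric_{\mfg_2}-\Ric_{\mfg_1}=\nabla^2\ast\mfk+\nabla\mfk\ast\nabla\mfk$, and uses a path integral only for $u$, via the $H^2$ bound on $f'$ from Lemma \ref{lemma_needed_for_third_var}. You instead integrate the whole linearised gradient along the segment; both give a uniform Lipschitz constant.

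For \eqref{mu_hess_estims} you take a genuinely different route. The paper does not differentiate a second time. It subtracts the linearised Euler--Lagrange equations satisfied by $u'_{\mfg_2}(\mfh)$ and $u'_{\mfg_1}(\mfh)$, inverts $\Delta_{\mfg_2}+n\colon H^2\to L^2$, and feeds in two facts: the part-one bound $\norm{u_{\mfg_2}-u_{\mfg_1}}_{H^2}\leq C\norm{\mfk}_{H^2}$ and the $C^{2,\alpha}$ bound on $u'_{\mfg_1}$. This gives $\norm{u'_{\mfg_2}-u'_{\mfg_1}}_{H^2}\leq C(\eps)\norm{\mfh}_{C^{2,\alpha}}\norm{\mfk}_{H^2}$ directly.

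You prove the infinitesimal version instead: an $H^2$ bound on the mixed second variation $u''(\mfh,\mfk)$, obtained by polarising the \emph{equation} (not the estimate) of Lemma \ref{lemma_needed_for_third_var}. Your distribution of norms is exactly what makes this work: $\mfh$ and $u'(\mfh)$ go in $C^{2,\alpha}$, $\mfk$ and $u'(\mfk)$ go in $H^2$, and $\nabla u''\ast\nabla u$ is absorbed on the left.

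What your route buys is that every estimate happens at a single base point $\mfg_t$. You therefore avoid the paper's bookkeeping with operators and contractions taken with respect to two different metrics. The cost is that you must write down $D^2\nabla\mu_{\mathrm{AH}}$ and the mixed $u''$-equation. You should also say explicitly that the differentiability in $t$ and the existence of $u''\in H^2$ come from the analyticity of $\mfg\mapsto u_{\mfg}\in H^\ell_{\wh\varphi}$ (Proposition \ref{entropy_analytic_metric_dep}).

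Both routes produce the constant $C(\eps)\norm{\mfh}_{C^{2,\alpha}}$. Your remark that the $\mfh$-dependence is absorbed into $C$ therefore matches what the paper actually proves.
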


        \begin{proof} 
        Using the auxiliary expression for the $L^2_{\mathfrak{g}}$-gradient of the entropy, provided by Corollary \ref{first_var_crit}, we may complete the proof as a (almost) faithful rendition of the proof of \cite[Proposition 5.9]{KY_PE_stability}. With the usual notation, $\mathfrak{g}=(g,\varphi)$ and $u_\mathfrak{g}=f_{\mathfrak{g}}-\varphi$, we have the gradient
            $$\nabla\mu_{\mathrm{AH}}\p{\mathfrak{g}}=-\frac{1}{2 n}\p{\Ric_{\mfg}+n\mfg+\nabla_\mfg^2u_\mfg}e^{-u_{\mfg}}.$$
            
        The proof of \eqref{mu_grad_estims} boils down to showing 
         \begin{align}
                \norm{\Ric_{\mfg_2} - \Ric_{\mfg_1}}_{L^2_{\wh{\mfg}}} &\leq C\norm{\mfg_2 - \mfg_1}_{H^2_{\wh{\mfg}}} \label{aux_ric_diff}\\
                \norm{\nabla^2_{\mfg_2}u_{\mfg_2} - \nabla^2_{\mfg_1} u_{\mfg_1}}_{L^2_{\wh{\mfg}}} &\leq C\norm{\mfg_2 - \mfg_1}_{H^2_{\wh{\mfg}}} \label{u_hess_diff}\\
                \norm{e^{-u_{\mfg_2}}-e^{-u_{\mfg_1}}}_{L^2_{\wh{\mfg}}} &\leq C\norm{\mfg_2 - \mfg_1}_{H^2_{\wh{\mfg}}}, \label{expu-diff}
        \end{align}

        For the first two, \eqref{aux_ric_diff}-\eqref{u_hess_diff}, it is advantageous to use the schematic expressions 
        \begin{align}
            \Ric_{\mfg_2}-\Ric_{\mfg_1}&=\nabla_{\mfg_2}^2\ast \p{\mfg_2-\mfg_1}+\nabla_{\mfg_2}\p{\mfg_2-\mfg_1}\ast \nabla_{\mfg_2}\p{\mfg_2-\mfg_1} \label{ricci_schematic}\\
            \nabla^2_{\mfg_2}u_{\mfg_2}-\nabla^2_{\mfg_1}u_{\mfg_1} &=\nabla^2_{\mfg_2}\p{u_{\mfg_2}-u_{\mfg_1}}+\nabla_{\mfg_2}\p{\mfg_2-\mfg_1}\ast d\p{u_{\mfg_2}-u_{\mfg_1}}\label{hessian_schematic},
        \end{align}
        where the suppressed contractions are all with respect to $\mfg_1$. The above equations can be found on page 114 of \cite{CLN}. It should be noted that the proximity of $\mfg_1$ and $\mfg_2$ to the background $\mfg$, is such as to make the difference between volume forms and covariant derivatives negligible. To put it simply, we are interested in the order of $\nabla$, not the subscript. In light of \eqref{ricci_schematic}, we see that \eqref{aux_ric_diff} holds as $\mfg_1$ and $\mfg_2$ are sufficiently close to $\wh{\mfg}$ to ensure 
            $$\norm{\nabla_{\mfg_2}^2\ast \p{\mfg_2-\mfg_1}+\nabla_{\mfg_2}\p{\mfg_2-\mfg_1}\ast \nabla_{\mfg_2}\p{\mfg_2-\mfg_1}}_{L^2_{\wh{\mfg}}}\leq C(\varepsilon)\norm{\mfg_2-\mfg_1}_{H^2_{\wh{\mfg}}}. $$

        For the difference of Hessians, \eqref{u_hess_diff}, we need to show that $\nabla^2_{\mfg_2}\p{u_{\mfg_2}-u_{\mfg_1}}$ can somehow be estimated by $\mfg_2-\mfg_1$. To this end we write $\mfg_s=\mfg_1+(s-1)\p{\mfg_2-\mfg_1}$, which means 
            $$\nabla^2_{\mfg_2}\p{u_{\mfg_2}-u_{\mfg_1}}=\int_{1}^2\nabla_{\mfg_2}^2u'_{\mfg_s}\; ds,$$
        where $u'_{\mfg_s}=\left.\frac{d}{dt}\right|_{t=0}u_{\mfg_{s+t}}$. Combined with the third inequality of Lemma \ref{lemma_needed_for_third_var}, we have the desired estimate
            $$\norm{\nabla^2_{\mfg_2}\p{u_{\mfg_2}-u_{\mfg_1}}}_{L^2_{\wh{\mfg}}}\leq \sup_{s\in[1,2]}\norm{u'_{\mfg_s}}_{H^2_{\wh{\mfg}}}\leq C\norm{\mfg_2-\mfg_1}_{H^2_{\wh{\mfg}}}.$$
        One may even use the argument to conclude that 
        $$\norm{u_{\mfg_2}-u_{\mfg_1}}_{H^2_{\wh\mfg}}\leq C\norm{\mfg_2-\mfg_1}_{H^2_{\wh\mfg}},$$
        from which \eqref{expu-diff} follows. The inequality will be needed again in the second part of this proof. From Proposition \ref{entropy_analytic_metric_dep} and the Sobolev embedding theorem, we have $\norm{u}_{C^{2,\alpha}}\leq C\norm{\mfg_2-\mfg_1}_{H^{\ell}_{\wh{\mfg}}}\leq C\varepsilon$, with which we may complete the proof of \eqref{mu_grad_estims}:
        \begin{align*}
            \norm{\nabla^2_{\mfg_2}u_{\mfg_2}-\nabla^2_{\mfg_1}u_{\mfg_1}}_{L^2_{\wh{\mfg}}} &\leq     \norm{\nabla^2_{\mfg_2}\p{u_{\mfg_2}-u_{\mfg_1}}}_{L^2_{\wh{\mfg}}}+C\norm{u_{\mfg_2}-u_{\mfg_1}}_{C^1}\norm{\nabla_{\mfg_2}\p{\mfg_2-\mfg_1}}_{L^2_{\wh{\mfg}}}\\ &\leq C(\varepsilon)\norm{\mfg_2-\mfg_1}_{H^2_{\wh{\mfg}}},
        \end{align*}    
        where that the first inequality uses \eqref{hessian_schematic}.\\
        
        As for the proof of \eqref{mu_hess_estims}, we may linearise the gradient using standard variational formulae (see Appendix \ref{appendix}). This yields
        \begin{align*}
            D_{\mfg}\nabla\mu_{\mathrm{AH}}\p{\mfh}=-\frac{e^{-u_\mfg}}{2 n}\biggl(\frac{1}{2}\Delta_{L}\mfh+\frac{1}{2}\mfh\times \nabla^2_\mfg u_\mfg+\frac{1}{2}\nabla_{\nabla_\mfg u_\mfg}\mfh-\delta_\mfg^*\p{\beta_\mfg\mfh+\mfh\cdot du_\mfg}+n\mfh \;\;&\\ +\nabla^2_\mfg u'_\mfg-u'_\mfg\p{\Ric_\mfg+n\mfg+\nabla^2_\mfg u_\mfg} &\biggr), 
        \end{align*}
        where $\beta_\mfg=\delta_\mfg+\frac{1}{2}d\tr_\mfg$ is the Bianchi operator. In light of past estimates, the only term we need to prove new estimates for is $\nabla^2_{\mfg_2} u'_{\mfg_2} - \nabla^2_{\mfg_1} u'_{\mfg_1}$. To this end, first note that by applying \eqref{hessian_schematic} we have
        \begin{equation*}
            \nabla^2_{\mfg_2} u'_{\mfg_2} - \nabla^2_{\mfg_1} u'_{\mfg_1} = \nabla^2_{\mfg_2} \p{u'_{\mfg_2} - u'_{\mfg_1}} + \nabla_{\mfg_2} \p{\mfg_2 - \mfg_1}\ast d\p{u'_{\mfg_1} - u'_{\mfg_2}}.
        \end{equation*}    
        Therefore, we can estimate as follows:
        \begin{align*}
            \norm{\nabla^2_{\mfg_2} u'_{\mfg_2} - \nabla^2_{\mfg_1} u'_{\mfg_1}}_{L^2_{\wh\mfg}}
            &\leq C\left(\norm{\nabla^2_{\mfg_2} \p{u'_{\mfg_2} - u'_{\mfg_1}}}_{L^2_{\wh\mfg}} + \norm{u'_{\mfg_2} - u'_{\mfg_1}}_{C^1}\norm{\mfg_2 - \mfg_1}_{H^1_{\wh\mfg}}\right)\\
            &\leq C\left(\norm{\nabla^2_{\mfg_2} \p{u'_{\mfg_2} - u'_{\mfg_1}}}_{L^2_{\wh\mfg}} + \norm{\mfh}_{C^{2,\alpha}}\norm{\mfg_2 - \mfg_1}_{H^1_{\wh\mfg}}\right),
        \end{align*}
        where the final line is due to the first estimate of Proposition \ref{lemma_needed_for_third_var}. Next, by taking the difference of the linearised Euler--Lagrange equations \eqref{EL_variation} for $u_{\mfg_2}'$ and $u_{\mfg_1}'$, one finds that 
        \begin{align*}
            \p{\Delta_{\mfg_2}+n}\p{u_{\mfg_2}'-u_{\mfg_1}'}={}& \nabla^2_{\mfg_2}\mfk\ast \mfh+\nabla_{\mfg_2}\mfk\ast {\big[\nabla_{\mfg_2}\mfk \ast \mfh+\nabla_{\mfg_2}\mfh+\mfh\ast \nabla_{\mfg_2}\p{u_{\mfg_1}+u_{\mfg_2}}\big]}+\mfk\ast \nabla^2_{\mfg_2}u_{\mfg_1}'\\
            &+\mfk\ast {\big[\nabla^2_{\mfg_2}\mfh+\nabla_{\mfg_2}\mfh\ast \nabla_{\mfg_2}\p{u_{\mfg_1}+u_{\mfg_2}}+\mfh\ast \p{\Ric_{\mfg_2}+\nabla_{\mfg_2}u_{\mfg_1}\ast \nabla_{\mfg_2}u_{\mfg_2}}\big]}\\ & +\nabla_{\mfg_2}^2\p{u_{\mfg_1}-u_{\mfg_2}}\ast \mfh +\nabla_{\mfg_2}\p{u_{\mfg_1}-u_{\mfg_2}}\ast {\big[\nabla_{\mfg_2}\mfh+\mfh\ast \nabla_{\mfg_2}\p{u_{\mfg_1}+u_{\mfg_2}}\big]}
        \end{align*}
        where $\mfk:=\mfg_1-\mfg_2$ and we allow $\ast$ to denote contractions in both metrics. Thus, using that $\Lap_{\mfg_2} + n: H^2_{\wh\mfg} \rightarrow L^2_{\wh\mfg}$ is an isomorphism by a slight generalisation of Proposition \ref{isomorph_prop}, we have
         \begin{align*}
             \norm{u_{\mfg_2}'-u_{\mfg_1}'}_{H^2_{\wh\mfg}} &\leq C\norm{\p{\Lap_{\mfg_2} + n} \p{u'_{\mfg_1}-u'_{\mfg_2}}}_{L^2_{\wh\mfg}}\\
             &\leq C(\eps)\p{\norm{\mfh}_{C^{2,\alpha}}+\norm{u_{\mfg_1}'}_{C^{2,\alpha}}} \norm{\mfk}_{H^2_{\wh{\mfg}}} + C(\varepsilon)\norm{\mfh}_{C^{1,\alpha}}\norm{u_{\mfg_1}-u_{\mfg_2}}_{H^2_{\wh\mfg}}.
         \end{align*}
         where we, for the sake of the reader, have already estimated the factors $\nabla_{\mfg_2}u_{\mfg_1}$ and $\nabla_{\mfg_2}u_{\mfg_2}$ using $\norm{u_{\mfg_i}}_{C^{2,\alpha}}\leq C(\varepsilon)$ for $i=1,2$. In the first half of the proof, it was shown that
         $$\norm{u_{\mfg_1}-u_{\mfg_2}}_{H^2_{\wh\mfg}}\leq C\norm{\mfg_1-\mfg_2}_{H^2_{\wh\mfg}},$$
         which, combined with the first inequality of Lemma \ref{lemma_needed_for_third_var}, proves that 
         $$\norm{u_{\mfg_2}'-u_{\mfg_1}'}_{H^2_{\wh\mfg}}\leq C(\varepsilon)\norm{\mfh}_{C^{2,\alpha}}\norm{\mfg_1-\mfg_2}_{H^2_{\wh\mfg}}.$$        
         With this in hand, we may conclude that 
         $$\norm{D_{\mfg_1}\nabla\mu_{\mathrm{AH}}\p{\mfh}-D_{\mfg_2}\nabla\mu_{\mathrm{AH}}\p{\mfh}}_{L^2_{\wh\mfg}}\leq C\p{\eps}\norm{\mfh}_{C^{2,\alpha}}\norm{\mfg_1-\mfg_2}_{H^2_{\wh\mfg}},$$
         which, due to the Sobolev embedding theorem, is simply an auxiliary version of \eqref{mu_hess_estims}.
        \end{proof}

        We will need the following slice theorem before we are ready to prove the \L ojasiewicz--Simon inequality.
        
        \begin{lemma}\label{slice_result}
            Let $\p{M,\wh{g},\wh\varphi}$ be a static triple, let $\ell > \frac{n}{2} + 2$, and define the space of transverse auxiliary metrics
            \begin{equation*}
                \mathcal{S}^{\ell}_{\wh{g},\wh{\varphi}} := \left\{\p{g,\varphi} \in \mathcal{R}^\ell_{\wh{\varphi}}\p{M,\wh{g}} \times \mathcal{R}^\ell_{\wh{\varphi}}\p{M, \wh{\varphi}}: e^{-2\varphi}d\tau^2 + g = \mathfrak{g} \in \ker\delta_{\wh{\mathfrak{g}}}\right\}.
            \end{equation*}

            Then there is a neighbourhood $\mathcal{U} \subset \mathcal{R}^\ell_{\wh{\varphi}}\p{M,\wh{g}} \times \mathcal{R}^\ell_{\wh{\varphi}}\p{M, \wh{\varphi}}$ of $\p{\wh{g},\wh{\varphi}}$, such that any pair $\p{g,\varphi} \in \mathcal{U}$ can be written uniquely as $\p{g,\varphi} = \p{\eta^\ast \wt{g}, \eta^\ast \wt{\varphi}}$ for some $\p{\wt{g},\wt{\varphi}} \in \mathcal{U} \cap \mathcal{S}^{\ell}_{\wh{g},\wh{\varphi}}$, and a diffeomorphism $\eta \in \mathrm{Diff}^{\ell + 1}\p{M}$ that is $H^{\ell + 1}_{\wh{\varphi}}$-close to the identity.
        \end{lemma}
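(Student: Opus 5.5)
The approach is the standard Ebin-type slice argument via the implicit function theorem, carried out on the auxiliary manifold but restricted to $\tau$-independent objects. Consider the map
\begin{equation*}
    \Phi : \p{\mathcal{R}^\ell_{\wh{\varphi}}\p{M,\wh{g}} \times \mathcal{R}^\ell_{\wh{\varphi}}\p{M,\wh{\varphi}}} \times H^{\ell+1}_{\wh{\varphi}}\p{TM} \longrightarrow H^{\ell-1}_{\wh{\varphi}}\p{T^*M}, \qquad \Phi\p{\mfg, X} := \delta_{\wh{\mfg}}\p{\eta_X^{-1,*}\mfg},
\end{equation*}
where $\eta_X = \exp_{\wh{g}}(X)$ (or the time one flow of $X$). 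Here $\mfg=\p{g,\varphi}$ is viewed as the $\tau$-independent tensor $e^{-2\varphi}d\tau^2+g$, and $X$ as a $\tau$-independent vector field on $\mathfrak{M}$. By \eqref{aux_divergence}, $\delta_{\wh\mfg}$ of a $\tau$-independent tensor has no $d\tau$-component. The target is therefore genuinely a space of one-forms on $M$, and pulling back by diffeomorphisms of $M$ preserves the warped structure. Consequently the slice condition $\Phi=0$ is exactly membership in $\mathcal{S}^\ell_{\wh g,\wh\varphi}$.

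First I check that $\Phi$ is well defined and $C^1$ (indeed analytic in $\mfg$) between these weighted spaces. This uses $\ell>\frac n2+2$, Lemma \ref{wgtd_sob_emb} and the norm equivalences \eqref{sobolev_norm_equiv}, so that compositions with $H^{\ell+1}$-diffeomorphisms close to the identity behave well. One must note the usual loss: $\mfg\circ\eta$ is only continuous, not differentiable, in $\eta$ at top order. I would handle this as in Ebin and \cite{KY_PE_stability}, by differentiating at $\p{\wh\mfg,0}$ only in the $X$-slot, where $\wh\mfg$ is smooth ($C^{\ell,\alpha}$ suffices). The partial derivative is
\begin{equation*}
    D_X\Phi_{(\wh\mfg,0)}(Y) = -\delta_{\wh\mfg}\mathscr{L}_Y\wh\mfg = -2\,\delta_{\wh\mfg}\delta^*_{\wh\mfg}Y .
\end{equation*}

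The key step is showing that $P:=\delta_{\wh\mfg}\delta^*_{\wh\mfg}: H^{\ell+1}_{\wh\varphi}(TM)\to H^{\ell-1}_{\wh\varphi}(T^*M)$ is an isomorphism on $\tau$-independent fields. Note that $L^2_{\wh\varphi}(M)$ corresponds to $L^2(\mathfrak{M})$ with no extra weight, so this is the unweighted $L^2$ theory of $P$ on the Poincar\'e--Einstein manifold $\p{\mathfrak M,\wh\mfg}$. Since $\Ric_{\wh\mfg}=-n\wh\mfg$, the Weitzenb\"ock formula gives $2\delta\delta^* Y = \nabla^*\nabla Y - d\delta Y +2n Y$ up to sign conventions, so $(PY,Y)_{L^2}\ge c\|Y\|^2_{H^1}$. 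This yields trivial $L^2$-kernel, and Lee's theory \cite{Lee} (indicial radius of $\delta\delta^*$ on PE manifolds, whose Fredholm weight range was quoted as $(0,n)$ in \eqref{standard_splitting}, containing the $L^2$ weight $\frac n2$) gives Fredholm index zero, hence an isomorphism. Because $P$ commutes with the $\mathbb{S}^1$-action, it preserves $\tau$-independence, and the inverse does too by uniqueness. Thus the restriction to $M$ is an isomorphism as well, with the norm equivalence \eqref{sobolev_norm_equiv}.

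The implicit function theorem then gives neighbourhoods and a unique $X=X(\mfg)$, small in $H^{\ell+1}_{\wh\varphi}$, with $\Phi(\mfg,X(\mfg))=0$. Setting $\eta=\eta_{X(\mfg)}$ and $\p{\wt g,\wt\varphi}=\eta^{-1,*}\p{g,\varphi}$ gives the decomposition, with uniqueness coming from the uniqueness part of the implicit function theorem after shrinking $\mathcal U$. I expect the main obstacle to be the regularity loss, since $\Phi$ is not $C^1$ jointly in the naive Sobolev setting. I would first try the standard remedy of working at a smooth base point, using the equivariance $\Phi(\mfg,\eta\circ\zeta)$ and the Ebin slice argument adapted to weighted $H^\ell$ spaces as done in \cite{KY_PE_stability}. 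If that fails, I would solve the equivalent elliptic equation $\delta_{\wh\mfg}\p{\eta^{-1,*}\mfg}=0$ as a nonlinear harmonic-map-type problem for $\eta$, where the loss of derivatives does not occur.
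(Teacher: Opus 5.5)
Your proposal is correct and is in substance the paper's argument. Both rest on $\delta_{\wh{\mathfrak{g}}}\delta^*_{\wh{\mathfrak{g}}}$ being invertible from $H^{\ell+1}_{\wh\varphi}$ to $H^{\ell-1}_{\wh\varphi}$ on $\tau$-independent fields (indicial radius $\frac{n}{2}$ on the Poincar\'e--Einstein manifold $(\mathfrak{M},\wh{\mathfrak{g}})$, trivial $L^2$-kernel). The difference is that you apply the implicit function theorem to the gauge condition $\delta_{\wh{\mathfrak{g}}}(\eta^{-1,*}\mathfrak{g})=0$, whereas the paper applies the inverse function theorem to $(\wt g,\wt\varphi,\eta)\mapsto(\eta^*\wt g,\eta^*\wt\varphi)$ on $\mathcal{S}^{\ell}_{\wh g,\wh\varphi}\times\mathrm{Diff}^{\ell+1}(M)$, whose differential $(h,\psi,X)\mapsto(h+\delta^*_{\wh g}X,\psi+\frac12X(\wh\varphi))$ is bijective by the same splitting.

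The paper's proof passes over the loss of derivatives you flag, and its map suffers from it equally. Your fallback is the right remedy: concretely, rewrite the condition by naturality as $\delta_{\eta^*\wh{\mathfrak{g}}}\mathfrak{g}=0$, so that $\eta$ acts only on the smooth background. Note also that the correct identity is $2\delta\delta^*Y=\nabla^*\nabla Y+d\delta Y+nY$ (Besse's conventions, $\Ric_{\wh{\mathfrak{g}}}=-n\wh{\mathfrak{g}}$); this, not the version you wrote, is what yields your coercivity bound.
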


        \begin{proof}
            As $\p{\mathfrak{M}, \wh{\mathfrak{g}}}$ is AH, one may show that the operator $\delta_{\wh\mfg}\delta_{\wh\mfg}^*$ has indicial radius $R=\frac{n}{2}$ (when $\dim\mathfrak{M}=n+1$). One way to see this is to decompose it as
            $$2\delta_{\wh{\mfg}}\delta^*_{\wh{\mfg}}X=\p{\nabla^*\nabla}_{\wh\mfg} X-\Ric_{\wh\mfg}\cdot X+\nabla_{\wh\mfg}\delta_{\wh\mfg} X,$$
            and use that $-\Ric_{\wh\mfg}\cdot X\sim nX$ (an identity in the case of the PE metric $\wh\mfg$, but asymptotically true for a general AH metric). One may then show that $R$ is determined solely as the indicial radius of $\p{\nabla^*\nabla}_{\wh\mfg}+n$, which is indeed $\frac{n}{2}$. This, and ellipticity, implies that $\mathrm{Im}\delta^*_{\wh{\mfg}}|_{H^{\ell+1}_{\beta}\p{T\mathfrak{M}}}$ is closed for $\beta$ in the Fredholm range of $\delta_{\wh\mfg}\delta_{\wh\mfg}^*$, i.e. $\abs{\beta}<\frac{n}{2}$. Specifically, for $\beta=0$, we have the decomposition 
            \begin{equation*}
                H^\ell_{\wh{\mathfrak{g}}}\p{S^2\mathfrak{M}} = \mathrm{ker} \left. \delta_{\wh{\mathfrak{g}}}\right|_{H^{\ell}_{\wh{\mathfrak{g}}}\p{S^2\mathfrak{M}}} \oplus \mathrm{Im} \left. \delta^\ast_{\wh{\mathfrak{g}}}\right|_{H^{\ell+1}_{\wh{\mathfrak{g}}}\p{T\mathfrak{M}}}.
            \end{equation*}

            By restricting the decomposition to the subspace $T_{\p{\wh{g}, \wh{\varphi}}}\p{\mathcal{R}^\ell_{\wh{\varphi}}\p{M,\wh{g}} \times \mathcal{R}^\ell_{\wh{\varphi}}\p{M, \wh{\varphi}}}$, we obtain
            \begin{align*}
                &T_{\p{\wh{g}, \wh{\varphi}}}\p{\mathcal{R}^\ell_{\wh{\varphi}}\p{M,\wh{g}} \times \mathcal{R}^\ell_{\wh{\varphi}}\p{M, \wh{\varphi}}}\\
                &= \left\{\p{h,\psi} \in H^\ell_{\wh{\varphi}}\p{S^2M} \times H^\ell_{\wh{\varphi}}\p{M} : \delta_{\wh{\mathfrak{g}}}(h,\psi)= 0\right\}\oplus \left\{\p{\delta^\ast_{\wh{g}}X, \frac{1}{2}X\p{\wh{\varphi}}} : X \in H^{\ell+1}_{\wh{\varphi}}\p{TM}\right\},
            \end{align*}
            where $\delta_{\wh{\mathfrak{g}}} (h,\psi) = \delta_{\wh{g}}h + h \cdot d\wh{\varphi} + 2\psi \;d\wh{\varphi}$ by \eqref{aux_divergence}. Note that the first factor on the right hand side above is exactly $T_{\p{\wh{g},\wh{\varphi}}}\mathcal{S}^{\ell}_{\wh{g},\wh{\varphi}}$. Now define the map
            \begin{equation*}
                \begin{cases}
                    \Phi : \mathcal{S}^{\ell}_{\wh{g}, \wh{\varphi}} \times \mathrm{Diff}^{\ell + 1}\p{M} &\longrightarrow \;\;\mathcal{R}^\ell_{\wh{\varphi}}\p{M,\wh{g}} \times \mathcal{R}^\ell_{\wh{\varphi}}\p{M, \wh{\varphi}}\\\hfill
                    \p{g,\varphi,\eta} &\longmapsto\;\; \p{\eta^\ast g, \eta^\ast \varphi}.
                \end{cases}
            \end{equation*}

            The differential of $\Phi$ at $\p{\wh{g}, \wh{\varphi}, \mathrm{Id}_M}$ is
            \begin{equation*}
                \begin{cases}
                    D_{\wh{g}, \wh{\varphi},\mathrm{Id}_M}\Phi : T_{\p{\wh{g}, \wh{\varphi}}} \mathcal{S}^{\ell}_{\wh{g}, \wh{\varphi}} \times H^{\ell + 1}_{\wh{\varphi}}\p{TM} &\longrightarrow \;\; T_{\p{\wh{g}, \wh{\varphi}}} \mathcal{S}^{\ell}_{\wh{g}, \wh{\varphi}} \oplus \mathrm{Im}\:   \delta^\ast_{\wh{\mathfrak{g}}}|_{H^{\ell+1}_{\wh{\varphi}}\p{TM}}\\
                    \hfill\p{h,\psi,X} &\longmapsto \;\; \p{h + \delta^\ast_{\wh{g}}X, \psi + \frac{1}{2}X\p{\wh{\varphi}}},
                \end{cases}
            \end{equation*}
            which is clearly bijective. The inverse function theorem then informs us that there exists a neighbourhood $\mathcal{U} \subset \mathcal{R}^\ell_{\wh{\varphi}}\p{M,\wh{g}} \times \mathcal{R}^\ell_{\wh{\varphi}}\p{M, \wh{\varphi}}$, such that every $\p{g,\varphi} \in \mathcal{U}$ can be written uniquely as $\Phi\p{\wt{g},\wt{\varphi},\eta} = \p{g,\varphi}$, for some $\p{\wt{g},\wt{\varphi}} \in \mathcal{U} \cap \mathcal{S}^{\ell}_{\wh{g},\wh{\varphi}}$ and $\eta \in \mathrm{Diff}^{\ell + 1}\p{M}$ that is $H^{\ell + 1}_{\wh{\varphi}}$-close to $\mathrm{Id}_M$.
        \end{proof}

        For the next proof, we shall need the following generalisation of the space in Lemma \ref{slice_result}. For $1\leq k\leq \ell$, set
                $$\mathcal{S}^k_{\wh{g},\wh{\varphi}}=\mathcal{S}^k_{\wh{\mfg}}:= \p{\mathcal{R}^k_{\wh{\varphi}}\p{M,\wh{g}} \times \mathcal{R}^k_{\wh{\varphi}}\p{M, \wh{\varphi}}}\cap \ker \delta_{\wh\mfg}.$$
                
        This definition is naturally extended to $k =0$, by setting 
            $$\mathcal{S}^0_{\wh{g},\wh{\varphi}}=\mathcal{S}_{\wh\mfg}^0:=\p{\mathcal{R}^k_{\wh{\varphi}}\p{M,\wh{g}} \times \mathcal{R}^k_{\wh{\varphi}}\p{M, \wh{\varphi}}}\cap \p{\mathrm{Im}\; \delta^*_{\wh{\mfg}}|_{C_c^\infty(TM)}}^\perp.$$

        The precise form of $\mathcal{S}^0_{\wh{g},\wh{\varphi}}$ is needed because of regularity considerations. We refer the reader to \cite[Lemma 5.7]{KY_PE_stability} and the associated discussion for more details. We can now prove our \L ojasiewicz--Simon inequality, which we restate here in an expanded form.
        
        \begin{theorem}\label{loj_ineq_thm}
            Given a static pair $\p{\wh{g},\wh{\varphi}}$, there is a neighbourhood $\mathcal{U} \subset \mathcal{R}^\ell_{\wh{\varphi}}\p{M,\wh{g}} \times \mathcal{R}^\ell_{\wh{\varphi}}\p{M,\wh{\varphi}}$ of $\p{\wh{g},\wh{\varphi}}$, in which the entropy $\mu_{\mathrm{AH}}=\mu_{\mathrm{AH},\wh{g}}$ satisfies the following: there exists a positive constant $C > 0$ and an exponent $\theta \in \left(0,1\right]$, such that
            \begin{equation}\label{wgtd_loj_ineq}
                \abs{\mu_{\mathrm{AH}}\p{g, \varphi} - \mu_{\mathrm{AH}}\p{\wh{g}, \wh{\varphi}}}^{2-\theta} \leq C\norm{\nabla\mu_{\mathrm{AH}}\p{g, \varphi}}^2_{L^2_{\wh{\varphi}}}.
            \end{equation}

            This weighted inequality is equivalent to an unweighted inequality with respect to the auxiliary metric $\\wh{\mathfrak{g}} = e^{-2\wh{\varphi}}d\tau^2 + \wh{g}$:
            \begin{equation}\label{aux_loj_ineq}
                \abs{\mu_{\mathrm{AH}}\p{\mathfrak{g}} - \mu_{\mathrm{AH}}\p{\wh{\mathfrak{g}}}}^{2-\theta} \leq C\norm{\nabla \mu_{\mathrm{AH}}\p{\mathfrak{g}}}^2_{L^2_{\wh{\mathfrak{g}}}}.
            \end{equation}
        \end{theorem}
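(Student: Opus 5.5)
We follow the Colding--Minicozzi framework \cite{CM_loj_simon}, as in \cite{KY_PE_stability}. The first move is to reduce to the slice. By Lemma \ref{slice_result}, any $\p{g,\varphi}$ in a small neighbourhood $\mathcal{U}$ can be written as $\p{\eta^*\wt g,\eta^*\wt\varphi}$ with $\p{\wt g,\wt\varphi}\in\mathcal{S}^\ell_{\wh g,\wh\varphi}$. Proposition \ref{entropy_diffeo_invar} shows that $\mu_{\mathrm{AH}}$ is invariant under such structure-preserving diffeomorphisms, and its gradient transforms naturally. Since the weighted $L^2$-norms relative to $\wh{\varphi}$ change only by a bounded factor when $\eta$ is close to the identity, it suffices to prove \eqref{wgtd_loj_ineq} for pairs in the slice.

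Next we set up the abstract Łojasiewicz--Simon argument. We view $\mu_{\mathrm{AH}}$ restricted to $\mathcal{S}^\ell_{\wh\mfg}$ as an analytic functional, which is justified by the corollary following Proposition \ref{entropy_analytic_metric_dep}. We take its gradient to be the $L^2_{\wh\varphi}$-projection of $\nabla\mu_{\mathrm{AH}}$ onto $\ker\delta_{\wh\mfg}$, equivalently onto $\mathcal{S}^0_{\wh\mfg}$. The hypotheses of \cite{CM_loj_simon} then need to be verified in turn:
\begin{itemize}
\item[(a)] the gradient map is Lipschitz from $H^2_{\wh\varphi}$ to $L^2_{\wh\varphi}$ near $\wh\mfg$;
\item[(b)] its linearisation depends Lipschitz-continuously on the base point;
\item[(c)] the linearisation at $\wh\mfg$ is Fredholm between the appropriate slice spaces;
\item[(d)] the map is analytic.
\end{itemize}
Conditions (a) and (b) are exactly \eqref{mu_grad_estims} and \eqref{mu_hess_estims} of Proposition \ref{loj_needed_estims}, and (d) follows from the analyticity results already established.

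For (c), we use Proposition \ref{auxiliary_einstein_op} together with \eqref{linearised_minimiser}. For transverse $\mfh$ we have $v=\frac12\tr h$, so on the slice the Hessian reduces to $-\frac{1}{4n}\Delta_{E,\wh\mfg}\mfh$, where $\Delta_{E,\wh\mfg}$ is the Einstein operator of the Poincaré--Einstein metric $\wh\mfg$ acting on $\tau$-independent tensors. We then invoke \cite{Lee}. The operator $\Delta_E$ on symmetric 2-tensors over an AH manifold of dimension $n+1$ has indicial radius $\sqrt{(n+1)^2/4 - \ldots}>0$ at weight $0$ (that is, for $L^2$). Hence $\Delta_{E,\wh\mfg}\colon H^{k+2}\to H^k$ is Fredholm of index zero with finite-dimensional $L^2$-kernel. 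Restricting to $\tau$-independent transverse tensors is compatible with this, because $\Delta_E$ commutes with $\delta_{\wh\mfg}$ up to $\delta^*$ terms at an Einstein metric, and it preserves $\tau$-independence. The norm equivalence \eqref{sobolev_norm_equiv} then lets us move between $H^k_{\wh\varphi}(M)$ and $H^k_{\wh\mfg}(\mathfrak{M})$.

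With (a)--(d) in hand, we perform the Lyapunov--Schmidt reduction onto the finite-dimensional kernel, apply the classical finite-dimensional Łojasiewicz inequality to the analytic reduced function, and conclude \eqref{wgtd_loj_ineq} on the slice. The resulting inequality is controlled by the projected gradient, which is bounded by the full gradient. The equivalence of \eqref{wgtd_loj_ineq} and \eqref{aux_loj_ineq} follows directly from $\norm{\cdot}_{L^2_{\wh\varphi}(M)}^2=\frac1{2\pi}\norm{\cdot}_{L^2_{\wh\mfg}(\mathfrak{M})}^2$ and the pointwise norm identity for pairs.

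We expect the main obstacle to be the functional-analytic setup around $\mathcal{S}^0_{\wh\mfg}$. One must ensure that the projection of $\nabla\mu_{\mathrm{AH}}$ onto the slice loses no control in the $L^2_{\wh\varphi}$-norm, and that the Fredholm property holds on the $k=0$ slice, where the divergence-free condition is only weakly defined. Here we would mimic \cite[Lemma 5.7]{KY_PE_stability}: we show that the $L^2$-orthogonal complement of $\delta^*_{\wh\mfg}$ of compactly supported fields agrees with the $L^2$-closure of $\ker\delta_{\wh\mfg}$, using the closed-range property of $\delta^*_{\wh\mfg}$ from the proof of Lemma \ref{slice_result}.
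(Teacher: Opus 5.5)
Your proposal is correct and follows essentially the same route as the paper. You reduce to the divergence-free slice via Lemma \ref{slice_result} and diffeomorphism invariance, then verify the hypotheses of the abstract \L ojasiewicz--Simon theorem, using Proposition \ref{loj_needed_estims} for the Lipschitz conditions and the reduction of the Hessian on the slice to $-\frac{1}{4n}\Delta_{E,\wh\mfg}$ for the Fredholm property; the only difference is that you spell out the gradient projection and the Colding--Minicozzi Lyapunov--Schmidt reduction explicitly, whereas the paper packages these by citing \cite[Theorem 5.1]{KY_PE_stability}.
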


        \begin{proof}
            The result will follow if we can ensure the assumptions and Condition \textit{(1.)} - \textit{(4.)} of \cite[Theorem 5.1]{KY_PE_stability} are met. The setup is slightly different from that of \cite[Theorem 5.1]{KY_PE_stability}, as we shall use $\wh{\mfg}$ in place of $0$, and 
            $$\mathcal{R}^k_{\wh\mfg}(\mathfrak{M},\wh{\mfg}):=\left\{\mfg \; :\; \mfg-\wh\mfg\in H^k_{\wh{\mfg}}(S^2\mathfrak{M})\right\}$$
            in place of $H^k$. However, this causes no issues. By Lemma \ref{slice_result} and the diffeomorphism invariance of the entropy, it is enough to prove \eqref{wgtd_loj_ineq} on a neighbourhood $\mathcal{U}\subset \mathcal{R}^\ell_{\wh\mfg}\p{\mathfrak{M},\wh{\mfg}}\cap \mathcal{S}^0_{\wh{\mfg}}=\mathcal{S}^\ell_{\wh{\mfg}}$ of $\wh{\mfg}$. Suppose $\mathcal{U}$ is chosen small enough that $\mu_{\mathrm{AH}}$ is well-defined and analytic, and the estimates of Proposition \ref{loj_needed_estims} and Lemma \ref{lemma_needed_for_third_var} hold. \\

            \noindent\textbf{Condition 1:} \textit{The $L^2$-gradient of $\mu_{\mathrm{AH}}$, $\nabla \mu_{\mathrm{AH}}:\mathcal{U}\to H^{\ell-2}_{\wh\mfg}$, satisfies $\nabla\mu_{\mathrm{AH}}(\wh{\mfg})=0$ and $\norm{\nabla\mu_{\mathrm{AH}}(\mfg_{1})-\nabla\mu_{\mathrm{AH}}(\mfg_2)}_{L^2_{\wh{\mfg}}}\leq C\norm{\mfg_1-\mfg_2}_{H^{2}_{\wh\mfg}}$, for $\mfg_1,\mfg_2\in \mathcal{U}$}.\\

            The $L^2$-gradient of $\mu_{\mathrm{AH}}$ at $\mfg\in \mathcal{U}$ is given by 
            $$\nabla \mu_{\mathrm{AH}}(\mfg)=-\frac{1}{2 n}\p{\Ric_\mfg+n\mfg+\nabla^2_\mfg u_\mfg}e^{-u_\mfg}.$$
            
            Since the Poincar\'e-Einstein metric $\wh\mfg$ is a critical point for the entropy, we have $\nabla \mu_{\mathrm{AH}}(\wh\mfg)=0$. The required $L^2_{\wh{\mfg}}$ estimate follows from Proposition \ref{loj_needed_estims}.\\

            \noindent\textbf{Condition 2:} \textit{For every $\mfh\in H^\ell_{\wh{\mfg}}$ the map $\mfg\mapsto D_\mfg\nabla\mu_{\mathrm{AH}}(\mfh):\mathcal{U}\to L^2_{\wh\mfg}$ is continuous.}\\

            This follows immediately from the second (Lipschitz) estimate of Proposition \ref{loj_needed_estims}.\\

            \noindent\textbf{Condition 3:} \textit{The linearisation $D_{\wh{\mfg}}\nabla\mu_{\mathrm{AH}}$ is formally self-adjoint, and its restriction as a map $T_{\wh\mfg}\mathcal{S}^2_{\wh\mfg}\to T_{\wh\mfg}\mathcal{S}^0_{\wh\mfg}$ is continuous and Fredholm.}\\

            A simple computation shows that the linearisation at $\wh\mfg$ agrees with the Hessian of Section \ref{sec_second_var}, i.e. \eqref{aux_mu_Hessian}:
            $$D_{\wh{\mfg}}\nabla\mu_{\mathrm{AH}}\p{\mfh}=-\frac{1}{2 n}\p{\frac{1}{2}\Delta_{E,\wh{\mfg}}\mfh-\delta^*_{\wh\mfg}\delta_{\wh\mfg}\mfh+\nabla^2_{\wh\mfg}\p{u'_{\wh\mfg}-\frac{1}{2}\tr_{\wh\mfg}\mfh}}. $$
            
            As the Hessian of a Riemannian functional, it is automatically formally self-adjoint. The restriction to $\mfh\in T_{\wh{\mfg}}\mathcal{S}^2_{\wh\mfg}\subset \ker \delta_{\wh\mfg}$ has $u'_{\wh\mfg}-\frac{1}{2}\tr_{\wh\mfg}\mfh=0$ (see \eqref{linearised_minimiser}), which leaves only 
            $$D_{\wh{\mfg}}\nabla\mu_{\mathrm{AH}}|_{T_{\wh{\mfg}}\mathcal{S}^2_{\wh\mfg}}=-\frac{1}{4 n}\Delta_{E,\wh{\mfg}}.$$
            
            Since $\wh{\mathfrak{g}}$ is a Poincar\'e--Einstein metric, \cite[Remark 2.11]{KY_PE_stability} tells us the operator is indeed Fredholm. Continuity follows directly from ellipticity, as all elliptic second order differential operators $P$ satisfy $\norm{P\mfh}_{H^{k-2}_{\wh\mfg}}\leq C\norm{\mfh}_{H^{k}_{\wh{\mfg}}}$ for all $k \geq 2$ and some constant $C > 0$.\\

            \noindent\textbf{Condition 4:} \textit{The linearisation $D_{\wh{\mfg}}\nabla\mu_{\mathrm{AH}}:T_{\wh\mfg}\mathcal{S}^\ell_{\wh\mfg}\to T_{\wh\mfg}\mathcal{S}^{\ell-2}_{\wh\mfg}$ is continuous and Fredholm.}\\

            Fredholmness follows from \cite[Remark 2.11]{KY_PE_stability} (see also \cite[Proposition D]{Lee}). Continuity again follows from ellipticity.\\
            
            Conditions 1 and 2 show that $\mfg\mapsto \nabla\mu_{\mathrm{AH}}(\mfg)$ is a $C^1$ map in a neighbourhood of $\wh\mfg$, while Conditions 3 and 4 imply that its derivative at $\wh\mfg$ is an isomorphism of the tangent space. Teed up for the use of the inverse function theorem, we can apply \cite[Theorem 5.1]{KY_PE_stability} to complete the proof of the \L ojasiewicz--Simon inequalities \eqref{wgtd_loj_ineq}-\eqref{aux_loj_ineq}.
        \end{proof}

    \subsection{Proofs of Theorems \ref{mainthm_stability} and \ref{mainthm_instability}}
        We are now finally able to prove our main stability and instability results. However, before presenting the proofs, we need to recall some short time existence and regularity results due to Bamler (see \cite{bamler_symm_spaces}) that will be useful. In particular, our proofs will be very similar to those in \cite{KY_PE_stability}, since instead of pairs $\p{g,\varphi}$ we can consider the auxiliary metric $\mathfrak{g} = e^{-2\varphi}d\tau^2 + g$. 
        
        As every static pair $\p{\wh{g}, \wh{\varphi}}$ on $M$ yields a PE metric $\wh{\mathfrak{g}}$ on $\mathfrak{M}$, the problem reduces to proving stability of PE metrics (as done in \cite{KY_PE_stability}) with a warped product structure. Importantly, metrics which are warped products remain so when evolving by Ricci flow.\\

        Throughout the proofs of the results in this section, we will mainly consider the auxiliary Ricci-DeTurck flow:
        \begin{equation}\label{rdtf}
            \begin{cases}
                \partial_t \mathfrak{g}(t) = -2\p{\Ric_{\mathfrak{g}(t)} + n \mathfrak{g}(t) + \frac{1}{2}\mathscr{L}_{X_{\wh\mfg}\p{\mathfrak{g}(t)}}\p{\mathfrak{g}(t)}}\\
                \mathfrak{g}\p{0} = \wh{\mathfrak{g}}. 
            \end{cases}
        \end{equation}
        
        Here $\wh\mfg=\p{\wh{g},\wh\varphi}$ is a static background metric, and $X_{\mathfrak{g}}\p{\mathfrak{h}}$ is the Bianchi vector field,
        \begin{equation*}
            X_{\mathfrak{g}}\p{\mathfrak{h}} :=\p{\beta_{\mfg}\mfh}^\sharp= \p{\delta_{\mathfrak{g}}\mfh}^\sharp + \frac{1}{2}\nabla_{\mathfrak{g}}\tr_{\mathfrak{g}}\mathfrak{h}.
        \end{equation*}

        We refer the reader to \cite[Section 5]{RJJ} for more on the Ricci-DeTurck flow, and the connection between $X_{\wh{\mfg}}$ and the usual DeTurck vector field. \\

        Note that, neglecting the term $\mathscr{L}_{X_{\wh{\mathfrak{g}}}\p{\mfg(t)}}\p{\mfg(t)}$, the system \eqref{rdtf} is equivalent to 
        \begin{equation*}\label{warped_product_rf}
            \begin{cases}
                \partial_t g(t) = -2\p{\Ric_{g(t)} + n g(t) - d\varphi(t) \otimes d\varphi(t) + \nabla^2 \varphi(t)}\\
                \partial_t \varphi(t) = -\p{\Lap_{g(t)} \varphi(t) + \abs{\nabla \varphi(t)}^2 - n}\\
                \p{g\p{0}, \varphi\p{0}} = \p{\wh{g}, \wh{\varphi}}.
            \end{cases}
        \end{equation*}

        This follows from the formulae of Section \ref{auxiliary_Ric}.\\

        Short time existence, as well as $L^\infty$-bounds on the perturbation $\mathfrak{h}(t) = \mathfrak{g}(t) - \wh{\mathfrak{g}}$, hold due to the following result (\cite[Proposition 2.4]{bamler_symm_spaces}):
        
        \begin{proposition}\label{rdtf_short_time}
            Let $\p{\mathfrak{M},\wh{\mathfrak{g}}}$ be a complete Riemannian manifold with its curvature tensor globally bounded in the $C^{0,\alpha}$-sense and let $\mathfrak{g}_0 = \wh{\mathfrak{g}} + \mathfrak{h}_0$  be a smooth metric on $\mathfrak{M}$. Then there are constants $C,\eps,\tau_{\mathrm{SE}} > 0$, depending only on $\mathfrak{M}$ and $\wh{\mathfrak{g}}$, such that, if $\norm{\mathfrak{h}_0}_{L^\infty} < \eps$, there is a unique $L^\infty$-bounded smooth solution $\left\{\mathfrak{h}(t)\right\}_{t \in \left[0,2\tau_{\mathrm{SE}}\right]}$ to \eqref{rdtf}. Moreover, we have
            \begin{equation*}
                \sup_{t\in\left[0,2\tau_{\mathrm{SE}}\right]}\norm{\mathfrak{h}(t)}_{L^\infty} \leq C\norm{\mathfrak{h}_0}_{L^\infty}.
            \end{equation*}
        \end{proposition}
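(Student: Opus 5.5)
Since this is quoted from \cite[Proposition 2.4]{bamler_symm_spaces}, the plan is to reproduce the standard perturbative argument for the Ricci--DeTurck flow with background $\wh{\mathfrak{g}}$. Set $\mathfrak{h}(t)=\mathfrak{g}(t)-\wh{\mathfrak{g}}$. The first step is to expand \eqref{rdtf} with the Bianchi vector field $X_{\wh\mfg}$. The gauge term cancels the non-elliptic part of $D\Ric$, and the flow becomes a strictly parabolic quasilinear system of the form
\begin{equation*}
\partial_t\mathfrak{h}=\mathfrak{g}^{-1}\ast\nabla^2_{\wh\mfg}\mathfrak{h}+\wh{R}\ast\mathfrak{h}+E_0+\mathfrak{g}^{-1}\ast\mathfrak{g}^{-1}\ast\nabla_{\wh\mfg}\mathfrak{h}\ast\nabla_{\wh\mfg}\mathfrak{h}.
\end{equation*}
Here $E_0=-2(\Ric_{\wh\mfg}+n\wh\mfg)$ is a fixed zero-order term, bounded by the curvature bound; it vanishes in our static setting. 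The key structural point is that the second-order part can be rewritten as $\Delta$-type operator plus $\nabla\big((\mathfrak{g}^{-1}-\wh{\mathfrak{g}}^{-1})\ast\nabla\mathfrak{h}\big)$, up to further $\nabla\mathfrak{h}\ast\nabla\mathfrak{h}$ terms. This puts the nonlinearity in divergence form plus a gradient-quadratic term.

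The second step is to solve by Duhamel's principle for the linear operator $\partial_t+\nabla^*\nabla+\wh R\ast$. Because the curvature of $\wh{\mathfrak{g}}$ is globally $C^{0,\alpha}$-bounded and $\mathfrak{M}$ is complete, its heat kernel $K$ satisfies Gaussian bounds $|K|\le Ct^{-N/2}e^{-d^2/Ct}$ and $|\nabla K|\le Ct^{-(N+1)/2}e^{-d^2/Ct}$ for $t\le 1$; these follow from local parabolic Schauder theory on harmonic-coordinate balls of uniform size. I would then run a contraction mapping argument in the Banach space with norm
\begin{equation*}
\|\mathfrak{h}\|_X=\sup_{t\le 2\tau_{\mathrm{SE}}}\big(\|\mathfrak{h}(t)\|_{L^\infty}+t^{1/2}\|\nabla\mathfrak{h}(t)\|_{L^\infty}\big).
\end{equation*}
The divergence term is handled by moving the derivative onto $K$, which costs $\int_0^t(t-s)^{-1/2}\,ds$. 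The term $\nabla\mathfrak{h}\ast\nabla\mathfrak{h}$ costs $\int_0^t s^{-1}\,ds$ against the weight, and this is the delicate point. I would handle it as in Koch--Lamm/Bamler by additionally controlling a local space-time $L^2$ norm of $\nabla\mathfrak{h}$ on parabolic cylinders of scale $\sqrt t$, or simply $\sup t^{1/2}|\nabla\mathfrak{h}|$ together with local energy estimates. With $\|\mathfrak{h}_0\|_{L^\infty}<\eps$, smallness of $\eps$ and $\tau_{\mathrm{SE}}$ makes the Duhamel map a contraction on a ball of radius $C\eps$. This yields both existence and the bound $\sup_t\|\mathfrak{h}(t)\|_{L^\infty}\le C\|\mathfrak{h}_0\|_{L^\infty}$.

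The last step has two parts. Uniqueness in the class of $L^\infty$-bounded solutions follows from the same estimates applied to the difference of two solutions, after first showing via local interior (Shi-type) derivative estimates that any small $L^\infty$ solution automatically satisfies $t^{1/2}|\nabla\mathfrak{h}|\le C$. Smoothness for $t>0$ follows by bootstrapping local parabolic Schauder estimates. I expect the main obstacle to be the critical quadratic gradient term $\nabla\mathfrak{h}\ast\nabla\mathfrak{h}$, because pure $L^\infty$ data do not control it pointwise. The choice of a scale-invariant norm such as the Koch--Lamm norm is exactly what is needed to close the fixed point, so I would try that norm first.
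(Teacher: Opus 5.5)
There is no internal proof to compare against: the paper imports this statement from Bamler. Your reconstruction follows the Koch--Lamm route, namely writing the nonlinearity as $\nabla\big(O(\mathfrak h)\ast\nabla\mathfrak h\big)+\nabla\mathfrak h\ast\nabla\mathfrak h$ and closing a contraction in a scale-invariant space.

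The classical argument in this $L^\infty$-small setting (Shi, M.~Simon) uses a priori estimates instead. For $\abs{\mathfrak h}\le\eps$ and $E_0=0$ one has $\partial_t\abs{\mathfrak h}^2\le \mathfrak g^{ab}\wh\nabla_a\wh\nabla_b\abs{\mathfrak h}^2-(2-C\eps)\abs{\wh\nabla\mathfrak h}^2+C\abs{\mathfrak h}^2$. So the critical term $\mathfrak h\ast\wh\nabla\mathfrak h\ast\wh\nabla\mathfrak h$ is absorbed by the good gradient term, and the maximum principle gives $\norm{\mathfrak h(t)}_{L^\infty}\le e^{Ct}\norm{\mathfrak h_0}_{L^\infty}$ directly. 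Existence then follows from four steps: approximate $\mathfrak h_0$ by data of bounded geometry, apply Shi's existence theorem, prove Bernstein-type bounds on $t\abs{\wh\nabla\mathfrak h}^2$, and pass to the limit by compactness.

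That route needs no heat-kernel analysis. It does use that the principal part is exactly $\mathfrak g^{ab}\wh\nabla_a\wh\nabla_b$ acting diagonally. This holds for the DeTurck field, but for the linear Bianchi field written in \eqref{rdtf} it holds only up to $O(\mathfrak h)\ast\wh\nabla^2\mathfrak h$. Uniqueness must also be argued separately. Your route gives existence, the bound and (local) uniqueness from one contraction, and it absorbs such divergence-form perturbations of the principal part. The price is proving Koch--Lamm's linear estimates on a curved background.

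Several steps of your sketch need repair.

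(i) $E_0=0$ is not a convenience. If $\Ric_{\wh{\mathfrak g}}\neq-n\wh{\mathfrak g}$, then $\mathfrak h_0=0$ produces a nonzero solution, so the claimed bound is false. The proposition tacitly requires $\wh{\mathfrak g}$ to be Einstein with constant $-n$, which holds in the application.

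(ii) Your space $X$ cannot close, and not only because of the logarithmic divergence you identify. Bounding $t^{1/2}\norm{\nabla\,\cdot\,}_{L^\infty}$ of the Duhamel integral of the divergence term puts two derivatives on $K$, and $\int_0^t(t-s)^{-1}\,ds$ diverges. This is the $L^\infty$ failure of Calder\'on--Zygmund estimates, not a bookkeeping issue. Koch--Lamm's norm drops the pointwise gradient term. In its place they use $\sup_{x,R}R^{-N/2}\norm{\nabla\mathfrak h}_{L^2(B_R(x)\times(0,R^2))}$ together with a scale-invariant $L^{N+4}$ norm of $\nabla\mathfrak h$ on $B_R(x)\times(R^2/4,R^2)$, where $N=\dim\mathfrak M$. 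The bound $t^{1/2}\abs{\nabla\mathfrak h}\le C\norm{\mathfrak h_0}_{L^\infty}$ is recovered afterwards from interior estimates. So the local $L^2$ component must be built into the fixed-point norm itself, and the pointwise gradient component must go.

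(iii) Local Schauder theory gives the derivative bounds on $K$, but the off-diagonal Gaussian decay needs a global argument, e.g.\ Kato domination by the scalar kernel combined with Li--Yau. The factor $t^{-N/2}$ also needs a uniform lower volume bound, which the hypotheses do not provide. That bound holds for AH $\wh{\mathfrak g}$. In general one works on local covers, or uses only the $L^1$-in-$y$ kernel bounds that Duhamel actually needs.

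(iv) For uniqueness, the difference estimate needs both solutions to be small in the scale-invariant norm. You therefore need interior bounds proportional to $\sup\abs{\mathfrak h}$, not merely $t^{1/2}\abs{\nabla\mathfrak h}\le C$. What you obtain is uniqueness among solutions that stay $L^\infty$-small, which is what the paper uses, rather than among all $L^\infty$-bounded ones.
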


        We are also able to get $H^\ell_{\wh{\mathfrak{g}}}$-control, as well as $C^\ell_{\wh{\mathfrak{g}}}$-control, of $\mathfrak{h}(t)$ for $\ell \in \mathbb{N}$ as large as we like. This is due to the following result (\cite[Lemma 6.2]{bamler_symm_spaces}):

        \begin{lemma}\label{rdtf_hl_bounds}
            Let $\p{\mathfrak{M},\wh{\mathfrak{g}}}$ be a PE manifold of class $C^{2,\alpha}$, let $\mathfrak{g}_0 = \wh{\mathfrak{g}} + \mathfrak{h}_0$ be a smooth metric on $\mathfrak{M}$ and denote the Ricci-DeTurck flow starting from $\mathfrak{h}_0$ by $\left\{\mathfrak{h}_t\right\}_{t \in \left[0,2\tau_{\mathrm{SE}}\right]}$, where $2\tau_{\mathrm{SE}}$ is the existence time from Proposition \ref{rdtf_short_time}. For every $\ell, b \geq 0$ there are constants $A_\ell,\eps_\ell > 0$ such that, if $\norm{\mathfrak{h}_0}_{L^\infty} < \eps_\ell$ and $\norm{\mathfrak{h}_0}_{L^2_{\wh\mfg}} \leq b$, then
            $$\norm{\mathfrak{h}(t)}_{L^2_{\wh\mfg}}\leq A_0b,\qquad \qquad \text{for all }\quad t\in [0,2\tau_{\mathrm{SE}}]$$
            and, for $\ell\geq 1$,
            \begin{equation*}
                \norm{\mathfrak{h}(t)}_{H^\ell_{\wh\mfg}} \leq A_\ell b,\qquad \qquad \text{for all }\quad t\in [\tau_{\mathrm{SE}},2\tau_{\mathrm{SE}}].
            \end{equation*}

            Additionally, for all $\ell \geq 0$, we have the pointwise estimate
            \begin{equation*}
                \abs{\nabla^\ell_{\wh{\mfg}} \mathfrak{h}(t)}_{\wh\mfg} \leq A_\ell b, \qquad \qquad \text{for all }\quad t \in \left[\tau_{\mathrm{SE}},2\tau_{\mathrm{SE}}\right].
            \end{equation*}
        \end{lemma}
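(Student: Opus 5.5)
The estimates are three standard parabolic steps for the Ricci--DeTurck flow on a manifold of bounded geometry, in the spirit of \cite[Lemma 6.2]{bamler_symm_spaces}: an $L^2$ energy estimate, local $L^2\to L^\infty$ smoothing, and higher-order energy estimates. They will be carried out for $\mathfrak{h}(t)=\mathfrak{g}(t)-\wh{\mathfrak{g}}$ on $\mathfrak{M}$. Since $\wh{\mathfrak{g}}$ is Poincar\'e--Einstein, hence AH, $(\mathfrak{M},\wh{\mathfrak{g}})$ has bounded geometry: all covariant derivatives of the curvature are bounded and the injectivity radius is bounded below.

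The first step is to write \eqref{rdtf} schematically as
\begin{equation*}
\partial_t\mathfrak{h}=-\Delta_{E,\wh{\mathfrak{g}}}\mathfrak{h}+\p{\mathfrak{g}^{-1}-\wh{\mathfrak{g}}^{-1}}\ast\nabla^2_{\wh{\mathfrak{g}}}\mathfrak{h}+\mathfrak{g}^{-1}\ast\mathfrak{g}^{-1}\ast\nabla_{\wh{\mathfrak{g}}}\mathfrak{h}\ast\nabla_{\wh{\mathfrak{g}}}\mathfrak{h}+\wh{R}\ast\mathfrak{h}\ast\mathfrak{h},
\end{equation*}
which is the standard structure of the Ricci--DeTurck flow about an Einstein background.

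For the $L^2$ bound, I pair this equation with $\mathfrak{h}\,\chi_R^2$, where $\chi_R$ is a cutoff with $\abs{\nabla\chi_R}\le C/R$. In the second-order nonlinear term I integrate by parts once. This turns $\p{\mathfrak{g}^{-1}-\wh{\mathfrak{g}}^{-1}}\ast\nabla^2\mathfrak{h}\ast\mathfrak{h}$ into terms of the form $\abs{\mathfrak{h}}\abs{\nabla\mathfrak{h}}^2$, which are small multiples of $\abs{\nabla\mathfrak{h}}^2$ because $\norm{\mathfrak{h}(t)}_{L^\infty}\le C\eps$ by Proposition \ref{rdtf_short_time}. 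The outcome is
\begin{equation*}
\frac{d}{dt}\int\abs{\mathfrak{h}}^2\chi_R^2\le-(1-C\eps)\int\abs{\nabla\mathfrak{h}}^2\chi_R^2+C\int\abs{\mathfrak{h}}^2\chi_R^2+\frac{C}{R}\int_{\mathrm{supp}\,\nabla\chi_R}\abs{\mathfrak{h}}\abs{\nabla\mathfrak{h}}.
\end{equation*}
The cutoff term is what makes noncompactness a problem. I handle it by Young's inequality, or by first running the argument for $\mathfrak{h}_0$ of compact support, where the solution is known to decay, and then approximating. Letting $R\to\infty$ and applying Gr\"onwall on $[0,2\tau_{\mathrm{SE}}]$ gives $\norm{\mathfrak{h}(t)}_{L^2_{\wh\mfg}}\le A_0b$. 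The same computation also yields the space-time bound $\int_0^{2\tau_{\mathrm{SE}}}\norm{\nabla\mathfrak{h}}_{L^2}^2\,dt\le Cb^2$.

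For the pointwise bounds, fix $(x,t)$ with $t\in[\tau_{\mathrm{SE}},2\tau_{\mathrm{SE}}]$ and work on the parabolic cylinder $P=B_1(x)\times[t-\tau_{\mathrm{SE}}/2,t]$. By bounded geometry, harmonic or exponential coordinates on $B_1(x)$ have uniformly controlled $C^{k}$ norms. In these coordinates, smallness of $\norm{\mathfrak{h}}_{L^\infty}$ makes the equation uniformly parabolic with coefficients that are uniformly H\"older. Moser iteration, or a local mean value inequality, then gives
\begin{equation*}
\sup_{P'}\abs{\mathfrak{h}}\le C\norm{\mathfrak{h}}_{L^2(P)}\le Cb,
\end{equation*}
using that $\mathrm{Vol}(B_1(x))$ is uniformly bounded below. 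Interior parabolic Schauder estimates, bootstrapped, then give $\abs{\nabla^\ell\mathfrak{h}}(x,t)\le A_\ell b$; here $\eps_\ell$ is chosen small enough to keep the quadratic terms perturbative at each order. For the $H^\ell$ bound, I repeat the energy estimate for $\nabla^k\mathfrak{h}$ multiplied by a time cutoff that vanishes at $t=0$. Inducting on $k$, the lower-order space-time integrals control the right-hand side. The nonlinear terms are bounded using the pointwise derivative bounds just obtained, together with $\abs{\mathfrak{h}}\le C\eps$.

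The main obstacle is justifying the integrations by parts on the noncompact manifold. The solution is a priori only $L^\infty$-bounded, so it is not clear that it stays in $L^2$. I would first prove every estimate for compactly supported initial data, for which the flow remains asymptotically decaying. I would then pass to general $\mathfrak{h}_0$ using the uniqueness part of Proposition \ref{rdtf_short_time} together with local smooth convergence of the approximating flows.
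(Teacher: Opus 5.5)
The paper does not prove this lemma. It quotes it from \cite[Lemma 6.2]{bamler_symm_spaces}, the implicit justification being that $(\mathfrak{M},\wh{\mathfrak{g}})$ is a complete Einstein manifold of bounded geometry, so that Bamler's short-time theory (Proposition \ref{rdtf_short_time}) and his smoothing estimates apply.

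Your outline is a reasonable reconstruction of that black box: an $L^2$ energy estimate with Gr\"onwall, then local $L^2\to C^k$ parabolic regularity on unit cylinders, then higher-order energy estimates. For the last step it is simpler to square the local bounds $\abs{\nabla^k\mathfrak{h}}(x,t)\le C\norm{\mathfrak{h}}_{L^2(B_1(x)\times[t-\tau_{\mathrm{SE}}/2,t])}$ and integrate in $x$.

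What your route buys is transparency. It shows that nothing specific to symmetric spaces is used, only completeness, bounded geometry and the Einstein equation for $\wh{\mathfrak{g}}$. That is exactly what is needed to transplant Bamler's statement to the PE setting. Note that the bounds on higher covariant derivatives of curvature come from the Einstein equation, not from the $C^{2,\alpha}$ AH condition. What the citation buys is that the noncompact technicalities are already handled, and two of yours need correcting.

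\textbf{The cutoff term in the $L^2$ estimate.} Of your two options, plain Young's inequality does not close. It leaves $CR^{-2}\int_{B_{2R}\setminus B_R}\abs{\mathfrak{h}}^2$, and with only $\norm{\mathfrak{h}(t)}_{L^\infty}\le C\eps$ known this is of order $\eps^2R^{-2}e^{2nR}$, because volume grows like $e^{nR}$ on the $(n+1)$-dimensional AH manifold $\mathfrak{M}$. The approximation route can work, but ``the solution is known to decay'' is itself something you would have to prove.

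It is simpler to use the truncated exponential weight $w_R=\min\{1,e^{-2\lambda(\rho-R)}\}$, with $\rho$ a smoothed distance function and $2\lambda>n$. Then:
\begin{itemize}
\item every integral is finite for bounded $\mathfrak{h}$;
\item $\abs{\nabla w_R}\le C\lambda w_R$, so the cutoff term is absorbed;
\item Gr\"onwall gives $\int\abs{\mathfrak{h}(t)}^2w_R\le e^{C(1+\lambda^2)t}b^2$ uniformly in $R$;
\item monotone convergence as $R\to\infty$ finishes the argument.
\end{itemize}

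\textbf{The pointwise step.} $L^\infty$-smallness by itself does not make the coefficients H\"older. You need the standard smoothing bounds $\abs{\nabla^k\mathfrak{h}(t)}\le C_k\eps\, t^{-k/2}$ from the short-time theory. Use these $b$-independent bounds, rather than $A_jb$, for the sup-norm factors in the nonlinear terms, so that your estimates stay linear in $b$.
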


        We are now ready to prove our stability result (Theorem \ref{mainthm_stability}), which we restate here in an expanded form for the reader's convenience. In the expanded statement, we shall use the notion of an \textbf{auxiliary neighbourhood} to mean a subset of the set of pairs $(g,\varphi)$, as opposed to a true neighbourhood in the space of all $(n-1)$-dimensional metrics.\\

        \begin{theorem}\label{static_stability}
            Let $\p{M,\wh{g}, \wh{\varphi}}$ be a static triple of class $C^{\ell,\alpha}$ ($\ell>\frac{n}{2}+2$) and dimension $n \geq 3$, and set $\wh{\mathfrak{g}} := e^{-2\wh{\varphi}} d\tau^2 + \wh{g}$.  If $\p{\wh{g}, \wh{\varphi}}$ is a local maximiser of the entropy $\mu_{\mathrm{AH}}=\mu_{\mathrm{AH},\wh{g}}$, then for any auxiliary $\big(L^2_{\wh{\mathfrak{g}}} \cap L^{\infty}_{\wh{\mathfrak{g}}}\big)$-neighbourhood $ \mathcal{U} $ of $\wh{\mathfrak{g}}$ there exists an auxiliary $H^\ell_{\wh{\mfg}}$-neighbourhood $\mathcal{V} \subset \mathcal{U}$ of $\wh{\mathfrak{g}}$ with the following property:

            Any Ricci flow $\mathfrak{g}(t)=\p{g(t), \varphi(t)} $ starting from some $ \mathfrak{g}_0=\p{g_0, \varphi_0}  \in \mathcal{V}$ exists for all time, and there is a family of diffeomorphisms $\Phi_t$ such that $\Phi^\ast_t \mathfrak{g}(t) = \p{\Phi_t^*g(t), \Phi^\ast_t \varphi(t)}$ stays in $\mathcal{U}$ for all time, and converges in all derivatives to a static pair $\p{g_{\infty}, \varphi_\infty} = \mathfrak{g}_\infty$ as $t \rightarrow \infty$. Moreover, the convergence is polynomial; that is, for all $\ell \in \mathbb{N}$, there exist constants $C, \beta > 0$ such that
            \begin{equation*}
                \norm{\p{g(t),\varphi(t)} - \p{g_\infty, \varphi_\infty}}_{C^\ell} \leq C\p{t - \tau_{\mathrm{SE}}}^{-\beta}
            \end{equation*}
            for all $t > \tau_{\mathrm{SE}}$, where $\tau_{\mathrm{SE}}$ is (half) the existence time from Proposition \ref{rdtf_short_time}.
        \end{theorem}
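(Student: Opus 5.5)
We follow the scheme of \cite{KY_PE_stability}, working throughout with the auxiliary Ricci--DeTurck flow \eqref{rdtf} on $\mathfrak{M}$. It is useful to note first that \eqref{rdtf} preserves the class of $\tau$-independent warped products $e^{-2\varphi}d\tau^2+g$. The background $\wh\mfg$ is $\tau$-invariant, so $X_{\wh\mfg}(\mfg)$ is $\tau$-independent and tangent to $M$. Uniqueness in Proposition~\ref{rdtf_short_time} then forces the flow to commute with the $\mathbb{S}^1$-action and with the reflection $\tau\mapsto-\tau$. Hence $\mfg(t)=(g(t),\varphi(t))$ remains an auxiliary pair, all estimates can be transferred between $M$ and $\mathfrak{M}$ by \eqref{sobolev_norm_equiv}, and $\mu_{\mathrm{AH}}$ and Theorem~\ref{loj_ineq_thm} apply along the flow. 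The Ricci flow itself is recovered by pulling back with the diffeomorphisms generated by $X_{\wh\mfg}$, and these preserve the warped structure as well.

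\textbf{Step 1 (smoothing).} Take $\mathcal{V}$ to be a small $H^\ell_{\wh\mfg}$-ball, so that by Sobolev embedding $\norm{\mfh_0}_{L^\infty}<\eps$. Proposition~\ref{rdtf_short_time} and Lemma~\ref{rdtf_hl_bounds} then show that on $[\tau_{\mathrm{SE}},2\tau_{\mathrm{SE}}]$ the perturbation $\mfh(t)$ is controlled in $H^k_{\wh\mfg}$ and $C^k$ for every $k$ by $\norm{\mfh_0}_{L^2}$.

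\textbf{Step 2 (bootstrap).} Suppose that on $[\tau_{\mathrm{SE}},T)$ the solution stays in the neighbourhood $\mathcal{W}$ where Theorem~\ref{loj_ineq_thm} holds and where $\mu_{\mathrm{AH}}\le\mu_{\mathrm{AH}}(\wh\mfg)$ (this is the local maximality hypothesis). By diffeomorphism invariance (Proposition~\ref{entropy_diffeo_invar}) the DeTurck term does not change $\mu_{\mathrm{AH}}$. Lemma~\ref{entropy_monot}, suitably rewritten in DeTurck gauge using $\nabla(f_g-\varphi)$, gives
\[
\tfrac{d}{dt}\mu_{\mathrm{AH}}\ge c\,\norm{\nabla\mu_{\mathrm{AH}}}_{L^2}\,\norm{\partial_t\mfg}_{L^2}.
\]
The discrepancy between the DeTurck velocity and the gradient-flow velocity, namely the Lie derivatives along $\nabla u$ and $X_{\wh\mfg}$, must be absorbed. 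Combining this with \eqref{aux_loj_ineq} gives
\[
-\tfrac{d}{dt}\bigl(\mu_{\mathrm{AH}}(\wh\mfg)-\mu_{\mathrm{AH}}(\mfg(t))\bigr)^{\theta/2}\ge c\norm{\partial_t\mfg}_{L^2}.
\]
Consequently $\int_{\tau_{\mathrm{SE}}}^T\norm{\partial_t\mfg}_{L^2}\,dt\le C(\mu(\wh\mfg)-\mu(\mfg(\tau_{\mathrm{SE}})))^{\theta/2}$, and the right-hand side is small by continuity of $\mu_{\mathrm{AH}}$ in $H^\ell$. Together with Lemma~\ref{rdtf_hl_bounds} applied on each interval $[t,t+2\tau_{\mathrm{SE}}]$, this $L^2$ control upgrades to $H^\ell$ and $L^\infty$ control, which closes the bootstrap and shows $T=\infty$ with $\mfg(t)\in\mathcal{U}$.

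\textbf{Step 3 (convergence).} Finite length of the trajectory in $L^2$ gives a limit $\mfg_\infty$. The uniform $C^k$ bounds upgrade the convergence to all derivatives. Since $\mu_{\mathrm{AH}}$ is bounded along the flow, $\nabla\mu_{\mathrm{AH}}\to0$, so $\mfg_\infty$ is critical and therefore static by Proposition~\ref{static_crit_points}. For the rate, when $\theta<1$ the differential inequality $\tfrac{d}{dt}D\ge cD^{2-\theta}$ for $D=\mu(\wh\mfg)-\mu(\mfg(t))$ (with $D\to \mu(\wh\mfg)-\mu(\mfg_\infty)$ after recentering the Łojasiewicz inequality at $\mfg_\infty$, or directly if $\mu(\mfg_\infty)=\mu(\wh\mfg)$) yields $D\lesssim t^{-1/(1-\theta)}$. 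The tail length, and hence $\norm{\mfg(t)-\mfg_\infty}_{L^2}$, then decays polynomially, and interpolation with the $C^k$ bounds gives the $C^\ell$ rate. Undoing the DeTurck gauge via $\Phi_t$ produces the stated Ricci-flow statement.

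\textbf{Main obstacle.} The hardest part is Step 2: establishing the angle-type inequality between $\partial_t\mfg$ in DeTurck gauge and $\nabla\mu_{\mathrm{AH}}$. It requires controlling the vector fields $X_{\wh\mfg}(\mfg(t))$ and $\nabla u_{\mfg(t)}$ in $H^1$ by $\norm{\mfh}$, via Lemma~\ref{lemma_needed_for_third_var}, while preserving the warped structure. A closely related difficulty is whether one must recenter the Łojasiewicz inequality at nearby static pairs. We would first try to avoid recentering by using $\mu\le\mu(\wh\mfg)$ throughout, following \cite{KY_PE_stability} verbatim.
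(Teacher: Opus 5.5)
Your overall architecture is the right one: smoothing via Proposition~\ref{rdtf_short_time} and Lemma~\ref{rdtf_hl_bounds}, a bootstrap closed by a \L ojasiewicz finite-length estimate, then the rate. But Step~2 is run in the wrong gauge, and there the key inequality cannot be proved. By Proposition~\ref{entropy_diffeo_invar} you do have $\frac{d}{dt}\mu_{\mathrm{AH}}\approx\norm{\nabla\mu_{\mathrm{AH}}}^2_{L^2}$ along the Ricci--DeTurck flow. So your angle inequality is equivalent to $\norm{\partial_t\mfg}_{L^2}\lesssim\norm{\nabla\mu_{\mathrm{AH}}}_{L^2}$, that is, to bounding $\mathscr{L}_{\nabla u-X_{\wh\mfg}(\mfg)}\mfg$ by the gradient.

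The DeTurck field measures the gauge of $\mfg$ relative to $\wh\mfg$, not its failure to be static. For $\mfg=\phi^*\wh\mfg$ with $\phi$ close to the identity, $\nabla\mu_{\mathrm{AH}}(\mfg)=0$, but in general $X_{\wh\mfg}(\mfg)\neq0$ and $\partial_t\mfg=-\mathscr{L}_{X}\mfg\neq0$. Hence no such estimate holds on any neighbourhood of $\wh\mfg$. Bounding $X_{\wh\mfg}$ and $\nabla u$ by $\norm{\mfh}$, as you propose, does not help: the \L ojasiewicz argument needs the velocity bounded by the gradient, not by the distance to $\wh\mfg$. So the finite-length bound, and with it the bootstrap and the convergence, do not follow in DeTurck gauge. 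Note that the theorem itself only claims convergence after pulling back by $\Phi_t$.

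The missing idea is to change gauge after the smoothing time. For $t\geq\tau_{\mathrm{SE}}$ the paper sets $\wt\mfg(t)=(\Phi_t\circ\Psi_t)^*\mfg(t)$, with $\Phi_t$ generated by $X_{\wh\mfg}(\mfg(t))$ and $\Psi_t$ by $-\nabla u$. Then $\partial_t\wt\mfg=-2(\Ric_{\wt\mfg}+n\wt\mfg+\nabla^2 u)=4n\,e^{u}\nabla\mu_{\mathrm{AH}}(\wt\mfg)$ is exactly the gradient flow \eqref{entropy_grad_flow}. Since $u$ is bounded, $\frac{d}{dt}\mu_{\mathrm{AH}}\geq c\norm{\partial_t\wt\mfg}^2_{L^2}$ holds with nothing to absorb. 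The price is that the smoothing estimates must be re-established for this modified flow. The paper does this with Shi's estimates plus elliptic $C^{m,\alpha}$ bounds for $u$ from \eqref{aux_u_EL}. It then uses Gagliardo--Nirenberg interpolation to control the $H^\ell$ velocity by the $L^2$ velocity, and integrates the $H^\ell$ length directly. Your restart-and-smooth upgrade is a plausible substitute for this last step. The $\Phi_t$ in the statement are precisely these gauge diffeomorphisms.

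Two smaller points on Step~3. First, $D$ is nonincreasing, so the inequality is $\frac{d}{dt}D\leq-cD^{2-\theta}$. This already forces $D\to0$ polynomially, hence $\mu_{\mathrm{AH}}(\mfg_\infty)=\mu_{\mathrm{AH}}(\wh\mfg)$, and no recentering is needed. Second, identifying the limit via $\nabla\mu_{\mathrm{AH}}\to0$ and Proposition~\ref{static_crit_points} is a valid, slightly more direct alternative to the paper's route through local maximality of $\mfg_\infty$.
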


        \begin{proof} 
            For this proof, we shall require the definition of balls centred at $\wh{\mfg}$ with respect to several different metrics. Let 
                $$\mcb^{k}\p{r}:=\left\{\mfg=(g,\varphi)\in H^k_{\wh{\mfg}}\p{S^2\mathfrak{M}}\; ;\; \norm{\mfg-\wh\mfg}_{H^k_{\wh\mfg}}<r\right\}.$$

            With a very slight abuse of notation, we let $\mcb^\infty(r)$ denote the analogous ball with respect to the $L^\infty$-norm. Let $C,\varepsilon,\tau_{\mathrm{SE}}>0$ be the constants from Proposition \ref{rdtf_short_time}, and assume without loss of generality that $\mathcal{U}=\mcb^0(\varepsilon)\cap\mcb^\infty(\varepsilon)$. Let $A_0$, $A_\ell$ and $\varepsilon_\ell$ be as in Lemma \ref{rdtf_hl_bounds}, and choose $0<\delta<\varepsilon/A_0$ so small that 
            \begin{enumerate}
                \item The \L ojasiewicz--Simon inequality (Theorem \ref{loj_ineq_thm}) holds on $B^\ell(\delta)$.
                \item For all $\mfg\in \mcb^\ell(\delta)$, $\mu_{\mathrm{AH}}\p{\mfg}\leq \mu_{\mathrm{AH}}\p{\wh{\mfg}}=0$. 
                \item The closure of $\mcb^\ell(\delta)$ is contained in $\mcb^\infty\p{\min\left\{\varepsilon/C,\varepsilon_\ell,\varepsilon\right\}}$.
            \end{enumerate}
    
            Point 3. is possible since $H^\ell_{\wh\mfg}$ embeds continuously in $L^\infty$, and it ensures the existence of the Ricci-DeTurck flow \eqref{rdtf} starting at an arbitrary metric $\mfg_0\in \mcb^\ell(\delta)$. Furthermore, by design, the flow stays in $\mathcal{U}$ for $t\in [0,2\tau_{\mathrm{SE}}]$. Recall that the entropy is analytic on $H^\ell_{\wh\mfg}$, so we can find a $C_\mu>0$ such that $\abs{\mu_{\mathrm{AH}}\p{\mfg}}\leq C_\mu\norm{\mfg-\wh\mfg}_{H^\ell_{\wh\mfg}}$ for all $\mfg\in \mcb^\ell\p{\delta}$.\\

            Let $0<\delta'<\delta/2A^*_\ell$ be some even smaller radius to be determined later. Here $A_\ell^*=\max\left\{A_\ell,1\right\}$ ensures that a Ricci-DeTurck flow starting in $\mcb^\ell(\delta')$ is contained in $\mcb^\ell(\delta)$ for all $t\in [0,2\tau_{\mathrm{SE}}]$ and in $\mcb^\ell\p{\delta/2}$ for $t\in [\tau_{\mathrm{SE}},2\tau_{\mathrm{SE}}]$. Specifically, since any such Ricci-DeTurck flow is in $\mcb^\ell\p{\delta/2}$ at time $t=2\tau_{\mathrm{SE}}$, there is some $T=T(\delta') > 2\tau_{\mathrm{SE}}$ such that any Ricci-DeTurck flow starting in $\mcb^\ell(\delta')$ is contained in $\mcb^\ell(\delta)$ for all $t\in [0,T)$.  \\
    
            By the definition of $T$, we have
            \begin{equation*}
                \sup_{\mathfrak{M}} \abs{\Rm_{\mathfrak{g}(t)}}_{\mathfrak{g}(t)} \leq C_0
            \end{equation*}
            for all $t \in \left[\tau_{\mathrm{SE}},T\right)$. Shi estimates from \cite{shi} yield bounds on all derivatives of the curvature along an ordinary Ricci flow, while Lemma \ref{rdtf_hl_bounds} tells us the DeTurck diffeomorphism is as regular as we like. This implies that, for all $m \in \mathbb{N}$, there is a constant $C_m$ depending only on $\wh{\mathfrak{g}}$ and $ m$ such that
            \begin{equation}\label{shi_bounds}
                \sup_{\mathfrak{M}} \abs{\nabla^m_{\mfg(t)} \Rm_{\mathfrak{g}(t)}}_{\mathfrak{g}(t)} \leq C_m
            \end{equation}
            for all $t \in \left[\tau_{\mathrm{SE}},T\right)$.\\ 

            We now turn to controlling the derivatives of $u(t):=f_{\mfg(t)}-\varphi(t)$. By Lemma \ref{rdtf_hl_bounds}, the family of metrics $\mfg(t)$ is contained in an $H^\ell_{\wh\mfg}$-neighbourhood of $\wh\mfg$ for all $t\in [0,T)$, and by Proposition \ref{entropy_analytic_metric_dep}, $u(t) \in H^{\ell}_{\wh{\mathfrak{g}}}(\mathfrak{M})$. By the Sobolev embedding theorem we even have $u(t)\in C^{2,\alpha}\p{\mathfrak{M}}$. For the higher order derivatives, we consider only $t\in [\tau_{\mathrm{SE}},T)$.\\
            
            We want to bound the $C^m$-norm of $u\p{t}$ for all $m$, but since the metrics we are considering are of class $C^{\ell,\alpha}$, Proposition \ref{isomorph_prop} will only get us so far. We shall instead use \cite[Corollary 2.10]{KY_PE_stability}. In the following we have suppressed the time dependence to simplify the notation. Recall that $u=u(t)$ satisfies the Euler--Lagrange equation \eqref{aux_u_EL}:
            \begin{equation*}
                \p{\Lap_\mfg + n} u = \frac{1}{2}\p{\scal_\mfg + n\p{n+1}} - \frac{1}{2}\abs{du}^2.
            \end{equation*}
            where $\mfg=\mfg(t)$. Since \eqref{shi_bounds} provides uniform bounds on the curvature and its derivatives, we can use \cite[Corollary 2.10]{KY_PE_stability} to ensure that $\Lap_\mfg + n : C^{m,\alpha} \rightarrow C^{m-2,\alpha}$ is an isomorphism for all $m \geq 2$, which allow us to compute as follows:
            \begin{align*}
                \norm{u}_{C^{3,\alpha}} &\leq C\norm{\p{\Lap_\mfg + n} u}_{C^{1,\alpha}}\\
                &\leq C\p{\norm{\scal_\mfg + n\p{n+1}}_{C^{1,\alpha}} + \norm{\abs{du_g}^2}_{C^{1,\alpha}}}\\
                &<\infty,
            \end{align*}
            as $\norm{u}_{C^{2,\alpha}} <\infty$ and \eqref{shi_bounds} implies derivatives of the curvature are bounded for all $t \in \left[\tau_{\mathrm{SE}}, T\right)$. By bootstrapping we can deduce $C^{m,\alpha}$-bounds on $u$ for all $m \in \mathbb{N}$. In particular:
            \begin{equation}\label{stability_u_bounds}
                \sup_{\mathfrak{M}} \abs{\nabla^m_{\mfg(t)}u(t)}_{\mfg(t)} \leq \overline{C}_m
            \end{equation}
            for all $m \in \mathbb{N}$ and all $t \in \left[\tau_{\mathrm{SE}}, T\right)$, where $\overline{C}_m$ depends only on $m $ and $\wh{\mfg}$. Note that the estimates \eqref{shi_bounds} and \eqref{stability_u_bounds} are both diffeomorphism invariant. \\

            Now we modify the flow for $t \geq \tau_{\mathrm{SE}}$ to be more suited for the use of the \L ojasiewicz--Simon inequality of Theorem \ref{loj_ineq_thm}. For this, consider a Ricci-DeTurck flow $\mfg(t)$ starting in $\mcb^\ell(\delta')$ and the family of diffeomorphisms $\left\{\Phi_t\right\}_{t\in [0,T)}$ generated by $X_{\wh{\mfg}}\p{\mfg(t)}$, and satisfying $\Phi_0=\mathrm{Id}$. Add to this the family of diffeomorphisms $\left\{\Psi_t\right\}_{t \in \left[\tau_{\mathrm{SE}},T\right)}$ generated by $-\nabla_{\Phi^\ast_{t}\mfg(t)}u_{\Phi^\ast_{t}\mfg(t)}$ and satisfying the initial condition $\Psi_{\tau_{\mathrm{SE}}} = \Phi_{\tau_{\mathrm{SE}}}^{-1}$. Now consider the continuous flow starting in $\mcb^\ell(\delta')$, defined by
            \begin{equation*}
                \wt{\mathfrak{g}}(t): = \begin{cases}
                    \mathfrak{g}(t)~~&\mathrm{for}~t \in \left[0,\tau_{\mathrm{SE}}\right]\\
                    \p{\Phi_t\circ \Psi_t}^\ast \mathfrak{g}(t)~~&\mathrm{for}~t \in \left[\tau_{\mathrm{SE}},T\right).
                \end{cases}
            \end{equation*}
            
            By possibly decreasing it slightly, we may assume that $T$ is in fact the maximal time such that this new flow exists and stays within $\mcb^\ell(\delta)$ for all $t\in [0,T)$, and all initial metrics in $\mcb^\ell(\delta')$. This yields a \emph{modified} Ricci flow
            \begin{equation*}
                \partial_t\wt{\mathfrak{g}}(t) = \begin{cases}
                    -2\p{\Ric_{\wt\mfg(t)}+n\wt\mfg(t)+\frac{1}2 \mathscr{L}_{X_{\wh\mfg}\p{\wt\mfg(t)}}\wt\mfg(t)}~~&\mathrm{for}~t \in \left[0,\tau_{\mathrm{SE}}\right)\\
                    -2\p{\Ric_{\wt\mfg(t)}+n\wt\mfg(t)+\nabla^2_{\wt\mfg(t)}u_{\wt\mfg(t)}}~~&\mathrm{for}~t \in \p{\tau_{\mathrm{SE}},T}.
                \end{cases}
            \end{equation*}
            
            In particular, \eqref{stability_u_bounds} and the definition of $T$ allow us to replicate the arguments in \cite{bamler_cusps,bamler_symm_spaces} to deduce that Lemma \ref{rdtf_hl_bounds} still holds for the modified flow and all $t\in [0,T)$.\\
                    
            Since $\wt\mfg(t)$ has the same warped product structure as the initial metric, we can write its component-wise flow for $t\in \p{\tau_{\mathrm{SE}},T}$ as
            \begin{equation}\label{mod_warped_product_flow}
                \begin{cases}
                    \partial_t \wt{g}(t) = -2\p{\Ric_{\wt{g}(t)} + n\wt{g}(t) +\nabla^2_{\wt{g}(t)} f_{\wt{g}(t)} - d\wt{\varphi}(t) \otimes d\wt{\varphi}(t)}\\
                    \partial_t \wt{\varphi}(t) = -\p{\Delta_{\wt{g}(t)} \wt{\varphi}(t) + \g{df_{\wt{g}(t)}}{d\wt{\varphi}(t)} - n}.
                \end{cases}
            \end{equation}

            We note that, importantly, this is \eqref{entropy_grad_flow} -- the gradient flow of the entropy $\mu_{\mathrm{AH}}$. \\

            Moving on, we can use Corollary \ref{gagliardo_nirenberg_cor} and \eqref{sobolev_norm_equiv} to get, for every small $\eta > 0$, an $\ell'\in \mathbb{N}$ with $\ell'>\ell$ and constants $C_1, C_2 > 0$, depending on $ \wh{\mathfrak{g}}, \ell$ and $\ell'$, such that for $t\in \p{\tau_{\mathrm{SE}},T}$
            \begin{equation}\label{stability_interp_bounds}
                \norm{\partial_t \wt{\mfg}(t)}_{H^\ell_{\wh{\mathfrak{g}}}} \leq C_1\norm{\partial_t \wt{\mfg}(t)}^\eta_{H^{\ell'}_{\wh{\mathfrak{g}}}} \norm{\partial_t \wt{\mfg}(t)}^{1-\eta}_{L^2_{\wh{\mathfrak{g}}}} \leq C_2 \norm{\partial_t \wt{\mfg}(t)}^{1-\eta}_{L^2_{\wh{\mathfrak{g}}}}.    
            \end{equation}
            
            The second inequality and use of Theorem \ref{gagliardo_nirenberg_cor} are justified by Lemma \ref{rdtf_hl_bounds}. Next, let $\theta$ be the exponent from Theorem \ref{loj_ineq_thm} and choose $\eta>0$ such that $\sigma := \frac{\theta}{2} - \eta + \frac{\theta \eta}{2} > 0$. Note that $1-\sigma = \p{1-\frac{\theta}{2}}\p{1+\eta}$. Then \eqref{stability_interp_bounds} and the \L ojasiewicz--Simon inequality imply that
            \begin{align*}
                \frac{d}{dt}\mu_{\mathrm{AH}}\p{\wt{\mathfrak{g}}(t)} &\geq C_4\norm{\partial_t \wt{\mfg}(t)}^{1+\eta}_{L^2_{\wh{\mathfrak{g}}}} \norm{\partial_t\wt{\mfg}(t)}^{1-\eta}_{L^2_{\wh{\mathfrak{g}}}}\\
                &\geq \frac{C_4}{C_2}\norm{\partial_t \wt{\mfg}(t)}^{1+\eta}_{L^2_{\wh{\mathfrak{g}}}} \norm{\partial_t\wt{\mfg}(t)}_{H^\ell_{\wh{\mathfrak{g}}}}\\
                &\geq \frac{C_4}{C_2\sqrt{C_3}}\abs{\mu_{\mathrm{AH}}\p{\wt{\mathfrak{g}}(t)} - \mu_{\mathrm{AH}}\p{\wh{\mathfrak{g}}}}^{\p{1-\frac{\theta}{2}}\p{1+\eta}}\norm{\partial_t\wt{\mfg}(t)}_{H^\ell_{\wh{\mathfrak{g}}}}\\
                &= :\wt{C}\abs{\mu_{\mathrm{AH}}\p{\wt{\mathfrak{g}}(t)} - \mu_{\mathrm{AH}}\p{\wh{\mathfrak{g}}}}^{1-\sigma}\norm{\partial_t\wt{\mfg}(t)}_{H^\ell_{\wh{\mathfrak{g}}}},
            \end{align*}
            where $C_3$ is the constant from \eqref{aux_loj_ineq}, while the first inequality and constant $C_4$ comes from $\partial_t \wt{\mfg}(t)$ satisfying \eqref{mod_warped_product_flow}, albeit with respect to a differently weighted but easily comparable $L^2$-norm. We shall keep the vanishing entropy $\mu_{\mathrm{AH}}\p{\wh\mfg}=0$ throughout the proof, as it appears as such in the \L ojasiewicz--Simon inequality. Since all the points from the definition of $\delta$ hold along the flow, we have $\mu_{\mathrm{AH}}\p{\wt{\mathfrak{g}}(t)} \leq \mu_{\mathrm{AH}}\p{\wh{\mathfrak{g}}} = 0$ for $t\in[0,T)$, and we may deduce from the inequality above that
            \begin{align*}
                -\frac{d}{dt}\abs{\mu_{\mathrm{AH}}\p{\wt{\mathfrak{g}}(t)} - \mu_{\mathrm{AH}}\p{\wh{\mathfrak{g}}}}^\sigma &= \sigma \abs{\mu_{\mathrm{AH}}\p{\wt{\mathfrak{g}}(t)} - \mu_{\mathrm{AH}}\p{\wh{\mathfrak{g}}}}^{\sigma-1} \frac{d}{dt} \mu_{\mathrm{AH}}\p{\wt{\mathfrak{g}}(t)}\\
                &\geq \sigma\wt{C}\norm{\partial_t \wt{\mfg}(t)}_{H^\ell_{\wh{\mathfrak{g}}}}.
            \end{align*}

            Integrating this from $\tau_{\mathrm{SE}}$ to $T$ yields
            \begin{equation}\label{loj_stability_application}
                \int^{T}_{\tau_{\mathrm{SE}}} \norm{\partial_t \wt{\mfg}(t)}_{H^\ell_{\wh{\mathfrak{g}}}} dt \leq \frac{1}{\sigma\wt{C}}\abs{\mu_{\mathrm{AH}}\p{\wt{\mathfrak{g}}\p{\tau_{\mathrm{SE}}}} - \mu_{\mathrm{AH}}\p{\wh{\mathfrak{g}}}}^\sigma\leq\frac{C_\mu}{\sigma\wt{C}}\norm{\wt{\mfg}\p{\tau_{\mathrm{SE}}}-\wh{\mfg}}_{H^\ell_{\wh{\mfg}}}^\sigma,
            \end{equation}
            where the second inequality is again a consequence of the choice of $\delta$. This gives the bound
                $$\norm{\wt{\mfg}(T)-\wt{\mfg}(\tau_{\mathrm{SE}})}_{H^\ell_{\wh{g}}}\leq \int^{T}_{\tau_{\mathrm{SE}}} \norm{\partial_t \wt{\mfg}(t)}_{H^\ell_{\wh{\mathfrak{g}}}} dt\leq \frac{C_\mu}{\sigma\wt{C}}\norm{\wt{\mfg}\p{\tau_{\mathrm{SE}}}-\wh{\mfg}}_{H^\ell_{\wh{\mfg}}}^\sigma.$$
            
            We are finally in a position to specify $\delta'$ further. Choose it so small that $A_\ell\delta'+\frac{C_\mu}{\sigma\wt{C}}\p{A_\ell\delta'}^\sigma<\delta$. Then, for every initial metric in $\mcb^\ell\p{\delta'}$ we have $\wt{\mfg}\p{\tau_{\mathrm{SE}}}\in \mcb^\ell(A_\ell\delta')$, and
            \begin{align*}
                \norm{\wt{\mathfrak{g}}(T)-\wh{\mfg}}_{H^\ell_{\wh{\mathfrak{g}}}} &\leq \norm{ \wt{\mathfrak{g}}(\tau_{\mathrm{SE}})-\wh{\mathfrak{g}}}_{H^\ell_{\wh{\mathfrak{g}}}} + \norm{\wt{\mathfrak{g}}(T)-\wt{\mathfrak{g}}(\tau_{\mathrm{SE}}) }_{H^\ell_{\wh{\mathfrak{g}}}}\\ &\leq \norm{ \wt{\mathfrak{g}}(\tau_{\mathrm{SE}})-\wh{\mathfrak{g}}}_{H^\ell_{\wh{\mathfrak{g}}}}+\frac{C_\mu}{\sigma \wt{C}}\norm{ \wt{\mathfrak{g}}(\tau_{\mathrm{SE}})-\wh{\mathfrak{g}}}_{H^\ell_{\wh{\mathfrak{g}}}}^\sigma\\
                & <\delta.
            \end{align*}

            This implies that no flow starting in $\mcb^\ell(\delta')$ leaves $\mcb^\ell(\delta)$ at time $t=T$, and it can not be maximal in the sense defined. The only logical conclusion is that $ T = \infty$, which then implies $\wt{\mathfrak{g}}(t) \rightarrow \mathfrak{g}_\infty$ for $t\to\infty$ and some limit metric $\mathfrak{g}_\infty \in \mathcal{U}$. \\
             
            Using the \L ojasiewicz--Simon inequality again, one can compute that, for $t > \tau_{\mathrm{SE}}$,
            \begin{align*}
                \frac{d}{dt} \abs{\mu_{\mathrm{AH}}\p{\wt{\mathfrak{g}}(t)} - \mu_{\mathrm{AH}}\p{\wh{\mathfrak{g}}}}^{\theta-1}& =(1-\theta)\abs{\mu_{\mathrm{AH}}\p{\wt\mfg(t)}-\mu_{\mathrm{AH}}\p{\wh{\mathfrak{g}}}}^{\theta-2}\frac{d}{dt}\mu_{\mathrm{AH}}\p{\wt\mfg(t)}\\ & \geq \frac{1-\theta}{C_3}\norm{\nabla \mu_{\mathrm{AH}}\p{\wt\mfg(t)}}_{L^2_{\wh{\mfg}}}^{-2}\p{\nabla \mu_{\mathrm{AH}}\p{\wt\mfg(t)},\partial_t\wt\mfg(t)}_{L^2_{\wh\mfg}}\\
                &\geq \wh{C},
            \end{align*}
            for some constant $\wh{C} > 0$. The first inequality uses the \L ojasiewicz-Simon inequality and comparability of the volume forms, while the second uses that $\partial_t\wt\mfg(t)$ agrees with the gradient flow up to a factor $e^{-u_{\wt\mfg(t)}}$. Integrating this from $t=\tau_{\mathrm{SE}}$ to $T$ yields
            \begin{equation}\label{entropy_conv_rate}
                \abs{\mu_{\mathrm{AH}}\p{\wt{\mathfrak{g}}(T)} - \mu_{\mathrm{AH}}\p{\wh{\mathfrak{g}}}} \leq \wh{C}\p{T - \tau_{\mathrm{SE}}}^{-\frac{1}{1-\theta}}.
            \end{equation}

            It is important to note that we may assume $\theta < 1$, as the \L ojasiewicz--Simon inequaltiy still holds after decreasing the exponent $\theta$, as long as the difference of the entropies is at most $1$. Sending $t \rightarrow \infty$ tells us $\mu_{\mathrm{AH}}\p{\mathfrak{g}_\infty} = \mu_{\mathrm{AH}}\p{\wh{\mathfrak{g}}}$. Therefore, since $\wh{\mathfrak{g}}$ is a local maximum of the entropy, so too is $\mathfrak{g}_\infty$. We may then use \cite[Corollary 5.15]{DKM} to deduce that $\mathfrak{g}_\infty$ must be a Poincar\'e--Einstein metric. Moreover, since warped product structure is preserved by the modified Ricci flow, we know that $\mfg_\infty=\p{g_\infty, \varphi_\infty}$ is a static pair.\\

            Finally, combining \eqref{loj_stability_application} and \eqref{entropy_conv_rate} yields the claimed convergence rate:
            \begin{equation*}
                \norm{\wt{\mathfrak{g}}(T) - \wh{\mathfrak{g}}}_{H^\ell_{\wh{\mathfrak{g}}}} \leq \frac{1}{\sigma\wt{C}}\abs{\mu_{\mathrm{AH}}\p{\wt{\mathfrak{g}}(T)} - \mu_{\mathrm{AH}}\p{\wh{\mathfrak{g}}}}^\sigma \leq \frac{\wh{C}^\sigma}{\sigma \wt{C}}\p{T - \tau_{\mathrm{SE}}}^{-\frac{\sigma}{1-\theta}}.
            \end{equation*}
            
            Using Lemma \ref{rdtf_hl_bounds} and the Sobolev embedding theorem yields the desired convergence in all derivatives. Note also that by diffeomorphism invariance the above results for the modified Ricci flow hold for the standard Ricci flow 
            $$\mfg_{\mathrm{RF}}(t):=\begin{cases}\p{\Phi_t}_\ast\wt{\mfg}(t) &\text{for }t\in[0,\tau_{\mathrm{SE}}]\\ \p{\Phi_t \circ \Psi_t \circ \Phi_t}_\ast \wt\mfg\p{t}&\text{for }t\in[\tau_{\mathrm{SE}},\infty)\end{cases},$$
            satisfying $\partial_t\mfg_{\mathrm{RF}}(t)=-2\p{\Ric_{\mfg_{\mathrm{RF}}(t)}+n\mfg_{\mathrm{RF}}(t)}$ for all $t\geq 0$.
        \end{proof}

        Now for the instability theorem (Theorem \ref{mainthm_instability}), which we restate here in an expanded form.

        \begin{theorem}\label{static_instability}
            Let $\p{M,\wh{g}, \wh{\varphi}}$ be a static triple of class $C^{\ell,\alpha}$ ($\ell>\frac{n}{2}+2$) and dimension $n \geq 3$, and set $\wh{\mathfrak{g}} := e^{-2\wh{\varphi}}\;d\tau^2 + \wh{g}$. Assume that $\p{\wh{g}, \wh{\varphi}}$ is \textbf{not} a local maximiser of the entropy $\mu_{\mathrm{AH}}=\mu_{\mathrm{AH},\wh{g}}$. Then there exists a non-trivial ancient Ricci flow $\left\{\mathfrak{g}(t)\right\}_{t \in (-\infty,0]} = \left\{\p{g(t), \varphi(t)}\right\}_{t \in (-\infty,0]}$ and a family of diffeomorphisms $\left\{\Phi_t\right\}_{t \in (-\infty,0]}$, such that $\Phi_t^*\mathfrak{g}(t) \to \wh{\mathfrak{g}}$ in all derivatives as $t \rightarrow -\infty$.
        \end{theorem}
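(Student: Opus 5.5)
We follow the strategy of the instability theorem in \cite{KY_PE_stability}, which itself goes back to the compact case: construct a sequence of modified Ricci flows starting ever closer to $\wh{\mfg}$ at points of strictly larger entropy, run each until it exits a fixed small $H^\ell_{\wh\mfg}$-ball, and extract a limit of time-shifted flows. The tools are the Łojasiewicz--Simon inequality (Theorem \ref{loj_ineq_thm}), monotonicity of $\mu_{\mathrm{AH}}$ along the gradient flow (Lemma \ref{entropy_monot}), and the estimates of Proposition \ref{rdtf_short_time} and Lemma \ref{rdtf_hl_bounds}.

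\textbf{Setup.} Fix $\delta>0$ so small that three things hold on $\mcb^\ell(\delta)$: Theorem \ref{loj_ineq_thm} applies, $\mu_{\mathrm{AH}}$ is analytic, and the flow estimates are valid. Since $\wh\mfg$ is not a local maximiser, there are auxiliary pairs $\mfg_i=(g_i,\varphi_i)\to\wh\mfg$ in $H^\ell_{\wh\mfg}$ with $\mu_{\mathrm{AH}}(\mfg_i)>\mu_{\mathrm{AH}}(\wh\mfg)=0$. By Proposition \ref{entropy_analytic_metric_dep} and density, we may take them smooth, and by Lemma \ref{slice_result} we may take them in the slice. From each $\mfg_i$ we run the modified flow $\wt\mfg_i(t)$ exactly as in the proof of Theorem \ref{static_stability}: Ricci--DeTurck on $[0,\tau_{\mathrm{SE}}]$, then the gradient flow \eqref{entropy_grad_flow}. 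Along the gradient flow $\mu_{\mathrm{AH}}$ is nondecreasing, so $\mu_{\mathrm{AH}}(\wt\mfg_i(t))\ge\mu_{\mathrm{AH}}(\mfg_i)>0$ as long as the flow stays in $\mcb^\ell(\delta)$. The same exit-time bookkeeping as in the stability proof applies.

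\textbf{Key claim: every such flow leaves $\mcb^\ell(\delta/2)$ in finite time $T_i$, and $T_i\to\infty$.} Suppose a flow stays in the ball. Since $\mu_{\mathrm{AH}}>0$, the Łojasiewicz inequality gives
\begin{equation*}
\frac{d}{dt}\mu_{\mathrm{AH}}\ge c\,\mu_{\mathrm{AH}}^{2-\theta}.
\end{equation*}
This forces $\mu_{\mathrm{AH}}$ to blow up in finite time when $\theta<1$, and to grow exponentially when $\theta=1$. Both contradict the boundedness of $\mu_{\mathrm{AH}}$ on $\mcb^\ell(\delta)$, so the flow must exit. Reversing the integration of $\frac{d}{dt}\mu^\sigma$ as in \eqref{loj_stability_application}, the $H^\ell$-distance travelled is controlled by powers of the entropy gained. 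Since the flows start arbitrarily close to $\wh\mfg$ and the entropy is continuous with $\mu_{\mathrm{AH}}(\mfg_i)\to0$, the exit times satisfy $T_i\to\infty$.

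\textbf{Limit flow.} Shift time, $\mfg^i(t):=\wt\mfg_i(t+T_i)$ for $t\in[-T_i+\tau_{\mathrm{SE}},0]$. These lie in $\overline{\mcb^\ell(\delta/2)}$ and carry uniform bounds on curvature and all derivatives from \eqref{shi_bounds} and \eqref{stability_u_bounds}, together with uniform bounds on $u$. By Arzelà--Ascoli we extract a limit $\mfg(t)$, $t\in(-\infty,0]$, solving the modified flow. The warped product structure $e^{-2\varphi}d\tau^2+g$ survives in the limit because it is a closed condition.

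\textbf{Nontriviality and backward convergence.} These are the main obstacles.

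For nontriviality, we need $\|\mfg(0)-\wh\mfg\|$ to stay bounded below. Local $C^\infty$ convergence does not preserve $H^\ell_{\wh\mfg}$ norms, because mass can escape to infinity. We expect to handle this using the slice gauge and the uniform weighted decay coming from the $H^\ell_{\wh\varphi}$ bounds, or more simply by showing $\mu_{\mathrm{AH}}(\mfg(0))>0$. The latter follows if $\mu_{\mathrm{AH}}$ is continuous under the convergence; we would argue this via the $L^2$-Lipschitz gradient estimate of Proposition \ref{loj_needed_estims} combined with the decay.

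For backward convergence, the Łojasiewicz argument run backwards gives
\begin{equation*}
\int_{-\infty}^{t}\|\partial_s\mfg\|_{H^\ell}\,ds\lesssim\mu_{\mathrm{AH}}(\mfg(t))^\sigma .
\end{equation*}
Since $\mu_{\mathrm{AH}}\ge0$ is monotone and $\mu_{\mathrm{AH}}(\mfg(t))\to0$ as $t\to-\infty$, this makes $\mfg(t)$ Cauchy as $t\to-\infty$ with limit $\wh\mfg$. That the limit is $\wh\mfg$ rather than some other static pair follows from $\mfg^i(-T_i)=\mfg_i\to\wh\mfg$ together with the uniform length bound.

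Finally, undoing the DeTurck and $\nabla u$ diffeomorphisms, as at the end of the proof of Theorem \ref{static_stability}, converts $\mfg(t)$ into a genuine ancient Ricci flow with diffeomorphisms $\Phi_t$. Convergence in all derivatives comes from Lemma \ref{rdtf_hl_bounds} and Sobolev embedding.
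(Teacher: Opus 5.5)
Your outline follows the paper's proof step for step. Your blow-up argument showing that each flow actually leaves the ball is a useful addition the paper leaves implicit. The step you leave open, nontriviality, is a genuine gap.

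Your distance estimate does give $\mu_{\mathrm{AH}}(\wt\mfg_i(T_i))\geq\eps'>0$ uniformly in $i$; this is \eqref{non_trivial_rf_estim}. However, Arzel\`a--Ascoli with uniform $H^\ell_{\wh\mfg}$ bounds only yields local smooth, hence weak $H^\ell_{\wh\mfg}$, convergence. Your proposed repair does not close this. Integrating the gradient bound of Proposition \ref{loj_needed_estims} gives $\abs{\mu_{\mathrm{AH}}(\mfg_1)-\mu_{\mathrm{AH}}(\mfg_2)}\lesssim\norm{\mfg_1-\mfg_2}_{L^2_{\wh\mfg}}$, which needs global $L^2_{\wh\mfg}$ convergence. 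The $H^\ell_{\wh\varphi}$ bounds provide no decay towards the conformal boundary that is uniform in $i$. So nothing in your argument stops the perturbation from drifting to infinity, leaving the limit equal to $\wh\mfg$.

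The paper gets past this by asserting that Hamilton's compactness theorem yields convergence in $H^\ell_{\wh\mfg}$. It then lets $i\to\infty$ in \eqref{non_trivial_rf_estim} to get $\eps\leq C_\sigma^{-1}\mu_{\mathrm{AH}}(\mfg(0))^\sigma$. Your doubt about that step is justified, but then you owe a localisation argument. Here is one possible route:
\begin{itemize}
\item Move $\wt\mfg_i(T_i)$ into the slice of Lemma \ref{slice_result}. There the Hessian is $-\frac{1}{4n}\Delta_{E,\wh\mfg}$.
\item This Hessian is negative definite away from a compact set $K$, since the essential spectrum of $\Delta_{E,\wh\mfg}$ on the AH manifold $\mathfrak{M}$ is bounded away from $0$.
\item Combined with the cubic bound of Proposition \ref{third_var_estim}, $\mu_{\mathrm{AH}}\geq\eps'$ forces $L^2(K)$-mass of the perturbation $\gtrsim\sqrt{\eps'}$. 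This survives local convergence, so $\mfg(0)\neq\wh\mfg$.
\item A stationary limit flow would be constant and hence equal to its backward limit $\wh\mfg$, which is now excluded.
\end{itemize}

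Two smaller points.

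First, for $T_i\to\infty$ you need continuous dependence of the flow on initial data over bounded time intervals, not continuity of $\mu_{\mathrm{AH}}$ at $t=0$. You can get this by iterating Lemma \ref{rdtf_hl_bounds}, using that $\wh\mfg$ is stationary. Then bounded $T_i$ would force $\wt\mfg_i(T_i)\to\wh\mfg$, contradicting the entropy lower bound; this is how the paper argues.

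Second, for backward convergence, your claims that $\mu_{\mathrm{AH}}(\mfg(t))\geq 0$ and $\mu_{\mathrm{AH}}(\mfg(t))\to 0$ along the limit flow again presuppose that the entropy passes to the limit. The paper avoids this by working with the approximating flows. From \eqref{entropy_diff_bound}, $\mu_{\mathrm{AH}}(\wt\mfg_i(t+T_i))\leq(C_\theta\abs{t})^{-1/(1-\theta)}$ uniformly in $i$. Then \eqref{integrate_this} gives $\norm{\wt\mfg_i(t+T_i)-\wt\mfg_i(\tau_{\mathrm{SE}})}_{H^\ell_{\wh\mfg}}\lesssim\abs{t}^{-\sigma/(1-\theta)}$. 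Since $\wt\mfg_i(\tau_{\mathrm{SE}})\to\wh\mfg$, weak lower semicontinuity of the norm gives $\norm{\mfg(t)-\wh\mfg}_{H^\ell_{\wh\mfg}}\lesssim\abs{t}^{-\sigma/(1-\theta)}$.

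This needs only weak convergence and identifies the backward limit as $\wh\mfg$ directly. It also makes your separate Cauchy argument unnecessary. Note that the anchor is $\wt\mfg_i(\tau_{\mathrm{SE}})$, not $\mfg_i$, because the \L ojasiewicz estimates apply only after the DeTurck phase.
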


        \begin{proof}
            Consider $\left\{\p{g_i,\varphi_i}\right\} = \left\{\mathfrak{g}_i\right\} \subset \mathcal{R}^\ell_{\wh\mfg}\p{\mathfrak{M},\wh{\mathfrak{g}}}$, a sequence of smooth metrics on $\mathfrak{M}=\mathbb{S}^1\times M$ such that $\mathfrak{g}_i \rightarrow \wh{\mathfrak{g}}$ in the $\big(L^2_{\wh{\mathfrak{g}}}\cap L^\infty\big)$-sense. Furthermore, suppose $\mu_{\mathrm{AH}}\p{\mathfrak{g}_i} > \mu_{\mathrm{AH}}\p{\wh{\mathfrak{g}}} = 0$ for all $i$, and let $\wt{\mathfrak{g}}_i(t)$ be the modified Ricci flow starting at $\mathfrak{g}_i$, as defined during the proof of Theorem \ref{static_stability}. Importantly, since $\mu_{\mathrm{AH}}$ is monotonically increasing under the flow, we have 
            $$\mu_{\mathrm{AH}}\p{\wt\mfg_i(t)}=\mu_{\mathrm{AH}}\p{\wt\mfg_i(t)}-\mu_{\mathrm{AH}}\p{\wh\mfg}>0$$
            for all $t\geq 0$. Furthermore, for $t\in [\tau_{\mathrm{SE}},2\tau_{\mathrm{SE}}]$
            \begin{equation}\label{instab_sob_conv}
                \wt{\mathfrak{g}}_i(t) \overset{L^\infty}\longrightarrow \wh{\mathfrak{g}}\qquad \text{for}\qquad i\to \infty
            \end{equation}
            by Lemma \ref{rdtf_hl_bounds}. The same convergence is true in the $H^\ell_{\wh{\mfg}}$-norm.\\

            Now, take $\mathcal{U} =\mcb^\ell(2\varepsilon)$ with $\eps > 0$ small enough that Theorem \ref{loj_ineq_thm} holds on $\mathcal{U}$. For large enough $i$, convergence implies $\wt\mfg_i(t) \in \mathcal{U}$ for all $t \in [\tau_{\mathrm{SE}},2\tau_{\mathrm{SE}}]$. For these $i$ and $t$, we can apply the \L ojasiewicz--Simon inequality to deduce that
            \begin{equation*}
                \frac{d}{dt}\p{\mu_{\mathrm{AH}}\p{\wt{\mathfrak{g}}_i(t)} - \mu_{\mathrm{AH}}\p{\wh{\mathfrak{g}}}}^{\theta-1} \geq -C_\theta,
            \end{equation*}
            where $\theta$ is the exponent from \eqref{aux_loj_ineq} and $C_\theta=\frac{1-\theta}{C} \geq 0$. Integrating this inequality in time from $t = s$ to $ T$, we find that
            \begin{equation}\label{entropy_diff_bound}
                \left[\p{\mu_{\mathrm{AH}}\p{\wt{\mathfrak{g}}_i(s)} - \mu_{\mathrm{AH}}\p{\wh\mfg}}^{\theta-1} - C_\theta\p{T-s}\right]^{-\frac{1}{1-\theta}} \leq \mu_{\mathrm{AH}}\p{\wt{\mathfrak{g}}_i\p{T}} - \mu_{\mathrm{AH}}\p{\wh{\mathfrak{g}}},
            \end{equation}
            provided $s$ is such that $\wt{\mathfrak{g}}_i\p{s} \in \mathcal{U}$. Let $T_i$ be the first time that the flow $\wt{g}_i(t)$ leaves $\mcb^\ell(\varepsilon)$; that is, the first time at which
            \begin{equation*}
                \norm{\wt{\mathfrak{g}}_i\p{T_i} - \wh{\mathfrak{g}}}_{H^\ell_{\wh{\mathfrak{g}}}} = \eps.
            \end{equation*}

            \begin{claim}
                We have $T_i \rightarrow \infty$ as $i \rightarrow \infty$.
            \end{claim} 
            
            \begin{proof}[Proof of claim.]
                Assume for contradiction that $T_i$ (possibly after passing to a subsequence) is a bounded sequence, with $T_\infty = \lim_{i \rightarrow \infty} T_i$. Using Corollary \ref{gagliardo_nirenberg_cor} like before, we have, for any given $\eta > 0$, that for $t\in [0,T_i]$
                \begin{equation*}\label{instability_interp_bounds}
                    \norm{\partial_t \wt\mfg_i(t)}_{H^\ell_{\wh{\mathfrak{g}}}} \leq C_1\norm{\partial_t \wt\mfg_i(t)}^\eta_{H^{\ell'}_{\wh{\mathfrak{g}}}} \norm{\partial_t \wt\mfg_i(t)}^{1-\eta}_{L^2_{\wh{\mathfrak{g}}}} \leq C_2 \norm{\partial_t \wt\mfg_i(t)}^{1-\eta}_{L^2_{\wh{\mathfrak{g}}}}
                \end{equation*}
                for some $ \ell < \ell' \in \mathbb{N}$, and constants $C_1, C_2 > 0$ depending only on $ \wh{\mathfrak{g}}$, $ \ell$ and $\ell'$. Note that the second inequality and the application of Theorem \ref{gagliardo_nirenberg_cor} are justified by Lemma \ref{rdtf_hl_bounds}, which implies the $H^{\ell'}_{\wh\mfg}$-norm of $\partial_t \wt\mfg_i(t)$ is finite as long as the flow stays in $\mathcal{U}$. Let $\eta>0$ be small enough so that $\sigma := \frac{\theta}{2} - \eta + \frac{\theta \eta}{2} > 0$. Then we can compute as we did in the proof of Theorem \ref{static_stability} to see that
                \begin{equation}\label{integrate_this}
                    \frac{d}{dt}\p{\mu_{\mathrm{AH}}\p{\wt{\mathfrak{g}}_i(t)} - \mu_{\mathrm{AH}}\p{\wh\mfg} }^\sigma \geq C_\sigma\norm{\partial_t \wt\mfg_i(t)}_{H^\ell_{\wh{\mathfrak{g}}}}.
                \end{equation}
    
                Integrating this new bound with respect to time from $\tau_{\mathrm{SE}}$ to $T_i$ allows us to deduce that
                \begin{equation}\label{non_trivial_rf_estim}
                    \eps = \norm{\wt{\mathfrak{g}}_i\p{T_i} - \wh{\mathfrak{g}}}_{H^\ell_{\wh{\mathfrak{g}}}} \leq \norm{\wt{\mathfrak{g}}_i\p{\tau_{\mathrm{SE}}} - \wh{\mathfrak{g}}}_{H^\ell_{\wh{\mathfrak{g}}}} + \frac{1}{C_\sigma}\p{\mu_{\mathrm{AH}}\p{\wt{\mathfrak{g}}_i\p{T_i}} - \mu_{\mathrm{AH}}\p{\wh{\mathfrak{g}}}}^\sigma.
                \end{equation}
    
                Note that \eqref{non_trivial_rf_estim} does not rely on $T_i$ (or the limit, $T_\infty$) being finite. Additionally, by \eqref{instab_sob_conv} we can make $\norm{\wt{\mathfrak{g}}_i\p{\tau_{\mathrm{SE}}}- \wh{\mathfrak{g}}}_{H^\ell_{\wh{\mathfrak{g}}}}$ as small as we want by taking $i \gg 1$. Using this and rearranging \eqref{non_trivial_rf_estim}, we obtain the uniform lower bound 
                \begin{equation}\label{unif_entropy_lower_bnd}
                    \eps' \leq \mu_{\mathrm{AH}}\p{\wt{\mathfrak{g}}_i\p{T_i}}, 
                \end{equation}
                for some $\eps'>0$. Hence, letting $i \rightarrow \infty$, we have $\eps' \leq \mu_{\mathrm{AH}}\p{\wt{\mathfrak{g}}_\infty\p{T_\infty}}$. On the other hand, by \eqref{instab_sob_conv}, we know that $\wt{\mathfrak{g}}_i\p{T_i} \rightarrow \wh{\mathfrak{g}}$ in the $H^\ell_{\wh{\mathfrak{g}}}$-norm. Thus $\norm{\wt{\mathfrak{g}}_i\p{T_i} - \wh{\mathfrak{g}}}_{C^0} \rightarrow 0$ by the Sobolev embedding theorem. Since the limit here is unique, we have $ \lim_{i \rightarrow \infty} \wt{\mathfrak{g}}_i\p{T_i}=\wh{\mathfrak{g}}$, which contradicts the uniformly positive lower entropy bound \eqref{unif_entropy_lower_bnd}. We must therefore have $\lim_{i\to\infty}T_i =\infty$.
            \end{proof}

            We now define a new sequence of metrics by applying a time shift. Let $\wt{\mathfrak{g}}^*_i(t) := \wt{\mathfrak{g}}_i\p{t + T_i}$ for $t \in \left[\tau_{\mathrm{SE}}-T_i,0\right]$. Then, by the previous part of the proof, we have
            \begin{align*}
                \norm{\wt{\mathfrak{g}}^*_i(t) - \wh{\mathfrak{g}}}_{H^\ell_{\wh{\mathfrak{g}}}} \leq \eps\qquad \qquad &\text{for}\;\; t\in\left[\tau_{\mathrm{SE}}-T_i,0\right],\\
                \wt{\mathfrak{g}}^*_i\p{\tau_{\mathrm{SE}}-T_i} \overset{H^\ell_{\wh\mfg}}\longrightarrow \wh{\mathfrak{g}}\qquad \qquad &\text{for}\;\; i\to \infty.
            \end{align*}
             
             Additionally, note that we can apply Hamilton's compactness theorem (see \cite{hamilton_compactness}) to pass to a convergent subsequence, so that $\wt{\mathfrak{g}}^*_i(t) \rightarrow \wt{\mathfrak{g}}(t)$ in $H^\ell_{\wh{\mathfrak{g}}}$ as $i\to \infty$, where $\left\{\wt{\mathfrak{g}}(t)\right\}_{t \in (-\infty,0]} = \left\{\p{\wt{g}(t), \wt{\varphi}(t)}\right\}_{t \in (-\infty,0]}$ is an ancient modified Ricci flow satisfying the system \eqref{mod_warped_product_flow}.\\

            Next, for $t \in (-\infty,0]$, let $ \left\{\Psi_t\right\}_{t\in(-\infty,0]}$ be the family of diffeomorphisms generated by $-\nabla_{\wt\mfg(t)}u_{\wt\mfg(t)}$. Then $\mathfrak{g}(t) := \Psi^\ast_t \wt{\mathfrak{g}}(t)$ is a Ricci flow. Furthermore, sending $i \to \infty$ in \eqref{non_trivial_rf_estim}, we have
            \begin{equation*}
                \eps \leq \frac{1}{C_\sigma}\p{\mu_{\mathrm{AH}}\p{\mathfrak{g}(0)} - \mu_{\mathrm{AH}}\p{\wh{\mathfrak{g}}}}^\sigma \leq \norm{\nabla \mu_{\mathrm{AH}}\p{\mathfrak{g}(0)}}^{\frac{2\sigma}{2-\theta}}_{L^2_{\wh{\mathfrak{g}}}},
            \end{equation*}
            where the second inequality is due Theorem \ref{loj_ineq_thm}. Note that $\mathfrak{g}(0)$ appears since $\wt{\mathfrak{g}}^*_i(0) = \wt{\mathfrak{g}}_i\p{T_i}$, and the latter is what is present in \eqref{non_trivial_rf_estim}. This, together with the monotonicity of the entropy (Lemma \ref{entropy_monot}), implies that $\mathfrak{g}(t)$ is a non-trivial Ricci flow. That is, it is not a stationary solution of \eqref{entropy_grad_flow}; hence, by diffeomorphism invariance, it is not a stationary solution of \eqref{eq:Listflow_withLambda}. \\

            By integrating \eqref{integrate_this} from $\tau_{\mathrm{SE}}$ to $t+T_i$, and applying \eqref{entropy_diff_bound} (with $s = t+ T_i$ and $T = T_i$), we find that, for $t \in \left[\tau_{\mathrm{SE}}-T_i, 0\right]$ and $i \gg 1$, 
            \begin{align*}
                \norm{\wt\mfg^*_i\p{\tau_{\mathrm{SE}}-T_i} - \wt\mfg^*_i(t)}_{H^\ell_{\wh\mfg}} &= \norm{\wt\mfg_i\p{\tau_{\mathrm{SE}}}-\wt\mfg_i\p{t + T_i}}_{H^\ell_{\wh{\mathfrak{g}}}} \\
                &\leq \frac{1}{C_\sigma}\int_{\tau_{\mathrm{SE}}}^{t+T_i}\frac{d}{ds}\p{\mu_{\mathrm{AH}}\p{\wt\mfg_i(s)}-\mu_{\mathrm{AH}}\p{\wh\mfg}}^\sigma\; ds\\
                &\leq \frac{1}{C_\sigma}\p{\mu_{\mathrm{AH}}\p{\wt\mfg_i\p{t+T_i}} - \mu_{\mathrm{AH}}\p{\wh\mfg}}^\sigma\\
                &\leq \frac{1}{C_\sigma}\p{-C_\theta t + \p{\mu_{\mathrm{AH}}\p{\wt\mfg_i\p{T_i}} - \mu_{\mathrm{AH}}\p{\wh\mfg}}^{\theta-1}}^{-\frac{\sigma}{1-\theta}}\\
                &\leq \p{-C_3t + C_4}^{-\frac{\sigma}{1-\theta}},
            \end{align*}
            for some $C_3,C_4 > 0$. Note that the equality is due to the definition of $\wt\mfg^*_i$, while the second inequality relies on $\mu_{\mathrm{AH}}\p{\wt\mfg_i\p{\tau_{\mathrm{SE}}}} > 0$, and the last inequality is due to \eqref{unif_entropy_lower_bnd} and $\theta < 1$, so that
            \begin{equation*}
                \p{\mu_{\mathrm{AH}}\p{\wt\mfg_i\p{T_i}} - \mu_{\mathrm{AH}}\p{\wh\mfg}}^{\theta - 1} \leq \p{\eps'}^{1-\theta}.
            \end{equation*}
            
            The triangle inequality now yields
            \begin{align*}
                \norm{ \wt\mfg(t)-\wh\mfg}_{H^\ell_{\wh\mfg}} \leq \norm{ \wt\mfg(t)-\wt\mfg^*_i(t)}_{H^\ell_{\wh\mfg}}+ \p{-C_4 t + C_5}^{-\frac{\sigma}{1-\theta}}+\norm{\wt\mfg^*_i\p{\tau_{\mathrm{SE}}-T_i}-\wh\mfg}_{H^\ell_{\wh\mfg}},
            \end{align*}
            and by letting $i\to \infty$, it reduces to
            \begin{align*}
                \norm{ \wt\mfg(t)-\wh\mfg}_{H^\ell_{\wh\mfg}} \leq  \p{-C_4 t + C_5}^{-\frac{\sigma}{1-\theta}}.
            \end{align*}
            
           Finally, sending $t \rightarrow -\infty$ we have $\p{\Psi_t}_\ast \mathfrak{g}(t)=\wt\mfg(t) \rightarrow \wh\mfg$ in $H^\ell_{\wh\mfg}$. Convergence in all derivatives follows from Lemma \ref{rdtf_hl_bounds} and the Sobolev embedding theorem. 
        \end{proof}
    
\pagebreak    
\appendix
\section{Appendix}\label{appendix}
    This appendix is devoted to recording some variational formulas we use throughout the paper. The proofs can be found in, say, \cite{besse}.

    \begin{proposition}
        Let $\left(M,g\right)$ be a Riemannian manifold. Then, with $g_s = g + sh$ for $\abs{s} \ll 1$, we have
        
        \begin{align}
            \left. \frac{d}{ds}\right\vert_{s=0} \Ric_{g_s} &= \frac{1}{2}\Lap_L h -\delta^*_g\delta_g h - \frac{1}{2}\nabla^2_g \tr_g\left(h\right), \label{ric_var}\\
            \left. \frac{d}{ds}\right\vert_{s=0} \scal_{g_s} &= \Lap_g\tr_g\left(h\right) + \delta_g \delta_g h - \left<\Ric_g, h\right>. \label{scal_var}
        \end{align}
    \end{proposition}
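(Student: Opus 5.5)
These are the standard first-variation formulas for Ricci and scalar curvature, with the conventions fixed earlier in the paper: Besse's sign $\delta_g=-\mathrm{div}_g$, $\delta^*_g\omega=\frac12\mathscr{L}_{\omega^\sharp}g$, and $\Lap_L h=\nabla^*\nabla h+\Ric\circ h+h\circ\Ric-2\overset{\;\circ}{R}h$. The proof is a local computation, so I will work in normal coordinates at a point. There $\Gamma=0$ and partial derivatives of tensors agree with covariant ones, so every identity reduces to tensor algebra.

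\textbf{Step 1: variation of the connection.} Differentiating the Koszul formula gives the tensor
$$\dot\Gamma^k_{ij}=\tfrac12 g^{k\ell}\p{\nabla_i h_{j\ell}+\nabla_j h_{i\ell}-\nabla_\ell h_{ij}}.$$

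\textbf{Step 2: Palatini identity.} From $\Ric_{ij}=\partial_k\Gamma^k_{ij}-\partial_j\Gamma^k_{ik}+\Gamma\Gamma$, evaluated at the centre of normal coordinates, we get
$$\dot\Ric_{ij}=\nabla_k\dot\Gamma^k_{ij}-\nabla_j\dot\Gamma^k_{ik}.$$
Substituting Step 1 and using $\dot\Gamma^k_{ik}=\frac12\nabla_i\tr h$, this becomes
$$\tfrac12\p{\nabla_k\nabla_i h_j{}^k+\nabla_k\nabla_j h_i{}^k-\nabla^k\nabla_k h_{ij}-\nabla_i\nabla_j\tr h}.$$

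\textbf{Step 3: commute derivatives.} This is the main step. In the mixed terms I commute derivatives using the Ricci identity,
$$\nabla_k\nabla_i h_j{}^k=\nabla_i\nabla_k h_j{}^k+\Ric_{i}{}^{m}h_{mj}-R_{kij m}h^{mk}.$$
The term $\nabla_i\nabla_k h_j{}^k=-\nabla_i(\delta h)_j$, once symmetrised in $i,j$, produces $-2\delta^*\delta h$. The curvature terms assemble into $\Ric\circ h+h\circ\Ric-2\overset{\;\circ}{R}h$, and together with $\nabla^*\nabla h$ this gives $\Lap_L h$. This yields \eqref{ric_var}. The real obstacle is bookkeeping: the sign in front of $\overset{\;\circ}{R}$ depends on the index convention $\overset{\;\circ}{R}h_{ij}=R_{ikj\ell}h^{k\ell}$ used in Proposition \ref{second_var}, so I would check it against $h=g$ on a space form. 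There $\Lap_L g=0$ and $\dot\Ric=0$, and both sides of \eqref{ric_var} must agree.

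\textbf{Step 4: scalar curvature.} Write $\scal=g^{ij}\Ric_{ij}$, so
$$\dot\scal=-\g{h}{\Ric}+\tr\dot\Ric.$$
Take the trace of \eqref{ric_var}, using $\tr\Lap_L h=\Lap\tr h$, $\tr\delta^*\omega=-\delta\omega$ and $\tr\nabla^2 f=-\Lap f$. This gives
$$\tr\dot\Ric=\tfrac12\Lap\tr h+\delta\delta h+\tfrac12\Lap\tr h=\Lap\tr h+\delta\delta h,$$
which proves \eqref{scal_var}.
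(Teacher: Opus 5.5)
Your proposal is correct. The Christoffel-symbol variation, the Palatini identity, the Ricci-identity commutation (whose curvature terms do assemble into $\Ric\circ h+h\circ\Ric-2\overset{\;\circ}{R}h$ under Besse's conventions) and the trace computation for $\scal$ all check out. The paper gives no proof of its own and simply cites Besse, where these formulas are derived by essentially this standard computation.
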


    \begin{proposition}   
        Let $\left(M,g\right)$ be a Riemannian manifold, $f,v,\varphi,\psi \in C^1\p{M}$, and $h$ a symmetric $2$-tensor on $M$. Then, with $g_s = g + sh$ (for $\abs{s} \ll 1$), $f_s = f + sv$, and $\varphi_s = \varphi + s\psi$, we have
        \begin{align}
            \left. \frac{d}{ds}\right\vert_{s=0} e^{-f_s} dV_{g_s} &= \left(-v + \frac{1}{2}\tr_g\left(h\right)\right) e^{-f} dV_g, \label{f_wgtd_vol_var}\\
            \left. \frac{d}{ds}\right|_{s=0} \nabla^{2}_{g_s} f_s &= \nabla^2_g v -\delta^*_g\p{h\cdot df}+\frac{1}{2}h\times \nabla^2_g f + \frac{1}{2}\nabla_{g,\nabla_g f} h, \label{hess_var}\\
            \left. \frac{d}{ds}\right|_{s=0} \Lap_{g_s} \varphi_s &= \Lap_g \psi + \g{h}{\nabla^2_g \varphi}-\g{\delta_g h+\frac{1}{2}d\tr_g h}{d\varphi}, \label{lap_var}\\
            \left. \frac{d}{ds}\right|_{s=0} \g{d\varphi_s}{df_s}_{g_s} &= -\g{h}{df \otimes d\varphi}_g+\g{dv}{d\varphi}_g+\g{df}{d\psi}_g. \label{inner_prod_var}
        \end{align}

         Here $\times $ denotes the symmetrised contraction $\p{h\times k}_{ij}=g^{lm}\p{h_{il}k_{mj}+h_{jl}k_{mi}}$ for symmetric $2$-tensors $h,k$.
    \end{proposition}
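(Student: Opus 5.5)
These four formulas are the standard first-variation identities, and I would verify each by differentiating at $s=0$ in local coordinates. The two inputs are $\partial_s g^{ij} = -h^{ij}$ and the variation of the Christoffel symbols
\[
\dot\Gamma^k_{ij} = \tfrac12 g^{kl}\p{\nabla_i h_{jl}+\nabla_j h_{il}-\nabla_l h_{ij}}.
\]
Every other term follows by the product rule.

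For \eqref{f_wgtd_vol_var}, combine $\partial_s dV_{g_s} = \tfrac12 \tr_g h\, dV_g$ with $\partial_s e^{-f_s} = -v e^{-f}$.

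For \eqref{inner_prod_var}, write $\g{d\varphi_s}{df_s}_{g_s} = g_s^{ij}\,\partial_i\varphi_s\,\partial_j f_s$ and differentiate each of the three factors. Differentiating the inverse metric gives $-h(\nabla\varphi,\nabla f)$, which is $-\g{h}{df\otimes d\varphi}$; differentiating the two functions gives the remaining two terms.

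For \eqref{hess_var}, start from $\nabla^2_{ij} f = \partial_i\partial_j f - \Gamma^k_{ij}\partial_k f$. Its variation is $\nabla^2 v - \dot\Gamma^k_{ij}\partial_k f$. Then expand
\[
\dot\Gamma^k_{ij}\partial_k f = \tfrac12\p{\nabla_i h_{jl}+\nabla_j h_{il}-\nabla_l h_{ij}}\nabla^l f,
\]
and use the Leibniz rule, e.g. $\nabla_i h_{jl}\nabla^l f = \nabla_i(h\cdot df)_j - h_{jl}\nabla_i\nabla^l f$. The symmetrised first derivatives of $h\cdot df$ assemble into $\delta^*(h\cdot df)$ and cancel against the matching part of $\nabla^2 v - \ldots$. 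The leftover terms $\tfrac12(h_{jl}\nabla_i\nabla^l f + h_{il}\nabla_j\nabla^l f)$ and $\tfrac12\nabla_{\nabla f}h_{ij}$ appear with the stated signs. The only care needed is the sign convention $\delta^* \omega = \tfrac12\mathscr L_{\omega^\sharp}g$, i.e. the symmetrised covariant derivative.

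For \eqref{lap_var}, use Besse's convention $\Delta = -\tr\nabla^2$ and write $\Delta_{g_s}\varphi_s = -g_s^{ij}\nabla^2_{ij}\varphi_s$. The inverse-metric variation contributes $\g{h}{\nabla^2\varphi}$, and the $\varphi$-variation contributes $\Delta\psi$. The Christoffel variation contributes $g^{ij}\dot\Gamma^k_{ij}\partial_k\varphi$, and tracing the formula above gives
\[
g^{ij}\dot\Gamma^k_{ij} = \p{-\delta h - \tfrac12 d\tr h}^k,
\]
using $\delta h = -\mathrm{div}\,h$. This produces $-\g{\delta h+\tfrac12 d\tr h}{d\varphi}$.

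For \eqref{ric_var}, apply the Palatini identity $\dot R_{ij} = \nabla_k\dot\Gamma^k_{ij} - \nabla_j\dot\Gamma^k_{ki}$. Substitute $\dot\Gamma$, commute covariant derivatives with the Ricci identity to produce the curvature terms of $\Delta_L h = \nabla^*\nabla h + \Ric\circ h + h\circ\Ric - 2\overset{\;\circ}{R}h$, and collect the divergence and trace terms as $-\delta^*\delta h - \tfrac12\nabla^2\tr h$.

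Then \eqref{scal_var} follows by differentiating $\scal = g^{ij}R_{ij}$. This gives $-\g{h}{\Ric} + \tr\dot{\Ric}$. For the second term, $\tr\Delta_L h = \Delta\tr h$, $\tr\delta^*\delta h = -\delta\delta h$, and $-\tfrac12\tr\nabla^2\tr h = \tfrac12\Delta\tr h$, so $\tr\dot{\Ric} = \Delta\tr h + \delta\delta h$.

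The main obstacle is sign bookkeeping in \eqref{ric_var} and \eqref{hess_var} under Besse's conventions ($\delta = -\mathrm{div}$, positive Laplacian, the paper's $\times$ product). I would cross-check each formula against Besse's Theorem 1.174, which states these identities, rather than rederiving the curvature commutation from scratch.
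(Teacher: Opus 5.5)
Your proposal is correct and essentially matches the paper. The paper gives no derivation of its own and simply cites Besse, and your coordinate computations via $\partial_s g^{ij}=-h^{ij}$ and $\dot\Gamma^k_{ij}$ reproduce all four identities, including the paper's non-standard form of \eqref{hess_var}. One wording slip: in \eqref{hess_var} the symmetrised term $\delta^*(h\cdot df)$ does not cancel against anything --- it survives with a minus sign from $-\dot\Gamma^k_{ij}\partial_k f$, and that is exactly the $-\delta^*_g(h\cdot df)$ in the stated formula.
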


    Note that \eqref{hess_var} is written in a slightly non-standard way compared to the version in \cite{besse}. However, it is equivalent and is convenient for our purposes.

    \begin{proposition}
        Let $h$ be a symmetric $2$-tensor on a Riemannian manifold $\left(M,g\right)$. Further let $\omega$ be a $1$-form on $M$ and write $\omega_s = \omega + s\omega'$. Then
        \begin{align}
            \left(\left. \frac{d}{ds}\right\vert_{s=0} \delta_{g_s} h\right) &= -\frac{1}{2} \delta_g\left(h \times h\right) - \frac{1}{2}h \cdot d\tr_g\left(h\right) + \frac{1}{4}d\abs{h}^2_g \label{symm2_div_var}\\
            \left. \frac{d}{ds}\right\vert_{s=0} \delta_{g_s} \omega_s &= -\left(g^{ij}\nabla_i \omega_i\right)' = \delta_g \omega' + \frac{1}{2}g^{ij}\left(2\nabla_i h_{j\ell} - \nabla_\ell h_{ij}\right) \omega^\ell + \left<h, \nabla_g \omega\right> \label{oneform_var}\\
            \left. \frac{d}{ds} \right\vert_{s=0}\left( \delta_{g_s}\delta_{g_s}h\right) &= \delta_g \left(\left.\frac{d}{ds}\right\vert_{s=0} \delta_{g_s} h\right) - \abs{\delta_g h}^2_g - \frac{1}{2}\left<\delta_g h, d\tr_g\left(h\right)\right>_g + \left<h, \nabla_g h\right>_g \label{div_squared_var}
        \end{align}
    \end{proposition}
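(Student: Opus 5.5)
The plan is to prove all three identities by a direct local-coordinate computation. The only inputs are the two standard first-order variations:
\begin{align*}
\left.\tfrac{d}{ds}\right|_{s=0} g_s^{ij} &= -h^{ij},\\
\left.\tfrac{d}{ds}\right|_{s=0}\Gamma^p_{ij} &= \tfrac12 g^{pq}\p{\nabla_i h_{jq}+\nabla_j h_{iq}-\nabla_q h_{ij}} =: \dot\Gamma^p_{ij}.
\end{align*}
Throughout I use the Besse sign convention $\delta_g=-\mathrm{div}_g$ fixed in the introduction. I would first record the two traces of $\dot\Gamma$ that will appear repeatedly:
\begin{align*}
g^{ij}\dot\Gamma^p_{ij} &= -\p{\delta_g h+\tfrac12 d\tr_g h}^p,\\
\dot\Gamma^p_{pj} &= \tfrac12 \nabla_j \tr_g h.
\end{align*}
Most of the bookkeeping reduces to these.

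For \eqref{symm2_div_var}, I hold the tensor $h$ fixed and vary only the metric in $(\delta_{g_s}h)_j=-g_s^{ik}\nabla^{s}_i h_{kj}$. The product rule gives three terms.
\begin{itemize}
\item The inverse-metric term is $h^{ik}\nabla_i h_{kj}$, which I rewrite as $-\tfrac12\delta_g(h\times h)_j$ minus a contraction that cancels against one of the Christoffel terms.
\item The Christoffel term acting on the first slot is $g^{ik}\dot\Gamma^p_{ik}h_{pj}$. By the first trace identity it produces the $-\tfrac12 h\cdot d\tr_g h$ term, together with an $h\cdot\delta h$ term that is absorbed into the expansion of $\delta_g(h\times h)$.
\item The Christoffel term acting on the second slot is $g^{ik}\dot\Gamma^p_{ij}h_{kp}$. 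Here the two antisymmetric pieces of $\dot\Gamma$ cancel under contraction with the symmetric $h$, leaving $\tfrac12 h^{kp}\nabla_j h_{kp}=\tfrac14 d\abs{h}^2$.
\end{itemize}
Collecting these terms and comparing with the expansion of $\delta_g(h\times h)$ should give the stated formula.

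For \eqref{oneform_var}, I vary the one-form as well. The computation is
\begin{equation*}
\p{-g^{ij}\nabla_i\omega_j}' = h^{ij}\nabla_i\omega_j - g^{ij}\nabla_i\omega'_j + g^{ij}\dot\Gamma^\ell_{ij}\omega_\ell .
\end{equation*}
The middle term is $\delta_g\omega'$, the first term is $\g{h}{\nabla_g\omega}$, and the last term, after inserting the first trace identity, is the displayed $\tfrac12 g^{ij}(2\nabla_i h_{j\ell}-\nabla_\ell h_{ij})\omega^\ell$.

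For \eqref{div_squared_var}, I apply the chain rule to $\delta_{g_s}(\delta_{g_s}h)$ using \eqref{oneform_var} with $\omega=\delta_g h$ and $\omega'=(\delta_{g_s}h)'$. The $\delta_g\omega'$ term is exactly $\delta_g$ applied to the expression from \eqref{symm2_div_var}. The remaining terms are $\g{h}{\nabla\delta_g h}$ and $\tfrac12\p{2\nabla_i h^{i}{}_{\ell}-\nabla_\ell\tr h}(\delta h)^\ell$. Using $\nabla_i h^i{}_\ell=-(\delta h)_\ell$, the latter becomes $-\abs{\delta_g h}^2-\tfrac12\g{\delta_g h}{d\tr_g h}$, which matches the stated right-hand side up to the final pairing term.

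The main obstacle is not conceptual. It is sign and convention bookkeeping: the $-\mathrm{div}$ convention, the exact normalisation of $\times$, and matching the schematic term written as $\g{h}{\nabla_g h}$ in \eqref{div_squared_var}, which I would interpret as the full contraction arising from $h^{ij}\nabla_i(\delta h)_j$. I would fix these conventions first, and check each identity on $g$ Euclidean with $h=\eta\, g$ for a function $\eta$, where every term can be computed by hand.
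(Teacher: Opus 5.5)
Your proposal is correct and is the standard coordinate computation from $\frac{d}{ds}g_s^{ij}=-h^{ij}$ and the variation of the Christoffel symbols, which is what the paper defers to Besse. The bookkeeping for $\delta_g(h\times h)$, using $(h\times h)_{ij}=2h_{il}h^{l}{}_{j}$, cancels exactly as you describe.

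Your reading of the last term in \eqref{div_squared_var} is also the right one. As printed, $\langle h,\nabla_g h\rangle$ pairs a $2$-tensor with a $3$-tensor, whereas applying \eqref{oneform_var} with $\omega=\delta_g h$ produces $\langle h,\nabla_g\delta_g h\rangle=h^{ij}\nabla_i(\delta_g h)_j$, so the statement has a typo there.
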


    \begin{theorem}[Yamabe Problem]\label{Yamabe}
        Let $\ell>\frac{n}{2}+2$, $\frac{n}{2}<\beta<n$, and $g\in \mathcal{R}^{\ell}_{\frac{1}{2}}\p{M,\wh{g}}\cap \mathcal{R}^{2,\alpha}_{\beta}\p{M,\wh{g}}$. Then there is a unique $\omega\in H^{\ell}_{\frac{1}{2}}\p{M}\cap C^{2,\alpha}_\beta(M)$ such that $\wt{g}=e^{2\omega}g$ has constant scalar curvature $\scal_{\wt{g}}=-n(n-1)$.
    \end{theorem}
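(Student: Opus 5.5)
The plan is to solve the conformal equation in the form of the Yamabe equation with a prescribed asymptotic condition. Write $\wt{g}=\phi^{\frac{4}{n-2}}g$ with $\phi=e^{\frac{n-2}{2}\omega}$. The requirement $\scal_{\wt{g}}=-n(n-1)$ is then equivalent to
\begin{equation*}
    N(\phi):=\frac{4(n-1)}{n-2}\Delta_g\phi+\scal_g\phi+n(n-1)\phi^{\frac{n+2}{n-2}}=0 .
\end{equation*}
I look for $\phi=1+w$ with $w$ decaying. The linearisation at $\phi=1$ is $\frac{4(n-1)}{n-2}\Delta_g+\scal_g+n(n-1)\frac{n+2}{n-2}$. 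Since $\scal_g=-n(n-1)+\mathcal{O}(e^{-\beta r})$, this is asymptotically $\frac{4(n-1)}{n-2}(\Delta_g+n)$.

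\textbf{Existence.} I will use sub/supersolutions with the barrier technique of Lemma \ref{barrier_lemma}. Set $\phi_\pm=1\pm\lambda e^{-\beta r}$ outside a compact set, and patch them to constants inside. Because $n+n\beta-\beta^2>0$ for $\beta\in(\frac n2,n)$, the same computation as in Lemma \ref{barrier_lemma} gives $N(\phi_+)\ge0\ge N(\phi_-)$ for $\lambda$ large. In the interior, the large constant works as a supersolution and a small positive constant as a subsolution, because the nonlinearity $\phi^{\frac{n+2}{n-2}}$ dominates. I then solve Dirichlet problems on an exhaustion $\Omega_i$ with boundary value $1$, using monotone iteration between the barriers. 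Passing to a locally uniform limit by Schauder estimates gives a positive solution $\phi$ with $\abs{\phi-1}\le\lambda e^{-\beta r}$.

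\textbf{Regularity and decay.} I rewrite the equation as $(\Delta_g+n)w=F$, where $F$ contains $(\scal_g+n(n-1))(1+w)$ and quadratic terms in $w$. Then I bootstrap with Proposition \ref{isomorph_prop} on $C^{m,\alpha}_\beta$ for $\beta\in(-1,n)$ to obtain $w\in C^{2,\alpha}_\beta$. I do the same on $H^m$ with the weight $\frac12$ to obtain $w\in H^\ell_{\frac12}$. This uses that $\scal_g+n(n-1)\in H^{\ell-2}_{\frac12}\cap C^{0,\alpha}_\beta$, since $g-\wh g$ has this regularity, and that these spaces are algebras (Lemma \ref{wgtd_sob_emb}). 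Finally I set $\omega=\frac{2}{n-2}\ln\phi$.

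\textbf{Uniqueness.} Suppose $\phi_1,\phi_2$ are two decaying solutions. Then $\wt g_2=(\phi_2/\phi_1)^{\frac{4}{n-2}}\wt g_1$, and both metrics have scalar curvature $-n(n-1)$. The quotient $q=\phi_2/\phi_1\to1$ at infinity satisfies $\frac{4(n-1)}{n-2}\Delta_{\wt g_1}q=n(n-1)(q-q^{\frac{n+2}{n-2}})$. At an interior maximum with $q>1$, the left side is $\ge0$ while the right side is $<0$, which is a contradiction. The same argument rules out an interior minimum with $q<1$, so $q\equiv1$.

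The main obstacle is the existence step, specifically getting global barriers that are valid on the compact core where $\scal_g$ is arbitrary. If the patching argument fails, I would instead first apply the implicit function theorem near $\wh g$: the linearised operator is an isomorphism on $C^{2,\alpha}_\beta$ because it is asymptotic to $\Delta_g+n$ and its kernel is trivial by the maximum principle. For general $g$ I would then fall back on the barrier argument.
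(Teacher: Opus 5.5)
The existence step has a genuine gap, exactly where you suspected. Your decay bootstrap and your uniqueness argument (maximum principle for $q=\phi_2/\phi_1$) are sound; the paper's proof does not spell out uniqueness.

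\textbf{The interior subsolution.} For a positive constant $c$ one has $N(c)=c\bigl(\scal_g+n(n-1)c^{\frac{4}{n-2}}\bigr)$. So $c$ is a subsolution only where $\scal_g\le -n(n-1)c^{\frac{4}{n-2}}<0$. For small $c$ the nonlinearity is negligible, not dominant. The hypothesis $g-\wh{g}\in H^\ell_{\frac12}\cap C^{2,\alpha}_\beta$ imposes no smallness, so $\scal_g$ may be positive somewhere on the core, and then no constant lower barrier exists there.

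\textbf{The exterior profile.} Since $\Delta_g e^{-\beta r}=\beta(n-1+G-\beta)e^{-\beta r}$, for $\beta\in(n-1,n)$ the Laplacian contributes about $\frac{4(n-1)}{n-2}\beta(\beta-n+1)x>0$ to $N(1-x)$, where $x=\lambda e^{-\beta r}$. Meanwhile the zeroth-order part $n(n-1)\bigl((1-x)^{\frac{n+2}{n-2}}-(1-x)\bigr)$ tends to $0$ as $x\to 1$. So $1-\lambda e^{-\beta r}$ stops being a subsolution near its zero set. Relatedly, the indicial constant for $\Delta_g+n$ is $(n-\beta)(\beta+1)$, not the drift-Laplacian constant from Lemma \ref{barrier_lemma}.

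\textbf{The fallback.} The implicit function theorem at $\wh g$ is valid: there the linearisation is exactly $\frac{4(n-1)}{n-2}(\Delta_{\wh g}+n)$, an isomorphism by Proposition \ref{isomorph_prop}. But it only gives the result for $g$ near $\wh g$. That would suffice for the use in Theorem \ref{thm:local_PMT}, but it is not the theorem as stated. ``Falling back on the barrier argument'' for general $g$ is circular.

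\textbf{The missing idea} is to use the trivial solution $0$ as the interior lower barrier.
\begin{enumerate}
\item Take $\phi_-=1-\lambda e^{-\beta' r}$ with $\frac n2<\beta'<\min\{\beta,n-1\}$, which is possible since $n\ge 3$. Then $\Delta_g\phi_-\le 0$ near infinity, and for large $\lambda$ one checks $N(\phi_-)<0$ on all of $\{\phi_->0\}$.
\item Since $N(0)=0$, $\max\{\phi_-,0\}$ is a weak subsolution. It lies below your glued supersolution $\min\{\phi_+,K\}$, which is correct.
\item Monotone iteration on the exhaustion then yields a solution with $\max\{\phi_-,0\}\le\phi\le\min\{\phi_+,K\}$.
\item Positivity follows from the strong maximum principle for the linear equation $\frac{4(n-1)}{n-2}\Delta_g\phi+c\,\phi=0$, with bounded coefficient $c=\scal_g+n(n-1)\phi^{\frac{4}{n-2}}$, because $\phi>0$ near infinity.
\item Your bootstrap via Proposition \ref{isomorph_prop} then upgrades the decay from $\beta'$ to $\beta$.
\end{enumerate}

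The paper avoids all of this by quoting the Sobolev-class AH Yamabe theorem \cite[Theorem 1.1/6.8]{ALM}, which does not depend on the sign of $\scal_g$. It then runs essentially your decay bootstrap.
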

    \begin{proof} The heavy lifting is done by \cite[Theorem 1.1/6.8]{ALM}, though the right application requires a little finesse. With regards to the assumptions of that theorem, it will only work in our setting when the dimension $n$ is odd. In that case $g\in \mathcal{R}^\ell_{\frac{1}{2}}\p{M,\wh{g}}$ is of class $\mathscr{H}^{\ell,2;\frac{n+1}{2}}$, and the indices satisfy the requirements of the theorem. However, when $n$ is even, the best fortified space we can achieve is $\mathscr{H}^{\ell,2;\frac{n}{2}}$, which is just shy of the requirements. Fortunately, a closer inspection of the proof of \cite[Theorem 6.8]{ALM} reveals that we may bypass the notion of fortified spaces, as in our case $g$ satisfies the slightly stronger condition $g-\wh{g}\in H^{\ell}_{m-\frac{n}{2}}$ (and in particular $\scal_g+n(n-1)\in H^{\ell-2}_{m-\frac{n}{2}}$) for $m=\frac{n+1}{2}$. The proof proceeds to construct a solution $u\in H^{\ell}_{m-\frac{n}{2}}\p{M}$ to the Yamabe equation 
    $$4\frac{n-1}{n-2}\Delta_gu+\scal_g(1+u)=-n(n-1)(1+u)^{\frac{n+2}{n-2}}.$$

    Setting $\wt{g}:=\p{1+u}^{\frac{4}{n-2}}g$, we have found our constant scalar curvature metric, though we still need to check that the conformal factor has the right decay. By defining $\omega:=\frac{2}{n-2}\ln\p{1+u}$, we obtain another conformal factor in $H^\ell_{\frac{1}2}\p{M}$, satisfying the transformed Yamabe equation
    $$\p{\Delta_g+n}\omega =\frac{n-2}{2}\abs{d\omega}^2_g-\frac{1}{2(n-1)}\p{\scal_g+n(n-1)}-\frac{n}{2}F(\omega),$$
    where $F(\omega)=e^{2\omega}-1-2\omega=\mathcal{O}\p{\omega^2}$. By Lemma \ref{wgtd_sob_emb}, we have $\omega\in C^{2,\alpha}_{1/2}\p{M}$. It follows that $\abs{d\omega}^2_g\in C^{1,\alpha}_1\p{M}$ and $F(\omega)\in C^{2,\alpha}_{1}\p{M}$. Since $g\in \mathcal{R}^{2,\alpha}_{\beta}\p{M,\wh{g}}$, and since $\beta>\frac{n}{2}>1$ we have 
    $$\frac{n-2}{2}\abs{d\omega}^2_g-\frac{1}{2(n-1)}\p{\scal_g+n(n-1)}-\frac{n}{2}F(\omega)\in C^{0,\alpha}_{1}\p{M}.$$

    It follows from Proposition \ref{isomorph_prop}, that $\omega\in C^{2,\alpha}_{1}\p{M}$. We may now bootstrap up to $C^{2,\alpha}_{\beta}\p{M}$, as in \cite[Proposition 4.3]{DKM}.   
    \end{proof}

    Finally, we will need the following version of the Gagliardo--Nirenberg inequality.

    \begin{theorem}\label{gagliardo_nirenberg} Let $g$ be AH of class $C^{\ell,\alpha}$ and let $T\in H^{k}_{1/2}(S^2 M)\cap L^q_{1/2}(S^2 M)$ for $k\leq \ell$ and $k+\frac{n}{q}\geq \frac{n}{2}$. Then, for every $0\leq i\leq k$, there is a constant $C=C(k,q,n)$ such that 
    $$\norm{\nabla^{i}T}_{L^p_{1/2}}\leq C\norm{\nabla^k T}_{L^2_{1/2}}^\theta \norm{T}_{L^q_{1/2}}^{1-\theta}$$
    for every pair $(p,\theta)$ such that 
    $$\frac{n}{2}-\frac{n}{p}\leq k-i\qquad \& \qquad \theta\p{k-\frac{n}{2}+\frac{n}{q}}=i-\frac{n}{p}+\frac{n}{q}$$
    and $\frac{i}{k}\leq \theta \leq 1$.
    \end{theorem}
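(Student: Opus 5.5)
The plan is to reduce to the classical Gagliardo--Nirenberg inequality on bounded geometry, using that an AH manifold of class $C^{\ell,\alpha}$ has bounded geometry up to order $\ell$. The weight $1/2$ is handled by conjugation. Set $\wt T := e^{-r/2}T$, or equivalently use $e^{\varphi/2}$ with an approximate potential $\varphi$. Then $\norm{T}_{L^p_{1/2}}$ is comparable to $\norm{\wt T}_{L^p}$ in the unweighted sense, by the definition of the weighted spaces in \cite{Lee}. The derivatives satisfy
\begin{equation*}
\nabla^j \wt T = \sum_{m\le j} \nabla^{j-m}(e^{-r/2}) \ast \nabla^m T,
\end{equation*}
and each $\nabla^{j-m}(e^{-r/2})$ is bounded by $C e^{-r/2}$, because $r$ is a smoothed distance function on an AH manifold with all derivatives of $dr$ bounded. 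Consequently $\norm{\wt T}_{H^k}$ is equivalent to $\norm{T}_{H^k_{1/2}}$, and similarly for $L^p$ norms of derivatives.

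Second, I cover $M$ by a uniformly locally finite family of harmonic (or Möbius-type) coordinate balls of fixed radius. On each ball the metric is $C^{\ell,\alpha}$-close to Euclidean with uniform constants; this is the standard construction of \cite{Lee}, Lemma 2.2. On each ball I apply the Euclidean Gagliardo--Nirenberg inequality with lower-order terms:
\begin{equation*}
\norm{\nabla^i \wt T}_{L^p(B)}\le C\norm{\nabla^k\wt T}_{L^2(B)}^\theta\norm{\wt T}_{L^q(B)}^{1-\theta}+C\norm{\wt T}_{L^q(B)}.
\end{equation*}
The constant is uniform across balls. The exponent conditions stated are exactly the Euclidean ones.

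Third, I sum over the balls. Since $p\ge 2$ when $\theta>0$ under these relations, and because of the bounded overlap, I can use $\sum a_B^p\le(\sum a_B^2)^{p/2}$ together with Hölder's inequality on the products, as in Aubin/Hebey's treatment of bounded geometry. This yields the global inequality with a lower-order term $\norm{\wt T}_{L^q}$, plus intermediate derivative norms coming from the commutators in the first step.

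The main obstacle is removing these lower-order terms so that the inequality becomes genuinely multiplicative. To do this I would use the spectral gap on AH manifolds (\cite{Lee}, Propositions E/F): it gives $\norm{\wt T}_{L^2}\le C\norm{\nabla \wt T}_{L^2}$, and iterating gives $\norm{\nabla^j\wt T}_{L^2}\le C\norm{\nabla^k \wt T}_{L^2}$. This lets me absorb intermediate terms into $\norm{\nabla^k T}_{L^2_{1/2}}$.

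For the pure lower-order term $\norm{\wt T}_{L^q}$, I would use the constraint $\theta\ge i/k$ and interpolate. I write $\norm{\wt T}_{L^q}=\norm{\wt T}_{L^q}^{\theta}\norm{\wt T}_{L^q}^{1-\theta}$ and bound $\norm{\wt T}_{L^q}^\theta$ by $\norm{\nabla^k\wt T}_{L^2}^\theta$. That bound comes from Sobolev embedding combined with the Poincaré inequality above, using $k+\frac nq\ge\frac n2$. If this last step fails at an endpoint, I would restrict to the non-endpoint cases, which are all that are used in \eqref{stability_interp_bounds}. Finally, I undo the conjugation using the first step.
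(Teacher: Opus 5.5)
The gap is in how you remove the lower-order term $\norm{\wt T}_{L^q}$, and it is not an endpoint issue.

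You absorb it via $\norm{\wt T}_{L^q}\le C\norm{\nabla^k\wt T}_{L^2}$ (``Sobolev embedding plus Poincar\'e''). On an AH manifold, however, $H^k\hookrightarrow L^q$ holds only for $q\ge 2$, because the volume is infinite. For a smoothed indicator of $B_R$ one has $\norm{\wt T}_{H^k}\lesssim\Vol(B_R)^{1/2}$ while $\norm{\wt T}_{L^q}\gtrsim\Vol(B_R)^{1/q}$. Every $q\le 2$ satisfies $k+\frac nq\ge\frac n2$, so an open part of the hypotheses is lost, and your fallback to ``non-endpoint cases'' does not cover it.

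The same comparison is silently needed on the left. Expanding $e^{r/2}\nabla^iT$ produces terms $\norm{\nabla^m\wt T}_{L^p}$ with $m<i$. Their natural Gagliardo--Nirenberg exponent for the same $p$ is some $\theta_m<\theta$, and upgrading $X^{\theta_m}Y^{1-\theta_m}$ to $X^\theta Y^{1-\theta}$ again requires $Y\lesssim X$.

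The summation step is also not justified. The additive pieces $\sum_B\norm{\wt T}^p_{L^q(B)}$ are controlled by $\norm{\wt T}^p_{L^q}$ only if $p\ge q$. This fails for $q=\infty$, $p=\frac{2k}{i}$ (the second line of Corollary \ref{gagliardo_nirenberg_cor}), and for every $q>2$ with $\theta$ near $\frac ik$. Your claim ``$p\ge 2$ when $\theta>0$'' is false as well: take $i=0$, $q=1$, $\theta$ small.

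Finally, the commutator terms on the right are $\norm{\nabla^mT}_{L^2_{1/2}}$ with $m<k$, so you need the \emph{weighted} inequality $\norm{S}_{L^2_{1/2}}\le C\norm{\nabla S}_{L^2_{1/2}}$. The unweighted gap for $\wt T$ only gives $\norm{\wt T}_{H^k}\lesssim\norm{\nabla^k\wt T}_{L^2}$, and going from there to $\norm{\nabla^kT}_{L^2_{1/2}}$ is circular. The weighted inequality is true but has to be proved. The computation of Lemma \ref{alttospecgap} handles a neighbourhood of infinity, where the conjugated potential tends to $\frac{2n-1}{4}$. There is no zero mode since constants are not in $L^2_{1/2}$, and tensors reduce to functions via Kato's inequality. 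Also, the conjugation has the wrong sign: it should be $\wt T=e^{r/2}T\approx e^{-\varphi/2}T$, since $\norm{T}_{L^p_{1/2}}=\norm{\rho^{-1/2}T}_{L^p}$.

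The repair is to keep the local estimate inhomogeneous and in product form. On each unit ball and for every $m\le i$, use $\norm{\nabla^m u}_{L^p(B)}\le C\norm{u}^\theta_{W^{k,2}(B)}\norm{u}^{1-\theta}_{L^q(B)}$. This holds uniformly for all admissible $q$: on sets of bounded volume you may pass to larger Lebesgue exponents, and $\norm{u}_{L^q(B)}\lesssim\norm{u}_{W^{k,2}(B)}$, so both the additive term and the exponent shift are absorbed locally.

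Then sum using $\sum_Bx_B^ay_B^b\le(\sum_Bx_B)^a(\sum_By_B)^b$ for $a,b\ge0$ with $a+b\ge1$. Take $x_B=\norm{u}^2_{W^{k,2}(B)}$, $y_B=\norm{u}^q_{L^q(B)}$, $a=\frac{\theta p}{2}$, $b=\frac{(1-\theta)p}{q}$, with the obvious modification for $q=\infty$. The condition $a+b\ge1$ is exactly $\frac\theta2+\frac{1-\theta}{q}-\frac1p=\frac{\theta k-i}{n}\ge0$, i.e.\ $\theta\ge\frac ik$; this is the real role of that hypothesis.

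The summation yields $\norm{\nabla^iT}_{L^p_{1/2}}\lesssim\norm{T}^\theta_{H^k_{1/2}}\norm{T}^{1-\theta}_{L^q_{1/2}}$. The iterated weighted Poincar\'e inequality then finishes, with no global comparison of $L^q$ against $H^k$. You must still exclude the classical exceptional case $\theta=1$, $p=\infty$, $k-i=\frac n2$, where the inequality already fails locally.

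As written, your argument closes only for $q=2$, which is the case used in \eqref{stability_interp_bounds}, and only once the weighted Poincar\'e inequality is supplied. For comparison, the paper does not localise at all. It approximates $T$ by compactly supported smooth tensors, applies the classical (Hamilton-type) interpolation inequality for compactly supported tensors directly, and passes to the limit with the weighted Sobolev embedding.
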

    \begin{proof}
        By \cite[Lemma 3.9]{Lee} we can approximate $T$ by a sequence $\p{T_j}_{j}\subset C^\infty_c(S^2 M)$, so $T_j\to T$ in $H^k_{1/2}(S^ 2M)$. By the classical Gagliardo-Nirenberg estimate for tensors (see \cite[Corollary 12.7]{hamilton_interpolation} - although it is for compact manifolds, the argument is the same for compactly supported tensors), there is a constant $C > 0$ depending only on $k,q,n$ such that for any pair $(p,\theta)$ with 
        $$\theta\p{k-\frac{n}{2}+\frac{n}{q}}=i-\frac{n}{p}+\frac{n}{q}$$
        the following holds:
        $$\norm{\nabla^{i}T_j}_{L^p_{1/2}}\leq C\norm{\nabla^k T_j}_{L^2_{1/2}}^\theta \norm{T_j}_{L^q_{1/2}}^{1-\theta}.$$
        As $T\in H^{k}_{1/2}(S^2 M)\cap L^q_{1/2}(S^2 M)$, the norms on the right converge to the corresponding norms of $T$. By the Sobolev embedding theorem (see \cite[Lemma 3.6(c)]{Lee}), the norm on the left converges to the corresponding norm of $T$ under the assumption that $\frac{n}{2}-\frac{n}{p}\leq k-i$.
    \end{proof}

    \begin{corollary}\label{gagliardo_nirenberg_cor}
        With assumptions as in Theorem \ref{gagliardo_nirenberg}, we have
        $$\norm{\nabla^{i}T}_{L^2_\varphi}\leq C\norm{\nabla^k T}_{L^2_\varphi}^{i/k}\norm{T}_{L^2_\varphi}^{1-i/k}\qquad 0\leq k\leq \ell$$
        $$\norm{\nabla^{i}T}_{L^{2k/i}_\varphi}\leq C\norm{\nabla^k T}_{L^2_\varphi}^{i/k}\norm{T}_{L^\infty}^{1-i/k}\qquad \frac{n}{2}\leq k\leq \ell$$
        where $\varphi\in C^{\ell,\alpha}(M)$ is a function with asymptotics $\varphi=-r+\mathcal{O}\p{e^{-r}}$, with respect to the distance function described in Remark \ref{distance_function}. Furthermore, $L^2_\varphi$ denotes the standard $L^2$ space, with respect to the weighted volume form $e^{-\varphi}\;dV_{g}$.
    \end{corollary}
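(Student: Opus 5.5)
The plan is to reduce the weighted inequalities to the unweighted statement of Theorem \ref{gagliardo_nirenberg} by a conformal change of the tensor. The point is that the weight $e^{-\varphi}\,dV_g$ is comparable to $e^{r}\,dV_g$, since $\varphi=-r+\mathcal{O}(e^{-r})$ and hence $e^{-\varphi}=e^{r}(1+\mathcal{O}(e^{-r}))$ with $e^{-\varphi}/e^{r}$ bounded above and below. So $L^p_\varphi$ norms are uniformly equivalent to $L^p$ norms with respect to $e^{r}dV_g$. That measure is precisely the one encoded, up to the convention of \cite{Lee}, by the $L^p_{1/2}$-type spaces used in Theorem \ref{gagliardo_nirenberg}.

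For the first inequality I would take $q=2$ and $p=2$. The relation $\theta(k-\frac n2+\frac n2)=i-\frac n2+\frac n2$ then forces $\theta=i/k$, which automatically lies in $[\frac ik,1]$. The condition $\frac n2-\frac n2=0\le k-i$ holds trivially, and $k+\frac nq\ge\frac n2$ also holds. Theorem \ref{gagliardo_nirenberg} applies directly, and the weight comparison just described transfers the conclusion to the $L^2_\varphi$ norms, with the constant absorbing the comparability constants.

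For the second inequality I would take $q=\infty$ and $p=2k/i$. The scaling relation gives $\theta(k-\frac n2)=i-\frac{ni}{2k}=\frac ik(k-\frac n2)$, so $\theta=i/k$ when $k>\frac n2$. The constraint $\frac n2-\frac{ni}{2k}\le k-i$ is equivalent to $\frac n2(1-\frac ik)\le k(1-\frac ik)$, which holds because $k\ge\frac n2$. The assumption $k+\frac nq\ge \frac n2$ is exactly $k\ge\frac n2$.

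One subtlety is that the weighted $L^\infty$ norm carries no weight at all. I would therefore check that Theorem \ref{gagliardo_nirenberg} with $q=\infty$ gives the plain $\norm{T}_{L^\infty}$. If the weighted convention instead inserts a factor such as $e^{r/2}$, I would rerun the Hamilton interpolation argument from the proof of Theorem \ref{gagliardo_nirenberg} directly for compactly supported approximants $T_j$, and integrate by parts against the weight $e^{-\varphi}$. The integration by parts produces extra terms involving $d\varphi$. These are bounded, since $\abs{d\varphi}\to 1$ and $\varphi\in C^{\ell,\alpha}$, and they are lower order, so an induction on $i$ absorbs them.

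The borderline case $k=\frac n2$ makes the scaling relation degenerate, so the relation does not determine $\theta$. Here I would verify the claimed estimate directly, again via the interpolation argument on compactly supported approximants, and then pass to the limit using the density from \cite[Lemma 3.9]{Lee}.

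I expect the main obstacle to be the precise matching of the weight conventions, $e^{-\varphi}$ versus the $\frac12$-weighted spaces of \cite{Lee}, together with the handling of $L^\infty$ and of the boundary exponent $k=\frac n2$. The exponent bookkeeping itself is routine.
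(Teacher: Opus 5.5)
Your plan is the paper's: the corollary is stated there without a separate proof, as exactly the specialisation of Theorem~\ref{gagliardo_nirenberg} to $(p,q,\theta)=(2,2,i/k)$ and $(2k/i,\infty,i/k)$ combined with $e^{-\varphi}\simeq e^{r}$, and at $k=\frac{n}{2}$ the exponent relation reads $0=0$, so $\theta=i/k$ is still admissible and no separate treatment is needed. Your convention worry is justified---the second inequality comes out in the stated form only if the weighted $L^p$-norms of Theorem~\ref{gagliardo_nirenberg} are read as $L^p$-norms of the measure $e^{-\varphi}dV_g$ (with the weights of \cite{Lee}, $L^p_{1/2}$ carries the factor $e^{pr/2}$ and one would get $\sup e^{r/2}\abs{T}$ on the right)---and if you ever need the fallback, the $d\varphi$-terms are not absorbed by induction on $i$ alone but require a weighted Poincar\'e inequality $\norm{S}_{L^2_\varphi}\leq C\norm{\nabla S}_{L^2_\varphi}$ for tensors, which holds here by Kato's inequality, Lemma~\ref{alttospecgap} and a compactness argument.
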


\end{document}